\documentclass{amsart}

\usepackage{hyperref}
\usepackage{tikz-cd}
\usepackage{amsfonts,amsmath,amssymb,mathrsfs, stmaryrd, amsthm}
\usepackage{enumitem}
\usepackage{cite}
\usepackage{amsrefs}
\usepackage{bm}

\title{A non-unitary approach to the~$q$-deformation~of~$\mathrm{SL}(2,\mathbb{R})$}
\author{Yvann Gaudillot-Estrada}
\address{Université de Lorraine, CNRS, IECL, F-57000 Metz, France}
\email{yvann.gaudillot-estrada@univ-lorraine.fr}
\thanks{This research was supported by the projects OpART (ANR-23-CE40-0016) and CroCQG (ANR-25-CE40-5010) of the \emph{Agence Nationale de la Recherche}.  It is based upon work from COST Action CaLISTA CA21109 supported by COST (European Cooperation in Science and Technology) \url{www.cost.eu}.  Part of the research was carried out during the special trimester on Representation Theory and Noncommutative Geometry at the Institut Henri Poincaré (UAR 839 CNRS-Sorbonne Université) --- LabEx CARMIN (ANR-10-LABX-59-01).}
\subjclass[2020]{17B37,20G42, 22E46, 46L65, 46L67}
\keywords{Quantized enveloping algebras, Quantum groups, Harish-Chandra modules, Parabolic induction}

\newtheorem{theorem}{Theorem}[section]
\newtheorem{proposition}[theorem]{Proposition}
\newtheorem{lemma}[theorem]{Lemma}
\newtheorem{corollary}[theorem]{Corollary}
\newtheorem*{theoremA}{Theorem A}
\newtheorem*{theoremB}{Theorem B}

\theoremstyle{definition}
\newtheorem{definition}[theorem]{Definition}
\newtheorem{remark}[theorem]{Remark}

\newtheorem{convention}[theorem]{Convention}
\newtheorem{notation}[theorem]{Notation}
\newtheorem{example}[theorem]{Example}

\begin{document}

\begin{abstract}
    We study the representation theory of various convolution algebras attached to the $q$-deformation of $\mathrm{SL}(2,\mathbb{R})$ from an algebraic perspective and beyond the unitary case. We show that many aspects of the classical representation theory of real semisimple groups can be transposed to this context. In particular, we prove an analogue of the Harish-Chandra isomorphism and we introduce an analogue of parabolic induction. We use these tools to classify the non-unitary irreducible representations of $q$-deformed $\mathrm{SL}(2,\mathbb{R})$. Moreover, we explicitly show how they converge to the classical admissible dual of $\mathrm{SL}(2,\mathbb{R})$. For that purpose, we define a version of the quantized universal enveloping algebra defined over the ring of analytic functions on $\mathbb{R}_+^*$, which specializes at $q = 1$ to the enveloping $\ast$-algebra of $\mathfrak{sl}(2,\mathbb{R})$.
\end{abstract}

\maketitle
\section*{Introduction}

Studying a certain class of representations of a group $G$ often amounts to examining modules over a related convolution algebra. When $G$ is a noncompact semisimple Lie group, one is mainly interested in the unitary representations of $G$ and in Harish-Chandra's $(\mathfrak{g}, K)$-modules. The associated convolution algebras are respectively the group C*-algebra $C^*(G)$ and the Hecke algebra $R(\mathfrak{g}, K)$ \cite{KnappVoganInd}*{§I.6}.

In a recent paper \cite{DCquantisation}, De Commer provided a way to construct $q$-deformations of both algebras, which we respectively denote by $C^*_q(G)$ and $R_q(\mathfrak{g},K)$, the index $q$ being a positive real parameter different from one. The starting point of the whole approach consists in the definition of a new $q$-deformation $U_q(\mathfrak{g})$ of the universal enveloping algebra which is not of Drinfeld-Jimbo type. These constructions are mainly inspired by Letzter's theory of quantum symmetric pairs \cites{Letzter99,Kolb} and by the quantum duality principle \cites{Drinfeld, GavariniQDP}. For complex semisimple groups, they coincide with the commonly considered quantizations \cites{PWLorentz, VoigtYuncken}, namely Drinfeld doubles of compact semisimple quantum groups.

The goal of the present paper is to make a systematic study of the representation theory of these new $q$-deformations in the case $G = \mathrm{SL}(2,\mathbb{R})$. This is indeed a natural starting point for further developments. We adopt a non-unitary perspective and emphasize the similarities with the classical representation theory of $\mathrm{SL}(2,\mathbb{R})$, paying special attention to the behavior of the limit $q \to 1$ in a formal setting.

By non-unitary, we mean that we mainly study the modules over $R_q(\mathfrak{g},K)$ without considering its involution. Put differently, we are interested in the $q$-analogues of the $(\mathfrak{g}, K)$-modules. As we will see, this perspective sheds light on the unitary theory. Moreover, we extend the definition of the deformed convolution algebras to any suitable extension of $\mathbb{C}$. Within that framework, we introduce an analogue of parabolic induction. This allow us to classify the non-degenerate simple $R_q(\mathfrak{g},K)$-modules. At the same time, by working over a field of functions in $q$, we obtain an algebraic gluing of these $q$-deformations of $\mathrm{SL}(2,\mathbb{R})$ to the classical group. This is rather simpler than the operator algebraic approaches in previous works \cites{DCFlo1, DCFlo2}.

If $\mathrm{SL}(2,\mathbb{R}) = KAN$ is the standard Iwasawa decomposition and $M = \{-1,1\}$, then $P = MAN$ is a minimal parabolic subgroup of $\mathrm{SL}(2,\mathbb{R})$. We define an analogue $R_q(\mathfrak{p},M)$ of the Hecke algebra associated to $P$. Our parabolic induction then associates a non-degenerate $R_q(\mathfrak{g},K)$-module to any non-degenerate $R_q(\mathfrak{p},M)$-module. We will show that the non-zero characters of $R_q(\mathfrak{p},M)$ are naturally indexed by pairs $(\varepsilon,\lambda)\in \{-1,1\}\times \mathbb{C}^\times$. Let us denote by $\mathrm{Ind}_q(\varepsilon, \lambda)$ the $R_q(\mathfrak{g},K)$-module induced from the character associated to such a pair. For any $q \in \mathbb{R}^*_+ \setminus\{1\}$, we characterize the non-degenerate simple modules over $R_q(\mathfrak{g},K)$ as follows.

\begin{theoremA} $\quad$
\begin{enumerate}[label = (\roman*)]
    \item The module $\mathrm{Ind}_q(\varepsilon, \lambda)$ is simple unless $\lambda = \sigma q^{n}$ for $\sigma = \pm 1$ and $n$ an integer such that $\varepsilon = (-1)^{n+1}$. When simple, the modules $\mathrm{Ind}_q(\varepsilon, \lambda)$ and $\mathrm{Ind}_q(\varepsilon, \lambda^{-1})$ are isomorphic.
    \item Let $\sigma \in \{-1,1\}$, $n \in \mathbb{N}$ and let $\varepsilon = (-1)^{n+1}$. Then $\mathrm{Ind}_q(\varepsilon, \sigma q^n)$ has exactly two distinct simple submodules $D_{\sigma,n}^+$ and $D_{\sigma,n}^-$ while $\mathrm{Ind}_q(\varepsilon, \sigma q^n)$ has a unique simple submodule $Q_{\sigma,-n}$. The latter is $n$-dimensional. Moreover, the quotient module $\mathrm{Ind}_q(\varepsilon, \sigma q^n)/ D_{\sigma,n}^+ \oplus D_{\sigma,n}^-$ is isomorphic to $Q_{\sigma,n}$ while $\mathrm{Ind}_q(\varepsilon, \sigma q^{-n})/Q_{\sigma,n}$ is isomorphic to $D_{\sigma,n}^+ \oplus D_{\sigma,n}^-$.
    \item If $\sigma \in \{-1,1\}$ then $\mathrm{Ind}_q(-1, \sigma)$ is the direct sum of two inequivalent simple submodules $D_{\sigma,1}^+$ and $D_{\sigma,1}^-$.
    \item Every non-degenerate simple $R_q(\mathfrak{g},K)$-module is equivalent to a submodule of $\mathrm{Ind}_q(\varepsilon, \lambda)$ for some pair $(\varepsilon,\lambda)$.
    \item The existence of a non-zero intertwining map $\mathrm{Ind}_q(\varepsilon, \lambda) \to \mathrm{Ind}_q(\varepsilon', \lambda')$ implies that $\varepsilon = \varepsilon'$ and $\lambda' \in \{\lambda, \lambda^{-1}\}$.
\end{enumerate}
\end{theoremA}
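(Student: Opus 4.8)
The plan is to transpose, more or less verbatim, the classical analysis of $(\mathfrak g,K)$-modules for $\mathrm{SL}(2,\mathbb R)$, replacing the action of $\mathfrak{sl}_2(\mathbb C)$ on $K$-finite vectors by the action of $U_q(\mathfrak g)$. The structural input to be set up first is: the $K$-type decomposition $V=\bigoplus_m V[m]$ of any non-degenerate $R_q(\mathfrak g,K)$-module; the existence of raising and lowering elements $E,F$ with $E\cdot V[m]\subseteq V[m+2]$ and $F\cdot V[m]\subseteq V[m-2]$; and the central Casimir $\Omega$, whose scalar action on a simple module is computed through the $q$-Harish-Chandra isomorphism by a function of $\lambda$ invariant under $\lambda\mapsto\lambda^{-1}$. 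Since $K$ is a torus, each $V[m]$ is a sum of copies of one character, and for the modules at hand it will be at most one-dimensional; hence every submodule $W$ is $K$-stable, of the form $W=\bigoplus_{m\in T}V[m]$ for a subset $T$ of the $K$-type support, and $T$ must be stable under $E$ and $F$ in the sense dictated by where the relevant structure constants vanish. This reduces the whole theorem to a bookkeeping problem on strings of $K$-types.

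First I would compute, directly from the definition of parabolic induction, the action of $E,F$ on the natural basis $(v_m)_m$ of $\mathrm{Ind}_q(\varepsilon,\lambda)$: $E\,v_m=a_m(\varepsilon,\lambda)\,v_{m+2}$, $F\,v_m=b_m(\varepsilon,\lambda)\,v_{m-2}$, together with the $EF$- and $FE$-eigenvalues, which (given the $\Omega$-eigenvalue) are Laurent polynomials in $\lambda$. Part~(i) then follows from the zero loci of $a_\bullet$ and $b_\bullet$: if none vanishes, starting from any $V[m]\neq 0$ in a submodule and applying $E,F$ repeatedly fills the whole support, so $\mathrm{Ind}_q(\varepsilon,\lambda)$ is simple; and one checks that some structure constant vanishes only when $\lambda=\sigma q^{n}$ with $\varepsilon=(-1)^{n+1}$. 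In the simple case, $\mathrm{Ind}_q(\varepsilon,\lambda)$ and $\mathrm{Ind}_q(\varepsilon,\lambda^{-1})$ have the same (multiplicity-one) $K$-types and the same $\Omega$-eigenvalue; matching a chosen $K$-type and rescaling the $v_m$ so that the $E$-actions agree forces the $F$-actions to agree as well by comparing products, giving the isomorphism. (Alternatively one writes down the Knapp--Stein-type intertwiner and checks invertibility on $K$-types.)

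For (ii) and (iii): at $\lambda=\sigma q^{n}$ exactly one $a_{m_0}$ and one $b_{m_1}$ vanish, and these two cuts split the $K$-type support into a lower half-line $S_-$, a finite middle block $S_0$ of cardinality $n$, and an upper half-line $S_+$ (with $S_0=\varnothing$ precisely in case (iii), $n=0$). By the reduction above, the submodules are exactly the $E,F$-stable unions of these blocks; which of $S_-\sqcup S_0$, $S_0\sqcup S_+$, $S_-\sqcup S_+$ are submodules is governed by the direction in which the non-vanishing arrows cross each cut, and this direction is opposite for $\sigma q^{n}$ and $\sigma q^{-n}$ — producing the two mirror-image composition series in the statement. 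I would identify $S_\pm$ with the lowest/highest-weight $U_q(\mathfrak g)$-modules $D^{\pm}_{\sigma,n}$ and $S_0$ with the $n$-dimensional module $Q_{\sigma,n}$ (simplicity of each being the ``no vanishing constant'' argument applied inside the block, and $\dim Q_{\sigma,n}=n$ a direct count), and in case (iii) observe that $S=S_-\sqcup S_+$ splits \emph{as a module}, the summands being inequivalent since their $K$-type supports are disjoint.

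For (iv): given any non-degenerate simple $V$, pick $m_0$ with $V[m_0]\neq0$; the structure-constant analysis applied to $V$ itself shows its $K$-type support is a full string of fixed parity, an upper or lower half-line, or a finite segment. In each case I choose $(\varepsilon,\lambda)$ so that $\mathrm{Ind}_q(\varepsilon,\lambda)$ has $K$-types of the matching parity and $\Omega$-eigenvalue equal to that of $V$ — possible because the Casimir eigenvalue is onto $\mathbb C$ modulo $\lambda\mapsto\lambda^{-1}$ — and then Steps~(i)--(iii) together with Frobenius reciprocity for parabolic induction yield a nonzero map between $V$ and $\mathrm{Ind}_q(\varepsilon,\lambda)$, an embedding by simplicity; where classically one invokes Casselman's subrepresentation theorem, here the explicit description of the support makes it direct. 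Finally (v): a nonzero intertwiner $\mathrm{Ind}_q(\varepsilon,\lambda)\to\mathrm{Ind}_q(\varepsilon',\lambda')$ is $K$-equivariant, hence preserves $K$-types, forcing $\varepsilon=\varepsilon'$, and commutes with $\Omega$, forcing equal $\Omega$-eigenvalues, whence $\lambda'\in\{\lambda,\lambda^{-1}\}$ by the $q$-Harish-Chandra isomorphism. \textbf{Main obstacle.} The computational crux is getting the structure constants $a_m,b_m$ exactly right, since all five parts are downstream of their zero loci, together with the combinatorics in (ii) — enumerating \emph{all} $E,F$-stable subsets at the reducible parameters, pinning the direction of the composition series for $\sigma q^{\pm n}$, and matching indices and dimensions with $D^{\pm}_{\sigma,n}$, $Q_{\sigma,n}$. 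Conceptually the subtlest point is the reduction in (iv) that every simple module sits inside an induced one; for general real groups this is Casselman's theorem, but for $\mathrm{SL}(2,\mathbb R)$ it should follow from the explicit classification of possible $K$-type supports carried out in the earlier steps.
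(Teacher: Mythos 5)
Your overall strategy — reduce everything to combinatorics of $K$-type strings via the structure constants of the raising/lowering action, and settle (v) by matching $K$-types and $\Omega$-eigenvalues — is essentially the paper's strategy for parts (i), (ii), (iii), (v) (Theorems~\ref{submodules} and \ref{intertwiners}, built on Proposition~\ref{actiontransition}). Two caveats.

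First, the set-up as you describe it does not quite exist: $U_q(\mathfrak g)$ has no global elements $E,F$ satisfying $E\cdot V[m]\subseteq V[m+2]$ and $F\cdot V[m]\subseteq V[m-2]$ uniformly in $m$. Each of $X,Y,Z$ has components shifting $K$-types by $+2$, $0$, and $-2$ in proportions that depend on $m$. The paper's substitute is the family of transition operators $T_n^{\pm}, T_n\in\mathcal O_q(K\backslash U)$ from (\ref{transition}), which span $\mathrm{span}(X,Y,Z)$ for every $n$ but with $n$-dependent coefficients, and which do raise/lower exactly on the $n$-th isotypic component. Once you replace "$E,F$" by "$T_n^{\pm}$", your bookkeeping argument goes through verbatim: the structure constants $a_m,b_m$ become the explicit coefficients $\lambda q^{1\pm m}-\lambda^{-1}q^{-1\mp m}$ and their zero loci give precisely the cuts you describe, with the direction of the non-vanishing arrows across each cut flipping between $\sigma q^n$ and $\sigma q^{-n}$. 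This is a framing inaccuracy rather than a fatal error, but as you note yourself it is exactly where all the content lives.

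Second, your route to (iv) is genuinely different from the paper's and contains a gap. You invoke Frobenius reciprocity: to produce a nonzero map $V\to\mathrm{Ind}_q(\varepsilon,\lambda)$ you need, by the adjunction of Definition~\ref{induction}, a nonzero $R_q(\mathfrak p,M)$-map $\mathrm{Res}(V)\to (\chi_{\varepsilon,\lambda})_J$; that is, you must exhibit a one-dimensional quotient of $\mathrm{Res}(V)$ with the prescribed character. Knowing the $K$-type support and $\Omega$-eigenvalue of $V$ does not directly hand you such a quotient — this is exactly the nontrivial content of the classical Casselman subrepresentation theorem, which you cite as the model but do not actually replace. The paper avoids it entirely: it introduces the universal basic modules $M_{\lambda,n}=U_q(\mathfrak g)/I_{\lambda,n}$, shows every simple is the unique simple quotient of some $M_{\lambda,n}$, maps $M_{\lambda,n}$ into $\mathrm{Ind}_q(\varepsilon,\lambda)$ to get "every simple is a subquotient," and then reads off from the explicit composition series of Theorems~\ref{submodules} and~\ref{intertwiners} that every simple subquotient of some $\mathrm{Ind}_q(\varepsilon,\sigma q^{\pm n})$ also occurs as a genuine submodule of the induced module with the inverse parameter. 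That route is more elementary (no coinvariants argument), and you should either adopt it or supply the missing step producing the character quotient of $\mathrm{Res}(V)$.
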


Among these non-degenerate simple modules, we identify those which are \textit{unitarizable} thanks to the previous work of De Commer and Dzokou Talla \cite{DCDTsl2R}. As in the classical theory, they correspond up to Hilbert completion to the irreducible representations of $C^*_q(G)$.

In order to understand the classical limit $q \to 1$, we perform a change of field. Let $\mathbb{A}$ be the ring of complex-valued real analytic functions on $\mathbb{R}^*_+$ and let $\mathbb{F}$ be its field of fractions. We introduce a version $U_{\mathbb{F}}(\mathfrak{g})$ of the $q$-deformed enveloping algebra which is defined over $\mathbb{F}$. Then, inspired by Lusztig's integral form \cite{Lusztig}*{§3.1.13}, we define an $\mathbb{A}$-subalgebra $U_{\mathbb{A}}(\mathfrak{g})$ of $U_{\mathbb{F}}(\mathfrak{g})$ such that $\mathbb{F}\otimes_\mathbb{A}U_{\mathbb{A}}(\mathfrak{g}) = U_{\mathbb{F}}(\mathfrak{g})$. This $\mathbb{A}$-form specializes to $U_q(\mathfrak{g})$ at each $q \in \mathbb{R}^*_+\setminus \{1\}$ and to the classical enveloping algebra $U(\mathfrak{g})$ at $q =1$. Admitting that the non-degenerate modules over $R_q(\mathfrak{g},K)$ are the same as certain \textit{integral} $U_q(\mathfrak{g})$-modules (we refer to the fifth section), our second main result can be expressed as follows.

\begin{theoremB}
    Let $\lambda \in \mathbb{C}$ and $\varepsilon \in \{-1,1\}$. We write $\mathrm{I}({\varepsilon,\lambda})$ for the associated classical parabolically induced $(\mathfrak{g}, K)$-module. There exists an integral $U_{\mathbb{F}}(\mathfrak{g})$-module $\mathrm{I}_\mathbb{F}(\varepsilon, \lambda)$ and an $U_{\mathbb{A}}(\mathfrak{g})$-submodule of it $\mathrm{I}_\mathbb{A}(\varepsilon, \lambda)$ such that $\mathrm{I}_\mathbb{F}(\varepsilon, \lambda) = \mathbb{F}\otimes_\mathbb{A} \mathrm{I}_\mathbb{A}(\varepsilon, \lambda)$ and:
    \begin{enumerate}[label = (\arabic*)]
        \item the specialization of $\mathrm{I}_\mathbb{A}(\varepsilon, \lambda)$ at any $q\neq 1$ is $\mathrm{Ind}_q(\varepsilon, q^\lambda)$ while its specialization at $q = 1$ is $\mathrm{I}(\varepsilon,\lambda)$,
        \item for any $q$ in a suitable neighborhood of $1$, there is a natural one-one correspondence between the submodules of $\mathrm{I}_\mathbb{F}(\varepsilon, \lambda)$ and those of the specialization of $\mathrm{I}_\mathbb{A}(\varepsilon, \lambda)$ at $q$.
    \end{enumerate}
\end{theoremB}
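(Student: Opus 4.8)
The plan is to realise $\mathrm I_{\mathbb F}(\varepsilon,\lambda)$ as a ``universal'' principal series over the field $\mathbb F$, to fix an explicit $\mathbb A$-lattice inside it, and then to read off both specialisations and the whole submodule lattice from the \emph{explicit structure constants}, which are elementary analytic functions of $q$. First I would set up $U_{\mathbb F}(\mathfrak g)$ and its Lusztig-type $\mathbb A$-form $U_{\mathbb A}(\mathfrak g)$ (generated over $\mathbb A$ by $E,F$, the ``$q$-Cartan'' element, its inverse and the relevant divided powers/binomial Cartan elements), recording the property established earlier in the paper that base change $\mathbb A\to\mathbb C$ at $q_0$ gives $U_{q_0}(\mathfrak g)$ for $q_0\neq1$ and the classical enveloping $\ast$-algebra of $\mathfrak{sl}(2,\mathbb R)$ at $q_0=1$. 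Then $\mathrm I_{\mathbb F}(\varepsilon,\lambda)$ is defined by parabolic induction over $\mathbb F$ from the character of $U_{\mathbb F}(\mathfrak p)$ attached to the pair $(\varepsilon,q^{\lambda})$, so that the $AN$-part acts through $q^{\lambda}\in\mathbb F^{\times}$ and the $M$-part through $\varepsilon$. Exactly as in the classical and in the $q\neq1$ cases, this module is multiplicity-free for the ``$K$-action'': $\mathrm I_{\mathbb F}(\varepsilon,\lambda)=\bigoplus_{n\in S}\mathbb F v_n$ over an arithmetic progression $S\subseteq\mathbb Z$ fixed by $\varepsilon$, with $Ev_n=\alpha_n v_{n+2}$, $Fv_n=\beta_n v_{n-2}$, and $\alpha_n,\beta_n$ explicit $q$-number expressions in $q^{\lambda}$ and $q^{n}$; after the standard normalisation of the $v_n$ one has $\alpha_n,\beta_n\in\mathbb A$, each equal up to a unit of $\mathbb A$ to an expression of the form $(q^{m}-q^{-m})/(q-q^{-1})$ with $m$ an affine function of $n$ involving $\lambda$. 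Setting $\mathrm I_{\mathbb A}(\varepsilon,\lambda):=\bigoplus_{n\in S}\mathbb A v_n$, the explicit action shows at once that it is a $U_{\mathbb A}(\mathfrak g)$-submodule, is free over $\mathbb A$, and is a full lattice, i.e. $\mathbb F\otimes_{\mathbb A}\mathrm I_{\mathbb A}(\varepsilon,\lambda)=\mathrm I_{\mathbb F}(\varepsilon,\lambda)$.

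For part (1): specialising $\mathrm I_{\mathbb A}(\varepsilon,\lambda)$ at $q_0\neq1$ yields a free-rank-one-per-weight $U_{q_0}(\mathfrak g)$-module on which the generators act by $\alpha_n(q_0),\beta_n(q_0)$; comparing with the construction of $\mathrm{Ind}_{q_0}$ and invoking the identification of non-degenerate $R_{q_0}(\mathfrak g,K)$-modules with integral $U_{q_0}(\mathfrak g)$-modules recalled in Section~5, this is precisely $\mathrm{Ind}_{q_0}(\varepsilon,q_0^{\lambda})$, since the defining character of $\mathrm{Ind}_{q_0}$ sends the $AN$-generator to $q_0^{\lambda}$. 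At $q_0=1$ the coefficients $\alpha_n(1),\beta_n(1)$ are the removable-singularity limits of those $q$-numbers, namely the corresponding affine expressions in $n$ and $\lambda$, so the specialised action is exactly the classical action on the $(\mathfrak g,K)$-module $\mathrm I(\varepsilon,\lambda)$.

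Part (2) is the heart of the matter. Over each ring in play, every submodule of $\mathrm I_{?}(\varepsilon,\lambda)$ is a sum of weight spaces (the weight spaces are free of rank one and the Cartan separates them), hence has the form $\bigoplus_{n\in T}(\cdot)v_n$ with $T\subseteq S$, and $T$ supports a submodule precisely when it is closed under the non-vanishing raising/lowering arrows: $n\in T,\ \alpha_n\neq0\Rightarrow n+2\in T$ and $n\in T,\ \beta_n\neq0\Rightarrow n-2\in T$. Thus the submodule lattice is a purely combinatorial invariant of the vanishing pattern of $(\alpha_n,\beta_n)$ inside $S$. Over $\mathbb F$ this pattern is easy: $\alpha_n=0$ (resp. $\beta_n=0$) iff the affine quantity $m$ in its numerator is zero, which happens for at most two indices $n$, and exactly when $\lambda$ lies in the parity-constrained reducibility locus; this reproduces over $\mathbb F$ the same (at most five-element) submodule lattice described in Theorem~A. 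It remains to show that the \emph{specialised} pattern at $q_0$ agrees with the $\mathbb F$-pattern for all $q_0$ in a neighbourhood of $1$. The naive approach — specialising away from the discrete zero sets of finitely many analytic functions — is not available, because infinitely many $\alpha_n,\beta_n$ occur and their zero sets could be dense in the aggregate; the explicit $q$-number form is what saves the day. Indeed, if $\alpha_n\neq0$ in $\mathbb F$, write its numerator as $q^{m}-q^{-m}$ with $m=\operatorname{Re}m+i\operatorname{Im}\lambda\neq0$; for $q_0\neq1$ one has $\alpha_n(q_0)=0$ iff $q_0^{2m}=1$ iff $m\ln q_0\in i\pi\mathbb Z\setminus\{0\}$, which forces $\operatorname{Re}m=0$ and $|\operatorname{Im}\lambda|\,|\ln q_0|\ge\pi$, while $\alpha_n(1)=m\neq0$. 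The bound $|\operatorname{Im}\lambda|\,|\ln q_0|<\pi$ does not involve $n$, and the same estimate handles the $\beta_n$; hence on the neighbourhood $U_\lambda:=\{q_0\in\mathbb R_+^*:|\operatorname{Im}\lambda|\,|\ln q_0|<\pi\}$ of $1$, including $q_0=1$, no such accidental vanishing occurs, so the vanishing pattern — and therefore the combinatorial lattice and the submodule lattice of the specialisation — coincides with that of $\mathrm I_{\mathbb F}(\varepsilon,\lambda)$. The correspondence is then realised concretely: $N=\bigoplus_{n\in T}\mathbb F v_n\subseteq\mathrm I_{\mathbb F}(\varepsilon,\lambda)$ is sent to the specialisation at $q_0$ of the saturated lattice $N\cap\mathrm I_{\mathbb A}(\varepsilon,\lambda)=\bigoplus_{n\in T}\mathbb A v_n$, and a submodule of the specialisation, being supported on some admissible $T$, is the image of $\bigoplus_{n\in T}\mathbb F v_n$; these assignments are mutually inverse and inclusion-preserving, and they carry the named subquotients $D^{\pm}$, $Q$ and the finite-dimensional piece of $\mathrm I_{\mathbb F}$ to their counterparts, since all of these are the weight-space submodules attached to the explicit cut indices.

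\textbf{Expected main obstacle.} The substance of the argument is the uniformity in part (2): an infinite family of analytic structure constants is involved, each with its own zero set, and the point is that their very explicit $q$-number form keeps \emph{all} of these zero sets away from a fixed neighbourhood of $1$ — which is exactly why the analytic ring $\mathbb A$ (rather than, say, $\mathbb C[q,q^{-1}]_{(q-1)}$) is the right base. A secondary, more technical hurdle is the construction of $U_{\mathbb A}(\mathfrak g)$ together with the verification that $\mathrm I_{\mathbb A}(\varepsilon,\lambda)$ is genuinely $U_{\mathbb A}(\mathfrak g)$-stable and has the correct $q=1$ specialisation; this is where the Lusztig-type divided powers are needed and where one must reconcile the $q$-Cartan element $K$ with its ``logarithm'' inside $\mathbb A$.
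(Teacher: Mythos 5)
Your overall strategy matches the paper's: work in the compact picture $\bigoplus_n \mathbb{F}\boldsymbol{\zeta}_n$, take the obvious $\mathbb{A}$-lattice, read off both specializations from explicit structure constants, and observe that the $q$-number form of those constants keeps all their zero sets uniformly away from a neighborhood of $q=1$; your neighborhood $|\ln q_0|\,|\Im\lambda|<\pi$ is exactly the one in the paper. That part of the proposal is on target.

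However, there is a genuine misconception at the foundation of the argument. You describe $U_q(\mathfrak{g})$ (and hence $U_\mathbb{A}(\mathfrak{g})$) as ``generated by $E,F$, the $q$-Cartan element, its inverse and the relevant divided powers/binomial Cartan elements,'' i.e.\ as Lusztig's integral form of a Drinfeld--Jimbo algebra. But in this paper $U_q(\mathfrak{g})$ is not Drinfeld--Jimbo: it is the coideal subalgebra of a Drinfeld double generated by $\theta$ and by $X,Y,Z\in\mathcal{O}_q(K\backslash U)$, and the $\mathbb{A}$-form is the subalgebra generated by $\theta$ and the rescaled elements $x=(X-1)/(\boldsymbol{q}-\boldsymbol{q}^{-1})$, $y=(Y-1)/(\boldsymbol{q}-\boldsymbol{q}^{-1})$, $z=Z/(\boldsymbol{q}-\boldsymbol{q}^{-1})$; no divided powers or binomial Cartan elements appear. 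This is not cosmetic. In your model the raising/lowering action is carried by fixed generators $E,F$, so $\mathbb{A}$-stability of the lattice is immediate. In the actual setup there are no constant raising/lowering generators: the shift-by-two action on $\zeta_n$ is carried by the $n$-dependent transition operators $T_n^{\pm},T_n$ (linear combinations of $X,Y,Z$ with coefficients depending on $q^n$), and the lattice-stability lemma requires inverting, for each $n$, the $3\times 3$ change of basis from $(T_n^+,T_n^-,T_n)$ to $(x,y,z)$ and checking that all the resulting coefficients (including the reciprocal of the determinant $\{n-1\}_{\boldsymbol{q}}\{n\}_{\boldsymbol{q}}\{n+1\}_{\boldsymbol{q}}$) lie in $\mathbb{A}$, which uses that these $q$-integers vanish nowhere on $\mathbb{R}_+^*$. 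You flag this as a ``secondary, more technical hurdle,'' but it is in fact where the real verification lives, and the kind of work you anticipate there (divided powers, reconciling $K$ with its logarithm) is the wrong kind for this algebra. So: correct analytic insight for part (2), correct lattice and specialization picture for part (1), but a material gap in the construction of $U_\mathbb{A}(\mathfrak{g})$ and in the proof that $\mathrm{I}_\mathbb{A}(\varepsilon,\lambda)$ is $U_\mathbb{A}(\mathfrak{g})$-stable, traceable to misidentifying the algebra.
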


Let us mention that the module $\mathrm{I}_\mathbb{F}(\varepsilon, \lambda)$ above is explicitly constructed using our deformed parabolic induction.

Finally, in the course of our analysis, we establish several structural results regarding $U_q(\mathfrak{g})$, including the existence of certain PBW-type bases and the determination of its center.

\subsection*{Related works.}
Our work is inspired by earlier developments due to De Commer and Dzokou Talla \cites{DCDTsl2R, DCDTinvariant, DCinduction}, still only concerning $\mathrm{SL}(2,\mathbb{R})$. As already mentioned, these authors classified the simple unitarizable non-degenerate $R_q(\mathfrak{g},K)$-modules, which amounts to a description of the spectrum of $C^*_q(G)$. Their method is quite similar to the original approach of Bargmann \cite{BargmannSL2R} in the classical setting. They also defined in \cite{DCDTinvariant} a $q$-analogue of the regular representation. In the recent preprint \cite{DCinduction}, De Commer determined which irreducible representations of $C^*_q(G)$ are weakly contained in the regular one. Moreover, he introduced an induction procedure within the framework of Doi--Kopinen modules. We refer to the sixth section for a comparison with our own approach.

In this paper, we are no longer exclusively concerned with the operator algebraic framework. In particular, the classical limit is analyzed in a purely algebraic way using the $\mathbb{A}$-form $U_{\mathbb{A}}(\mathfrak{g})$, by contrast with previous works \cites{DCFlo1, DCFlo2, Blanchard}. In an accompanying paper \cite{YGE} though, we will draw inspiration from this algebraic treatment to construct a continuous field of $q$-deformed reduced group C*-algebras $\{ C^*_{\mathrm{red},q}(G)\}_{q \in \mathbb{R}^*_+}$ specializing to the classical reduced group C*-algebra at $q = 1$. 

If $G$ is a complex semisimple group, the representation theory of $R_q(\mathfrak{g},K)$ and $C^*_q(G)$ have already been investigated. The irreducible representations of the former have been classified by Voigt--Yuncken \cite{VoigtYuncken}*{Theorems 6.47 and 6.48}. The same authors also proved an analogue of the Plancherel formula \cite{VYplancherel}, see also \cite{BuffRoche}. Moreover, a description of a large part of the spectrum of $C^*_q(G)$ is due to Arano \cites{Arano1,Arano2}. The specific case of $\mathrm{SL}(2,\mathbb{C})$ was studied in detail by Pusz and Woronowicz \cites{PWLorentz, PWRep}.


\subsection*{Structure of the paper} In Section 1 we introduce some background material on approximately unital algebras, including induction and restriction functors. We then recall the construction of the $q$-deformed convolution algebras of $\mathrm{SL}(2,\mathbb{R})$ in Section 2. In Section 3, we construct a PBW-type basis of $U_q(\mathfrak{g})$ and use the associated filtration to determine its center. In Section 4, we construct $U_\mathbb{A}(\mathfrak{g})$, then we study its structure. Section 5 explains how the representation theory of $R_q(\mathfrak{g},K)$ is related to that of $U_q(\mathfrak{g})$ and $C^*_q(G)$. We introduce the $q$-analogue of parabolic induction in Section 6. Finally, Sections 7 and 8 are devoted to the classification of non-degenerate simple $R_q(\mathfrak{g},K)$-modules in terms of parabolic induction, as well as the study of their classical limit.

\subsection*{Acknowledgments} I am grateful to Bob Yuncken and Alexandre Afgoustidis for their support, as well as Kenny De Commer for stimulating discussions.

\section{Approximately unital algebras}

Hecke algebras belong to the larger class of approximately unital algebras. Within that framework, induction and restriction of representations are reinterpreted as simple change of ring functors (see \cite{Renard}*{Chapitre 1} and \cite{KnappVoganInd}*{Chapter II}). This section gathers elementary facts on this class of algebras that we use throughout the article. In what follows, the algebras are defined over a fixed field.

\begin{definition}
    An associative algebra $A$ is said to be approximately unital if for every finite subset $F \subset A$, there exists an idempotent $e \in A$ such that $ae =ea=a$ for all $a \in F$.
\end{definition}

For the rest of the section, we fix an approximately unital algebra $A$. There is a natural ring topology on it, which we define as the unique translation invariant topology such that the collection of all sets $$\{ a \in A: ae = ea = 0 \},$$
for $e$ an idempotent of $A$, form a base of neighborhoods of $0$. Note that this topology is discrete on every unital subalgebra of $A$.

\begin{definition} \label{completion}
    By the completion of $A$ we mean its completion with respect to the above ring topology. We denote it by $\bar{A}$.
\end{definition}

The completion of $A$ is a unital algebra which contains $A$ as an ideal. Its unit is the limit of the net of all idempotents of $A$, which is a directed set for the following order:
$$e \leq e' \Longleftrightarrow ee' = e'e = e.$$
More generally, an increasing net of idempotents of $A$ is called an approximate unit of $A$ if it converges to the unit of $\bar{A}$.

Let us now provide elementary facts about the representation theory. In this paper, we only deal with left modules.

\begin{definition}\label{defNd}
    Let $A$ be an approximately unital algebra. An $A$-module $V$ is said to be non-degenerate if $AV = V$, or equivalently if for all $v \in V$ there exists an idempotent $e \in A$ such that $ev=v$. We denote by $\mathsf{Mod}_A$ the category of non-degenerate $A$-modules with $A$-linear maps as morphisms.
\end{definition}

There exists a close relationship between $A$-modules and $\bar{A}$-modules. Let us say that a unital $\bar{A}$-module $V$ is continuous if the multiplication $A \times V \to V$ is continuous when $V$ is endowed with the discrete topology. Such a module is necessarily non-degenerate as an $A$-module. Indeed, for all $v \in V$, we must have
$$v = \lim_{i \in I} e_i v,$$
where $(e_i)_{i \in I}$ is the net of all idempotents of $A$. Conversely, a non-degenerate $A$-module $V$ is also a continuous $\bar{A}$-module in a natural way: it identifies with $\bar{A} \otimes_A V$, which is a continuous $\bar{A}$-module because so is $A$. This forms a category equivalence between $\mathsf{Mod}_A$ and the category of continuous unital $\bar{A}$-modules (whose morphisms are the $\bar A$-linear maps).

As a remark, we point out that if $A$ is unital then the topology introduced above is discrete. In that case, $A$ is already complete and $\bar{A}  = A$. Moreover, the non-degenerate $A$-modules are the same as unital $A$-modules.

We now turn to the induction and restriction functors. For any $A$-module $V$, the submodule $AV$ is non-degenerate, we call it the non-degenerate part of $V$. We consider another approximately unital algebra $B$ and we assume that $A$ is equipped with a structure of non-degenerate $B$-module such that $b.(a_1a_2) = (b.a_1)a_2$ for all elements $b \in B$, $a_1,a_2 \in A$. We say that $A$ is a non-degenerate $B$-algebra.

\begin{definition}\label{induction}
    Let $V$ be a non-degenerate $B$-module. We endow the space $\mathrm{Hom}_B(A, V)$ of $B$-linear maps $A \to V$ with the structure of $A$-module induced by right multiplication of $A$ on itself. The $A$-module induced from $V$, denoted by $\mathrm{Ind}_B^A(V)$, is defined as the non-degenerate part of $\mathrm{Hom}_B(A, V)$.
\end{definition}

Note that $\mathrm{Ind}_B^A$ defines a functor $\mathsf{Mod}_B \to \mathsf{Mod}_A$. In fact, $\mathrm{Ind}_B^A$ is the right adjoint of the restriction (or forgetful) functor $\mathrm{Res}_B^A : \mathsf{Mod}_A \to \mathsf{Mod}_B$ associating to any non-degenerate $A$-module $V$ the $B$-module $A \otimes_A V\, (= V \text{ as a vector space})$.

\begin{example}\label{exampleR}

We conclude this short section by presenting a rich source of examples of approximately unital algebras. Let us work over the field $\mathbb{C}$. If $G$ is a Lie group and $K \subset G$ a compact subgroup, the Hecke algebra $R(\mathfrak{g},K)$ of the pair $(\mathfrak{g}, K)$ is defined as the convolution algebra of $K$-finite distributions on $G$ supported on $K$. It is an approximately unital algebra, see \cite{KnappVoganInd}*{§I.3}. If $G$ is a real reductive group with Lie algebra $\mathfrak{g}$ and $K$ a maximal compact subgroup of $G$ then $\mathsf{Mod}_{R(\mathfrak{g},K)}$ is, up to equivalence, the category of $(\mathfrak{g}, K)$-modules.

Now let $P$ be a parabolic subgroup of $G$ and let $L \subset P$ be a Levi factor. We denote by $\mathfrak{p}$ and $\mathfrak{l}$ their respective Lie algebras. Let $M = K \cap P = K \cap L$. The Hecke algebra $R(\mathfrak{g}, K)$ is a non-degenerate $R(\mathfrak{p},M)$-algebra. Every $(\mathfrak{l}, M)$-module $V$ can be extended to a $R(\mathfrak{p},M)$-module by imposing that the nilpotent radical of $\mathfrak{p}$ acts trivially, then the $(\mathfrak{g}, K)$-module $$\mathrm{Ind}_{R(\mathfrak{p},M)}^{R(\mathfrak{g}, K)}(V)$$ is said to be parabolically induced from $V$. Up to a twist by a certain character, this corresponds to the infinitesimal version of parabolic induction for Hilbert representations \cite{vandenBan}*{§1--2}.
\end{example}

In Sections 2 and 6 respectively, we construct approximately unital algebras $R_q(\mathfrak{g},K)$ and $R_q(\mathfrak{p},M)$ for $G = \mathrm{SL}(2,\mathbb{R})$ and $q \in \mathbb{R}_+^*\setminus\{1\}$. The $q$-analogue of parabolic induction will then be defined in the same way as above.

\section{Construction of the deformation}

We now recall the definition of the $q$-deformations of the convolution algebras of $\mathrm{SL}(2,\mathbb{R})$ introduced in \cites{DCDTsl2R,DCquantisation}. Before doing so, we just sketch the main ideas of the construction in the general case of a real semisimple linear algebraic group, refering to \cite{DCquantisation} for a complete exposition. In the following, the universal enveloping $\ast$-algebra of a real Lie algebra $\mathfrak{g}$ refers to the universal enveloping algebra of its complexification endowed with the unique ring involution such that the elements of $\mathfrak{g}$ are skew-adjoint and whose restriction to $\mathbb{C}$ is the complex conjugation.

\subsection*{Overview of the general Case}
Let $G_\mathbb{C} \subset \mathrm{GL}(n,\mathbb{C})$ be a semisimple linear algebraic group defined over $\mathbb{R}$ which is stable under conjugate-transpose. Let $G = G_\mathbb{C} \cap \mathrm{GL}(n,\mathbb{R})$ be the associated group of real points, which we assume to be connected. We denote by $U$ the intersection of $G_\mathbb{C}$ with the special unitary group and $K = U \cap G$. Then $U$ and $K$ are maximal compact subgroups of $G_\mathbb{C}$ and $G$ respectively. We write $\mathfrak{g}$, $\mathfrak{g}_\mathbb{C}$, $\mathfrak{u}$, $\mathfrak{k}$ for the Lie algebras of these groups. Let $\theta$ be the Cartan involution on $\mathfrak{g}$ canonically associated to the compact real form $\mathfrak{u}$ of $\mathfrak{g}_\mathbb{C}$. We extend it to $\mathfrak{g}_\mathbb{C}$ by complexification.

Now let us choose a $\theta$-stable Cartan subalgebra $\mathfrak{t} \subset \mathfrak{u}$ and a certain Weyl chamber of its complexification. We denote by $\mathfrak{n}_+$ the sum of the positive root spaces of $\mathfrak{g}_\mathbb{C}$ with respect to its Cartan subalgebra $\mathfrak{t}_\mathbb{C}$ and our choice of Weyl chamber. Then $i\mathfrak{t}\oplus \mathfrak{n}_+$ is a real Lie subalgebra of $\mathfrak{g}_\mathbb{C}$. Let $B$ be the Killing form of $\mathfrak{g}_\mathbb{C}$. Its imaginary part $\mathrm{Im}(B)$ is a non-degenerate invariant bilinear form on $(\mathfrak{g}_\mathbb{C})_{|\mathbb{R}}$ (the underlying real Lie algebra of $\mathfrak{g}_\mathbb{C}$). One can show that $((\mathfrak{g}_\mathbb{C})_{|\mathbb{R}}, \mathfrak{u}, i\mathfrak{t}\oplus \mathfrak{n}_+)$ is a Manin triple \cite{EtingofSchiffman}*{§4.1} for $(\mathfrak{g}_\mathbb{C})_{|\mathbb{R}}$ endowed with $\mathrm{Im}(B)$. This automatically equips $\mathfrak{u}$ and $i\mathfrak{t}\oplus \mathfrak{n}_+$ with a structure of Lie bialgebra for which they are dual to each other. In particular, $(\mathfrak{g}_\mathbb{C})_{|\mathbb{R}}$ can be viewed as the classical Drinfeld double \cite{EtingofSchiffman}*{§4.2} of $\mathfrak{u}$. Moreover we have $\mathfrak{g} = \mathfrak{k} \oplus \mathfrak{k}^\perp$, where $\mathfrak{k}^\perp \subset \mathfrak{u}^* = i\mathfrak{t}\oplus \mathfrak{n}_+$ is the orthogonal of $\mathfrak{k}$. 

We fix a parameter $q \in \mathbb{R}_+ \setminus\{1\}$. Let $U_q(\mathfrak{u})$ be the usual quantized universal enveloping $\ast$-algebra of $\mathfrak{u}$, see for instance \cite{VoigtYuncken}*{§4.3}. Using Letzter's work \cite{Letzter99}, one can construct a coideal $\ast$-subalgebra $U_q(\mathfrak{k})$ of $U_q(\mathfrak{u})$, so that $(U_q(\mathfrak{u}), U_q(\mathfrak{k}))$ may be seen as a quantization of the symmetric pair $(\mathfrak{u}, \mathfrak{k})$. Let us also consider the quantized $\ast$-algebra of regular functions on $U$, denoted by $\mathcal{O}_q(U)$. The latter is dually paired with $U_q(\mathfrak{u})$. We define
$$\mathcal{O}_q(K\backslash U) = \{ f \in \mathcal{O}_q(U) : \forall x \in U_q(\mathfrak{k}), \,\,  f \triangleleft x = \epsilon(x)f\},$$
where $\epsilon$ is the counit of $U_q(\mathfrak{u})$ and $\triangleleft$ denotes the right regular action of $U_q(\mathfrak{u})$ on $\mathcal{O}_q(U)$.
By the quantum duality principle \citelist{ \cite{ConciniKacProcesi}*{§7.6} \cite{Song} \cite{GavariniQDP}}, $(\mathcal{O}_q(U), \mathcal{O}_q(K\backslash U))$ can be interpreted as a quantization of the pair $(\mathfrak{u}^*, \mathfrak{k}^\perp)$, at the level of universal enveloping $\ast$-algebras. Moreover, the Drinfeld double construction $D(U_q(\mathfrak{u}), \mathcal{O}_q(U))$ associated to the dual pair of Hopf algebras $(U_q(\mathfrak{u}), \mathcal{O}_q(U))$ may be regarded as a quantization of $(\mathfrak{g}_\mathbb{C})_{|\mathbb{R}} = \mathfrak{u} \oplus \mathfrak{u}^*$, see \cite{EtingofSchiffman}*{§12.2}. In this context, the subalgebra of $D(U_q(\mathfrak{u}), \mathcal{O}_q(U))$ generated by $U_q(\mathfrak{k})$ and $\mathcal{O}_q(K\backslash U)$ may be interpreted as a quantized enveloping $\ast$-algebra of $\mathfrak{g} = \mathfrak{k} \oplus \mathfrak{k}^\perp$. The reader should be warned that this is not a quantization of $\mathfrak{g}$ for a certain Lie bialgebra structure but rather a quantization of $\mathfrak{g}$ seen as a subalgebra of the Lie bialgebra $(\mathfrak{g}_\mathbb{C})_{|\mathbb{R}}$. As a reflection of the fact that $\mathfrak{g}$ is not a sub-bialgebra of $(\mathfrak{g}_\mathbb{C})_{|\mathbb{R}}$, the resulting quantization is not a Hopf subalgebra of $D(U_q(\mathfrak{u}), \mathcal{O}_q(U))$ in general, but it is still a coideal.

The analogue of the Hecke algebra is constructed in the same way, except that $U_q(\mathfrak{k})$ is replaced by an integrated version of it. In the classical context, this corresponds to replacing the distributions on $K$ supported on $\{1\}$ by the $K$-finite distributions on $K$. The quantized versions of the enveloping algebra and the Hecke algebra are naturally equipped with a $\ast$-structure. In contrast to the classical case, the $q$-analogue of the Hecke algebra admits an enveloping C*-algebra that we interpret as the quantized group C*-algebra.

\subsection*{Conventions and notations}

Antipodes, counits and coproducts will be generically denoted by $S$, $\epsilon$ and $\Delta$ respectively. We will freely use the sumless Sweedler notation. If $A$ is a unital algebra $C$ a counital coalgebra, a dual pairing between $A$ and $C$ is by definition a bilinear form $\langle \cdot\,, \cdot \rangle$ on $A \times C$ such that $\langle a\otimes b, \Delta(c)\rangle  = \langle ab, c\rangle$ and $\langle 1, c\rangle = \epsilon(c)$. Given such a dual pairing, we may equip $C$ with a structure of $A$-bimodule whose left and right actions $\triangleright$ and $\triangleleft$ are defined by
$$a \triangleright c  = c_{(1)}\langle a, c_{(2)}\rangle, \qquad c \triangleleft a  = \langle a, c_{(1)}\rangle c_{(2)}.$$
The notations $\triangleright$ and $\triangleleft$ will always be used for that purpose.

As we will consider algebras and modules defined over a field other than $\mathbb{C}$, we have decided to use a quite general class of fields for most of the paper, including this section. Let us define an involutive extension of $\mathbb{C}$ as an extension field of $\mathbb{C}$ equipped with an involutive automorphism whose restriction to $\mathbb{C}$ is the complex conjugation. For \textit{the rest of the paper}, $\mathbb{K}$ denotes any involutive extension of $\mathbb{C}$. The image of any element $\lambda$ of $\mathbb{K}$ by the involution will be denoted by $\lambda^*$. We call $\lambda$ self-adjoint if $\lambda^* = \lambda$. The notions of $\ast$-algebra, Hopf $\ast$-algebra and dual pairing of Hopf $\ast$-algebras as in \cite{KlimykSchmudgen}*{§1.2.7} extend without ambiguity to algebras defined over $\mathbb{K}$. The same goes for all the constructions of \cites{DCDTsl2R,DCquantisation} which do not involve norms. In what follows, except if it is explicitly mentioned otherwise, the tensor products are taken with respect to the field $\mathbb{K}$.

Moreover, in any ring, for any invertible element $q$ and any integer $n$, we write
$$[n]_q = q^n + q^{n-2} + ... + q^{-n}, \qquad \{n\}_q = q^n + q^{-n}.$$

Finally, for \textit{the rest of the paper}, we write: $G = \mathrm{SL}(2,\mathbb{R})$, $K = \mathrm{SO}(2)$, $U = \mathrm{SU}(2)$, $\mathfrak{g} = \mathfrak{sl}(2,\mathbb{R})$, $\mathfrak{k} =\mathfrak{so}(2)$, $\mathfrak{u} =\mathfrak{su}(2)$.

\subsection*{The quantized enveloping algebra} For the rest of the section, we fix $q \in \mathbb{K}^\times$. We assume that it admits self-adjoint square roots and we fix one of them $q^{1/2}$. We also assume that $q$ is not a root of unity. 

Let $U_q(\mathfrak{sl_2})$ be the Drinfeld--Jimbo algebra \cite{Jantzen}*{Chapter 1} associated to $\mathfrak{sl_2}$ and with deformation parameter $q$. Let $E$, $F$, $k$ and $k^{-1}$ be its canonical generators. We endow $U_q(\mathfrak{sl_2})$ with the Hopf algebra structure such that
$$\Delta(E) = E \otimes 1 + k \otimes E, \quad \Delta(F) = F \otimes k^{-1} + 1 \otimes F, \quad \Delta(k) = k \otimes k,$$
and with the $\ast$-structure determined by
$$E^* = Fk, \qquad F^* = k^{-1}E, \qquad k^* = k.$$
The resulting Hopf $\ast$-algebra is denoted by $U_q(\mathfrak{u})$.

Let $\mathcal{O}_q(\mathrm{SL}(2))$ be the Hopf algebra of matrix coefficients of type 1 finite dimensional representations of $U_q(\mathfrak{sl_2})$ (we refer to \cite{Jantzen}*{Chapter 5} for the meaning of type 1 representations). The canonical pairing between $\mathcal{O}_q(\mathrm{SL}(2))$ and $U_q(\mathfrak{sl_2})$ is a non-degenerate dual pairing of Hopf algebras, we denote it by $\langle \cdot \,,\cdot \rangle$. There is a unique $\ast$-structure on $\mathcal{O}_q(\mathrm{SL}(2))$ so that $\langle \cdot \,,\cdot \rangle$ becomes a Hopf $\ast$-algebra pairing with $U_q(\mathfrak{u})$. Equipped with that $\ast$-structure, $\mathcal{O}_q(\mathrm{SL}(2))$ is instead denoted by $\mathcal{O}_q(U)$. Another common way to describe $\mathcal{O}_q(U)$ is as follows: it is the universal unital $\ast$-algebra generated by the matrix entries of a $2 \times 2$ matrix $u = (u_{\alpha \beta})_{\alpha, \beta}$ subject to the relations
$$u^* = \mathrm{diag}(1,q)\, \mathrm{adj}(u) \,\mathrm{diag}(1,q^{-1}), \qquad u^*u = uu^* = 1,$$
$\mathrm{adj}(u)$ denoting the adjugate matrix of $u$; the coalgebra structure is expressed by $\Delta(u_{\alpha\beta}) = \sum_\gamma u_{\alpha\gamma} \otimes u_{\gamma\beta}$ and $\epsilon (u) = 1$; the pairing with $U_q(\mathfrak{u})$ is characterized by
$$\langle u, E \rangle = \begin{pmatrix}
    0 & q^{1/2}\\ 0 & 0
\end{pmatrix}, \quad \langle u, F \rangle = \begin{pmatrix}
    0 & 0 \\ q^{-1/2} & 0
\end{pmatrix}, \quad \langle u, k \rangle = \begin{pmatrix}
    q& 0 \\ 0 & q^{-1}
\end{pmatrix}.$$

One can define a left coideal $\ast$-subalgebra $U_q(\mathfrak{k}) \subset U_q(\mathfrak{u})$ and a right coideal $\ast$-subalgebra $\mathcal{O}_q(K\backslash U) \subset \mathcal{O}_q(U)$ as follows. The first one is the unital subalgebra of $U_q(\mathfrak{u})$ generated by the self-adjoint element \begin{equation}\label{theta} \theta = iq^{-1/2}(E- Fk).\end{equation} Then we put
$$\mathcal{O}_q(K\backslash U) = \{ y \in \mathcal{O}_q(U) : y \triangleleft \theta = 0\}.$$
The latter has a simple description in terms of generators and relations \cite{DCDTsl2R}*{Lemma 2.4} that we now present. Let $X,Y,Z \in \mathcal{O}_q(U)$ be defined by
\begin{equation} \label{defXYZ} u^*\begin{pmatrix}
    0 & -1 \\ 1 & 0
\end{pmatrix} u = \begin{pmatrix}
    iq^{-1}Z & -Y \\
    X & -iqZ
\end{pmatrix}. \end{equation}
Then $X,Y,Z$ belong to $\mathcal{O}_q(K\backslash U)$ and generate it as a unital algebra. They satisfy $X^*=Y, \, Z^* = Z$ and the following relations:
\begin{equation}\label{XYZ}
\begin{gathered}
    XZ = q^2ZX, \quad ZY = q^2YZ,\\
    XY + q^2Z^2 = 1, \quad YX + q^{-2}Z^2 = 1.
\end{gathered}
\end{equation}
In fact, $\mathcal{O}_q(K\backslash U)$ can be realized as the universal unital algebra generated by $X,Y,Z$ subject to the above relations.

Let $U_q((\mathfrak{g}_\mathbb{C})_{|\mathbb{R}})$ be the Drinfeld double Hopf algebra \citelist{\cite{KlimykSchmudgen}*{§8.2.1} \cite{VoigtYuncken}*{§2.2.4}} associated to the skew-pairing
$$\begin{array}{cccll} U_q(\mathfrak{u})^{\mathrm{cop}} \times \mathcal{O}_q(U) &\longrightarrow&  \mathbb{K} \\
(x,y) & \longmapsto &\langle y, x \rangle,\end{array}$$
where $U_q(\mathfrak{u})^{\mathrm{cop}}$ denotes the $\ast$-algebra $U_q(\mathfrak{u})$ endowed with the opposite coproduct. Let us equip  $U_q((\mathfrak{g}_\mathbb{C})_{|\mathbb{R}})$ with the $\ast$-structure such that the natural embeddings $U_q(\mathfrak{u})^{\mathrm{cop}} \hookrightarrow U_q((\mathfrak{g}_\mathbb{C})_{|\mathbb{R}})$ and $\mathcal{O}_q(U)\hookrightarrow U_q((\mathfrak{g}_\mathbb{C})_{|\mathbb{R}})$ are morphisms of Hopf $\ast$-algebras.

\begin{definition}
    The quantized enveloping $\ast$-algebra $U_q(\mathfrak{g})$ of $\mathfrak{g}$ is defined as the subalgebra of $U_q((\mathfrak{g}_\mathbb{C})_{|\mathbb{R}})$ generated by $U_q(\mathfrak{k})$ and $\mathcal{O}_q(K\backslash U)$.
\end{definition}

It follows from the above that $U_q(\mathfrak{g})$ is a right coideal of $U_q((\mathfrak{g}_\mathbb{C})_{|\mathbb{R}})$ and that it is $\ast$-stable. Concretely, $U_q(\mathfrak{g})$ can be realized as the universal unital $\ast$-algebra generated by copies of $U_q(\mathfrak{k})$ and $\mathcal{O}_q(K\backslash U)$ and whose elements are subject to the following exchange relations:
$$xy = y_{(1)}( x\triangleleft y_{(2)}), \qquad (y \in \mathcal{O}_q(K\backslash U), x \in U_q(\mathfrak{k})).$$
The multiplication induces linear isomorphisms $\mathcal{O}_q(K\backslash U) \otimes U_q(\mathfrak{k}) \to U_q(\mathfrak{g})$ and $U_q(\mathfrak{k}) \otimes \mathcal{O}_q(K\backslash U) \to U_q(\mathfrak{g})$, see \cite{DCDTsl2R}*{Lemma 1.4}. Moreover, one can also describe $U_q(\mathfrak{g})$ as the universal unital algebra generated by $X,Y,Z,\theta$ subject to the relations (\ref{XYZ}) and the following ones \cite{DCDTsl2R}*{Lemma 2.6}:
\begin{equation}\label{thetaXYZ}
    \begin{gathered}
        qX\theta - q^{-1}\theta X = q\theta Y - q^{-1}Y\theta = [2]_qZ, \\
        \theta Z - Z \theta = Y-X.
    \end{gathered}
\end{equation}
We point out that the $\ast$-algebra $U_q(\mathfrak{g})$ does not depend on the choice of the square root $q^{1/2}$. At the level of the generators though, replacing $q^{1/2}$ by $-q^{1/2}$ corresponds to the change of variables $\theta \mapsto - \theta,\, X \mapsto -X,\, Y \mapsto -Y,\, Z \mapsto Z$.

Let us mention that the authors of \cites{DCDTsl2R, DCDTinvariant} introduced an extra parameter $t$ in their deformation, each value of that parameter corresponding to a distinct conjugate of $\mathrm{SL}(2,\mathbb{R})$ in $\mathrm{SL}(2,\mathbb{C})$. Here, we are concerned with the case $t = 0$ corresponding to $\mathrm{SL}(2,\mathbb{R})$ itself.

\subsection*{Quantization of the Hecke algebra}
Let us now define the quantized Hecke algebra $R_q(\mathfrak{g},K)$. This will need several steps. We refer to \cite{DCquantisation} for more generality and precision regarding the upcoming constructions.

First of all, let $\mathcal{O}'_q(U^n)$ be the algebraic dual of $\mathcal{O}_q(U)^{\otimes n}$ for all $n \in \mathbb{N}$. The coalgebra structure on $\mathcal{O}_q(U)^{\otimes n}$ induces by duality a structure of unital algebra on $\mathcal{O}'_q(U^n)$. Moreover, we equip the latter with the coarsest topology such that all the elements of $\mathcal{O}_q(U)^{\otimes n}$, considered as linear functionals $\mathcal{O}'_q(U^n) \to \mathbb{K}$, are continuous when $\mathbb{K}$ is equipped with the discrete topology. This endows $\mathcal{O}'_q(U^n)$ with a structure of complete topological ring. Let us denote by $\langle \cdot\,,\cdot \rangle$ the canonical pairing $\mathcal{O}_q(U)^{\otimes n} \times \mathcal{O}'_q(U^n) \to \mathbb{K}$. The topology on $\mathcal{O}'_q(U^n)$ is precisely defined so that the closure of a linear subspace $V \subset \mathcal{O}'_q(U^n)$ is its bi-annihilator with respect to this pairing. As a corollary, we see that $U_q(\mathfrak{u})^{\otimes n}$ is dense in $\mathcal{O}'_q(U^n)$. Hence we can interpret $\mathcal{O}'_q(U^n)$ as a completion of $U_q(\mathfrak{u})^{\otimes n}$ for the above mentioned ring topology. In this way, the antipode $S : U_q(\mathfrak{u}) \to U_q(\mathfrak{u})$, the involution $\ast : U_q(\mathfrak{u}) \to U_q(\mathfrak{u})$ and the coproduct $\Delta : U_q(\mathfrak{u}) \to U_q(\mathfrak{u})^{\otimes 2}$ extend by continuity to linear maps $\mathcal{O}'_q(U) \to \mathcal{O}'_q(U)$ and $\mathcal{O}'_q(U) \to \mathcal{O}'_q(U^2)$ respectively, which we label with the same letters. With these new operations, $\mathcal{O}'_q(U)$ is almost a Hopf $\ast$-algebra: it satisfies all the axioms except that the coproduct take values in $\mathcal{O}'_q(U^2)$. In addition, the pairing $\mathcal{O}_q(U) \times \mathcal{O}'_q(U) \to \mathbb{K}$ satisfies the axioms of a pairing of Hopf $\ast$-algebras (for the product-coproduct compatibility we use the pairing $\mathcal{O}_q(U)^{\otimes 2} \times \mathcal{O}'_q(U^2) \to \mathbb{K}$). Finally, the actions $\triangleleft$ and $\triangleright$ of $\mathcal{O}_q(U)$ on $U_q(\mathfrak{u})$ extend to actions on $\mathcal{O}'_q(U)$ by continuity.

Let $\mathcal{O}'_q(K)$ and $\mathcal{O}'_q(U \times K)$ be the closures of $U_q(\mathfrak{k})$ and $U_q(\mathfrak{g}) \otimes U_q(\mathfrak{k})$ in $\mathcal{O}'_q(U)$ and $\mathcal{O}'_q(U^2)$, respectively. Since $U_q(\mathfrak{k})$ is a left coideal of $U_q(\mathfrak{g})$ we have
$$\Delta[\mathcal{O}'_q(K)] \subset \mathcal{O}'_q(U \times K).$$
A different but equivalent way of putting this is
\begin{equation}\label{generalizedcoideal} \mathcal{O}_q'(K) \triangleleft \mathcal{O}_q(U) \subset \mathcal{O}_q'(K). \end{equation}
Of course, $\mathcal{O}'_q(K)$ is also a $\ast$-subalgebra of $\mathcal{O}'_q(U)$. We define the quantized Hecke algebra of $K$ as the sum of all finite dimensional two-sided ideals of $\mathcal{O}'_q(K)$ and we denote it by $R_q(K)$. Note that $R_q(K)$ is itself a two-sided ideal of $\mathcal{O}'_q(K)$.

Let $\mathcal{O}'_q(G)$ be the universal $\ast$-algebra containing $\mathcal{O}'_q(K)$ and $\mathcal{O}_q(K\backslash U)$ as unital $\ast$-subalgebras and whose elements are subject to the following relations (this is essentially \cite{DCquantisation}*{Definition 3.1}):
\begin{equation}\label{exchange} xy = y_{(1)}( x\triangleleft y_{(2)}), \qquad (y \in \mathcal{O}_q(K\backslash U), x \in \mathcal{O}_q'(K)).\end{equation}
This expression is well defined because of the inclusion (\ref{generalizedcoideal}) and the fact that $\mathcal{O}_q(K\backslash U)$ is a right coideal of $\mathcal{O}_q(U)$. Note also that this is the same definition as $U_q(\mathfrak{g})$ except that we replaced $U_q(\mathfrak{k})$ by its completion $\mathcal{O}_q'(K)$.

\begin{definition}
    The quantized Hecke algebra of the pair $(\mathfrak{g}, K)$ is defined as the two-sided ideal of $\mathcal{O}'_q(G)$ generated by $R_q(K)$ and it is denoted by $R_q(\mathfrak{g},K)$.
\end{definition}

As observed in \cite{DCquantisation}*{§3}, $R_q(K)$ is stable by $U_q(\mathfrak{u}) \,\triangleright $ and $\triangleleft\, U_q(\mathfrak{u})$ so the multiplication map induces linear isomorphisms
\begin{equation}\label{ProduitGen} R_q(K) \otimes \mathcal{O}_q(K\backslash U) \rightarrow R_q(\mathfrak{g},K)\quad \text{and}\quad   \mathcal{O}_q(K\backslash U)\otimes R_q(K) \rightarrow R_q(\mathfrak{g},K).\end{equation}
Since $R_q(K)$ is a $\ast$-algebra, so is $R_q(\mathfrak{g},K)$.

Finally, we give a concrete description of $R_q(K)$, which will make $R_q(\mathfrak{g},K)$ easier to handle. We begin with the following fundamental result:
\begin{lemma}\label{diagtheta} Let $V$ be an irreducible $n+1$-dimensional $U_q(\mathfrak{u})$-module. Then $\theta$ (defined by (\ref{theta})) acts on $V$ as a diagonalizable operator whose set of eigenvalues is
$$\{ [m]_q : m \in \mathbb{Z}, \,|m| \leq n, n-m\,\text{even} \}.$$
\end{lemma}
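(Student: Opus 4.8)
The plan is to prove the statement by induction on $n$, using that $\theta$ is a \emph{twisted primitive} element: the formulas for $\Delta(E),\Delta(F),\Delta(k)$ give at once $\Delta(\theta)=\theta\otimes1+k\otimes\theta$, so $\theta$ acts on a tensor product of modules by this rule. The second input is the quantum Clebsch--Gordan decomposition $L(1)\otimes L(m)\cong L(m+1)\oplus L(m-1)$ for the type-$1$ simple $U_q(\mathfrak u)$-modules $L(m)$, valid because $q$ is not a root of unity; I read $L(-1)=0$, so this also records $L(1)\otimes L(0)\cong L(1)$. One may take $V=L(n)$: the other $(n{+}1)$-dimensional simple module is $L(n)$ tensored with the one-dimensional module on which $\theta$ vanishes, hence has the same $\theta$-spectrum. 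Writing $\theta_n$ for the operator $\theta$ induces on $L(n)$ and $I_n=\{-n,-n+2,\dots,n\}$, and noting that the numbers $[m]_q$ ($m\in I_n$) are pairwise distinct (since $q$ is not a root of unity, $[m]_q=[m']_q$ forces $m=m'$), it suffices to show that the characteristic polynomial of $\theta_n$ equals $\prod_{m\in I_n}(t-[m]_q)$; diagonalisability and the description of the spectrum then follow.

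The base case $n=0$ is trivial since $\theta_0=0=[0]_q$. For the inductive step, fix $n\ge1$ and assume the claim for $L(n-1)$ and $L(n-2)$. Take the standard weight basis $m_0,m_1$ of $L(1)$ with $km_0=qm_0$, $km_1=q^{-1}m_1$, so $\theta m_0=-iq^{1/2}m_1$ and $\theta m_1=iq^{-1/2}m_0$. Identifying $L(1)\otimes L(n-1)$ with $L(n-1)\oplus L(n-1)$ via $m_0\otimes\xi+m_1\otimes\eta\mapsto(\xi,\eta)$ and using $\Delta(\theta)=\theta\otimes1+k\otimes\theta$, the action of $\theta$ becomes the block operator $\left(\begin{smallmatrix}q\,\theta_{n-1}&iq^{-1/2}\\-iq^{1/2}&q^{-1}\theta_{n-1}\end{smallmatrix}\right)$. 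By hypothesis $\theta_{n-1}$ is diagonalisable with distinct eigenvalues $[\ell]_q$, $\ell\in I_{n-1}$; on the corresponding eigenline $L(n-1)_\ell$ the plane $L(n-1)_\ell\oplus L(n-1)_\ell$ is invariant, and the block operator restricts there to the $2\times2$ matrix $\left(\begin{smallmatrix}q[\ell]_q&iq^{-1/2}\\-iq^{1/2}&q^{-1}[\ell]_q\end{smallmatrix}\right)$. This matrix has trace $(q+q^{-1})[\ell]_q$ and determinant $[\ell]_q^2-1$, and the identities $[\ell+1]_q+[\ell-1]_q=(q+q^{-1})[\ell]_q$ and $[\ell+1]_q[\ell-1]_q=[\ell]_q^2-1$ show its (distinct) eigenvalues are $[\ell+1]_q$ and $[\ell-1]_q$. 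Hence the characteristic polynomial of $\theta$ on $L(1)\otimes L(n-1)$ is $\prod_{\ell\in I_{n-1}}(t-[\ell+1]_q)(t-[\ell-1]_q)$.

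Finally I match this with $L(1)\otimes L(n-1)\cong L(n)\oplus L(n-2)$. Because $I_{n-1}+1=I_n\setminus\{-n\}$, $I_{n-1}-1=I_n\setminus\{n\}$ and $I_n\setminus\{-n,n\}=I_{n-2}$, the polynomial just found equals $\big(\prod_{\ell\in I_{n-2}}(t-[\ell]_q)\big)^2(t-[n]_q)(t-[-n]_q)$. Dividing by the characteristic polynomial $\prod_{\ell\in I_{n-2}}(t-[\ell]_q)$ of $\theta_{n-2}$ leaves $\prod_{m\in I_n}(t-[m]_q)$ for $\theta_n$, which is exactly the assertion; this closes the induction.

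I do not expect a genuine obstacle here; the one subtle point is that the induction must run over two steps — separating the eigenvalue $[m]_q$ with $|m|<n$ from the copy contributed by $L(n-2)$ inside $L(1)\otimes L(n-1)$ really does require the statement for $L(n-2)$, not only for $L(n-1)$. A more hands-on alternative would be to produce eigenvectors of $\theta_n$ in the standard basis of $L(n)$ directly; their coordinates satisfy a three-term recurrence and are, up to normalisation, dual $q$-Krawtchouk polynomials, but verifying the truncation condition then needs $q$-hypergeometric identities that the argument above avoids.
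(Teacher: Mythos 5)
Your argument is correct, and it follows a genuinely different path from the paper's own proof.

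The paper proceeds by directly diagonalising $\theta$ on the standard weight basis of $V$: it sets up the resulting three-term recurrence, recognises the coefficient vectors as dual $q$-Krawtchouk polynomials, and transfers the orthogonality/truncation relations from $q\in(0,1)$ to the general coefficient ring by a polynomial-identity argument over the localisation $A$ of $\mathbb C[s]$. This produces the spectrum together with explicit eigenvectors, at the price of handling $q$-hypergeometric identities over a general ring. Your argument instead exploits two soft structural facts — that $\theta$ is twisted primitive, $\Delta(\theta)=\theta\otimes 1+k\otimes\theta$, and the quantum Clebsch--Gordan rule $L(1)\otimes L(n-1)\cong L(n)\oplus L(n-2)$ — and runs a two-step induction on $n$, computing the $2\times2$ blocks of $\theta$ on $L(1)\otimes L(n-1)$ and peeling off the $L(n-2)$ factor from the characteristic polynomial. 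The $2\times2$ trace and determinant identities check out with the standard quantum integers $[m]_q=(q^m-q^{-m})/(q-q^{-1})$ (which is what the paper actually uses despite the slightly garbled display in its conventions paragraph), and the needed pairwise-distinctness of $[m]_q$ for $m$ in a fixed congruence class is exactly what ``$q$ not a root of unity'' buys. You also correctly anticipate (and dispose of) the observation that the type-$(-1)$ module of the same dimension is $L(n)$ twisted by the one-dimensional module on which $\theta$ acts by $0$, and the convention $L(-1)=0$ closes the $n=1$ step cleanly.

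Both proofs tacitly restrict to modules with a standard weight basis (type $\pm 1$), so they carry the same implicit hypothesis. What your approach buys is uniformity over the base field for free and independence from orthogonal-polynomial machinery; what it gives up is the explicit eigenvectors, which the paper does not in fact reuse elsewhere. Your closing remark that the hands-on alternative leads to dual $q$-Krawtchouk polynomials is precisely a description of the route the paper takes.
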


This result is already known for $\mathbb{K} = \mathbb{C}$ and $q \in (0,1)$, see \citelist{\cite{DCDTsl2R}*{Theorem 2.1}\cite{Koo}*{Theorem 4.3}}.
Here we adapt the proof to more general fields than $\mathbb{C}$.

\begin{proof} Before actually proving the lemma, let us make some preparations. First, let $s$ be an indeterminate and let $A$ be the localization of $\mathbb{C}[s]$ by the multiplicatively closed subset generated by $s$ and $s^j-1$ for all $j\in\mathbb{N}$. Consider the sequence $(P_j)_{0\leq j \leq n}$ of polynomials in one variable $t$ and with coefficients in $A$ defined by
$$t P_j = (1-s^{j-n})P_{j+1} - (1-s^j)s^{-n}P_{j-1} \quad (0 \leq j\leq n-1), \qquad P_{0} = 1,$$
with the convention $P_{-1} = 0$. Note that $P_j$ is of degree $j$ and has the same parity as $j$ for all $j = 0,1,...,n$. Consider also the following quadratic form on $A[t]$:
\begin{gather*}(f,g) = \frac{1}{(-1;s)_{n+1}} \sum _{j=0}^nf(t_j)g(t_j)w_j,\\ w_j = \frac{(s^j+s^{n-j})(s^2;s^2)_n}{(s^2;s^2)_j(s^2;s^2)_{n-j}}, \quad t_j = s^{-j}-s^{j-n},\end{gather*}
where we used the Pochhammer symbol $(a;b)_r = \prod_{j=0}^r (1-ab^j)$. We have
$$(P_j,P_k) = \delta_{j,k}(-s^{-n})^j\frac{(s;s)_j}{(s^{-n};s)_j} \qquad (0\leq j,k\leq n).$$
Indeed, the evaluation of these relations at values of $s$ in $(0,1)$ are just the orthogonality relations of the dual $q$-Krawtchouk polynomials, see \cite{qOrthogonal}*{§14.17}.
If $\alpha_n$ is the leading coefficient of $P_n$ then let us write $Q = \alpha_n\prod_{j=0}^n(t-t_j)$. Since $tP_n-Q$ has degree strictly less than $n$ and $Q$ is in the kernel of the above quadratic form, we have
$$tP_n = Q + \sum_{j=0}^n\frac{(tP_n, P_j)}{(P_j,P_j)}P_j.$$
If $j < n-1$ then $tP_j$ has degree less than $n$, so $(tP_n,P_j) = (P_n,tP_j) = 0$. Moreover, since $t_{n-j} = -t_j$ and $w_j = w_{n-j}$ for all $j$, the fact that $tP_n$ and $P_n$ have opposite parity implies that $(tP_n,P_n) = 0$. We also have
\begin{align*}
    (tP_n, P_{n-1}) &= (P_n, (1-s^{-1})P_n + (1-s^{n-1})s^{-n}P_{n-2})\\
    &= (1-q^{-1})(P_n,P_n) = (1-s^{-n})(P_{n-1},P_{n-1}).
\end{align*}
We deduce from these facts that $tP_n = Q + (1-s^{-n})P_{n-1}$.

Now let us prove lemma. The evaluation $s= q^2$ induces a ring morphism $A \to \mathbb{K}$. Let $(\tilde P_j)_{0\leq j \leq n}$ and $\tilde Q$ be the polynomials over $\mathbb{K}$ obtained from $(P_j)_{0\leq j \leq n}$ and $Q$ by pushforward with respect to that morphism. Let $(v_j)_{0\leq j\leq n}$ be a basis of $V$ such that $Ev_j = [j+1]_qv_{j+1}, Fv_j = [n-j+1]_q v_{j-1}, Kv_j = q^{2j -n}v_j$, with the convention $v_{-1} = v_{n+1} =0$. Let $\lambda \in \mathbb{K}$ and let $w = \sum_jw^jv_j$ be a non-zero element of $V$. A direct computation shows that $w$ is an eigenvector of $\theta$ with eigenvalue $q^n(q-q^{-1})^{-1}\lambda$ if and only if
$$\lambda \tilde{w}^j = (1-q^{2j-2n})\tilde{w}^{j+1} - (1-q^{2j})q^{-2n}\tilde{w}^{j-1} \qquad (0 \leq j \leq n),$$
where $\tilde{w}_j = (-i)^jq^{j^2/2 +(1-n)j}w^j$ if $0 \leq j \leq n$ and $\tilde{w}^j = 0$ otherwise. Our preliminary analysis shows that this condition is equivalent to the following one:
$$\begin{cases}
    \tilde{w}^j = \tilde P_j(\lambda)\tilde{w}^0 \quad (0\leq j \leq n)\\
    \tilde{Q}(\lambda)\tilde{w}^0 =0,
\end{cases}$$
Thus, we see that $q^n(q-q^{-1})^{-1}\lambda$ is an eigenvalue of $\theta$ if and only if $\lambda$ is a root of $\tilde{Q}$, equivalently if it is of the form $q^{-2j} - q^{2j-2n}$ for $0\leq j \leq n$. In conclusion, the action of $\theta$ on $V$ is diagonalizable with set of eigenvalues
$\{[n-2j]_q : j = 0,...,n \}.$
\end{proof}

Thus, for each $n \in \mathbb{Z}$, the generic spectral projection of $\theta$ onto its $[n]_q$-eigenspace is a well-defined self-adjoint idempotent of $\mathcal{O}'(K)$; we denote it by $e_n$. In addition, for every family of scalars $(\lambda_n)_{n \in \mathbb{Z}}$, the sum $\sum_{n \in \mathbb{Z}} \lambda_ne_n$
is convergent in $\mathcal{O}'(K)$. One readily sees that every element of $\mathcal{O}'(K)$ is of that form and that $R_q(K)$ is the linear span of the family $(e_n)_{n \in \mathbb{Z}}$.

In particular, $R_q(K)$ is an approximately unital algebra. From (\ref{ProduitGen}), we infer that $R_q(G)$ is also an approximately unital algebra. Moreover, any approximate unit of $R_q(K)$ is an approximate unit of $R_q(G)$. An example of such an approximate unit is $(e_{(m)} = \sum_{|n|\leq m}e_n)_{m\in \mathbb{N}}$.

\begin{remark}\label{completionR} Let us endow $\mathcal{O}_q'(G)$ with the ring topology such that $$[(1-e_{(m)})\mathcal{O}_q'(G)(1-e_{(m)})]_{m \in \mathbb{N}}$$ is a neighborhood basis for $0$. The restriction of this topology to $R_q(\mathfrak{g},K)$ and $R_q(K)$ coincides with the topology of approximately unital algebra in both cases. In addition, the dual topology on $\mathcal{O}'_q(K)$ introduced at the beginning of the current section coincides with the subset topology for the inclusion $\mathcal{O}'_q(K) \subset \mathcal{O}_q'(G)$. In particular, $\mathcal{O}'_q(K)$ is the completion of $R_q(K)$ in the sense of Definition \ref{completion}. By contrast, $\mathcal{O}_q'(G)$ is only a dense subring of the completion of $R_q(\mathfrak{g},K)$. \end{remark}

\subsection*{Completion into a C*-algebra} Let $\mathbb{K} = \mathbb{C}$. Unlike the classical situation, the quantized Hecke algebra $R_q(\mathfrak{g},K)$ has a universal C*-completion. We denote the latter by $C^*_q(G)$ and interpret it as a $q$-analogue of the maximal group C*-algebra of $G$. In fact, $R_q(\mathfrak{g},K)$ satisfies a stronger property, introduced in \cite{DCquantisation}*{Definition 2.3} under the name \textit{strong uniform C*-boundedness}. Namely, any pre-Hilbert space $\ast$-representation of $R_q(\mathfrak{g},K)$ is bounded and therefore can be completed into a Hilbert $\ast$-representation of $C^*_q(G)$. We refer to \cite{DCquantisation}*{Propositions 2.14 and 3.5} for a proof.

\section{The center of the quantized enveloping algebra}

In this section, we explicitly determine the center of $U_q(\mathfrak{g})$, more precisely we prove that it is generated as an algebra by a single element. We use this to prove an analogue of the Harish-Chandra isomorphism which will motivate the definition of the infinitesimal character for $R_q(\mathfrak{g},K)$-modules.

We use the same conventions as in the previous section. Recall the definition of the generators $X$, $Y$, $Z$ and $\theta$ of $U_q(\mathfrak{g})$. Throughout this section, for every $n \in \mathbb{Z}$, we write
$$W^{(n)} = \begin{cases}
    Y^n \quad \text{if } n \geq 0 \\
    X^{-n} \quad \text{otherwise}.
\end{cases}$$

\begin{proposition} \label{structureAN}
    For every $(n,m) \in \mathbb{Z}^2$ with $m \geq 0$, let $b_{n,m} = W^{(n)}Z^m$.
    For each $r \in \mathbb{Z}_{\geq 0}$, let $\mathcal{O}_q(K \backslash U)^{(r)} = \mathrm{span}(b_{n,m} : n + |m| \leq r)$. Then $(b_{n,m})_{n \geq 0, m \in \mathbb{Z}}$ is a basis of $\mathcal{O}_q(K \backslash U)$ and $(\mathcal{O}_q(K \backslash U)^{(r)})_{r \geq 0}$ is a filtration of the algebra $\mathcal{O}_q(K \backslash U)$.
\end{proposition}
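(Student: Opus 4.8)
The plan is to use the relations (\ref{XYZ}) to put any word in $X,Y,Z$ into the normal form $W^{(n)}Z^m$, then check that these normal forms are linearly independent by exploiting the known presentation of $\mathcal{O}_q(K\backslash U)$ as the universal algebra on those relations, and finally verify that the span of the $b_{n,m}$ with $n+|m|\le r$ is closed under multiplication.

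First I would establish that $\{b_{n,m}\}$ spans. The relations $XZ=q^2ZX$ and $ZY=q^2YZ$ let one move every $Z$ to the right of any $X$ and to the left of any $Y$ (at the cost of powers of $q^2$), while $XY+q^2Z^2=1$ and $YX+q^{-2}Z^2=1$ let one eliminate any adjacent $XY$ or $YX$ pair in favour of a scalar and a $Z^2$. A straightforward induction on the length of a word — with the length-decreasing step being the elimination of an $XY$ or $YX$ subword — shows that every monomial in $X,Y,Z$ is a $\mathbb{K}$-linear combination of the $b_{n,m}$; note here that $W^{(n)}$ for $n\le 0$ is a power of $X$, consistent with the indexing $n+|m|$ since $|n|$ is what matters. (The proposition states $(b_{n,m})_{n\ge 0,\,m\in\mathbb Z}$, but since $Z^\ast=Z$ and $m\ge0$ in $b_{n,m}=W^{(n)}Z^m$ the correct index set is $n\in\mathbb Z,\ m\ge 0$; I would follow the convention of the filtration, where $n+|m|\le r$ with $m\ge 0$.)

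For linear independence, the cleanest route is representation-theoretic: it suffices to exhibit enough $\mathcal{O}_q(K\backslash U)$-modules to separate the $b_{n,m}$, or equivalently to use the fact (quoted in the excerpt) that $\mathcal{O}_q(K\backslash U)$ \emph{is} the universal unital algebra on generators $X,Y,Z$ subject to (\ref{XYZ}), together with a PBW/diamond-lemma argument: one orders monomials by (total degree, then say degree in $X$ minus degree in $Y$) and checks that the two overlap ambiguities — the triple products $X\cdot Y\cdot X$ resolved via $XY$ then via $YX$, and $X\cdot X\cdot Y$ etc., together with the $Z$-commutation overlaps $X\cdot Z$ with $X\cdot Y$ — all resolve consistently. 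By Bergman's diamond lemma this yields that the irreducible monomials, which are exactly the $b_{n,m}$, form a basis. I would present this as a short diamond-lemma verification rather than grinding all cases.

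Finally, for the filtration claim I must check $\mathcal{O}_q(K\backslash U)^{(r)}\cdot\mathcal{O}_q(K\backslash U)^{(s)}\subseteq \mathcal{O}_q(K\backslash U)^{(r+s)}$. Multiplying $b_{n,m}b_{n',m'}=W^{(n)}Z^m W^{(n')}Z^{m'}$, moving the $Z^m$ past $W^{(n')}$ produces only a scalar times $W^{(n)}W^{(n')}Z^{m+m'}$; then reducing $W^{(n)}W^{(n')}$ to normal form via the $XY+q^2Z^2=1$ relations replaces each cancelled $X$–$Y$ pair by a scalar plus $Z^2$, which strictly decreases the total degree contribution from $|n|+|n'|$ by $2$ while increasing the $Z$-exponent by $2$ — so the total $n+|m|$ grading is non-increasing at each such step and bounded by $|n|+|n'|+m+m'\le r+s$. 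Hence the product lies in degree $\le r+s$. The main obstacle is the bookkeeping in the linear independence step: verifying that no spurious relations are hidden, i.e. that the diamond-lemma ambiguities genuinely resolve; once that is in hand the spanning and filtration parts are routine rewriting.
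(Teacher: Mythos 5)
Your argument is correct, and the spanning and filtration parts run along essentially the same lines as the paper (inductive rewriting using the $q$-commutation and the relations $XY+q^2Z^2=1=YX+q^{-2}Z^2$). The genuine difference is in the linear independence step. The paper does not invoke the diamond lemma: it constructs, explicitly and in one stroke, a single $\mathcal{O}_q(K\backslash U)$-module $V$ with basis $(v_{n,m})_{n\in\mathbb{Z},\,m\ge 0}$ on which $X,Y,Z$ act by the same recursion formulas that express $X b_{n,m}$, $Y b_{n,m}$, $Z b_{n,m}$ in terms of the $b$'s; one checks the relations (\ref{XYZ}) hold for these operators, so by universality $V$ becomes a module, and then $b_{n,m}v_{0,0}=v_{n,m}$ immediately separates the $b_{n,m}$. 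Those same formulas are also what the paper uses to show the span of the $b_{n,m}$ is a left ideal (hence all of $\mathcal{O}_q(K\backslash U)$) and to get the filtration inclusion $\mathcal{O}_q(K\backslash U)^{(1)}\mathcal{O}_q(K\backslash U)^{(r)}\subset\mathcal{O}_q(K\backslash U)^{(r+1)}$, so the one computation drives all three claims. Your diamond-lemma route also works (the four overlaps $ZXY$, $ZYX$, $XYX$, $YXY$ do resolve), and it is conceptually tidy, but it delegates the key step to a general theorem and still requires verifying those ambiguities, whereas the paper's explicit module is entirely self-contained and in practice no longer. You are also right that the index set in the statement is typeset with $n$ and $m$ swapped: it should read $(b_{n,m})_{n\in\mathbb{Z},\,m\ge 0}$ and the filtration condition should be $|n|+m\le r$, which is what both you and the paper actually use.
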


\begin{proof}
    From (\ref{XYZ}) we derive the following relations:
    \begin{equation}\label{actionb}
        \begin{gathered}
            Xb_{n,m} = \begin{cases}
                b_{n-1,m} - q^{4n-2} b_{n-1,m+2}\quad \text{if }n>0\\
                b_{n-1,m}  \quad \text{else},
            \end{cases}\\
            Y b_{n,m} = \begin{cases}
                b_{n+1,m} - q^{4n-2} b_{n+1,m+2}\quad \text{if }n < 0\\
                b_{n+1,m}  \quad \text{else},
            \end{cases}\\
            Z b_{n,m} = q^{2n}b_{n,m+1}.
        \end{gathered}
    \end{equation}
    This shows that the linear span of $(b_{n,m})_{n,m}$ is a left ideal of $\mathcal{O}_q(K \backslash U)$, hence all of $\mathcal{O}_q(K \backslash U)$ because $b_{0,0} = 1$. This also proves that $\mathcal{O}_q(K \backslash U)^{(1)} \mathcal{O}_q(K \backslash U)^{(r)} \subset \mathcal{O}_q(K \backslash U)^{(r+1)}$ for all $r \in \mathbb{Z}_{\geq0}$. Since
    $$\mathcal{O}_q(K \backslash U)^{(r')} = \mathrm{span}\left[\mathcal{O}_q(K \backslash U)^{(1)}\mathcal{O}_q(K \backslash U)^{(r'-1)} \right]$$
    for all $r' \in \mathbb{N}$, a easy inductive argument shows that $(\mathcal{O}_q(K \backslash U)^{(r)})_{r \geq 0}$ is a filtration of the algebra $\mathcal{O}_q(K \backslash U)$.

    Finally, let us prove that the elements $b_{n,m}$ are linearly independent. Let $V$ be a vector space having a basis $(v_{n,m})_{n,m}$ indexed by $\mathbb{Z} \times \mathbb{Z}_{\geq 0}$. We let $X, Y, Z$ act linearly on $V$ via (\ref{actionb}), just replacing $b_{n,m}$ by $v_{n,m}$. These actions satisfy (\ref{XYZ}), so they extend by universality to a unique structure of $\mathcal{O}_q(K \backslash U)$-module on $V$. Then we have $b_{n,m}v_{0,0} = v_{n,m}$, which proves that the $b_{n,m}$ are indeed linearly independent.
\end{proof}

\begin{corollary}\label{filtrationUq}
    The family  $(\theta^{m_1}W^{(n)}Z^{m_2})_{m_1,m_2 \in \mathbb{Z}_{\geq 0}}^{n \in \mathbb{Z}}$ is a basis of $U_q(\mathfrak{g})$. For each $r \in \mathbb{Z}_{\geq 0}$, let us write
    $$U_q(\mathfrak{g})^{(r)} = \sum_{n \leq r}\theta^n \mathcal{O}_q(K\backslash U)^{(r-n)}.$$
    Then $(U_q(\mathfrak{g})^{(r)})_{r\geq 0}$ is a filtration of the algebra $U_q(\mathfrak{g})$.
\end{corollary}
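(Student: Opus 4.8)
The plan is to deduce both assertions from Proposition~\ref{structureAN}, from the multiplication isomorphism $U_q(\mathfrak{k})\otimes\mathcal{O}_q(K\backslash U)\xrightarrow{\ \sim\ }U_q(\mathfrak{g})$ recalled after the definition of $U_q(\mathfrak{g})$, and from the exchange relations~(\ref{thetaXYZ}). For the basis assertion, the first step is to check that $(\theta^{m})_{m\geq 0}$ is a $\mathbb{K}$-basis of $U_q(\mathfrak{k})$. These powers span $U_q(\mathfrak{k})$ by definition, so the only point is their linear independence: a relation $\sum_{m}c_{m}\theta^{m}=0$ would, after acting on the irreducible $(n+1)$-dimensional $U_q(\mathfrak{u})$-module and restricting to $U_q(\mathfrak{k})$, force the polynomial $\sum_{m}c_{m}t^{m}$ to vanish at $[n]_q$ for every $n\in\mathbb{N}$ by Lemma~\ref{diagtheta}; since $[n]_q=(q^{n+1}-q^{-n-1})/(q-q^{-1})$ and $q$ is not a root of unity, the scalars $[n]_q$, $n\geq 0$, are pairwise distinct, so a polynomial vanishing at all of them is zero. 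Transporting the tensor-product basis $\theta^{m_1}\otimes b_{n,m_2}$ along the multiplication isomorphism then yields the basis $(\theta^{m_1}W^{(n)}Z^{m_2})$.

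For the filtration assertion, the plan is to compare $(U_q(\mathfrak{g})^{(r)})_{r\geq 0}$ with the word-length filtration $(F^{(r)})_{r\geq 0}$ attached to the generating set $\{\theta,X,Y,Z\}$, where $F^{(r)}$ is the linear span of all products of at most $r$ of these four generators. Since $F^{(\bullet)}$ is tautologically an increasing, exhaustive (the four elements generate $U_q(\mathfrak{g})$) and multiplicative filtration, it suffices to prove $U_q(\mathfrak{g})^{(r)}=F^{(r)}$ for all $r$. Unwinding the definition of $U_q(\mathfrak{g})^{(r)}$ together with that of the filtration of $\mathcal{O}_q(K\backslash U)$ in Proposition~\ref{structureAN}, one sees that $U_q(\mathfrak{g})^{(r)}$ is exactly the span of the basis vectors $\theta^{m_1}W^{(n)}Z^{m_2}$ with $m_1+|n|+m_2\leq r$; each of these is literally a word of length $m_1+|n|+m_2$ in the generators, whence $U_q(\mathfrak{g})^{(r)}\subseteq F^{(r)}$ with no work.

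The content lies in the reverse inclusion $F^{(r)}\subseteq U_q(\mathfrak{g})^{(r)}$. By induction on word length it reduces to the claim that $g\cdot U_q(\mathfrak{g})^{(r)}\subseteq U_q(\mathfrak{g})^{(r+1)}$ for each $g\in\{\theta,X,Y,Z\}$ and each $r$. The case $g=\theta$ is immediate from the definition of $U_q(\mathfrak{g})^{(r)}$. For $g\in\{X,Y,Z\}$ one first proves, by a separate induction on $c$ using the three identities $X\theta=q^{-2}\theta X+q^{-1}[2]_qZ$, $Y\theta=q^{2}\theta Y-q[2]_qZ$ and $Z\theta=\theta Z+X-Y$ obtained by rearranging~(\ref{thetaXYZ}), that $g\theta^{c}\in U_q(\mathfrak{g})^{(c+1)}$; one then writes a typical summand of $U_q(\mathfrak{g})^{(r)}$ as $\theta^{a}y$ with $y\in\mathcal{O}_q(K\backslash U)^{(r-a)}$ and $0\leq a\leq r$, computes $g\theta^{a}y=(g\theta^{a})y$, and invokes the inclusion $U_q(\mathfrak{g})^{(a+1)}\cdot\mathcal{O}_q(K\backslash U)^{(r-a)}\subseteq U_q(\mathfrak{g})^{(r+1)}$, which follows immediately from the multiplicativity of the filtration $(\mathcal{O}_q(K\backslash U)^{(\bullet)})$ recorded in Proposition~\ref{structureAN}.

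The main obstacle is thus not conceptual but organizational: it is the nested induction that pushes powers of $\theta$ past $X$, $Y$ and $Z$ while keeping track of the filtration degree. What makes it work is that every defining relation in~(\ref{XYZ}) and~(\ref{thetaXYZ}) is \emph{filtered}, in the sense that once all top-length terms are moved to one side, the other side is a combination of words strictly shorter than its length-two counterpart; it is this feature, together with the explicit normal form of Proposition~\ref{structureAN}, that forces the two filtrations to coincide. I would carry out the degree bookkeeping in a few lines of explicit computation and treat the rest as routine.
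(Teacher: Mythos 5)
Your proof is correct, and it takes a genuinely different organizational route from the paper's, even though both arguments ultimately turn on the same two ingredients (Proposition~\ref{structureAN} and the exchange relations~(\ref{thetaXYZ})). The paper proves the sharper inclusion $\mathcal{O}_q(K\backslash U)^{(r)}\,\theta^{n}\subset\sum_{0\leq m\leq n}\theta^{m}\mathcal{O}_q(K\backslash U)^{(r)}$ by a double induction on $(r,n)$, which controls not only the total degree but also the $\theta$-degree separately; the filtration axiom then follows in one line. You instead prove the weaker statement $g\theta^{c}\in U_q(\mathfrak{g})^{(c+1)}$ for $g\in\{X,Y,Z\}$ by a single induction on $c$, then recover the general inclusion $U_q(\mathfrak{g})^{(r)}U_q(\mathfrak{g})^{(s)}\subseteq U_q(\mathfrak{g})^{(r+s)}$ by bootstrapping through the auxiliary identification $U_q(\mathfrak{g})^{(r)}=F^{(r)}$ with the word-length filtration. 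What this buys you is conceptual clarity (a filtration defined by generators is automatically multiplicative and exhaustive, so one only needs to check equality with the declared filtration degree by degree), at the cost of having to introduce $F^{(\bullet)}$ and run one extra inclusion $U_q(\mathfrak{g})^{(a+1)}\cdot\mathcal{O}_q(K\backslash U)^{(r-a)}\subseteq U_q(\mathfrak{g})^{(r+1)}$. You also go beyond the paper in one small respect: you explicitly justify the linear independence of $(\theta^m)_{m\geq 0}$ in $U_q(\mathfrak{k})$ via Lemma~\ref{diagtheta} and the injectivity of the eigenvalue assignment $n\mapsto[n]_q$ for $q$ not a root of unity, a point the paper's terse proof silently absorbs into the cited multiplication isomorphism $U_q(\mathfrak{k})\otimes\mathcal{O}_q(K\backslash U)\to U_q(\mathfrak{g})$. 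One minor presentational caveat: the three inductions for $X$, $Y$, $Z$ in your step $g\theta^{c}\in U_q(\mathfrak{g})^{(c+1)}$ are mutually entangled (the relation $Z\theta=\theta Z+X-Y$ invokes the hypothesis for $X$ and $Y$), so they must be carried out as a single simultaneous induction on $c$; this is surely what you intend, but the phrase ``a separate induction'' could be misread.
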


\begin{proof}
    The first claim follows from the previous proposition and the fact that the multiplication induces a linear isomorphism $U_q(\mathfrak{k}) \otimes \mathcal{O}_q(K\backslash U) \to U_q(\mathfrak{g})$. Regarding the second assertion, an induction on $r,n \geq 0$ using relations (\ref{thetaXYZ}) shows that
    $$\mathcal{O}_q(K\backslash U)^{(r)} \theta^n \subset \sum_{0\leq m\leq n}\theta^m \mathcal{O}_q(K\backslash U)^{(r)}.$$
    Together with the last proposition, this proves that $(U_q(\mathfrak{g})^{(r)})_{r\geq 0}$ is a filtration of the algebra $U_q(\mathfrak{g})$.
\end{proof}

We can now describe explicitly the center $\mathcal{Z}_q(\mathfrak{g})$ of $U_q(\mathfrak{g})$.

\begin{theorem} \label{center} The center of $U_q(\mathfrak{g})$ is the unital subalgebra generated by the element
$$\Omega = qX + q^{-1}Y + (q-q^{-1})\theta Z.$$
\end{theorem}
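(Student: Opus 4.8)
The strategy is to use the PBW basis and filtration from Corollary \ref{filtrationUq} to reduce the computation of the center to a tractable linear-algebra problem on the associated graded algebra. First I would verify directly that $\Omega$ is central: since $U_q(\mathfrak{g})$ is generated by $X,Y,Z,\theta$ (subject to \eqref{XYZ} and \eqref{thetaXYZ}), it suffices to check that $\Omega$ commutes with each of these four generators, which is a finite computation using the two families of relations. This establishes that the unital subalgebra $\mathbb{K}[\Omega]$ is contained in $\mathcal{Z}_q(\mathfrak{g})$; note also that from the basis $(\theta^{m_1}W^{(n)}Z^{m_2})$ one sees $\Omega$ is not algebraic over $\mathbb{K}$, so $\mathbb{K}[\Omega]$ is a polynomial algebra and the content of the theorem is the reverse inclusion.

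For the reverse inclusion, let $u$ be a central element and consider its leading term with respect to the filtration $(U_q(\mathfrak{g})^{(r)})_{r \geq 0}$. The associated graded algebra $\operatorname{gr} U_q(\mathfrak{g})$ is, by Corollary \ref{filtrationUq}, a $\mathbb{K}$-algebra with PBW basis given by the symbols of $\theta^{m_1}W^{(n)}Z^{m_2}$, and its defining relations are the ``top-degree parts'' of \eqref{XYZ} and \eqref{thetaXYZ}: the symbols of $X,Y,Z,\theta$ will $q$-commute with each other up to scalars (the inhomogeneous lower-order terms like $XY + q^2 Z^2 = 1$ drop to $XY = -q^2 Z^2$, i.e. $\bar X\bar Y = -q^2\bar Z^2$, and $qX\theta - q^{-1}\theta X = [2]_q Z$ becomes $q\bar X\bar\theta = q^{-1}\bar\theta\bar X$ since $\bar Z$ has lower degree than $\bar X\bar\theta$, etc.). Because the leading symbol of a central element must be central in $\operatorname{gr} U_q(\mathfrak{g})$, I would classify the center of this (essentially a quantum affine space / skew polynomial ring in the symbols $\bar\theta, \bar X$ or $\bar Y, \bar Z$) and show it is spanned by powers of the symbol of $\Omega$ — here one checks that the symbol of $\Omega$ is $q\bar X + q^{-1}\bar Y + (q-q^{-1})\bar\theta\bar Z$ (all three terms having the same filtration degree) and that, since $q$ is not a root of unity, the only central elements of the quantum-affine-space-type graded algebra are polynomials in this symbol. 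Then subtract an appropriate scalar multiple of a power of $\Omega$ to strictly lower the filtration degree of $u$, and induct.

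The main obstacle I anticipate is the middle step: pinning down $\operatorname{gr} U_q(\mathfrak{g})$ precisely enough to compute its center, and in particular checking that the graded relations really do make $\bar Z$ lower-degree than the mixed monomials appearing in \eqref{thetaXYZ} so that the $q$-commutation relations among $\bar\theta,\bar X,\bar Y,\bar Z$ come out cleanly — the grading is by the single integer $r$ with $\theta$, $X$, $Y$, $Z$ all in degree $1$, so one must track carefully which terms in each relation are genuinely top-degree. Once $\operatorname{gr} U_q(\mathfrak{g})$ is identified as a suitable iterated $q$-skew polynomial algebra, determining that its center is $\mathbb{K}[\overline{\Omega}]$ is a standard argument using the non-vanishing of $q^a - q^b$ for $a \neq b$; the rest is the routine filtered-to-graded induction. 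An alternative that avoids some of this, which I would keep in reserve, is to use the explicit basis directly: write a general element as a finite sum $\sum c_{m_1,n,m_2}\,\theta^{m_1}W^{(n)}Z^{m_2}$, impose $[\theta, u] = 0$ and $[Z,u]=0$ (or $[X,u]=0$), and solve the resulting recursions on the coefficients — but the graded approach is cleaner and more robust.
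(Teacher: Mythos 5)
Your strategy matches the paper's proof: establish that $\Omega$ is central, pass to the associated graded algebra for the filtration of Corollary~\ref{filtrationUq}, compute the graded center, and conclude by a filtered-to-graded induction. (The paper cites \cite{DCDTsl2R}*{Proposition 2.8} for centrality of $\Omega$ rather than recomputing, but your direct check is equally fine.)

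There is, however, a concrete error in the middle of your plan. You claim that the symbol of $\Omega$ in the graded algebra is $q\bar X + q^{-1}\bar Y + (q-q^{-1})\bar\theta\bar Z$, ``all three terms having the same filtration degree.'' This is inconsistent with the filtration you yourself describe a few lines later: since $\theta, X, Y, Z$ each have filtration degree one, $qX$ and $q^{-1}Y$ live in degree one while $(q-q^{-1})\theta Z$ lives in degree two. Thus $\Omega$ has filtration degree two and its leading symbol is only the top-degree part, namely $\bar\Omega = (q-q^{-1})\bar\theta\bar Z$, which is exactly what the paper works with. The element you wrote down is not even homogeneous in $\mathcal{G}(U_q(\mathfrak{g}))$, so it cannot be a symbol. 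With this corrected the rest of your plan is sound: in the graded algebra the relations degenerate to $[\bar\theta,\bar Z]=0$, $\bar Z\bar W^{(n)}=q^{2n}\bar W^{(n)}\bar Z$, $\bar\theta\bar W^{(n)}=q^{-2n}\bar W^{(n)}\bar\theta$, so commuting a central symbol with $\bar\theta$ kills all $\bar W^{(n)}$-components with $n\neq 0$ (here you need $q$ not a root of unity), and commuting with $\bar W^{(1)}=\bar Y$ then forces the remaining monomials $\bar\theta^{m_1}\bar Z^{m_2}$ to have $m_1 = m_2$; these span $\mathbb{K}[\bar\Omega]$, and the subtraction-and-induct step works because $\Omega^m$ has top symbol $(q-q^{-1})^m(\bar\theta\bar Z)^m$.

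One smaller point: describing $\operatorname{gr}U_q(\mathfrak{g})$ as ``essentially a quantum affine space / skew polynomial ring'' in $\bar\theta,\bar X,\bar Y,\bar Z$ glosses over the relation $\bar X\bar Y = -q^2\bar Z^2$, which is not a $q$-commutation and shows $\bar X,\bar Y$ are not independent. It is cleaner to argue directly with the basis $(\bar\theta^{m_1}\bar W^{(n)}\bar Z^{m_2})$ and the three $q$-commutation relations above, as the paper does, rather than trying to fit the graded algebra into a standard template.
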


\begin{proof}
    The fact that $\Omega$ belongs to the center is proven in \cite{DCDTsl2R}*{Proposition 2.8}. Our goal is to prove that, conversely, every element of the center is a polynomial evaluated at $\Omega$. Let us denote by
    $$\mathcal{G}(U_q(\mathfrak{g})) = \oplus_{r \geq 0}\, \mathcal{G}_r(U_q(\mathfrak{g}))$$
    the graded algebra associated to the filtration of $U_q(\mathfrak{g})$ introduced in Corollary \ref{filtrationUq}. We denote by $\bar{W}^{(n)}$, $\bar Z$, $ \bar \theta$ the respective images of $W^{(n)},Z, \theta$ in $\mathcal{G}_1(U_q(\mathfrak{g}))$. Note that
    $$(\bar{\theta}^{m_1}\bar{W}^{(n)}\bar Z^{m_2})_{m_1,m_2 \in \mathbb{Z}_{\geq 0}}^{n \in \mathbb{Z}}$$
    is a basis of $\mathcal{G}(U_q(\mathfrak{g}))$. Let $\bar\Omega = (q-q^{-1}) \bar \theta \bar Z$ be the image of $\Omega$ in $\mathcal{G}_2(U_q(\mathfrak{g}))$.

    The theorem will be proved if we show that $\mathbb{K}[\bar\Omega]$ contains the graded algebra $\mathcal{G}(\mathcal{Z}_q(\mathfrak{g}))$ associated to $\mathcal{Z}_q(\mathfrak{g})$. Let $z \in \mathcal{G}(\mathcal{Z}_q(\mathfrak{g}))$. We decompose $z$ along the above basis:
    $$z = \sum_{m_1,m_2,n} z_{m_1,m_2,n} \,\bar \theta^{m_1}\bar{W}^{(n)}\bar Z^{m_2}.$$
    The relations (\ref{XYZ}) and (\ref{thetaXYZ}) simplify in the graded algebra, we have in particular
    \begin{gather*}
        \bar Z\bar \theta = \bar \theta \bar Z, \quad \bar Z\bar{W}^{(n)} = q^{2n} \bar{W}^{(n)}\bar Z, \quad \bar \theta\bar{W}^{(n)} = q^{-2n} \bar{W}^{(n)}\bar \theta.
    \end{gather*}
    Using these relations and the fact that $z$ commutes with $\bar \theta$, we get
    $$z_{m_1,m_2,n} = q^{2n} z_{m_1,m_2,n} \qquad (m_1,m_2 \geq 0,\forall n \in \mathbb{Z}).$$
    Thus $z_{m_1,m_2,n} = 0$ if $n \neq 0$. Then, the fact that $z$ commutes with $\bar W^{(1)}$ yields
    $$q^{2m_1} z_{m_1,m_2,0} = q^{2m_2} z_{m_1,m_2,0}\qquad (m_1,m_2 \in \mathbb{Z}_{\geq 0}),$$
    showing that $z$ is a linear combination of the elements $\bar \theta^m \bar Z^m = (q-q^{-1})^{-m}\bar\Omega^m$.
    
\end{proof}

\subsection*{An analogue of the Harish-Chandra isomorphism}

For a split semisimple Lie group $G$ (such as $\mathrm{SL}(2,\mathbb{R})$) with infinitesimal Iwasawa decomposition $\mathfrak{g} = \mathfrak{k} \oplus \mathfrak{a} \oplus \mathfrak{n}$, the composition of the projection $U(\mathfrak{k}) \otimes U(\mathfrak{a}) \otimes U(\mathfrak{n}) = U(\mathfrak{g}) \to U(\mathfrak{a})$ with a certain $\rho$-shift identifies the center of $U(\mathfrak{g})$ with the algebra of Weyl group invariants of $U(\mathfrak{a})$. This is known as the Harish-Chandra isomorphism \cite{HarishChandraZ}, see also \cite{Wallach}*{§3.2}. Here we check that such an isomorphism holds for the center of $U_q(\mathfrak{g})$.

As mentioned in the beginning of last section, $\mathcal{O}_q(K\backslash U)$ is a quantized analogue of the universal enveloping algebra of $\mathfrak{k}^\perp$, which corresponds to the $\mathfrak{a} \oplus \mathfrak{n}$ part of the infinitesimal Iwasawa decomposition. In the classical context, $U(\mathfrak{a})$ is the abelianization of $U(\mathfrak{a}\oplus\mathfrak{n})$ and $U(\mathfrak{n})$ is the kernel of the associated quotient map $U(\mathfrak{a}\oplus\mathfrak{n}) \to U(\mathfrak{a})$. Here, by abelianization we mean the left adjoint of the forgetful functor from the category of unital commutative algebras to that of unital algebras. The abelianization of $\mathcal{O}_q(K\backslash U)$ should therefore be considered as an analogue of $U(\mathfrak{a})$ while the ideal of $\mathcal{O}_q(K\backslash U)$ generated by the commutators could be thought as an analogue of $U(\mathfrak{n})$.

\begin{proposition} \label{ab}
    Let $\mathbb{K}[t,t^{-1}]$ be the ring of Laurent polynomials and let $\mathrm{ab}$ be the unique algebra morphism $\mathcal{O}_q(K\backslash U) \to \mathbb{K}[t,t^{-1}]$ such that
    $$\mathrm{ab}(X) = t, \quad \mathrm{ab}(Y) = t^{-1}, \quad \mathrm{ab}(Z) = 0.$$
    Under that morphism, $\mathbb{K}[t,t^{-1}]$ identifies with the abelianization of $\mathcal{O}_q(K\backslash U)$. The kernel of $\mathrm{ab}$ is the (left, right or two-sided, they are all the same) ideal generated by $Z$.
\end{proposition}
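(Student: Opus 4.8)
The plan is to establish, in order: that $\mathrm{ab}$ is a well-defined surjection; that its kernel is the ideal generated by $Z$, in the left, right, and two-sided senses simultaneously; and that this kernel coincides with the ideal generated by all commutators, which is exactly what makes $(\mathbb{K}[t,t^{-1}],\mathrm{ab})$ the abelianization.

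The existence of $\mathrm{ab}$ is a one-line check: substituting $X\mapsto t$, $Y\mapsto t^{-1}$, $Z\mapsto 0$ into the four relations~(\ref{XYZ}) produces only trivial identities, so by the universal presentation of $\mathcal{O}_q(K\backslash U)$ the morphism exists and is unique, and it is surjective since $t,t^{-1}$ lie in its image. To compute the kernel I would use the basis $(b_{n,m})$, $n\in\mathbb{Z}$, $m\geq 0$, of Proposition~\ref{structureAN}. Since $\mathrm{ab}(Z)=0$ we get $\mathrm{ab}(b_{n,m})=0$ whenever $m\geq 1$, whereas $\mathrm{ab}(b_{n,0})=\mathrm{ab}(W^{(n)})=t^{-n}$ for every $n$ (whether $n\geq 0$ or $n<0$); as $n$ ranges over $\mathbb{Z}$ these exhaust a basis of $\mathbb{K}[t,t^{-1}]$. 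Hence $\ker(\mathrm{ab})=\mathrm{span}(b_{n,m}:n\in\mathbb{Z},\,m\geq 1)$ and $\mathrm{ab}$ descends to an isomorphism $\mathcal{O}_q(K\backslash U)/\ker(\mathrm{ab})\xrightarrow{\ \sim\ }\mathbb{K}[t,t^{-1}]$.

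Next I would identify $\ker(\mathrm{ab})$ with the ideal generated by $Z$. From~(\ref{XYZ}) one reads off $Zg=\rho(g)Z$ for $g\in\{X,Y,Z\}$, where $\rho$ is the algebra automorphism of $\mathcal{O}_q(K\backslash U)$ fixing $Z$ and sending $X\mapsto q^{-2}X$, $Y\mapsto q^{2}Y$; that $\rho$ respects~(\ref{XYZ}) and is invertible is immediate, and it follows that $Z\,\mathcal{O}_q(K\backslash U)=\mathcal{O}_q(K\backslash U)\,Z$. Thus this subspace is a two-sided ideal and coincides with the left, the right, and the two-sided ideal generated by $Z$. Using the relation $Zb_{n,m}=q^{2n}b_{n,m+1}$ from~(\ref{actionb}), one gets $Z\,\mathcal{O}_q(K\backslash U)=\mathrm{span}(b_{n,m}:n\in\mathbb{Z},\,m\geq 1)=\ker(\mathrm{ab})$.

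Finally, let $J$ be the two-sided ideal generated by the commutators; it remains to prove $\ker(\mathrm{ab})=J$, which says precisely that $\mathbb{K}[t,t^{-1}]$ is the abelianization. The inclusion $J\subseteq\ker(\mathrm{ab})$ is automatic because $\mathrm{im}(\mathrm{ab})$ is commutative. For the reverse inclusion, by the previous paragraph it suffices to show $Z\in J$, and this is the only step where the hypothesis that $q$ is not a root of unity is genuinely used: from~(\ref{XYZ}) one computes $[X,Y]=(q^{-2}-q^{2})Z^{2}$ and $[X,Z]=(q^{2}-1)ZX$, so since $q^{2}\neq 1$ and $q^{2}\neq q^{-2}$ both $Z^{2}$ and $ZX$ lie in $J$; then $Z-q^{2}Z^{3}=(ZX)Y\in J$ because $XY=1-q^{2}Z^{2}$, while $Z^{3}=Z\cdot Z^{2}\in J$, whence $Z\in J$. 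Everything here is routine; the only delicate point is exactly this last descent from $Z^{2},Z^{3}\in J$ to $Z\in J$.
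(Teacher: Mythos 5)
Your proof is correct and follows essentially the same outline as the paper's: well-definedness of $\mathrm{ab}$ from the presentation, identification of $\ker(\mathrm{ab})$ with $Z\,\mathcal{O}_q(K\backslash U)$ via the basis of Proposition~\ref{structureAN}, the observation that the $q$-commuting relations make the left, right and two-sided ideals generated by $Z$ coincide, and finally a proof that $Z$ dies in the abelianization. The one place where the route genuinely differs is the last step. The paper passes to an arbitrary algebra map $\pi$ onto a commutative quotient and argues: $q^2\pi(Z^2) = 1 - \pi(X)\pi(Y) = 1 - \pi(Y)\pi(X) = q^{-2}\pi(Z^2)$, so $\pi(Z^2) = 0$ and hence $\pi(X)\pi(Y)=\pi(Y)\pi(X)=1$; then $\pi(Z) = \pi(ZYX) = q^2\pi(YZX) = q^2\pi(Z)$, so $\pi(Z) = 0$. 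You instead work directly inside the commutator ideal $J$: $[X,Y] = (q^{-2}-q^2)Z^2$ gives $Z^2\in J$, $[X,Z] = (q^2-1)ZX$ gives $ZX\in J$, and then $(ZX)Y = Z - q^2Z^3 \in J$ together with $Z^3 = Z\cdot Z^2\in J$ forces $Z\in J$. Both arguments rest on $q^4\neq 1$; yours has the small merit of not invoking a quotient and of isolating precisely which commutators are needed, whereas the paper's is a touch shorter because passing to $\pi$ lets it use commutativity ``for free'' rather than tracking memberships in $J$.
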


\begin{proof}
    The morphism $\mathrm{ab}$ is well defined because of the description of $\mathcal{O}_q(K\backslash U)$ by generators and relations (see Section 2). The fact that the left, right and two-sided ideals generated by $Z$ are the same is due to the $q$-commuting relations in the first line of (\ref{XYZ}). By Proposition \ref{structureAN}, we have
    $$\mathcal{O}_q(K\backslash U) = Z. \mathcal{O}_q(K\backslash U) \oplus \mathrm{span}(W^{(n)} : n \in \mathbb{Z}).$$
    Moreover, the restriction of $\mathrm{ab}$ to $\mathrm{span}(W^{(n)} : n \in \mathbb{Z})$ is a linear isomorphism, so the kernel of $\mathrm{ab}$ is indeed $Z. \mathcal{O}_q(K\backslash U)$. To check that $\mathrm{ab}$ identifies $\mathbb{K}[t,t^{-1}]$ with the ablianization of $\mathcal{O}_q(K\backslash U)$, it is thus enough to prove that $Z$ lies in the kernel of any commutative quotient of $\mathcal{O}_q(K\backslash U)$. Let $\pi : \mathcal{O}_q(K\backslash U) \to Q$ be such a quotient. We have
    $$q^2\pi(Z^2) = 1-\pi (X)\pi(Y) = 1-\pi (Y)\pi(X) = q^{-2}\pi(Z^2),$$
    so $\pi(Z^2) = 0$ and $\pi(X)\pi(Y) = \pi(Y) \pi(X) = 1$. Then, using $q^2YZ = ZY$, we get
    $$\pi(Z) = \pi (ZYX) = \pi(q^{2}YZX)= q^2\pi(Z),$$
    and we conclude that $\pi(Z) = 0$.
\end{proof}

\begin{theorem} \label{HCiso}
    Let $\zeta_0$ be the unique unital character of $U_q(\mathfrak{k})$ such that $\zeta_0(\theta) = 0$. We define a linear map $\gamma : U_q(\mathfrak{g}) \to \mathbb{K}[t,t^{-1}]$ by the following formula:
    $$\gamma(xy) = \zeta_0(x) \mathrm{ab}(y), \qquad (x \in U_q(\mathfrak{k}), y \in \mathcal{O}_q(K\backslash U)).$$
    Let $\sigma$ be the unique automorphism of $\mathbb{K}[t,t^{-1}]$ such that $\sigma(t) = q^{-1}t$. Then the restriction of $\tilde{\gamma} = \sigma \circ \gamma$ to $\mathcal{Z}_q(\mathfrak{g})$ is an algebra isomorphism onto the algebra of Laurent polynomials that are invariant under the transformation $t \mapsto t^{-1}$.
\end{theorem}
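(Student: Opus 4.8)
The plan is to reduce the statement to Theorem~\ref{center}, which identifies $\mathcal{Z}_q(\mathfrak{g})$ with the unital subalgebra $\mathbb{K}[\Omega]$ generated by $\Omega = qX + q^{-1}Y + (q-q^{-1})\theta Z$; moreover the proof of that theorem shows that $\Omega$ is transcendental over $\mathbb{K}$, since the powers $\bar\Omega^m = (q-q^{-1})^m\bar\theta^m\bar Z^m$ are distinct basis vectors of the associated graded algebra. It therefore suffices to establish: (i) $\tilde\gamma$ is multiplicative on the powers of $\Omega$; (ii) $\tilde\gamma(\Omega) = t + t^{-1}$; (iii) the subalgebra $\mathbb{K}[t+t^{-1}]$ thus obtained is precisely the ring of Laurent polynomials fixed by $t\mapsto t^{-1}$, and the resulting surjection is injective.

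For (i), I would write $\Omega = c_1 + c_2 + c_3$ with $c_1 = qX$, $c_2 = q^{-1}Y$, $c_3 = (q-q^{-1})\theta Z$, and record two facts. Taking $x = 1$ in the defining formula of $\gamma$ shows that $\gamma$ restricts to $\mathrm{ab}$ on $\mathcal{O}_q(K\backslash U)$ and to $\zeta_0$ on $U_q(\mathfrak{k})$; in particular $\gamma(c_3) = (q-q^{-1})\zeta_0(\theta)\mathrm{ab}(Z) = 0$. Also $U_q(\mathfrak{g})Z \subseteq \ker\gamma$: writing an element $uZ$ through the isomorphism $U_q(\mathfrak{k})\otimes\mathcal{O}_q(K\backslash U) \to U_q(\mathfrak{g})$ as $\sum_i x_i(y_i Z)$ with $x_i\in U_q(\mathfrak{k})$ and $y_i\in\mathcal{O}_q(K\backslash U)$, and noting that each $y_i Z$ lies in the ideal $\ker(\mathrm{ab})$ described in Proposition~\ref{ab}, we get $\gamma(uZ) = \sum_i\zeta_0(x_i)\mathrm{ab}(y_i Z) = 0$. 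Now I expand $\Omega^j = \sum_{(i_1,\dots,i_j)\in\{1,2,3\}^j} c_{i_1}\cdots c_{i_j}$ and apply $\gamma$ to each monomial. If some $i_k$ equals $3$, pick the rightmost such index $p$; the tail $c_{i_{p+1}}\cdots c_{i_j}$ is a scalar times a word in $X$ and $Y$, and the relations $XZ = q^2 ZX$, $ZY = q^2 YZ$ of (\ref{XYZ}) let one carry the $Z$ occurring in $c_{i_p}$ to the far right, rewriting the monomial, up to a power of $q$, as an element of $U_q(\mathfrak{g})Z$; hence $\gamma$ annihilates it. If no $i_k$ equals $3$, the monomial lies in $\mathcal{O}_q(K\backslash U)$, so, $\mathrm{ab}$ being multiplicative, $\gamma(c_{i_1}\cdots c_{i_j}) = \prod_k\mathrm{ab}(c_{i_k}) = \prod_k\gamma(c_{i_k})$. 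Since any product containing the factor $\gamma(c_3) = 0$ vanishes as well, we obtain $\gamma(c_{i_1}\cdots c_{i_j}) = \prod_k\gamma(c_{i_k})$ in every case, so $\gamma(\Omega^j) = (\gamma(c_1)+\gamma(c_2)+\gamma(c_3))^j = \gamma(\Omega)^j$. Linearity then gives $\gamma(P(\Omega)) = P(\gamma(\Omega))$ for all polynomials $P$, and composing with the automorphism $\sigma$ shows $\tilde\gamma$ is an algebra morphism on $\mathcal{Z}_q(\mathfrak{g})$.

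For (ii) one simply computes $\gamma(\Omega) = q\,\mathrm{ab}(X) + q^{-1}\mathrm{ab}(Y) = qt + q^{-1}t^{-1}$, hence $\tilde\gamma(\Omega) = \sigma(qt + q^{-1}t^{-1}) = t + t^{-1}$, which is manifestly invariant under $t\mapsto t^{-1}$; so the image of $\tilde\gamma|_{\mathcal{Z}_q(\mathfrak{g})}$ is $\mathbb{K}[t+t^{-1}]$. For (iii), a Laurent polynomial invariant under $t\mapsto t^{-1}$ is a $\mathbb{K}$-linear combination of the elements $t^k + t^{-k}$ with $k\geq 0$, and the identity $t^{k+1}+t^{-(k+1)} = (t+t^{-1})(t^k+t^{-k}) - (t^{k-1}+t^{-(k-1)})$ shows inductively that each of these is a polynomial in $t + t^{-1}$; thus $\mathbb{K}[t+t^{-1}]$ is exactly the ring of invariants. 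Finally, $\tilde\gamma|_{\mathcal{Z}_q(\mathfrak{g})}$ is injective: if $P(\Omega)$ lies in its kernel then $P(t+t^{-1}) = 0$ in $\mathbb{K}[t,t^{-1}]$, which forces $P = 0$ because $t+t^{-1}$ is transcendental over $\mathbb{K}$ (inspect the top-degree coefficient of $P(t+t^{-1})$), and then $P(\Omega) = 0$ since $\Omega$ is transcendental. Combining (i)--(iii) yields the claimed isomorphism.

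The only genuinely non-routine point is the multiplicativity in (i); the crucial observation, and the only place where the defining $q$-commutation relations of $U_q(\mathfrak{g})$ are really used, is that every monomial in the expansion of $\Omega^j$ carrying a factor $\theta Z$ can be rearranged into the left ideal $U_q(\mathfrak{g})Z$, on which $\gamma$ vanishes by Proposition~\ref{ab}. Everything else reduces to bookkeeping with the factorisation $U_q(\mathfrak{k})\otimes\mathcal{O}_q(K\backslash U)\cong U_q(\mathfrak{g})$ and with the familiar structure of the ring of symmetric Laurent polynomials.
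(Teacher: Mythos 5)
Your proposal is correct, but it takes a genuinely different route from the paper. The paper proves multiplicativity of $\zeta_0 \otimes \mathrm{id}$ directly on \emph{all} products $zx$ with $z$ central and $x$ arbitrary, via the identity
\[
zx - (\zeta_0\otimes \mathrm{id})(z)(\zeta_0\otimes \mathrm{id})(x)= [x-(\zeta_0\otimes \mathrm{id})(x)]z + [z -(\zeta_0\otimes \mathrm{id})(z)](\zeta_0\otimes \mathrm{id})(x),
\]
valid because $z$ commutes with both $x$ and $(\zeta_0\otimes\mathrm{id})(x)$; since $\ker\zeta_0 = \theta\, U_q(\mathfrak{k})$, the right-hand side lies in $\theta\, U_q(\mathfrak{g})$, which is killed by $\zeta_0\otimes\mathrm{id}$. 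Composing with the algebra morphisms $\mathrm{ab}$ and $\sigma$ then gives multiplicativity of $\tilde\gamma$ on $\mathcal{Z}_q(\mathfrak{g})$ at one stroke. Your argument instead invokes Theorem~\ref{center} first to reduce to powers of $\Omega$, then expands $\Omega^j$ explicitly into monomials in $qX$, $q^{-1}Y$, $(q-q^{-1})\theta Z$, and uses the $q$-commutation relations to push the $Z$ of any $\theta Z$-factor to the far right, landing the monomial in $U_q(\mathfrak{g})Z\subseteq\ker\gamma$, while the remaining monomials live in $\mathcal{O}_q(K\backslash U)$ where $\gamma=\mathrm{ab}$ is already multiplicative. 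Both arguments are sound. What the paper's version buys is independence from the specific form of $\Omega$ and from the fact that the centre is singly generated: only the decomposition $U_q(\mathfrak{g})\cong U_q(\mathfrak{k})\otimes\mathcal{O}_q(K\backslash U)$ and $\ker\zeta_0=\theta\, U_q(\mathfrak{k})$ are used, so the argument would survive in a higher-rank analogue. Your version is more hands-on and makes the mechanism (monomials carrying a $\theta Z$ die, the rest are seen by $\mathrm{ab}$) very transparent, but it is logically downstream of Theorem~\ref{center} and relies on the explicit relations (\ref{XYZ}). A small remark: once $\tilde\gamma(\Omega)=t+t^{-1}$ is known to be transcendental over $\mathbb{K}$, injectivity of $\tilde\gamma|_{\mathcal{Z}_q(\mathfrak{g})}$ follows immediately without appealing separately to the transcendence of $\Omega$, so the graded-algebra observation you cite at the start, while correct, is not needed.
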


\begin{proof}
    The main step is to prove that $\tilde\gamma$ is multiplicative when restricted to $\mathcal{Z}_q(\mathfrak{g})$. Identifying $U_q(\mathfrak{g})$ with $U_q(\mathfrak{k}) \otimes \mathcal{O}_q(K\backslash U)$ through multiplication, we see that $\gamma = \zeta_0 \otimes \mathrm{ab}$. Let us show that the restriction of $\zeta_0 \otimes \mathrm{id}$ to $\mathcal{Z}_q(\mathfrak{g})$ is multiplicative. Let $z \in \mathcal{Z}_q(\mathfrak{g})$ and $x \in U_q(\mathfrak{g})$. Using that $z$ is central, we get
    $$zx - (\zeta_0\otimes \mathrm{id})(z)(\zeta_0\otimes \mathrm{id})(x)= [x-(\zeta_0\otimes \mathrm{id})(x)]z + [z -(\zeta_0\otimes \mathrm{id})(z)](\zeta_0\otimes \mathrm{id})(x).$$
    Note that the right hand side of the expression lies in $\theta U_q(\mathfrak{g})$, hence
    $$ (\zeta_0\otimes \mathrm{id})(zx)= (\zeta_0\otimes \mathrm{id})(z)(\zeta_0\otimes \mathrm{id})(x).$$
    Composing with the algebra morphisms $\mathrm{id}\otimes \mathrm{ab}$ and $\sigma$, we get that $\tilde{\gamma}$ is an algebra morphism $\mathcal{Z}_q(\mathfrak{g}) \to \mathbb{K}[t,t^{-1}]$.
    
    A short computation yields $\tilde\gamma(\Omega) = t + t^{-1}$. Then, theorem \ref{center} implies that $\tilde\gamma$ is injective on $\mathcal{Z}_q(\mathfrak{g})$ and that $\tilde\gamma[\mathcal{Z}_q(\mathfrak{g})]$ is the algebra of invariant Laurent polynomials.
\end{proof}

\section{Construction of the $\mathbb{A}$-form}

Our aim now is to construct the $\mathbb{A}$-form of the quantized enveloping $\ast$-algebra of $\mathfrak{g}$ mentioned in the introduction, then show that it specializes at $q=1$ to the classical universal enveloping $\ast$-algebra. Our approach will be more general in fact, since we construct quantized versions of this enveloping algebra over a wide class of rings.

\subsection*{From involutive fields to general rings} Let $R$ be any unital commutative ring such that $2 \in R^\times$ and let $q \in R^\times$. We define $\mathrm{U}(q,R)$ as the universal unital $R$-algebra generated by elements $\theta, x, y, z$ subject to the following relations:
\begin{equation}\label{AqR}
    \begin{gathered}
        qx \theta - q^{-1}\theta x = q \theta y - q^{-1}y\theta = [2]_q z - \theta,\qquad 
        \theta z - z \theta = y - x,\\
        qzx - q^{-1}x z = qy z - q^{-1}zy = -z,\\
        xy + q^2z^2 = yx + q^{-2}z^2, \qquad x + y + (q-q^{-1})(xy + q^2z^2) = 0.
    \end{gathered}
\end{equation}
The relevance of this algebra for our purpose consists in the following proposition.
\begin{proposition}\label{specialcases} Here are alternative descriptions of $\mathrm{U}(q,R)$ in two special cases.
    \begin{enumerate}[label = (\roman*)]
        \item Assume that $q =1$. Then $y = - x$ and $\mathrm{span}_R(\theta, x, z)$ is a Lie subalgebra of $\mathrm{U}(1,R)$ isomorphic to $\mathfrak{sl}_2(R)$. In addition, $\mathrm{U}(1,R)$ is the universal enveloping algebra of $\mathrm{span}_R(\theta, x, z)$.
        \item Assume that $R$ is an involutive extension of $\mathbb{C}$ and $q$ has self-adjoint square roots and is not a root of unity. The algebra $\mathrm{U}(q,R)$ identifies with $U_q(\mathfrak {g})$ through the equalities
        \begin{equation} \label{changeVariables}x = \frac{X-1}{q-q^{-1}}, \quad y = \frac{Y-1}{q-q^{-1}}, \quad z = \frac{Z}{q-q^{-1}},\end{equation}
        and the identification of the generator $\theta \in \mathrm{U}(q,R)$ with the identically written element of~ $U_q(\mathfrak{g})$.
    \end{enumerate}
\end{proposition}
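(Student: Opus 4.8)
The plan is to treat parts (i) and (ii) separately, in each case reducing the claim to an inspection of how the presenting relations (\ref{AqR}) degenerate. I will abbreviate $\nu = q - q^{-1}$.

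For (ii), the first observation is that, since $R$ is a field (being an extension of $\mathbb{C}$) and $q$ is not a root of unity, $\nu$ is a nonzero and hence invertible scalar; consequently the substitution (\ref{changeVariables}), namely $X = 1 + \nu x$, $Y = 1 + \nu y$, $Z = \nu z$, together with $\theta \mapsto \theta$, is an invertible affine change of generators. It then suffices to verify that this substitution carries the defining relations (\ref{XYZ}) and (\ref{thetaXYZ}) of $U_q(\mathfrak{g})$ bijectively onto the defining relations (\ref{AqR}) of $\mathrm{U}(q,R)$, so that the two presentations literally coincide. I would do this relation by relation: substituting, using $q^2 - 1 = q\nu$, and dividing through by the appropriate power of $\nu$, one finds that $XZ = q^2 ZX$ and $ZY = q^2 YZ$ become $qzx - q^{-1}xz = -z$ and $qyz - q^{-1}zy = -z$; that $\theta Z - Z\theta = Y - X$ becomes $\theta z - z\theta = y - x$; that the chain $qX\theta - q^{-1}\theta X = q\theta Y - q^{-1}Y\theta = [2]_q Z$ becomes $qx\theta - q^{-1}\theta x = q\theta y - q^{-1}y\theta = [2]_q z - \theta$ once the term $\nu\theta$ coming from the constant $1$ in $X$ and $Y$ is moved to the right-hand side; and that the pair $XY + q^2 Z^2 = 1$, $YX + q^{-2}Z^2 = 1$ becomes the pair $x + y + \nu(xy + q^2 z^2) = 0$, $xy + q^2 z^2 = yx + q^{-2}z^2$. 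Each step is reversible, so the identification follows; I would add the one-line remark that it also intertwines the $\ast$-structures, since $X^* = Y$, $Z^* = Z$ and $q^* = q$ give $x^* = y$, $z^* = z$ and $\theta^* = \theta$.

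For (i), I would set $q = 1$, so that $\nu = 0$ and $[2]_q$ specializes to the integer $[2]_1 = 3$ (with the convention of the paper). The relation $x + y + (q - q^{-1})(xy + q^2 z^2) = 0$ becomes $x + y = 0$, so $y = -x$ and $\mathrm{U}(1,R)$ is generated by $\theta, x, z$; substituting $y = -x$ makes $xy + z^2 = yx + z^2$ vacuous, collapses the two chained relations to $x\theta - \theta x = 3z - \theta$ and to $zx - xz = -z$, and turns $\theta z - z\theta = y - x$ into $\theta z - z\theta = -2x$. Hence $\mathrm{U}(1,R)$ is presented as the free unital $R$-algebra on $\theta, x, z$ modulo the ideal generated by the three elements $x\theta - \theta x - (3z - \theta)$, $\theta z - z\theta + 2x$ and $zx - xz + z$. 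I would then let $\mathfrak{L}_0$ be the free rank-three $R$-module on symbols $\theta_0, x_0, z_0$ equipped with the alternating bracket $[x_0,\theta_0] = 3z_0 - \theta_0$, $[\theta_0,z_0] = -2x_0$, $[z_0,x_0] = -z_0$, and exhibit the change of basis $e = z_0$, $h = 2x_0$, $f = \theta_0 - \tfrac{3}{2} z_0$, which is invertible over $R$ exactly because $2 \in R^\times$ and converts these formulas into $[h,e] = 2e$, $[h,f] = -2f$, $[e,f] = h$; thus $\mathfrak{L}_0 \cong \mathfrak{sl}_2(R)$ as Lie algebras, and in particular $\mathfrak{L}_0$ does satisfy the Jacobi identity. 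Since the three defining relations of $\mathrm{U}(1,R)$ above are precisely the relations $uv - vu = [u,v]$ for $u, v$ running over the basis of $\mathfrak{L}_0$, this presentation identifies $\mathrm{U}(1,R)$ with the universal enveloping algebra $U(\mathfrak{L}_0)$. Finally, because $\mathfrak{L}_0$ is $R$-free, the Poincar\'e--Birkhoff--Witt theorem gives the embedding $\mathfrak{L}_0 \hookrightarrow U(\mathfrak{L}_0)$, which, transported along the identification $\mathrm{U}(1,R) = U(\mathfrak{L}_0)$, says exactly that $\mathrm{span}_R(\theta,x,z)$ is a free rank-three Lie subalgebra of $\mathrm{U}(1,R)$ isomorphic to $\mathfrak{sl}_2(R)$ and that $\mathrm{U}(1,R)$ is its enveloping algebra.

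I do not anticipate a real obstacle: both parts amount to bookkeeping with the relations. The two points deserving some care are, in (ii), checking that the correspondence between the seven relations is genuinely a bijection, so that the two presentations are equivalent and not merely comparable; and, in (i), invoking the Poincar\'e--Birkhoff--Witt theorem over the general ground ring $R$, which is legitimate since $\mathfrak{L}_0$ is free of finite rank, in order to conclude that $\theta, x, z$ remain $R$-linearly independent inside $\mathrm{U}(1,R)$. It is also worth keeping track of where the standing hypotheses enter: $2 \in R^\times$ for the identification $\mathfrak{L}_0 \cong \mathfrak{sl}_2(R)$, and the assumption that $q$ is not a root of unity for the invertibility of $\nu$ in part (ii).
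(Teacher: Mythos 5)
Your proof is correct and follows essentially the same route as the paper: for (ii) a direct relation-by-relation check that the affine change of generators carries one presentation onto the other, and for (i) reduction of the $q=1$ relations to a Lie bracket table followed by exhibiting an $\mathfrak{sl}_2$-triple (your detour through the abstract Lie algebra $\mathfrak{L}_0$ and the PBW theorem makes explicit what the paper's ``it follows'' leaves implicit). One small discrepancy worth noting: you read off $[2]_1 = 3$ from the displayed definition of $[n]_q$, whereas the paper's own proof takes $f = \theta - z$, consistent with $[2]_1 = 2$, i.e.\ with the standard quantum integer $[n]_q = (q^n - q^{-n})/(q - q^{-1})$ used everywhere else in the paper (e.g.\ in the proof of Lemma \ref{diagtheta}); the displayed formula is evidently a misprint, and your shift $f = \theta_0 - \tfrac{3}{2}z_0$ correctly compensates for it, so the argument goes through with either reading.
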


The proof of \textit{(\romannumeral 2)} is clear, one just has to check that the relations (\ref{AqR}) are equivalent to the combination of (\ref{XYZ}) and (\ref{thetaXYZ}) under the change of variables (\ref{changeVariables}).

\begin{proof}[Proof of (\romannumeral 1)] The relations (\ref{AqR}) reduce to
\begin{gather*}
     y = - x, \quad
    [x, z] = z, \quad [x , \theta] = 2z - \theta, \quad [z, \theta] = 2 x.
\end{gather*}
It follows that $\mathrm{U}(1,R)$ is the enveloping algebra of its Lie subalgebra $\mathrm{span}_R(x, z, \theta)$. Because of the hypothesis $2 \in R^\times$, we have $\mathrm{span}_R(x, z, \theta) = \mathrm{span}_R(e,f,h)$, where
\begin{equation}\label{sl2triple} e= z, \quad f = \theta - z, \quad h = 2x.\end{equation}
One can easily check that $(e,f,h)$ is an $\mathfrak{sl_2}$-triple, which concludes the proof.
    
\end{proof}

Regarding the general structure of $\mathrm{U}(q,R)$, here is our fundamental result.

\begin{theorem} \label{basis}
    Let $a = y - x$ and $b = -xy - q^2z^2$. The collection of all elements $\theta^mz^na^d b^p$ for $m,n,p \in \mathbb{Z}_{\geq 0}$ and $d \in \{0,1\}$ is an $R$-basis of $\mathrm{U}(q,R)$.
\end{theorem}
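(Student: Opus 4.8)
The proof has two parts: showing that the listed monomials span $\mathrm{U}(q,R)$, and showing that they are $R$-linearly independent. I would establish spanning over every ring $R$ as in the definition, and then reduce linear independence, by base change, to the already-understood algebra $U_q(\mathfrak{g})$ over a field.

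\emph{Spanning.} First I would switch to the generators $\theta,z,a,b$: the relation $x+y+(q-q^{-1})(xy+q^{2}z^{2})=0$ reads $x+y=(q-q^{-1})b$, so with $a=y-x$ and $2\in R^{\times}$ one recovers $x=\tfrac12\bigl((q-q^{-1})b-a\bigr)$ and $y=\tfrac12\bigl((q-q^{-1})b+a\bigr)$; hence $\theta,z,a,b$ generate $\mathrm{U}(q,R)$. From (\ref{AqR}) one then derives a complete set of straightening relations: besides
\[
z\theta=\theta z-a,\qquad ba=ab-2[2]_{q}z^{2},\qquad a^{2}=4b+(q-q^{-1})^{2}b^{2}+2\{2\}_{q}z^{2},
\]
there are analogous identities expressing $a\theta$ and $b\theta$ as $R$-linear combinations of $\theta,z,\theta a,\theta b$, and $az$ and $bz$ as $R$-linear combinations of $z,za,zb$ (the identities that formally involve a division by $q-q^{-1}$ are first obtained over $\mathbb{Z}[q^{\pm1},\tfrac12]$, where $q-q^{-1}$ is a non-zero-divisor, and then transported by base change). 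Each right-hand side is an $R$-combination of monomials $\theta^{m}z^{n}a^{d}b^{p}$ in the ordering $\theta\prec z\prec a\prec b$ with the exponent of $a$ at most $1$, and of word-length no larger than the left-hand side. Reading these as rewriting rules and checking --- by an appropriate reduction order, or equivalently by resolving the finitely many overlap ambiguities $a^{2}\theta$, $a^{2}z$, $a^{3}$, $ba\theta$, $baz$, $baa$, $az\theta$, $bz\theta$ in Bergman's diamond lemma --- that every word straightens to an $R$-combination of such monomials, one concludes that their $R$-span is a unital subalgebra, hence all of $\mathrm{U}(q,R)$; moreover, since the rules do not increase word-length, the monomials of word-length $\le r$ span the submodule $\mathrm{U}(q,R)^{(r)}$ of sums of products of at most $r$ generators.

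\emph{Linear independence.} The relations (\ref{AqR}) have coefficients in the integral domain $R_{0}:=\mathbb{Z}[t,t^{-1},\tfrac12]$ (put $q=t$), and presentations commute with base change, so $\mathrm{U}(q,R)\cong R\otimes_{R_{0}}\mathrm{U}(t,R_{0})$ for every such $(R,q)$; it therefore suffices to treat $\mathrm{U}(t,R_{0})$. Work inside the involutive extension $\mathbb{K}=\mathbb{C}(t^{1/2})$ of $\mathbb{C}$, in which $t$ is self-adjoint, admits the self-adjoint square root $t^{1/2}$, and is not a root of unity, and which contains $R_{0}$; by Proposition \ref{specialcases}(ii), $\mathrm{U}(t,\mathbb{K})\cong U_{t}(\mathfrak{g})$, and Corollary \ref{filtrationUq} furnishes the basis $(\theta^{m_{1}}W^{(n)}Z^{m_{2}})$ of the latter. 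Under (\ref{changeVariables}) each of $\theta,X,Y,Z$ --- and hence, $q-q^{-1}$ being invertible in $\mathbb{K}$, each of $\theta,z,a,b$ --- lies in $\mathrm{U}(t,\mathbb{K})^{(1)}$, so both the elements $\theta^{m_{1}}W^{(n)}Z^{m_{2}}$ with $m_{1}+|n|+m_{2}\le r$ and the monomials $\theta^{m}z^{n}a^{d}b^{p}$ with $m+n+d+p\le r$ lie in $\mathrm{U}(t,\mathbb{K})^{(r)}$. Each of the two families has exactly $\sum_{s=0}^{r}(s+1)^{2}$ members meeting its bound, and the first is linearly independent while the second spans $\mathrm{U}(t,\mathbb{K})^{(r)}$ by the previous step; hence both are bases of the finite-dimensional space $\mathrm{U}(t,\mathbb{K})^{(r)}$, and letting $r\to\infty$ shows that $(\theta^{m}z^{n}a^{d}b^{p})$ is a $\mathbb{K}$-basis of $\mathrm{U}(t,\mathbb{K})$. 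Consequently the canonical surjection of the free $R_{0}$-module on our index set onto $\mathrm{U}(t,R_{0})$ becomes bijective after $\mathbb{K}\otimes_{R_{0}}(-)$, hence is injective because $R_{0}\hookrightarrow\mathbb{K}$, hence is an isomorphism; base-changing along $R_{0}\to R$ proves the theorem. The one genuinely laborious step is the derivation of the straightening relations and the attendant bookkeeping --- everything else rests on Corollary \ref{filtrationUq}.
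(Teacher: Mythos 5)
Your route to linear independence is genuinely different from the paper's.  Where the paper builds an explicit $\mathrm{U}(q,R)$-module by hand with basis $(v_{m,n,d,p})$ and $\theta^m z^n a^d b^p v_{0,0,0,0}=v_{m,n,d,p}$ (Lemma~\ref{technical}, a long verification of the relations \eqref{AqR}), you base-change to $\mathbb{C}(t^{1/2})$, invoke the PBW basis and filtration from Corollary~\ref{filtrationUq}, and close with a dimension count before descending to $\mathbb{Z}[t^{\pm1},\tfrac12]$ by flatness.  That is a cleaner path and avoids the module construction entirely; the trade-off is modularity.  In the paper, linear independence is proved independently of spanning, whereas your dimension count needs to know that the monomials of length at most $r$ already span $\mathrm{U}^{(r)}$, so your two halves cannot be separated.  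Your spanning step is at bottom the same length-preserving reduction as the paper's, recast as a rewriting system; note that the order would have to be chosen with care (the rule for $a^2$ produces $b^2$, which is not smaller in a naive length-then-lex order with $\theta<z<a<b$), and you leave the verification to ``bookkeeping,'' which the paper actually carries out.

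There is one genuine gap in the dimension-count half.  You write that, $q-q^{-1}$ being invertible, ``each of $\theta,z,a,b$ lies in $\mathrm{U}(t,\mathbb{K})^{(1)}$'' as a consequence of $\theta,X,Y,Z$ lying there.  For $\theta,z,a$ this is immediate, but $b=-xy-q^2z^2$ is a priori a degree-two expression in $x,y,z$, so the stated reasoning does not give $b\in\mathrm{U}^{(1)}$.  The claim is nonetheless true: from the relation $x+y+(q-q^{-1})(xy+q^2z^2)=0$ one gets $b=\dfrac{x+y}{q-q^{-1}}=\dfrac{X+Y-2}{(q-q^{-1})^{2}}$, which visibly lies in $\mathrm{U}^{(1)}$.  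Without this observation the inclusion of your monomials with $m+n+d+p\le r$ into $\mathrm{U}^{(r)}$ fails, and with it the equality of cardinalities that drives the whole argument; so the point must be made explicit.  With that one line added, and with the straightening/termination check actually performed, the proposal is a valid alternative proof.
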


The proof will be very similar to that of the Poincaré-Birkhoff-Witt theorem for enveloping algebras. It uses the following lemma in a crucial way.

\begin{lemma} \label{technical}
    Let $V$ a free $R$-module having a basis $(v_{m,n,d,p})_{m,n,d,p}$ indexed by $\mathbb{Z}_{\geq 0}^2 \times \{0,1\} \times \mathbb{Z}_{\geq 0}$. There exists a structure of $\mathrm{U}(q,R)$-module on $V$ such that
    \begin{equation}\label{mndp}(\theta^mz^na^d b^p)v_{0,0,0,0} = v_{m,n,d,p},\qquad ((m,n,d,p) \in \mathbb{Z}_{\geq 0}^2 \times \{0,1\} \times \mathbb{Z}_{\geq 0}).\end{equation}
\end{lemma}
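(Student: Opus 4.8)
The plan is to define operators $\Theta, \mathcal{Z}, \mathcal{A}, \mathcal{B}$ on the free module $V$ that satisfy the defining relations (\ref{AqR}) of $\mathrm{U}(q,R)$, and are rigged so that the generator $\theta$ acts as $\Theta$, $z$ acts as $\mathcal{Z}$, and the \emph{central-looking} combinations $a = y-x$ and $b = -xy-q^2z^2$ act as $\mathcal{A}$ and $\mathcal{B}$; then (\ref{mndp}) will follow by construction. The subtlety is that $a$ and $b$ are not literally central in $\mathrm{U}(q,R)$, so before doing anything with $V$ I would first record in $\mathrm{U}(q,R)$ the commutation rules among $\theta, z, a, b$ and the generators $x, y$. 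Concretely, from (\ref{AqR}) one extracts: $a$ and $b$ commute with each other; $b$ is central (it is, up to rescaling, the Casimir $\Omega$ of Theorem \ref{center} written in the $x,y,z$ variables, since $x+y = -(q-q^{-1})(xy+q^2z^2) = (q-q^{-1})b$ forces $x = \tfrac12\big((q-q^{-1})b - a\big)$, $y = \tfrac12\big((q-q^{-1})b + a\big)$); and $\theta, z, a$ among themselves satisfy a closed set of $q$-commutation relations. This reduces the module to be built to something essentially governed by the three ``coordinate'' operators $\theta, z, a$ acting on $R[a,b]$-many copies, with $b$ acting as a free polynomial variable.

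The key steps, in order. \textbf{Step 1:} Solve for $x, y$ in terms of $a, b$ as above, so that all eight relations in (\ref{AqR}) are rewritten purely in $\theta, z, a, b$; identify which become the statement ``$b$ central'', ``$[a,b]=0$'', and a remaining small system $\mathcal{S}$ in $\theta, z, a$. \textbf{Step 2:} On $V$, index the basis by $(m,n,d,p)$ and \emph{define} $\mathcal{B} v_{m,n,d,p} = v_{m,n,d,p+1}$ (shift in $p$), so $b$ acts freely; this is legitimate precisely because $b$ is central. \textbf{Step 3:} For fixed $p$, define $\Theta, \mathcal{Z}, \mathcal{A}$ on $\mathrm{span}(v_{m,n,d,p})_{m,n,d}$ by the ``obvious'' formulas dictated by wanting $\theta^m z^n a^d$ to be basis-producing: $\Theta$ raises $m$ (and then one must commute it past $z^n a^d$ using $\mathcal{S}$, producing lower-order correction terms, exactly as in the PBW straightening argument), $\mathcal{Z}$ raises $n$ modulo similar corrections, and $\mathcal{A}$ raises $d$ from $0$ to $1$ and, when $d=1$, re-expresses $a^2$ through lower terms using whatever quadratic relation $\mathcal{S}$ imposes on $a$ (one expects $a^2$ to be a polynomial in $z, b$, analogous to a Casimir-type identity; this is what makes $d\in\{0,1\}$ and not $d\in\mathbb{Z}_{\geq 0}$). \textbf{Step 4:} Verify that $\Theta, \mathcal{Z}, \mathcal{A}, \mathcal{B}$ satisfy all relations of (\ref{AqR}) — equivalently $\mathcal{S}$ plus centrality — hence define a genuine $\mathrm{U}(q,R)$-action; then (\ref{mndp}) holds by design with $v_{0,0,0,0}$ as cyclic vector.

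The main obstacle is \textbf{Step 3 together with Step 4}: writing down the action of $\Theta, \mathcal{Z}, \mathcal{A}$ so that $a^d$ truly only needs exponents $0,1$ requires first proving in $\mathrm{U}(q,R)$ itself that $a^2$ lies in the $R$-span of $\{z^n b^p : n,p\ge 0\}$ (an identity of Casimir type, presumably $a^2 = $ polynomial in $z^2$ and $b$, up to explicit $q$-dependent coefficients), and then checking that the straightening corrections generated when $\Theta$ and $\mathcal{Z}$ hop past each other and past $\mathcal{A}$ are consistent — i.e.\ that the relations in $\mathcal{S}$ are not just satisfied term-by-term but that the recursion closes, so that $\Theta,\mathcal{Z},\mathcal{A}$ are well defined on all of $V$ (no circularity in the inductive definition). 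This is the familiar ``diamond lemma / PBW'' bookkeeping; the novelty here is purely that the relevant normal-form monomials are $\theta^m z^n a^d b^p$ rather than the generator monomials, so I would set up a degree function (e.g.\ total degree in $\theta, z, a, b$ with $b$ weighted $2$) making each correction term strictly lower, and run the induction on that degree. Once the quadratic identity for $a^2$ is in hand, the verification of (\ref{AqR}) for the operators is a finite, if tedious, check, and the lemma follows; Theorem \ref{basis} is then immediate, since linear independence of the $\theta^m z^n a^d b^p$ in $\mathrm{U}(q,R)$ is forced by their images $v_{m,n,d,p}$ being a basis of $V$, while spanning follows from the straightening rules of Step 1.
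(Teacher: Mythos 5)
There is a genuine gap in the plan, and it is fatal to Step~2. You assert that ``$a$ and $b$ commute with each other'' and that ``$b$ is central (it is, up to rescaling, the Casimir $\Omega$)'', and then Step~2 defines $\mathcal B v_{m,n,d,p}=v_{m,n,d,p+1}$ as a uniform shift ``precisely because $b$ is central''. Neither assertion holds. From the defining relations one computes
$(q-q^{-1})(ab-ba)=2(q^{2}-q^{-2})z^{2}$
(an identity that appears verbatim in the paper's proof of Theorem~\ref{basis}), so $[a,b]\ne 0$. Moreover $b=-xy-q^{2}z^{2}$ involves no $\theta$, whereas by Theorem~\ref{center} the center of $U_q(\mathfrak g)$ is $R[\Omega]$ with $\Omega=qX+q^{-1}Y+(q-q^{-1})\theta Z$; a nonconstant polynomial in $\Omega$ necessarily has a $\theta$-term, so $b$ cannot be central. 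Thus defining $\mathcal B$ as a pure shift in $p$ that commutes with $\mathcal A$ is inconsistent with the algebra relations, and your Step~4 verification would fail; the parenthetical in Step~1 only solves for $x,y$ in terms of $a,b$ and does not justify centrality.

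The paper's actual proof sidesteps this issue by never introducing $a$ and $b$ as operators to be defined directly. It defines the actions of the generators $\theta,x,y,z$ recursively on the basis vectors, with a case analysis on whether $m$, $n$, $d$ vanish, so that each application of a generator reduces to lower filtration degree $V_r=\mathrm{span}\{v_{m,n,d,p}:m+n+d+p\le r\}$ via the defining relations. In particular the two extra prescriptions $(x+y)v_{0,0,1,p}$ and $(y-x)v_{0,0,1,p}$ implement $b\cdot(ab^{p})$ and $a^{2}b^{p}$ and carry correction terms in $z^{2}$ that come precisely from $[a,b]\ne 0$ and from the identity $a^{2}=(q-q^{-1})^{2}b^{2}+2\{2\}_qz^{2}+4b$. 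Your intuition about this quadratic identity for $a^{2}$, and about running a normal-form/PBW-type induction on a degree function, is on the right track; but the construction must incorporate the nontrivial commutator $[a,b]$ rather than assume it is zero.
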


We first prove the theorem assuming this lemma and then prove the latter.

\begin{proof}[Proof of the theorem]
Let $V$ be the $\mathrm{U}(q,R)$-module considered in the lemma. Since the family $(v_{m,n,d,p})_{m,n,d,p}$ is linearly independent, so is $(\theta^mz^na^d b^p)_{m,n,d,p}$ because of the property (\ref{mndp}).

It remains to show that the linear span $S$ of all the elements $\theta^mz^na^d b^p$ is equal to $\mathrm{U}(q,R)$. For that, it is enough to check that $S$ is a left ideal of $\mathrm{U}(q,R)$. For every $r \in \mathbb{Z}_{\geq 0}$, let $$S_r = \mathrm{span}\,\{ \theta^mz^na^d b^p : m + n + d +p \leq r\}.$$
We prove by induction on $r$ that the multiplication on the left of an element of $S_r$ by one of the generators $\theta, x, y, z$ lies in $S_{r+1}$. For $r = 0$, this property is easily checked from the following fact:
\begin{equation}\label{xyab} x = 2^{-1}(-a + (q-q^{-1})b), \quad y = 2^{-1}(a + (q-q^{-1})b).\end{equation}
Now, let us assume that the property is true for some $r \in \mathbb{Z}_{\geq 0}$. We have automatically $\theta S_{r+1} \subset S_{r+2}$. Let us then check it for $z, y, x$ in this order. Let $m,n,d,p$ be indices such that $ m + n +d + p \leq r+1$.

If $m = 0$ then we have $\bar {z}(\theta^mz^na^d b^p) = z^{n+1}a^d b^p \in S_{r+2}$. If $m \neq 0$ then
\begin{align*}
    \bar {z}(\theta^mz^na^d b^p) = z \theta  (\theta^{m-1}z^na^d b^p) = (\theta z + x - y)\theta^{m-1}z^na^d b^p.
\end{align*}
By induction hypothesis, $(x - y)\theta^{m-1}z^na^d b^p$ and $z\theta^{m-1}z^na^d b^p$ both belong to $S_{r+1}$. Having checked before that $\theta S_{r+1} \subset S_{r+2}$, we get that
$$(\theta z + x - y)\theta^{m-1}z^na^d b^p \in S_{r+2},$$
which proves that $\bar {z} S_{r+1} \subset S_{r+2}$.

We continue with $y $. Regarding (\ref{xyab}), we have $y (\theta^mz^na^d b^p) \in S_{r+2}$ when $m,n,d$ are all equal to $0$. If $m = n = 0$ and $d = 1$, then
\begin{align*}
    y (\theta^mz^na^d b^p) &= 2^{-1}a^2b^p + (q-q^{-1})bab^p.
\end{align*}
Now the equalities in the third line of (\ref{AqR}) are equivalent to the combination of the two following ones:
\begin{gather*}
    (q-q^{-1})^2b^2 - a^2 + 2\{2\}_qz^2 = - 4b,\\
    (q-q^{-1})(ab-ba) = 2(q^2-q^{-2})z^2.
\end{gather*}
Thus we have
\begin{gather*}
    y (\theta^mz^na^d b^p) = [2b + \{2\}_qz^2 + 2^{-1}(q-q^{-1})^2b^2+(q-q^{-1})ab - (q^2-q^{-2})z^2]b^p.
\end{gather*}
Since $p \leq r$, this element belongs to $S_{r+2}$. If $m = 0$ but $n \neq 0$ then
\begin{gather*}y(\theta^mz^na^d b^p) = y z(z^{n-1}a^d b^p) = (q^{-2}z y - q^{-1}z)(z^{n-1}a^d b^p),\end{gather*}
with $q^{-1}z(z^{n-1}a^d b^p) \in S_{r+1}$.
By induction hypothesis, $y (z^{n-1}a^d b^p) \in S_{r+1}$ and since we proved above that $z S_{r+1} \subset S_{r+2}$ we have $y(\theta^mz^na^d b^p) \in S_{r+2}$. It remains the case $n \neq 0$:
$$y (\theta^mz^na^d b^p) = y \theta (\theta^{m-1}z^na^d b^p) = [q^2\theta y +q(\theta- [2]_qz)](\theta^{m-1}z^na^d b^p).$$
The same type of argument as in the previous case shows that $y(\theta^mz^na^d b^p) \in S_{r+2}$. To sum up, we just proved that $y S_{r+1} \subset S_{r+2}$. The elements $x$ and $y$ satisfy the same kind of relations, see (\ref{AqR}), so the arguments used for the proof of $yS_{r+1} \subset S_{r+2}$ also work for $x$.

In conclusion, since $x $, $y$, $z$ and $\theta$ generate $\mathrm{U}(q,R)$ as an algebra, we have indeed $\mathrm{U}(q,R)S \subset S$ and thus $S = \mathrm{U}(q,R)$.
\end{proof}

Now we give a proof of lemma \ref{technical}.

\begin{proof}[Proof of the lemma]
For any $r \in \mathbb{Z}_{\geq 0}$, let $$V_r = \mathrm{span}\,\{ v_{m,n,d,p} : m + n + d +p \leq r\}.$$
We recursively define linear actions of $\theta , x, y, z$ on $V$ mapping $V_r$ to $V_{r+1}$ for all $r$. For any $m,n,d,p$, the action of $\theta$ on $v_{m,n,d,p}$ is given by
$$\theta v_{m,n,d,p} = v_{m+1,n,d,p}.$$
Then, we define
$$z v_{m,n,d,p} = \begin{cases}
    \theta z v_{m-1,n,d,p} + x v_{m-1,n,d,p} - y v_{m-1,n,d,p} \quad \text{if }m\neq 0\\
    v_{0,n+1,d,p} \quad \text{if }m=0.
\end{cases}$$
The action of $x$ and $y$ are defined by
\begin{gather*}x v_{m,n,d,p} = \begin{cases}
    q^{-2}\theta x v_{m-1,n,d,p}+ q^{-1}[2]_qzv_{m-1,n,d,p} - q^{-1}\theta v_{m-1,n,d,p} \quad \text{if }m\neq 0\\
    q^2z x v_{0,n-1,d,p}+ qzv_{0,n-1,d,p} \quad \text{if }m=0, n\neq 0\\
    2^{-1}(q-q^{-1})v_{0,0,0,p+1} - 2^{-1}v_{0,0,1,p} \quad \text{if }m=n=d=0,
    \end{cases}\\
y v_{m,n,d,p} = \begin{cases}
    q^{2}\theta y v_{m-1,n,d,p}-q[2]_qzv_{m-1,n,d,p} + q\theta v_{m-1,n,d,p} \quad \text{if }m\neq 0\\
    q^{-2}z y v_{0,n-1,d,p}- q^{-1}zv_{0,n-1,d,p} \quad \text{if }m=0, n\neq 0\\
    2^{-1}(q-q^{-1})v_{0,0,0,p+1} + 2^{-1}v_{0,0,1,p} \quad \text{if }m=n=d=0,
    \end{cases}\end{gather*}
and\begin{gather*}
(x + y)v_{0,0,1,p} = v_{0,0,1,p+1} - 2(q^2-q^{-2})v_{0,0,0,p},\\
(y -x)v_{0,0,1,p} = (q-q^{-1})^2v_{0,0,0,p+2} + 4 v_{0,0,0,p+1} + 2\{2\}_qz^2v_{0,0,0,p}.\end{gather*}

Then one can prove by induction that the defining relations (\ref{AqR}) of $\mathrm{U}(q,R)$ are satisfied for these actions. More precisely, one can check by induction that for each $r\geq 0$, the relations (\ref{AqR}) hold when applied to an element of $V_r$.

Let us assume that the relations (\ref{AqR}) hold on $V_{r-1}$. Then the first line of relations applied on elements of $V_r$ follows from the very definition of the actions of $\theta, x, y, z$ (see their expression on $v_{m,n,d,p}$ in the case $m\neq 0$). The other ones need more computations but they all rely on the same kind of techniques (which are similar to the ones used in the proof of the theorem). For that reason we only detail the proof that $qyz - q^{-1}z y = -z$ on $V_r$.

Let $m,n,d,p$ be indices such that $m+n+d+p \leq r$. Assume first that $m \neq 0$. We have $v_{m,n,d,p} = \theta v_{m-1,n,d,p}$ and by using the exchange relations involving $\theta$:
\begin{align*}
    q^{-1}zyv_{m,n,d,p} &= z (q^{-1}y \theta) v_{m-1,n,d,p}\\
    &= z (q\theta y-[2]_qz+\theta)v_{m-1,n,d,p}\\
    &= (z \theta )(qy - \mathrm{id}_V)v_{m-1,n,d,p} - [2]_qz^2v_{m-1,n,d,p}\\
    & = (\theta z + x - y)(qy - \mathrm{id}_V)v_{m-1,n,d,p} - [2]_qz^2v_{m-1,n,d,p},
\end{align*}
\begin{align*}
    q y z v_{m,n,d,p} &= qy(z \theta) v_{m-1,n,d,p}\\
    & = qy(\theta z + x - y)v_{m-1,n,d,p}\\
    & = q^2(q^{-1} y\theta) z v_{m-1,n,d,p}+ q y (x - y)v_{m-1,n,d,p}\\
    & = (q^3\theta y- q^2[2]_qz +q^2\theta)zv_{m-1,n,d,p}+ q y (x - y)v_{m-1,n,d,p}.
\end{align*}
Hence we get
\begin{align*}
    (q y z &- q^{-1}zy)v_{m,n,d,p} =\\ &[q^2\theta (qy z - q^{-1}z y)+y-x + (q^2-1)(\theta z - [2]_q z^2) + q(yx - xy)]v_{m-1,n,d,p}.
\end{align*}
Using the recursion hypothesis, we have
\begin{gather*}(qy z - q^{-1}z y)v_{m-1,n,d,p} = -z v_{m-1,n,d,p},\\
(yx - xy)v_{m-1,n,d,p} = (q^2-q^{-2})z^2v_{m-1,n,d,p}.
\end{gather*}
Gathering everything we get
$$(q y z - q^{-1}zy)v_{m,n,d,p} = (y - x - \theta z)v_{m-1,n,d,p} = - z \theta v_{m-1,n,d,p} = -z v_{m,n,d,p}.$$

Now, if we assume that $m=0$, we have by definition
$$qy z v_{0,n,d,p} = q y v_{0,n+1,d,p} = q^{-1}z y v_{0,n,d,p}-zv_{0,n,d,p}.$$

Thus, the relation $qyz - q^{-1}z y = -z$ is indeed satisfied on $V_r$. As explained before, the other relations can be checked along the same lines. One just have to treat them in the same order as they appear in (\ref{AqR}).

To sum up, the actions of $\theta, x, y, z$ on $V$ extend to a structure of $\mathrm{U}(q,R)$-module on $V$. The fact that $\theta^m z^n a^d b^p  v_{0,0,0,0} = v_{m,n,d,p}$ for all possible indices $m,n,d,p$ ($d \in \{0,1\})$ is quite straightforward from the explicit actions of $\theta, x, y , z$ on $V$.

\end{proof}

One of the interesting features of our construction is that it is functorial. Namely, if $\varphi : R \to S$ is a ring homomorphism then the universal property of $\mathrm{U}(q,R)$ implies the existence of a unique ring homomorphism $\mathrm{U}(\varphi) : \mathrm{U}(q,R) \to \mathrm{U}(\varphi(q),S)$ such that
$$A(\varphi)_{|R} = \varphi, \quad A(\varphi)(x) = x, \quad A(\varphi)(y) = y,\quad  A(\varphi)(z) = z, \quad A(\varphi)(\theta) = \theta.$$
Moreover, since they share the same universal property, the algebras $S \otimes_\varphi \mathrm{U}(q,R)$ and $\mathrm{U}(\varphi(q),S)$ are naturally isomorphic.

An important consequence of Theorem \ref{basis} is the following one:

\begin{corollary} \label{changering}
    If $R \subset S$ is a ring extension, then the associated ring morphism $\mathrm{U}(q,R) \to \mathrm{U}(q,S)$ is injective, which means that $\mathrm{U}(q,R)$ identifies with the $R$-subalgebra of $\mathrm{U}(q,S)$ generated by $\theta, x , y, z$.
\end{corollary}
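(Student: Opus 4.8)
The plan is to read this off directly from Theorem~\ref{basis}. First I would fix notation: let $\iota \colon R \hookrightarrow S$ be the inclusion. Since $q \in R^\times \subset S^\times$ we have $\iota(q) = q$, so the functoriality of the construction provides a ring homomorphism $\mathrm{U}(\iota) \colon \mathrm{U}(q,R) \to \mathrm{U}(q,S)$ that fixes each of the four generators $\theta, x, y, z$ and restricts to $\iota$ on scalars; this is exactly the morphism in the statement. Consequently $\mathrm{U}(\iota)$ also fixes the elements $a = y - x$ and $b = -xy - q^2 z^2$, hence it sends every monomial $\theta^m z^n a^d b^p$ (with $m,n,p \in \mathbb{Z}_{\geq 0}$, $d \in \{0,1\}$) of $\mathrm{U}(q,R)$ to the identically written monomial of $\mathrm{U}(q,S)$.

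Now I would apply Theorem~\ref{basis} twice. Over $R$ this family of monomials is an $R$-basis of $\mathrm{U}(q,R)$; over $S$ the same family is an $S$-basis of $\mathrm{U}(q,S)$, in particular an $S$-linearly independent set, hence \emph{a fortiori} an $R$-linearly independent set, since any nontrivial $R$-linear relation among them would be a nontrivial $S$-linear relation. Thus $\mathrm{U}(\iota)$ maps an $R$-basis of its source to an $R$-linearly independent subset of its target, so it is injective. Identifying $\mathrm{U}(q,R)$ with its image, it becomes the $R$-span of the monomials $\theta^m z^n a^d b^p$ inside $\mathrm{U}(q,S)$, which by Theorem~\ref{basis} (applied once more, now to see closure under multiplication) is precisely the $R$-subalgebra of $\mathrm{U}(q,S)$ generated by $\theta, x, y, z$.

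There is essentially no obstacle here: the whole statement is a formal corollary of the PBW-type basis in Theorem~\ref{basis}. The only point that deserves a moment's attention is checking that the ``associated ring morphism'' of the corollary is indeed the functorial map $\mathrm{U}(\iota)$ and that it fixes $a$ and $b$ — but this is immediate from the way $\mathrm{U}(\varphi)$ was defined on generators just above the statement.
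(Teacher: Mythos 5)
Your proof is correct and matches the paper's intent exactly: the paper gives no explicit argument, treating the corollary as an immediate consequence of Theorem~\ref{basis}, and the argument you supply (the functorial map fixes the PBW monomials, which are linearly independent over $S$ and hence over $R$) is precisely the one the author has in mind.
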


\subsection*{The $\mathbb{A}$-form and its specializations}

Let $\mathbb{A}$ be the ring of complex-valued real analytic functions on $\mathbb{R}_+^*$ and $\mathbb{F}$ its field of fractions. The pointwise complex conjugation defines a ring involution on $\mathbb{A}$. This involution uniquely extends to an automoprhism of $\mathbb{F}$, so that the latter becomes an involutive extension of $\mathbb{C}$. Let us denote by $\boldsymbol{q}$ the identity function on $\mathbb{R}^*_+$. It is an invertible element of $\mathbb{A}$, it is not a root of unity and it admits self-adjoint square roots.

\begin{definition} \label{defUA}
    Let us denote by $U_\mathbb{F}(\mathfrak{g})$ the deformed enveloping $\ast$-algebra $U_{\boldsymbol{q}}(\mathfrak{g})$ defined over $\mathbb{F}$. The $\mathbb{A}$-form $U_\mathbb{A}(\mathfrak{g})$ of $U_\mathbb{F}(\mathfrak{g})$ is defined as the unital $\mathbb{A}$-subalgebra of $U_\mathbb{F}(\mathfrak{g})$ generated by the elements
    $$\theta, \quad x = \frac{X-1}{\boldsymbol{q}-\boldsymbol{q}^{-1}}, \quad y = \frac{Y-1}{\boldsymbol{q}-\boldsymbol{q}^{-1}}, \quad z = \frac{Z}{\boldsymbol{q}-\boldsymbol{q}^{-1}}.$$
\end{definition}

By Proposition \ref{specialcases}, $U_\mathbb{F}(\mathfrak{g})$ identifies with $\mathrm{U}(\boldsymbol{q},\mathbb{F})$. Then, Corollary \ref{changering} indicates that $U_\mathbb{A}(\mathfrak{g})$ identifies with $\mathrm{U}(\boldsymbol{q},\mathbb{A})$. The discussion before Corollary \ref{changering} combined with Proposition \ref{specialcases} leads to the following fact.

\begin{proposition} For every $q \in \mathbb{R}_+^*$, let us denote by $\mathrm{ev}_q : \mathbb{A} \to \mathbb{C}$ the evaluation at $q$. For each such $q$, the specialization of $U_\mathbb{A}(\mathfrak{g})$ at $q$, that is $\mathbb{C} \otimes _{\mathrm{ev}_q} U_\mathbb{A}(\mathfrak{g})$, is isomorphic to $U(\mathfrak{sl_2}(\mathbb{C}))$ if $q = 1$ and it identifies with $U_q(\mathfrak{g})$ via (\ref{changeVariables}) otherwise.
\end{proposition}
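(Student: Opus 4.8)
The plan is to derive everything from the functoriality of the construction $\mathrm{U}(q,-)$ together with Proposition \ref{specialcases}, so that essentially no new computation is needed. First I would recall the base-change isomorphism established just before Corollary \ref{changering}: for any ring homomorphism $\varphi : R \to S$ one has a natural isomorphism $S \otimes_\varphi \mathrm{U}(q,R) \cong \mathrm{U}(\varphi(q),S)$, because both sides satisfy the same universal property (the left side is the universal $S$-algebra on generators $\theta,x,y,z$ subject to the relations \eqref{AqR} with $q$ replaced by $\varphi(q)$, and so is the right side). Applying this with $R = \mathbb{A}$, $S = \mathbb{C}$ and $\varphi = \mathrm{ev}_q$, and using the identification $U_\mathbb{A}(\mathfrak{g}) \cong \mathrm{U}(\boldsymbol{q},\mathbb{A})$ recorded in the paragraph following Definition \ref{defUA}, we obtain
$$\mathbb{C} \otimes_{\mathrm{ev}_q} U_\mathbb{A}(\mathfrak{g}) \;\cong\; \mathbb{C} \otimes_{\mathrm{ev}_q} \mathrm{U}(\boldsymbol{q},\mathbb{A}) \;\cong\; \mathrm{U}\bigl(\mathrm{ev}_q(\boldsymbol{q}),\mathbb{C}\bigr) \;=\; \mathrm{U}(q,\mathbb{C}),$$
since $\mathrm{ev}_q(\boldsymbol{q}) = q$ by definition of $\boldsymbol{q}$ as the identity function on $\mathbb{R}_+^*$.

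It then remains to identify $\mathrm{U}(q,\mathbb{C})$ for the two cases, and this is exactly what Proposition \ref{specialcases} provides. For $q = 1$, part \textit{(i)} of that proposition (valid since $2 \in \mathbb{C}^\times$) says that $\mathrm{U}(1,\mathbb{C})$ is the universal enveloping algebra of the Lie algebra $\mathrm{span}_\mathbb{C}(\theta,x,z)$, which is isomorphic to $\mathfrak{sl}_2(\mathbb{C})$ via the $\mathfrak{sl}_2$-triple \eqref{sl2triple}; hence $\mathbb{C} \otimes_{\mathrm{ev}_1} U_\mathbb{A}(\mathfrak{g}) \cong U(\mathfrak{sl}_2(\mathbb{C}))$. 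For $q \neq 1$, since $q$ is a self-adjoint positive real that is not a root of unity and admits a self-adjoint square root, part \textit{(ii)} applies with $R = \mathbb{C}$ and identifies $\mathrm{U}(q,\mathbb{C})$ with $U_q(\mathfrak{g})$ through the change of variables \eqref{changeVariables}. Chasing the isomorphism through, the images of the generators $x,y,z$ of $U_\mathbb{A}(\mathfrak{g})$ under specialization at $q$ are precisely $\frac{X-1}{q-q^{-1}}$, $\frac{Y-1}{q-q^{-1}}$, $\frac{Z}{q-q^{-1}}$, and $\theta$ maps to $\theta$, which is exactly the identification \eqref{changeVariables}; this is what we want to record as the content of the statement.

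I do not expect any serious obstacle here, as the argument is a formal consequence of results already in place. The only point requiring a little care is making sure that the variable-names match up: the generators of $U_\mathbb{A}(\mathfrak{g})$ are by Definition \ref{defUA} the elements $\theta$ and $x = (X-1)/(\boldsymbol{q}-\boldsymbol{q}^{-1})$, etc., living inside $U_\mathbb{F}(\mathfrak{g}) = \mathrm{U}(\boldsymbol{q},\mathbb{F})$; under the abstract identification $U_\mathbb{A}(\mathfrak{g}) \cong \mathrm{U}(\boldsymbol{q},\mathbb{A})$ these $x,y,z,\theta$ correspond to the abstract generators of $\mathrm{U}(\boldsymbol{q},\mathbb{A})$, and functoriality sends abstract generators to abstract generators. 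So the composite isomorphism $\mathbb{C} \otimes_{\mathrm{ev}_q} U_\mathbb{A}(\mathfrak{g}) \cong U_q(\mathfrak{g})$ sends the specialized $x,y,z,\theta$ to the $x,y,z,\theta$ of Proposition \ref{specialcases}\textit{(ii)}, hence to the right-hand sides of \eqref{changeVariables} inside $U_q(\mathfrak{g})$; this justifies the phrase ``identifies with $U_q(\mathfrak{g})$ via \eqref{changeVariables}'' in the statement. A one- or two-line proof along these lines should suffice.
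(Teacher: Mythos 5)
Your argument is correct and follows exactly the route the paper intends: the paper itself simply states that ``the discussion before Corollary~\ref{changering} combined with Proposition~\ref{specialcases}'' yields the result, and your proof spells out precisely that chain, namely the base-change isomorphism $\mathbb{C}\otimes_{\mathrm{ev}_q}\mathrm{U}(\boldsymbol{q},\mathbb{A})\cong\mathrm{U}(q,\mathbb{C})$ followed by the two cases of Proposition~\ref{specialcases}. Nothing is missing and no new obstacle arises.
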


It is clear that $U_\mathbb{A}(\mathfrak{g})$ is a $\ast$-subring of $U_\mathbb{F}(\mathfrak{g})$. Since  $\mathrm{ev}_q : \mathbb{A} \to \mathbb{C}$ is a morphism of $\ast$-rings, the specialization of $U_\mathbb{A}(\mathfrak{g})$ at each $q \in \mathbb{R}_+^*$ inherits a structure of $\ast$-algebra over $\mathbb{C}$. If $q \neq 1$, the latter is readily seen to coincide with the $\ast$-structure of $U_q(\mathfrak{g})$. What about the specialization at $q=1$ ?

Let $U(\mathfrak{g})$ be the enveloping $\ast$-algebra of $\mathfrak{g} = \mathfrak{sl}(2,\mathbb{R})$. We denote by $(E, F, H)$ the canonical basis of $\mathfrak{sl}(2,\mathbb{R})$. The $\ast$-structure of $\mathbb{C} \otimes _{\mathrm{ev}_1} U_\mathbb{A}(\mathfrak{g}) = \mathrm{U}(1,\mathbb{C})$ is characterized by:
$\theta^* = \theta, \, x^* = y = -x, \, z^* = z.$ Now one can check that if we set
\begin{equation}\label{specialization1} \theta = i(E-F), \quad x = \frac{H}{2}, \quad z = iE,\end{equation}
then $\mathbb{C} \otimes _{\mathrm{ev}_1} U_\mathbb{A}(\mathfrak{g})$ identifies with $U(\mathfrak{g})$ as a $\ast$-algebra.

\section{Elementary aspects of the representation theory}

We now turn to the representation theory of the $q$-deformed convolution algebras introduced in Section 2. The goal of the present section is to explain the relationships between several representation categories of these algebras. Ultimately, we will end up with several equivalent descriptions of the $q$-analogues of the $(\mathfrak{g},K)$-modules and of the $q$-analogues of the unitary representations of $G$. At the end of the section, we study a natural analogue of admissibility for these representations.

For the two following sections, we work again over a fixed involutive extension $\mathbb{K}$ of $\mathbb{C}$ and we consider an element $q \in \mathbb{K}^\times$ which is not a root of unity and has self-adjoint square-roots. The generators $X,Y,Z, \theta$ of $U_q(\mathfrak{g})$ are defined with respect to a fixed square root $q^{1/2}$ of $q$.

\subsection*{The $q$-analogues of $(\mathfrak{g},K)$-modules}  The classical $(\mathfrak{g},K)$-modules can be equally described as non-degenerate $R(\mathfrak{g},K)$-modules or as $U(\mathfrak{g})$-modules satisfying certain integrality conditions along $K$. Such an equivalence also holds in our context.

As $R_q(\mathfrak{g},K)$ is an approximately unital $\mathbb{K}$-algebra, one can consider its category of non-degenerate modules $\mathsf{Mod}_{R_q(\mathfrak{g},K)}$. Recall from Remark \ref{completionR} that $\mathcal{O}'_q(G)$ is a dense subring of the completion of $R_q(\mathfrak{g},K)$. Consequently, there is category equivalence between $\mathsf{Mod}_{R_q(\mathfrak{g},K)}$ and the category of continuous $\mathcal{O}'_q(G)$-modules (see the paragraph just after Definition \ref{defNd}).

\begin{definition}
    A $U_q(\mathfrak{g})$-module is said to be integral if the action of $\theta$ on it is diagonalizable with eigenvalues of the form $[n]_q$ for $n \in \mathbb{Z}$.
\end{definition}

Recall from Section 2 that $\mathcal{O}_q'(K)$ is the closure of $U_q(\mathfrak{g})$ in $\mathcal{O}'_q(G)$ (here we use again the content of Remark \ref{completionR}). Since $\mathcal{O}'_q(G)$ is generated as an algebra by $\mathcal{O}_q'(K)$ and $\mathcal{O}_q(K\backslash U)$, we see that $U_q(\mathfrak{g})$ is dense in $\mathcal{O}'_q(G)$. If $V$ is a given integral $U_q(\mathfrak{g})$-module then the map of the action $U_q(\mathfrak{g}) \times V \to V$ is continuous, where $V$ is endowed with the discrete topology. By completion, $V$ can be equipped with a structure of continuous $\mathcal{O}'_q(G)$-module. Conversely, a continuous $\mathcal{O}'_q(G)$-module is automatically an integral $U_q(\mathfrak{g})$ by restriction. Moreover, in this context, because of the continuity of the action, a $\mathbb{K}$-linear map intertwining the action of $U_q(\mathfrak{g})$ automatically intertwines the action of $\mathcal{O}'_q(G)$. Thus we get a category equivalence between integral $U_q(\mathfrak{g})$-modules and continuous $\mathcal{O}'_q(G)$-modules.

In conclusion, non-degenerate $R_q(\mathfrak{g},K)$-modules, integral $U_q(\mathfrak{g})$-modules and continuous $\mathcal{O}'_q(G)$-modules are essentially the same objects; in addition the related notions of intertwining maps all coincide.

\begin{convention} 
    By a $(\mathfrak{g},K)_q$-module, we mean a vector space over $\mathbb{K}$ equipped with one of these three equivalent structures of module. The intertwining maps between these objects will be referred to as morphisms of $(\mathfrak{g},K)_q$-modules or $(\mathfrak{g},K)_q$-intertwiners.
\end{convention}

Let $V$ be any $(\mathfrak{g},K)_q$-module. Recall from Section 2 that the $e_n$ are spectral projections of $\theta$, so that $e_n V$ is just the $[n]_q$-eigenspace of the action of $\theta$ on $V$ for all $n \in \mathbb{Z}$. Hence $V$ is the direct sum of its subspaces $e_n V$ as $n$ ranges over $\mathbb{Z}$. This corresponds to the isotypic decomposition of $V$ when the latter is considered as a $R_q(K)$-module (or equivalently, as an $U_q(\mathfrak{k})$-module). Accordingly, the subspaces $e_n V$ will be referred to as the $K_q$-isotypical components of $V$. Moreover, an integer $n \in \mathbb{Z}$ such that $e_n V \neq 0$ will be called a $K_q$-type of $V$.

Finally, let us define $(\mathfrak{g},K)_q$-submodules as subspaces that are stable by the action of one (hence all) of the algebras $R_q(\mathfrak{g},K)$, $U_q(\mathfrak{g})$, $\mathcal{O}'_q(G)$. A $(\mathfrak{g},K)$-module will be called simple if $\{0\}$ is its only proper $(\mathfrak{g},K)_q$-submodule. The set of isomorphism classes of simple $(\mathfrak{g},K)_q$-modules will be denoted by $\mathrm{Irr}_q(\mathfrak{g},K)$.

\subsection*{Unitarity} For this part, let $\mathbb{K} = \mathbb{C}$. Our goal is to discuss the link between Hilbert representations of the $q$-deformed group C*-algebra $C_q^*(G)$ and certain $(\mathfrak{g},K)_q$-modules.

An inner product $(\cdot |\cdot)$ on a $(\mathfrak{g},K)_q$-module $V$ will be called invariant if it satisfies $(x^*v | w) = (v | xw) $ for all $x \in R_q(\mathfrak{g},K)$ and $(v,w) \in V^2$. Note that this condition is equivalent when expressed for all elements of $U_q(\mathfrak{g})$ or $\mathcal{O}'_q(G)$ instead of $R_q(\mathfrak{g},K)$. In the following, such an inner product will always be denoted by $(\cdot |\cdot)$.

\begin{definition}
    A unitary $(\mathfrak{g},K)_q$-module is a $(\mathfrak{g},K)_q$-module which is equipped with an invariant inner product and whose $K_q$-isotypical components are Hilbert spaces. A $(\mathfrak{g},K)_q$-module will be called unitarizable if it admits an invariant inner product which turns it into a unitary $(\mathfrak{g},K)_q$-module.

    A non-zero unitary $(\mathfrak{g},K)_q$-module is said to be irreducible if it has not any proper non-zero unitary $(\mathfrak{g},K)_q$-submodule. Two unitary $(\mathfrak{g},K)_q$-modules are said to be unitarily equivalent if there exists an isometric $(\mathfrak{g},K)_q$-intertwiner between them.
\end{definition}

As explained in \cite{DCquantisation}*{Theorem 3.3} in a more general context, there is a natural one-one correspondence between unitary $(\mathfrak{g},K)_q$-modules and Hilbert representations of $C_q^*(G)$. Concretely, if $W$ is a Hilbert representation of $C_q^*(G)$ then $W_0 = R_q(\mathfrak{g},K)W = \oplus_{n\in\mathbb{Z}}e_n W$ is a unitary $(\mathfrak{g},K)_q$-module. Conversely, if $V$ is any unitary $(\mathfrak{g},K)_q$-module then its Hilbert completion $\overline{V}$ is a representation of $C^*_q(G)$ in a natural way. This is due to the strong uniform C*-boundedness of $R_q(\mathfrak{g},K)$, see the end of Section 2. These two operations are inverse to each other and they are functorial, in the sense that intertwining isometries between two unitary $(\mathfrak{g},K)_q$-modules naturally correspond to $C^*_q(G)$-intertwining isometries between their completions. In particular, this establishes a correspondence between irreducible Hilbert representations of $C^*_q(G)$ and irreducible unitary $(\mathfrak{g},K)_q$-modules. From the foregoing, we derive the following fact.

\begin{proposition} \label{unitarydual} The above correspondence induces a bijection between the spectrum of $C^*_q(G)$ and the set of unitary equivalence classes of irreducible unitary $(\mathfrak{g},K)_q$-modules. \end{proposition}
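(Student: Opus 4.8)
The plan is to leverage the functorial correspondence between Hilbert representations of $C^*_q(G)$ and unitary $(\mathfrak{g},K)_q$-modules recalled just above, and to check that it descends to the level of equivalence classes once restricted to the irreducible objects. By definition the spectrum of $C^*_q(G)$ is the set of unitary equivalence classes of its irreducible Hilbert representations, so the content of the proposition is precisely that the object-level bijection $W \mapsto W_0$, with inverse $V \mapsto \overline{V}$, between irreducible Hilbert representations of $C^*_q(G)$ and irreducible unitary $(\mathfrak{g},K)_q$-modules is compatible with the two notions of equivalence.

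First I would treat the easy direction. If $U \colon W \to W'$ is a unitary $C^*_q(G)$-intertwiner, then $U$ is in particular $R_q(\mathfrak{g},K)$-linear, so it maps $W_0 = R_q(\mathfrak{g},K)W$ isometrically onto $R_q(\mathfrak{g},K)W' = W'_0$; hence $W_0$ and $W'_0$ are unitarily equivalent as unitary $(\mathfrak{g},K)_q$-modules.

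For the converse I would use the functoriality of the completion. Given an isometric $(\mathfrak{g},K)_q$-intertwiner $T \colon W_0 \to W'_0$, it extends by continuity to an isometric $C^*_q(G)$-intertwiner $\overline{T} \colon \overline{W_0} \to \overline{W'_0}$; since the two operations $W \mapsto W_0$ and $V \mapsto \overline{V}$ are mutually inverse, we have $\overline{W_0} = W$ and $\overline{W'_0} = W'$, so $\overline{T}$ is an isometric $C^*_q(G)$-intertwiner $W \to W'$. Its range is a closed $C^*_q(G)$-invariant subspace of $W'$, and it is non-zero because $T$ is an isometry of a non-zero space, so the irreducibility of $W'$ forces the range to be all of $W'$; thus $\overline{T}$ is a unitary equivalence. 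Symmetrically, an isometric intertwiner between two irreducible unitary $(\mathfrak{g},K)_q$-modules is automatically surjective: since it intertwines the projections $e_n$, its image has $K_q$-isotypical components that are isometric images of the Hilbert spaces $e_n W_0$, hence complete and therefore closed, so the image is a unitary $(\mathfrak{g},K)_q$-submodule of $W'_0$ and thus equals the whole module. In particular, restricted to the irreducible objects, the relation of admitting an isometric intertwiner is a genuine equivalence relation on each side.

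Combining the two directions, the object-level bijection between irreducible Hilbert representations of $C^*_q(G)$ and irreducible unitary $(\mathfrak{g},K)_q$-modules carries one equivalence relation to the other, and hence descends to a bijection between the quotient sets; by the definition of the spectrum this is exactly the assertion. I do not anticipate any real obstacle here: the only step requiring a little care is the Schur-type argument that isometric intertwiners between irreducibles are surjective on both sides, which on the module side relies on the completeness of the $K_q$-isotypical components built into the definition of a unitary $(\mathfrak{g},K)_q$-module.
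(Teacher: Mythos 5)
Your argument is correct and follows exactly the path the paper has in mind: the paper itself offers no separate proof, simply stating ``From the foregoing, we derive the following fact'' after recalling the mutually inverse functorial operations $W \mapsto W_0$ and $V \mapsto \overline{V}$. You have filled in precisely the details that are left implicit — that the functoriality for isometries carries one equivalence relation to the other, and that isometric intertwiners between irreducibles are automatically surjective (on the $C^*$-side because the closed range of $\overline T$ is a non-zero invariant subspace, and on the module side because the completeness of the $K_q$-isotypical components makes the image a unitary $(\mathfrak{g},K)_q$-submodule). This matches the paper's intent; there is no genuine difference of method.
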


We will see later that the irreducible unitary $(\mathfrak{g},K)_q$-modules are simple and that the spectrum of $C^*_q(G)$ embeds into $\mathrm{Irr}_q(\mathfrak{g},K)$. The fundamental notion coming into play at this level is admissibility.

\subsection*{Admissibility} In the representation theory of general semisimple Lie groups, there are basic results asserting that simple $(\mathfrak{g},K)$-modules and irreducible unitary representations are admissible, see \cite{Wallach}*{§3.4.8 and §3.4.10}. Here we adapt them to our specific context. Let us come back to the case where $\mathbb{K}$ is a general involutive extension of $\mathbb{C}$.

\begin{definition}
    A $(\mathfrak{g},K)_q$-module is said to be admissible if its $K_q$-isotypical components are finite dimensional over $\mathbb{K}$.
\end{definition}

Here is our main admissibility criterion.
\begin{definition}
    Recall from Theorem \ref{center} the definition of the Casimir element $\Omega$.
    A $(\mathfrak{g},K)_q$-module $V$ is said to be basic if $V = U_q(\mathfrak{g})v$ for some $v \in V$ such that both $\Omega v$ and $\theta v$ are proportional to $v$.
\end{definition}

\begin{lemma} \label{admissibilitycriterion}
     If $V$ is a basic $(\mathfrak{g},K)_q$-module then its $K_q$-isotypical components are at most one-dimensional and its $K_q$-types all have the same parity. In particular $V$ is admissible.
\end{lemma}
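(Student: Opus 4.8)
The plan is to describe how the generators $X$, $Y$, $Z$ shift the $\theta$-eigenspaces $e_nV$, and then to build explicit raising and lowering operators whose relevant compositions turn out to be scalars.

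\emph{Step 1: the shift structure and the parity statement.} Since $V$ is integral and $\theta v$ is proportional to $v$, we have $v\in e_{n_0}V$ for some $n_0\in\mathbb{Z}$; and, $\Omega$ being central, $\Omega$ acts on the whole of $V$ by the scalar $c$ determined by $\Omega v=cv$. For any $v'\in e_nV$, the relations (\ref{thetaXYZ}) express $\theta(Xv')$, $\theta(Yv')$, $\theta(Zv')$ as fixed linear combinations of $Xv'$, $Yv'$, $Zv'$, so $\mathrm{span}(Xv',Yv',Zv')$ is $\theta$-stable and a short computation shows that the eigenvalues of $\theta$ on it lie in $\{[n-2]_q,[n]_q,[n+2]_q\}$ --- three pairwise distinct scalars, because $q$ is not a root of unity. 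Hence each of $X$, $Y$, $Z$ maps $e_nV$ into $e_{n-2}V\oplus e_nV\oplus e_{n+2}V$. As $X$, $Y$, $Z$, $\theta$ generate $U_q(\mathfrak{g})$ and $\theta$ preserves every $e_nV$, the subspace $\bigoplus_{m\equiv n_0\,(2)}e_mV$ is a $(\mathfrak{g},K)_q$-submodule containing $v$; by cyclicity it equals $V$, which already proves that the $K_q$-types of $V$ all have the parity of $n_0$.

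\emph{Step 2: raising/lowering operators and their loops.} Fix $n$ and, for $v'\in e_nV$, let $Xv'=E^{+}v'+E^{0}v'+E^{-}v'$ be its decomposition into $\theta$-eigencomponents; the relevant eigenlines of $\theta$ on $\mathrm{span}(Xv',Yv',Zv')$ do not depend on $v'$ (simple spectrum), so $E^{\pm}\colon e_nV\to e_{n\pm2}V$ and $E^{0}\colon e_nV\to e_nV$ are well-defined linear operators, and the eigencomponents of $Yv'$ and of $Zv'$ are proportional, componentwise, to those of $Xv'$, with constants depending only on $n$. Projecting the identity $\Omega v'=cv'$ onto the summand $e_nV$, and using that $\theta$ acts there by $[n]_q$, one obtains a relation $\kappa_n\,E^{0}=c\cdot\mathrm{id}$ on $e_nV$, where $\kappa_n$ is an explicit scalar built from $q$-numbers; checking $\kappa_n\neq0$ shows that $E^{0}$ acts on $e_nV$ as the scalar $c/\kappa_n$. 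Projecting instead the relations $XY+q^2Z^2=1$, $YX+q^{-2}Z^2=1$ and $XZ=q^2ZX$ of (\ref{XYZ}) onto $e_nV$, and using that $E^{0}$ is now known to be scalar, one gets several linear relations between the three operators $E^{-}E^{+}$, $E^{+}E^{-}$ and $\mathrm{id}_{e_nV}$; verifying that two of them are independent forces $E^{-}E^{+}$ and $E^{+}E^{-}$ to be scalar operators on $e_nV$ as well.

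\emph{Step 3: conclusion.} An arbitrary vector of $e_nV=e_n\,U_q(\mathfrak{g})v$ is obtained by inserting $\sum_m e_m=1$ between successive generators: it becomes a sum of contributions indexed by lattice paths from $n_0$ to $n$ with steps in $\{0,\pm2\}$, each contribution being (by Step 1) a scalar multiple of a composite of operators $E^{0}$, $E^{+}$, $E^{-}$ at the intermediate levels. By Step 2 every $E^{0}$, and every local maximum $E^{-}E^{+}$ or local minimum $E^{+}E^{-}$ occurring along such a path, contributes only a scalar, so each contribution reduces to a scalar multiple of $(E^{+})^{(n-n_0)/2}v$ when $n\geq n_0$, resp.\ of $(E^{-})^{(n_0-n)/2}v$ when $n\leq n_0$. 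Hence $e_nV$ is spanned by a single vector; in particular it is at most one-dimensional, it vanishes unless $n\equiv n_0\,(2)$, and admissibility is immediate. The main obstacle is Step 2: one has to carry out the projections onto $e_nV$ explicitly and verify the two non-degeneracy statements, namely $\kappa_n\neq0$ and the independence of the relations pinning down $E^{\pm}E^{\mp}$ --- that is, that no accidental cancellation among the $q$-numerical coefficients occurs.
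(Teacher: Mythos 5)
Your proposal follows the same overall strategy as the paper's proof: exploit the fact that $X,Y,Z$ shift $\theta$-eigenspaces only by $0,\pm 2$, use centrality of $\Omega$ to force the "diagonal" part and the up-down/down-up compositions to be scalars, and then conclude by a chain/path argument. Indeed, the paper constructs a chain $(v_\mu)_{\mu\in\mathbb{Z}}$ with $v_{\mu+1}=T^{+}_{n+2\mu}v_\mu$ (for $\mu\ge 0$) and $v_{\mu-1}=T^-_{n+2\mu}v_\mu$ (for $\mu\le 0$), which is precisely your "monotone path" spanning each isotypical component, and its $\theta$-eigenvalues $[n+2\mu]_q$ exhaust the $K_q$-types, giving the parity statement; Steps 1 and 3 of your write-up are correct and match this.

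The genuine issue is Step 2, and you identify it yourself. The paper does not derive the scalarity of $E^0$, $E^-E^+$, $E^+E^-$ by projecting the relations (\ref{XYZ}) onto $e_nV$ and solving a linear system. Instead it works with the transition operators $T_n^{\pm},T_n$ (which are, up to scaling, your $E^{\pm},E^0$) and proves the \emph{exact} operator identities in $U_q(\mathfrak{g})$, namely (\ref{transitions2}) and (\ref{transitions3}): $T_{n+2}^-T_n^+=T_n^2-\{n+1\}_q^2$, $T_{n-2}^+T_n^-=T_n^2-\{n-1\}_q^2$, and $\Omega-T_n=(q-q^{-1})Z(\theta-[n]_q)$. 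The last of these gives directly $T_n|_{e_nV}=\omega\,\mathrm{id}$, with no need for a $\kappa_n\neq 0$ check, and the first two then give $E^-E^+$ and $E^+E^-$ as explicit scalar polynomials in $\omega$, with no rank condition on a linear system to verify. Your approach would, after the computation, reconstruct the same facts, but as you acknowledge, the two non-degeneracy verifications (that $\kappa_n\neq 0$, and that the projected relations from (\ref{XYZ}) are independent as linear constraints on $E^-E^+$ and $E^+E^-$) are left open, and without them the proof is incomplete. The cleanest way to close the gap is to replace the projection-and-solve step by the exact identities (\ref{transitions2})--(\ref{transitions3}).
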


\begin{corollary}
    Let $V$ be a $(\mathfrak{g},K)_q$-module which is finitely generated over $U_q(\mathfrak{g})$ and on which the action of $\Omega$ is semisimple. Then $V$ is admissible.
\end{corollary}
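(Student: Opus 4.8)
The plan is to exhibit $V$ as a \emph{finite} sum of \emph{basic} submodules and then invoke Lemma~\ref{admissibilitycriterion}, using crucially that $\Omega$ generates the centre $\mathcal{Z}_q(\mathfrak{g})$ (Theorem~\ref{center}), so that $\mathbb{K}[\Omega]$ is central in $U_q(\mathfrak{g})$. First I would record two elementary reductions. If $V = M_1 + \dots + M_r$ is a finite sum of $(\mathfrak{g},K)_q$-submodules, then $e_nV \subseteq e_nM_1 + \dots + e_nM_r$ for every $n$, so $V$ is admissible as soon as each $M_j$ is; hence it suffices to write $V$ as a finite sum of basic submodules. Next, since $V = \bigoplus_{n\in\mathbb{Z}} e_nV$, any finite generating family of $V$ over $U_q(\mathfrak{g})$ may be replaced by the finitely many nonzero $K_q$-isotypical components of its members, so we may assume $V = \sum_{i=1}^{k} U_q(\mathfrak{g})v_i$ with each $v_i$ a $\theta$-eigenvector, and it is enough to handle each $U_q(\mathfrak{g})v_i$ separately. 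We are thus reduced to $V = U_q(\mathfrak{g})v$ with $\theta v = [n_0]_q v$ for some $n_0 \in \mathbb{Z}$, still finitely generated over $U_q(\mathfrak{g})$ and still with $\Omega$ acting semisimply (a $\mathbb{K}[\Omega]$-submodule of a semisimple $\mathbb{K}[\Omega]$-module is semisimple).

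Then I would split $v$ using $\Omega$. Since $\Omega$ is central, $N := \mathbb{K}[\Omega]v$ is a cyclic $\mathbb{K}[\Omega]$-submodule of $V$; being semisimple it is isomorphic to $\mathbb{K}[\Omega]/(f)$ for some nonzero squarefree $f = p_1 \cdots p_r$ with the $p_j$ pairwise non-associate irreducibles (the ideal is nonzero, as the polynomial ring $\mathbb{K}[\Omega]$ is not semisimple over itself), and in particular $N$ is finite-dimensional. Taking polynomials $e_j \in \mathbb{K}[\Omega]$ reducing mod $f$ to the orthogonal idempotents of $\prod_j \mathbb{K}[\Omega]/(p_j)$ and putting $v_j := e_j(\Omega)v \in V$, one gets $v = \sum_j v_j$ and $p_j(\Omega)v_j = 0$; and since $\theta$ commutes with $\Omega$ it preserves each isotypic summand $\mathbb{K}[\Omega]/(p_j)$ of $N$, whence $\theta v_j = [n_0]_q v_j$. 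On $M_j := U_q(\mathfrak{g})v_j$ the central element $p_j(\Omega)$ acts by zero, so $\Omega$ acts on $M_j$ through the field $\mathbb{K}_j := \mathbb{K}[\Omega]/(p_j)$, and being central this makes $M_j$ a $(\mathfrak{g},K)_q$-module over $\mathbb{K}_j$, generated over $U_q(\mathfrak{g})$ by $v_j$, a vector that is simultaneously an eigenvector of $\theta$ and of $\Omega$. Hence $M_j$ is basic over $\mathbb{K}_j$, and Lemma~\ref{admissibilitycriterion} gives $\dim_{\mathbb{K}_j} e_nM_j \le 1$, i.e. $\dim_{\mathbb{K}} e_nM_j \le \deg p_j$, for every $n$. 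Combining this with the first reduction shows that $V$ is admissible.

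The step I expect to require the most care is this passage to the finite extension $\mathbb{K}_j$: over a general involutive extension of $\mathbb{C}$ the operator $\Omega$ need not be diagonalizable, and one must observe that Lemma~\ref{admissibilitycriterion}, in both statement and proof, refers only to the $U_q(\mathfrak{g})$-module structure and not to the $\ast$-structure, so that applying it over $\mathbb{K}_j$ is legitimate even though $\mathbb{K}_j$ need not carry a natural involution. If one instead works over $\mathbb{C}$ (or any algebraically closed field) the subtlety disappears: there $N = \mathbb{K}[\Omega]v$ is a finite direct sum of $\Omega$-eigenlines, $v$ breaks up into finitely many simultaneous $\theta$- and $\Omega$-eigenvectors, and each of these generates a basic submodule on the nose, so one reads off admissibility directly from Lemma~\ref{admissibilitycriterion}.
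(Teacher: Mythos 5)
Your proof is correct, and it takes a genuinely different route from the paper's. The paper first replaces the annihilating polynomial $P$ of $\Omega$ by the lcm of $P$ and $\bar P$ so that $P$ becomes self-adjoint, then passes to a splitting field $L \supset \mathbb{K}$ of $P$; the self-adjointness is exactly what allows the involution of $\mathbb{K}$ to be extended to $L$, so that $L$ is again an involutive extension of $\mathbb{C}$, $L\otimes_{\mathbb{K}}V$ is an honest $(\mathfrak{g},K)_q$-module in the sense of the paper, $\Omega$ diagonalizes, and each generator splits into finitely many simultaneous $(\theta,\Omega)$-eigenvectors. You instead avoid splitting fields altogether: you decompose cyclically using the CRT idempotents of $\mathbb{K}[\Omega]/(f)$, and for each irreducible factor $p_j$ you view $M_j = U_q(\mathfrak{g})v_j$ as a module over $\mathbb{K}_j = \mathbb{K}[\Omega]/(p_j)$, on which $v_j$ is a genuine $\Omega$-eigenvector. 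This is more elementary (no Galois theory, no extension of the involution), but it buys that simplicity at the cost of the meta-observation you explicitly flag: $\mathbb{K}_j$ need not carry an involution, so one must unwind the definitions and check that the notion of ``integral $U_q(\mathfrak{g})$-module'' and the proof of Lemma~\ref{admissibilitycriterion} (the transition operators $T_n^{\pm}$, $T_n$ via (\ref{TXYZ}), the identities (\ref{transitions})--(\ref{transitions3}), and the resulting recursion) depend only on $q \in R^\times$ not a root of unity, and make no use of the $\ast$-structure or of $q^{1/2}$. That observation is correct, but it is precisely the bookkeeping the paper avoids by arranging $\bar P = P$. Both routes give the same quantitative bound $\dim_{\mathbb{K}} e_n V \le \sum_j \deg p_j$ per generator; neither is clearly preferable, and a referee might ask you to make the ``$(\mathfrak{g},K)_q$-module over a non-involutive field'' passage explicit rather than just flag it.
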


The proof of the lemma for $\mathbb{K} = \mathbb{C}$ is essentially given by \cite{DCDTsl2R}*{Proposition 3.8}. The case of a general involutive extension of $\mathbb{C}$ need only very minor adaptations that we will nonetheless present carefully, especially because the material needed for this proof will be used again later.

We begin with the definition of certain transitions operators which are elements of $\mathcal{O}_q(K\backslash U)$. Let us consider a 2-dimensional irreducible representation of $U_q(\mathfrak{u})$ having a basis $(\xi_{\mu,\nu})^{\mu,\nu \geq 0}_{\mu + \nu = 2}$ such that
$$E\xi_{\mu,\nu} = [\nu]_q \xi_{\mu+1,\nu-1}, \quad F \xi_{\mu,\nu} = [\mu]_q \xi_{\mu-1,\nu +1},\quad k \xi_{\mu,\nu} = q^{\mu - \nu} \xi_{\mu,\nu}.$$
For every $n \in \mathbb{Z}$, we introduce
\begin{equation}\label{w_n} \begin{gathered} w_n^\pm = q^{\pm n}\xi_{2,0} - q^{\mp n}\xi_{0,2} \mp iq^{1/2}[2]_q \xi_{1,1},\\ \quad w_n= q^{-1}\xi_{2,0} +q \xi_{0,2} + iq^{1/2}(q^n-q^{-n}) \xi_{1,1}.\end{gathered}\end{equation}
One can check that $w_n^{\pm}$ and $w_n$ are eigeinvectors of $\theta + [n]_qk$ with respective eigenvalues $[n\pm 2]_q$ and $[n]_q$. Let $(\xi_{\mu,\nu}^*)_{\mu,\nu}$ be the dual basis of $(\xi_{\mu,\nu})_{\mu,\nu}$. We define matrix coefficients $T_n^{\pm}$, $T_n$ of $U_q(\mathfrak{u})$ as follows:
\begin{equation}\label{transition} \qquad \begin{array}{ll}
    \langle T_n^\pm, x\rangle = (\xi_{2,0}^* + \xi_{0,2}^*, x w_n^\pm), \\
    \langle T_n, x\rangle = (\xi_{2,0}^* + \xi_{0,2}^*, x w_n),
\end{array} \quad \qquad (x \in U_q(\mathfrak{u})).\end{equation}
A short computation shows that $(\xi_{2,0}^* + \xi_{0,2}^*, w_0^\pm) = 0$, hence $\xi_{2,0}^* + \xi_{0,2}^*$ is in the kernel of the transpose of the action of $\theta$. We deduce that $T_n^\pm \triangleleft \theta = T_n \triangleleft \theta = 0$, so $T_n^\pm$ and $T_n$ are elements of $\mathcal{O}_q(K\backslash U)$. Moreover, a computation very similar to the one of \cite{DCDTsl2R}*{Lemma 3.3} leads to the following formulas in the algebra $U_q(\mathfrak{g})$:
\begin{equation}\label{transitions}\begin{gathered}(\theta - [n\pm2]_q)T_n^\pm = (k \triangleright T_n^\pm) (\theta - [n]_q),\\(\theta - [n]_q)T_n = (k \triangleright T_n) (\theta - [n]_q).\end{gathered}\end{equation}
Another important fact is that $\mathrm{span}(X,Y,Z) = \mathrm{span}(T_n, T_n^+, T_n^-)$ with the following explicit formulas for the change of variables:
\begin{equation}\label{TXYZ} T_n^\pm = q^{\pm n} X - q^{\mp n}Y \mp [2]_q Z, \qquad T_n = q^{-1}X + qY + (q^n - q^{-n}) Z.\end{equation}
Finally, using (\ref{XYZ}) and (\ref{thetaXYZ}), one can check that for every $n \in \mathbb{Z}$, we have
\begin{gather}\label{transitions2}T_{n+2}^-T_n^+ = T_n^2 - \{n+1\}_q^2, \qquad T_{n-2}^+T_n^- = T_n^2 - \{n-1\}_q^2, \\
\label{transitions3} \Omega - T_n = (q-q^{-1})Z(\theta - [n]_q).\end{gather}
Thanks to this material, we can now give the proof of Lemma \ref{admissibilitycriterion}.

\begin{proof}[Proof of Lemma \ref{admissibilitycriterion}] 
    Let $\omega \in \mathbb{K}$ and $n \in \mathbb{Z}$ such that $\Omega v = \omega v$ and $\theta v = [n]_q v$. Let us define a family $(v_\mu)_{\mu \in \mathbb{Z}}$ of elements of $V$ as follows:
    $$v_\mu = \begin{cases}
        v \quad  \text{if $\mu = 0$}\\
        T_{n+2(\mu-1)}^+v_{\mu-1} \quad \text{if }\mu >0 \\
        T_{n+2(\mu+1)}^-v_{\mu+1} \quad \text{if }\mu <0.
    \end{cases}$$
    From (\ref{transitions}), it follows that $v_\mu$ is an eigenvector of $\theta$ with eigenvalue $[n+2\mu]_q$, for every $\mu \in \mathbb{Z}$. Using (\ref{transitions3}) and the centrality of $\Omega$, we get
    $$\Omega v_\mu = T_{n+2\mu} v_\mu = \omega v_\mu \qquad (\mu \in \mathbb{Z}).$$
    Then, thanks to (\ref{transitions2}), we have for every $\mu \in \mathbb{Z}$,
    $$T_{n+2\mu}^+v_\mu \in \mathbb{K}v_{\mu+1}, \qquad T_{n+2\mu}^-v_\mu \in \mathbb{K}v_{\mu-1}.$$
    Since $\mathrm{span}(X,Y,Z) = \mathrm{span}(T_{n+2\mu}^+, T_{n+2\mu}^-, T_{n+2\mu})$, we conclude that the span of $\{v_\mu : \mu \in \mathbb{Z}\}$ is a $U_q(\mathfrak{g})$-submodule of $V$ and hence is equal to $V$, which proves the lemma.
\end{proof}

\begin{remark} \label{noneed}
    Let $V$ be a $U_q(\mathfrak{g})$-module and let $v \in V$. Assume that $\Omega v = \omega v$ and $\theta v = [n]_q$ for some pair $(\omega,n) \in \mathbb{K} \times \mathbb{Z}$. If $V = U_q(\mathfrak{g})v$ then the above argument still applies and we get that $V$ is integral. It is then a basic $(\mathfrak{g},K)_q$-module.
\end{remark}

\begin{proof}[Proof of the corollary]
    Let $v_1,..., v_n$ be elements of $V$ that are eigenvectors of $\theta$ and such that
    $V = \sum_{j = 1}^n U_q(\mathfrak{g})v_j$. Since $\Omega$ acts semisimply on $V$ there exists a polynomial $P$ over $\mathbb{K}$ which is a product of distinct irreducible monic polynomials and such that $P(\Omega)v_j = 0$ for all $j$. Then, by centrality of $\Omega$, one has $P(\Omega)V = 0$.
    
    Let $\bar P$ be the conjugate of $P$ with respect to the involution of $\mathbb{K}$. Since $\bar P$ is also a product of distinct irreducible polynomials, so is the lowest common multiple of $\bar P$ and $P$. Hence, without loss of generality, one can assume that $\bar P = P$. Let $L \supset \mathbb{K}$ be a splitting field of $P$. The involution on $\mathbb{K}$ can be extended to $L$ and $L \otimes_\mathbb{K} U_q(\mathfrak{g})$ identifies with the quantized enveloping $\ast$-algebra of $\mathfrak{g}$ over $L$ with parameter $q$. Given that $L$ is a finite extension of $\mathbb{K}$, the admissibility of $L \otimes_\mathbb{K} V$ over $L \otimes_\mathbb{K} U_q(\mathfrak{g})$ implies the admissibility of $V$ over $U_q(\mathfrak{g})$. To sum up, without loss of generality, one can assume that $P$ splits into linear factors.
    
    Then, the action of $\Omega$ on $V$ is diagonalizable with a finite number of eigenvalues. Since $\Omega$ preserves that $K_q$-isotypical components of $V$, each $v_j$ can be expressed as a sum of elements of $V$ that are both eigenvectors of $\Omega$ and $\theta$. Consequently, $V$ is a finite sum of basic $(\mathfrak{g},K)_q$-submodules and we conclude that $V$ is admissible thanks to Lemma \ref{admissibilitycriterion}.
\end{proof}

Let us now analyse the consequences of these results in the case $\mathbb{K} = \mathbb{C}$. A direct application of Lemma \ref{admissibilitycriterion} is that every simple $(\mathfrak{g},K)_q$-module is admissible with at most one-dimensional $K_q$-isotypical components. Indeed, since $\Omega$ is central in $U_q(\mathfrak{g})$ and the latter has countable dimension, Schur's lemma \cite{Bourbaki}*{§3, N°2} implies that $\Omega$ acts as a scalar on any simple $U_q(\mathfrak{g})$-module. Thus, any simple $(\mathfrak{g},K)_q$-module is basic. Such a result also holds for irreducible unitary $(\mathfrak{g},K)_q$-modules.

\begin{proposition} \label{ilcuequivalence}
    Any irreducible unitary $(\mathfrak{g},K)_q$-module is basic and simple, in particular it is also admissible. Two irreducible unitary $(\mathfrak{g},K)_q$-modules are unitarily equivalent if and only if they are isomorphic as $(\mathfrak{g},K)_q$-modules. Moreover, a simple $(\mathfrak{g},K)_q$-module is unitarizable if and only if there exists an invariant inner product on it.
\end{proposition}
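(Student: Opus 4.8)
The plan is to reduce everything to one key fact: the Casimir $\Omega$ acts by a scalar on any irreducible unitary $(\mathfrak{g},K)_q$-module. I would begin with two preliminary observations. First, $\Omega = qX+q^{-1}Y+(q-q^{-1})\theta Z$ is self-adjoint, which is a one-line check from $X^*=Y$, $Y^*=X$, $Z^*=Z$, $\theta^*=\theta$ and the relation $\theta Z-Z\theta=Y-X$ of (\ref{thetaXYZ}). Second, $\Omega$ commutes with all of $\mathcal{O}'_q(G)$: it is central in $U_q(\mathfrak{g})$ by Theorem \ref{center}, hence commutes with $U_q(\mathfrak{k})$ and with $\mathcal{O}_q(K\backslash U)$, hence by continuity with $\mathcal{O}'_q(K)$, hence with the algebra they generate. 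Now let $V$ be irreducible unitary; its Hilbert completion $\overline{V}$ is then an irreducible Hilbert representation of $C^*_q(G)$. For each $K_q$-type $n$, the element $e_n\Omega e_n$ lies in $R_q(\mathfrak{g},K)$ (the ideal of $\mathcal{O}'_q(G)$ generated by $R_q(K)\ni e_n$), is self-adjoint, commutes with $e_n$, hence commutes with the corner $\overline{e_nR_q(\mathfrak{g},K)e_n}=e_nC^*_q(G)e_n$. Since $e_nV=\pi(e_n)\overline{V}$ is nonzero, it is an irreducible module over this corner (a projection in a C*-algebra cuts an irreducible representation down to an irreducible representation of the corresponding corner), and strong uniform C*-boundedness of $R_q(\mathfrak{g},K)$ ensures $e_n\Omega e_n$ acts there as a bounded self-adjoint operator; by Schur's lemma it is a real scalar $\omega_n$, i.e.\ $\Omega$ acts on $e_nV$ as $\omega_n$. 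By centrality of $\Omega$, each subspace $\bigoplus_{n:\,\omega_n=\omega}e_nV$ is a $(\mathfrak{g},K)_q$-submodule whose isotypical components are Hilbert spaces, hence a unitary submodule; being nonzero for $\omega=\omega_{n_0}$ it must equal $V$, forcing $\omega_n$ to be constant. So $\Omega$ acts on $V$ as a single scalar.

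From this the first assertion follows quickly. Pick a nonzero $v$ in some isotypical component $e_nV$; then $\Omega v=\omega v$ and $\theta v=[n]_q v$, so by Remark \ref{noneed} the submodule $M=U_q(\mathfrak{g})v$ is basic, and by Lemma \ref{admissibilitycriterion} its $K_q$-isotypical components are at most one-dimensional, hence Hilbert spaces. Thus $M$ is a nonzero unitary $(\mathfrak{g},K)_q$-submodule of $V$, so $M=V$, and $V$ itself is basic and therefore admissible. Moreover $V$ is simple: any $(\mathfrak{g},K)_q$-submodule $W$ is stable under the projections $e_n$, so $W=\bigoplus_n(W\cap e_nV)$ with each summand equal to $0$ or to the one-dimensional $e_nV$, whence $W$ is a unitary submodule and thus $\{0\}$ or $V$.

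For the second assertion I would first show that any two invariant inner products on a simple $(\mathfrak{g},K)_q$-module $V$ differ by a positive scalar. Recall that $V$ is admissible with at most one-dimensional $K_q$-isotypical components, and that, by the construction in the proof of Lemma \ref{admissibilitycriterion} applied to a generator lying in an isotypical component, the set of $K_q$-types of $V$ is an interval inside a fixed coset of $2\mathbb{Z}$. Given invariant inner products $(\cdot\mid\cdot)_1$ and $(\cdot\mid\cdot)_2$, the isotypical components are mutually orthogonal for each (since $e_n^*=e_n$ and $e_ne_m=0$ for $m\neq n$), so on each one-dimensional $e_nV$ one has $(\cdot\mid\cdot)_2=\lambda_n(\cdot\mid\cdot)_1$ with $\lambda_n>0$. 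If $n$ and $n+2$ are both $K_q$-types, choose $0\neq v\in e_nV$; then $w:=T_n^+v$ spans $e_{n+2}V$, and for $j=1,2$ invariance gives $(w\mid w)_j=(e_n(T_n^+)^*T_n^+e_n\,v\mid v)_j=c\,(v\mid v)_j$, where $c$ is the scalar by which $e_n(T_n^+)^*T_n^+e_n\in R_q(\mathfrak{g},K)$ acts on the one-dimensional space $e_nV$, a quantity intrinsic to the module and independent of $j$. Hence $\lambda_{n+2}=\lambda_n$, and by connectedness all $\lambda_n$ coincide. With this uniqueness in hand, for an isomorphism $\phi:V\to V'$ of irreducible unitary modules the pullback $(v,w)\mapsto(\phi v\mid\phi w)_{V'}$ is an invariant inner product on $V$, so equals $\lambda(\cdot\mid\cdot)_V$ for some $\lambda>0$, and $\lambda^{-1/2}\phi$ is an isometric $(\mathfrak{g},K)_q$-intertwiner; the converse is trivial.

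The last assertion is then immediate: a simple $(\mathfrak{g},K)_q$-module is admissible with at most one-dimensional $K_q$-isotypical components, so each component is a finite-dimensional, hence complete, inner product space; therefore any invariant inner product on it makes it a unitary $(\mathfrak{g},K)_q$-module, and the converse is the definition of unitarizability. I expect the main obstacle to be the scalarity of $\Omega$ on an irreducible unitary module: the isotypical components of $V$ are only pre-Hilbert a priori and $\Omega$ need not even be a densely defined closed operator before admissibility is known, so one cannot argue with it directly; the way around is to pass to $\overline{V}$ and combine the irreducibility inherited by the corners $e_nC^*_q(G)e_n$ with strong uniform C*-boundedness, which is exactly what makes $e_n\Omega e_n$ a legitimate bounded self-adjoint operator there.
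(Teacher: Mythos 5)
Your argument is correct, but it reaches two of the key steps by routes genuinely different from the paper's.

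For the scalarity of $\Omega$ on an irreducible unitary module, the paper argues directly on $\overline{V}$: supposing $\Omega v$ and $v$ are not proportional, it approximates (in the strong topology) a bounded operator sending both to $v$ by elements of $R_q(\mathfrak{g},K)$, then uses self-adjointness and centrality of $\Omega$ to force $\Omega v = v$, a contradiction. You instead localize to each $K_q$-type: you show $e_n\Omega e_n$ is a self-adjoint element of $R_q(\mathfrak{g},K)$ commuting with the corner $e_nC^*_q(G)e_n$, use the standard fact that cutting an irreducible C*-representation by a projection yields an irreducible representation of the corresponding corner, deduce scalars $\omega_n$ by Schur, and then use centrality of $\Omega$ to prove $n\mapsto\omega_n$ is constant via a submodule argument. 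Both work; the paper's is shorter and does not invoke the corner lemma, while yours makes the role of the projections $e_n$ explicit and would transfer more readily to higher rank where one might want type-by-type information.

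For the equivalence ``unitarily equivalent $\Leftrightarrow$ isomorphic'' the paper builds the adjoint $f^*$ of an isomorphism $f$ directly (possible because the isotypical components are finite-dimensional and $f$ is $K_q$-diagonal) and applies Schur to $f^*f$. You instead establish uniqueness of an invariant inner product up to positive scalar on a simple module, using that the $K_q$-types form an interval in a coset of $2\mathbb{Z}$ and that consecutive types are linked by nonvanishing transition operators; the unitary equivalence then falls out by pulling back the inner product along $f$. Your route requires one point you state without justification: that $T_n^+v$ spans $e_{n+2}V$ when $n$ and $n+2$ are both $K_q$-types. This does hold for a \emph{simple} module (if $T_n^+e_nV=0$ then, by the formulas of Proposition \ref{actiontransition} and the fact that $\mathrm{span}(X,Y,Z)=\mathrm{span}(T_m^\pm,T_m)$ for all $m$, the subspace $\bigoplus_{m\leq n}e_mV$ is a proper nonzero submodule), but you should say so explicitly since you are invoking it. Apart from that unflagged step, your uniqueness-of-inner-product argument is sound; the paper's adjoint construction is arguably more economical and less tied to the $\mathfrak{sl}_2$-specific transition operators. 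Your treatment of basicness, simplicity, and the third assertion matches the paper's.
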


\begin{proof}
    Let $V$ be an irreducible unitary $(\mathfrak{g},K)_q$-module. Let us prove that the Casimir element $\Omega$ acts on $V$ as a scalar. Let $\bar V$ be the Hilbert completion of $V$. It is an irreducible representation of $C^*_q(G)$, see Proposition \ref{unitarydual}. Hence the algebra of bounded operators on $\bar V$ representing elements of $C^*_q(G)$ is strongly dense in the space of all bounded operators on $\bar V$. The same goes for $R_q(\mathfrak{g},K)$ as it is norm-dense in $C^*_q(G)$. Let us assume that there exists $v \in V$ such that $\Omega v$ is not proportional to $v$. Then there exists a bounded operator on $\bar V$ mapping both $\Omega v$ and $v$ to $v$. Using the above strong density property, one can find a sequence $(x_n)_{n \in \mathbb{N}}$ of elements of $R_q(\mathfrak{g},K)$ such that
    $$\lim_{n \to + \infty} x_n \Omega v  = \lim_{n \to + \infty} x_n v = v.$$
    Using that $\Omega$ is self-adjoint and commutes with the elements of $R_q(\mathfrak{g},K)$, we have for every $w \in V$
    \begin{align*}
        ( v|w) = \lim_{n \to + \infty} (x_n \Omega v| w) = \lim_{n \to + \infty} ( \Omega x_n v| w) = \lim_{n \to + \infty} (x_n v| \Omega w)  = (v| \Omega w) = (\Omega v| w).
    \end{align*}
    Consequently we have $\Omega v = v$, which contradicts our initial hypothesis. Hence $\Omega v$ is proportional to $v$ for all $v \in V$ and we conclude that $\Omega$ must act as a scalar on~$V$.
    
    Let $v \in V$ be an eigenvector of $\theta$. The $(\mathfrak{g},K)_q$-submodule of $V$ generated by $v$ is basic by Lemma \ref{admissibilitycriterion} and in particular is admissible, which means that it is a unitary $(\mathfrak{g},K)_q$-submodule of $V$. By irreducibility of the latter, we must have $V = U_q(\mathfrak{g})v$. Hence $V$ is basic. Because of its admissibility, the fact that $V$ is irreducible is then equivalent to its simplicity as a $(\mathfrak{g},K)_q$-module. This proves the first statement.

    Let $W$ be another unitary irreducible $(\mathfrak{g},K)_q$-module and assume that there exists a $(\mathfrak{g},K)_q$-isomorphism $f : V \to W$. Note that $V$ and $W$ are the orthogonal direct sums of their respective $K_q$-isotypical components, because $(e_n)_{n\in \mathbb{Z}}$ is a family of disjoint self-adjoint idempotents. Since both $V$ and $W$ are admissible and $f$ maps $e_nV$ to $ e_nW$ for all $n \in \mathbb{Z}$, there exists a linear map $f^* : W \to V$ such that
    $$(f(v)|w) = (v|f^*(w)) \qquad (v \in V, w \in W).$$
    Then $f^*f : V \to V$ is a $(\mathfrak{g},K)_q$-intertwiner. By simplicity of $V$ and Schur's lemma, $f^*f$ is necessarily a positive scalar $t$. Then $t^{-1/2}f : V \to W$ is an isometry, so $V$ and $W$ are unitarily equivalent. This proves the second statement. The last one follows directly from the first.
\end{proof}

As a consequence of the above results and by means of Proposition \ref{unitarydual}, the spectrum of $C^*_q(G)$ identifies with the subset of $\mathrm{Irr}_q(\mathfrak{g},K)$ corresponding to simple $(\mathfrak{g},K)_q$-modules admitting an invariant inner product. The latter has been classified by De Commer and Dzokou Talla \cite{DCDTsl2R}*{Theorem 3.17}. We shall give later a restatement of that classification in terms of parabolically induced modules. Another consequence is the following one.

\begin{corollary}
    The C*-algebra $C^*_q(G)$ is liminal, in particular it is of type I.
\end{corollary}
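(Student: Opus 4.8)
The plan is to show that $C^*_q(G)$ is \emph{liminal}, i.e.\ that $\pi(C^*_q(G)) = \mathcal{K}(H)$ for every irreducible Hilbert representation $\pi$ of $C^*_q(G)$ on a Hilbert space $H$; since a liminal C*-algebra is postliminal, it is in particular of type I. So let $\pi$ be a nonzero irreducible representation. By the correspondence recalled just before Proposition~\ref{unitarydual}, the space of $K_q$-finite vectors $H_0 = R_q(\mathfrak{g},K)H = \bigoplus_{n \in \mathbb{Z}} e_n H$ is an irreducible unitary $(\mathfrak{g},K)_q$-module, hence basic and admissible by Proposition~\ref{ilcuequivalence} and Lemma~\ref{admissibilitycriterion}. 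Therefore each $e_n H = e_n H_0$ is finite-dimensional (indeed at most one-dimensional), and so for every $m \in \mathbb{N}$ the image under $\pi$ of the self-adjoint idempotent $e_{(m)} = \sum_{|n| \leq m} e_n \in R_q(\mathfrak{g},K) \subseteq C^*_q(G)$ is the orthogonal projection onto the finite-dimensional subspace $\bigoplus_{|n| \leq m} e_n H$, hence a finite-rank operator.

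Next I would prove that $\pi(R_q(\mathfrak{g},K))$ consists of finite-rank operators. Writing an arbitrary element $a \in R_q(\mathfrak{g},K)$ as a finite sum $a = \sum_n e_n y_n$ via the first isomorphism in (\ref{ProduitGen}) (with $y_n \in \mathcal{O}_q(K\backslash U)$), and also as $a = \sum_n y'_n e_n$ via the second one, and using $e_{(m)} e_n = e_n$ for $|n| \leq m$ together with the orthogonality of the $e_n$, one gets $e_{(m)} a = a = a e_{(m)}$ for $m$ large enough, hence $e_{(m)} a e_{(m)} = a$. Consequently $\pi(a) = \pi(e_{(m)})\pi(a)\pi(e_{(m)})$ factors through the finite-dimensional subspace $\pi(e_{(m)})H$ and is of finite rank. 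Since $R_q(\mathfrak{g},K)$ is norm-dense in $C^*_q(G)$, this yields $\pi(C^*_q(G)) \subseteq \overline{\mathcal{F}(H)} = \mathcal{K}(H)$, where $\mathcal{F}(H)$ denotes the finite-rank operators. Being a C*-subalgebra of $\mathcal{K}(H)$ that acts irreducibly on $H$, $\pi(C^*_q(G))$ must then equal $\mathcal{K}(H)$, by the standard fact that an irreducibly acting C*-subalgebra of the compact operators containing a nonzero compact operator is all of them. Thus $\pi$ maps $C^*_q(G)$ onto $\mathcal{K}(H)$, so $C^*_q(G)$ is liminal and in particular of type I.

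I do not expect a real obstacle here: the decisive input is that irreducible unitary $(\mathfrak{g},K)_q$-modules are admissible, which has already been established in Proposition~\ref{ilcuequivalence}; the remainder is the classical approximate-unit/finite-rank argument for liminality, together with the elementary identification of an irreducibly acting C*-subalgebra of $\mathcal{K}(H)$ with $\mathcal{K}(H)$. The only mildly delicate point is the claim that every element of $R_q(\mathfrak{g},K)$ is fixed on both sides by some $e_{(m)}$, but this follows at once from the two multiplicative decompositions in (\ref{ProduitGen}) and the fact that $(e_{(m)})_{m}$ is an approximate unit of $R_q(K)$ built from the orthogonal idempotents $e_n$.
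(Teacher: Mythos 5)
Your proof is correct and follows essentially the same route as the paper's: admissibility of irreducible unitary $(\mathfrak{g},K)_q$-modules (Proposition \ref{ilcuequivalence}) gives that each $e_n$ acts as a finite-rank projection, the multiplicative decomposition (\ref{ProduitGen}) of $R_q(\mathfrak{g},K)$ gives that every element of $R_q(\mathfrak{g},K)$ is absorbed by some $e_{(m)}$ and hence acts with finite rank, and norm-density of $R_q(\mathfrak{g},K)$ in $C^*_q(G)$ then yields compactness of all operators in the image. You spell out slightly more than the paper does (the absorption argument and the standard fact that an irreducibly acting C*-subalgebra of $\mathcal{K}(H)$ equals $\mathcal{K}(H)$), but the ideas and the order of the argument coincide.
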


\begin{proof}
    One can restate the admissibility of irreducible unitary $(\mathfrak{g},K)_q$-modules as follows: for each $n \in \mathbb{Z}$, the element $e_n$ acts as a finite rank projection on any irreducible representation of $C^*_q(G)$. Since $R_q(\mathfrak{g},K) = R_q(K)\mathcal{O}_q(K\backslash U)$ is norm-dense in $C^*_q(G)$, the elements of $C^*_q(G)$ act as compact operators on any of its irreducible representations.
\end{proof}

\section{Analogue of parabolic induction}

In this section, we construct an analogue of parabolic induction. Denoting by $P = MAN$ the Iwasawa decomposition of the standard minimal parabolic subgroup of $G = \mathrm{SL}(2,\mathbb{R})$, we define a $q$-analogue $R_q(\mathfrak{p},M)$ of the Hecke algebra relative to the pair $(\mathfrak{p},M)$. The induction procedure is then essentially defined as suggested in Section 1. We prove that it preserves the unitarity of modules when $\mathbb{K} = \mathbb{C}$. Finally, we explicitly determine the simple submodules and quotients of the $(\mathfrak{g},K)_q$-modules that are parabolically induced from characters; we also exhibit all the intertwining maps between them.

\subsection*{Constructions}
The components of the Iwasawa decomposition of the standard parabolic subgroup $P$ of $G$ are $M = \{1,-1\}$ and
$$A = \left\{ \begin{pmatrix}
    a & 0\\
    0 & a^{-1}
\end{pmatrix} : a \in \mathbb{R}^*\right\}, \qquad N = \left\{ \begin{pmatrix}
    1 & b\\
    0 & 1
\end{pmatrix} : b \in \mathbb{R}\right\}.$$
The analogue of the $AN$ part of $G$ at the level of the enveloping algebra is $\mathcal{O}_q(K \backslash U)$, as explained in the first section. On the other hand, a natural analogue of the group algebra of $M$ is the unital subalgebra of $\mathcal{O}'_q(K)$ generated by $\sum_{n \in \mathbb{Z}} (-1)^n e_n$. Let us brielfy justify this. At $q= 1$, the element $\theta$ specializes to $i(E-F)$ so that $K = \mathrm{SO}(2) = \{ e^{it\theta} : t\in \mathbb{R}\}$. Let $\mathcal{O}'(K)$ be the dual of the space of regular functions on $K$. As a convolution algebra $\mathcal{O}'(K)$ is just a countable product of copies of $\mathbb{C}$; more precisely, if $e_n \in \mathcal{O}'(K)$ is defined by $\langle e_n , f \rangle = (2\pi)^{-1}\int_0^{2\pi} f(e^{it\theta})e^{-itn}dt$ for all $n \in \mathbb{Z}$, the following map is an isomorphism of algebras:
$$\begin{array}{cccll}
    \mathbb{C}^\mathbb{Z} & \longrightarrow & \mathcal{O}'(K) \\
    (\lambda_n)_{n \in \mathbb{Z}} & \longmapsto & \sum_{n \in \mathbb{Z}} \lambda_n e_n.
\end{array}$$
Then $K$ can be embedded into $\mathcal{O}'(K)^\times$ via $e^{it\theta} \mapsto \sum_{n\in\mathbb{Z}}e^{itn}e_n$, so the element $-1 = e^{i\pi\theta} \in M$ identifies with $\sum_{n} (-1)^ne_n$. The group algebra of $M$ then identifies with the unital subalgebra of $\mathcal{O}'(K)$ generated by this element.

Motivated by these considerations, we introduce the following:
\begin{definition}
    The $q$-deformed Hecke algebra relative of the pair $(\mathfrak{p},M)$ is defined as the subalgebra of $\mathcal{O}'_q(G)$ generated by $\mathcal{O}_q(K \backslash U)$ and $(-1)_q = \sum_{n\in \mathbb{Z}}(-1)^n e_n.$
    We denote it by $R_q(\mathfrak{p},M)$.
\end{definition}

Note that $R_q(\mathfrak{p},M)$ is a unital $\ast$-subalgebra of $\mathcal{O}'_q(G)$. Since $(-1)_q^2 = 1$, the linear span of $\{1,(-1)_q\}$ is a subalgebra of $R_q(\mathfrak{p},M)$ which we denote by $R_q(M)$. We have the following basic structure result:
\begin{proposition}
    The center of $R_q(\mathfrak{p},M)$ contains $R_q(M)$. Moreover, the multiplication map induces a isomorphism of algebras $R_q(M) \otimes \mathcal{O}_q(K\backslash U) \to R_q(\mathfrak{p},M)$.
\end{proposition}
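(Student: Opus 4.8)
The plan is to reduce everything to one structural fact about $\mathcal{O}_q(K\backslash U)$ sitting inside $\mathcal{O}'_q(G)$, namely a \emph{parity property}: for every $y\in\mathcal{O}_q(K\backslash U)$ one has $e_m\,y\,e_n=0$ in $\mathcal{O}'_q(G)$ whenever $m-n$ is odd. Granting this, set $P_{\mathrm{ev}}=\sum_{n\ \mathrm{even}}e_n$ and $P_{\mathrm{odd}}=\sum_{n\ \mathrm{odd}}e_n$; these are orthogonal idempotents of $\mathcal{O}'_q(K)$ summing to $1$, and $(-1)_q=P_{\mathrm{ev}}-P_{\mathrm{odd}}$. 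The parity property gives $P_{\mathrm{ev}}\,y\,P_{\mathrm{odd}}=P_{\mathrm{odd}}\,y\,P_{\mathrm{ev}}=0$, hence $(-1)_q\,y=P_{\mathrm{ev}}\,y\,P_{\mathrm{ev}}-P_{\mathrm{odd}}\,y\,P_{\mathrm{odd}}=y\,(-1)_q$ for all $y\in\mathcal{O}_q(K\backslash U)$. Since $(-1)_q$ also commutes with every $e_n$ (all of $\mathcal{O}'_q(K)$ being commutative), and $\mathcal{O}_q(K\backslash U)$ together with $(-1)_q$ generate $R_q(\mathfrak{p},M)$, the element $(-1)_q$ is central in $R_q(\mathfrak{p},M)$. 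Thus $R_q(M)=\mathrm{span}\{1,(-1)_q\}\subset Z(R_q(\mathfrak{p},M))$, which is the first assertion.

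To prove the parity property I would observe that the set $\mathcal{P}=\{w\in\mathcal{O}'_q(G):e_m\,w\,e_n=0\ \text{for}\ m-n\ \text{odd}\}$ is a unital subalgebra of $\mathcal{O}'_q(G)$ (a product of two operators preserving the parity of $K_q$-types again preserves it), so it suffices to show $X,Y,Z\in\mathcal{P}$. For this I invoke the transition operators of Section~5: by \eqref{TXYZ}, for \emph{each} $n\in\mathbb{Z}$ one has $\mathrm{span}(X,Y,Z)=\mathrm{span}(T_n,T_n^+,T_n^-)$, while the intertwining relations \eqref{transitions} show that $T_n^{\pm}$ maps the $[n]_q$-eigenspace of $\theta$ into its $[n\pm2]_q$-eigenspace and $T_n$ maps it into itself — equivalently $e_mT_n^{\pm}e_n=0$ for $m\neq n\pm2$ and $e_mT_ne_n=0$ for $m\neq n$, where one uses that the scalars $[k]_q$ ($k\in\mathbb{Z}$) are pairwise distinct because $q$ is not a root of unity. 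Consequently $e_mXe_n=e_mYe_n=e_mZe_n=0$ whenever $m\notin\{n-2,n,n+2\}$, in particular whenever $m-n$ is odd; hence $X,Y,Z\in\mathcal{P}$, and since they generate $\mathcal{O}_q(K\backslash U)$ we conclude $\mathcal{O}_q(K\backslash U)\subset\mathcal{P}$.

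It remains to show that multiplication $\mu\colon R_q(M)\otimes\mathcal{O}_q(K\backslash U)\to R_q(\mathfrak{p},M)$ is an algebra isomorphism. It is an algebra homomorphism precisely because $R_q(M)$ is central, and it is surjective because $(-1)_q$ being central with $(-1)_q^2=1$ forces $R_q(\mathfrak{p},M)=\mathcal{O}_q(K\backslash U)+(-1)_q\,\mathcal{O}_q(K\backslash U)$. For injectivity, suppose $a+(-1)_q\,b=0$ with $a,b\in\mathcal{O}_q(K\backslash U)$. Right-multiplying by $e_n$ and using the parity property (which, since $b\,e_n$ is supported on $K_q$-types of the parity of $n$, gives $(-1)_q\,b\,e_n=(-1)^n\,b\,e_n$), we obtain $\bigl(a+(-1)^nb\bigr)e_n=0$ in $R_q(\mathfrak{g},K)$. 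The isomorphism $\mathcal{O}_q(K\backslash U)\otimes R_q(K)\xrightarrow{\ \sim\ }R_q(\mathfrak{g},K)$ of \eqref{ProduitGen} then yields $a+(-1)^nb=0$; specializing to $n=0$ and $n=1$ gives $a=b=0$.

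The only genuinely substantive ingredient is the parity property, and the subtlety to be careful about there is that the identification $\mathrm{span}(X,Y,Z)=\mathrm{span}(T_n,T_n^+,T_n^-)$ must be used for every $n$ simultaneously, so as to control $e_mXe_n$ for all $n$ (not merely $n=0$); everything else is formal bookkeeping with the already-established multiplicative structure of $\mathcal{O}'_q(G)$.
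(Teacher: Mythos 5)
Your proof is correct, but it takes a genuinely different route from the paper's. The paper establishes centrality of $(-1)_q$ by a Hopf-algebraic argument: it observes that $(-1)_q$ coincides with $\varphi_1(k)$ for the homomorphism $\varphi_1(q^n)=(-1)^n$, hence $(-1)_q$ is group-like; then, since $\mathcal{O}_q(K\backslash U)$ is spanned by matrix coefficients of odd-dimensional type~1 representations, on which $(-1)_q$ acts trivially, one gets $y\triangleleft(-1)_q=y$, and the exchange relations \eqref{exchange} together with the right-coideal property of $\mathcal{O}_q(K\backslash U)$ give commutativity directly. You instead bypass the group-like structure and the $\triangleleft$-action and deduce centrality from the spectral ``parity property'' of the generators $X,Y,Z$, using the transition-operator relations \eqref{transitions} and \eqref{TXYZ} already in the paper. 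Both routes work; yours makes the $K_q$-type-shifting behaviour of $\mathcal{O}_q(K\backslash U)$ explicit (which is conceptually close in spirit to the analysis in Section~7), at the cost of invoking the transition-operator machinery, while the paper's is shorter because it exploits the grouplike nature of $(-1)_q$.

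Two technical points you should tighten. First, the claim that $\mathcal{P}$ is a subalgebra of $\mathcal{O}'_q(G)$ is not quite a triviality in a topological completion; the clean justification is that for $w_1,w_2\in\mathcal{P}$ one has $w_2e_n\in R_q(\mathfrak{g},K)$ (because $R_q(\mathfrak{g},K)$ is a two-sided ideal), so there is a finite $M$ with $w_2e_n=\sum_{|k|\le M}e_kw_2e_n$, and then $e_mw_1w_2e_n=\sum_{|k|\le M}(e_mw_1e_k)(e_kw_2e_n)=0$ whenever $m-n$ is odd. Second, the identity $(-1)_qy=P_{\mathrm{ev}}yP_{\mathrm{ev}}-P_{\mathrm{odd}}yP_{\mathrm{odd}}$ is most safely verified by right-multiplying by $e_n$: using the parity property and the finite support of $ye_n$ one gets $(-1)_qye_n=(-1)^nye_n=y(-1)_qe_n$ for all $n$, and the factorization $\mathcal{O}_q(K\backslash U)\otimes\mathcal{O}'_q(K)\xrightarrow{\sim}\mathcal{O}'_q(G)$ (which the paper cites for the second half of the proposition anyway) shows that $w e_n=0$ for all $n$ forces $w=0$. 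With these made precise, your injectivity argument for the multiplication map is also perfectly sound, and slightly more hands-on than the paper's, which simply restricts the linear isomorphism $\mathcal{O}'_q(K)\otimes\mathcal{O}_q(K\backslash U)\to\mathcal{O}'_q(G)$.
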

\begin{proof}
    Recall the action of $k$ and $\theta$ on any type 1 irreducible finite dimensional $U_q(\mathfrak{u})$-module $V$ are diagonalizable (see Lemma \ref{diagtheta}). Their respective set of eigenvalues are
    $$\{ q^n :| n| < \mathrm{dim}V,\, n - \mathrm{dim}V \text{ odd}\}, \qquad \{ [n]_q : |n| < \mathrm{dim}V,\, n - \mathrm{dim}V \text{ odd}\}.$$ If $\varphi_1 : q^n \mapsto (-1)^n$ and $\varphi_2 : [n]_q \mapsto (-1)^n$ then $\varphi_1 (k)$ and $\varphi_2(\theta)$, interpreted in the sense of algebraic functional calculus, are well-defined elements of $\mathcal{O}'_q(U)$. Almost by definition, we have $(-1)_q = \varphi_2(\theta)$. On the other hand, $\varphi_1 (k)$ and $\varphi_2(\theta)$ act in the same way on any type 1 irreducible finite dimensional $U_q(\mathfrak{u})$-module, so they are equal. Hence we have $(-1)_q = \varphi_2(k)$.
    
    Since $\varphi_1$ is a group homomorphism and $k$ is a grouplike element, so is $(-1)_q$. The algebra $\mathcal{O}_q(K \backslash U)$ is spanned by matrix coefficients of odd dimensional type~1 representations of $U_q(\mathfrak{u})$, hence $y \triangleleft (-1)_q = y$ for all $y \in \mathcal{O}_q(K \backslash U)$. Given the exchange relations (\ref{exchange}) and the fact that $O_q(K \backslash U)$ is right coideal of $O_q(U)$, the elements of $O_q(K \backslash U)$ all commute with $(-1)_q$. This proves the first statement.

    The second statement follows directly from the fact that the multiplication induces a linear isomorphism $\mathcal{O}_q'(K) \otimes \mathcal{O}_q(K \backslash U) \to \mathcal{O}'_q(G)$, see the discussion after Definition 3.1 of \cite{DCquantisation}.
\end{proof}

Since $(-1)_q R_q(K) \subset R_q(K)$, we have $R_q(\mathfrak{p},M) R_q(\mathfrak{g},K) \subset R_q(\mathfrak{g},K)$. Hence $R_q(\mathfrak{g},K)$ is a non-degenerate $R_q(\mathfrak{p},M)$-module for the action by left multiplication. According to Definition \ref{induction} and the discussion following it, there is a well-defined induction functor
\begin{equation}\label{concreteinduction} \mathrm{Ind}_{R_q(\mathfrak{p},M)}^{R_q(\mathfrak{g},K)} : \mathsf{Mod}_{R_q(\mathfrak{p},M)} \to \mathsf{Mod}_{R_q(\mathfrak{g},K)},\end{equation}
which associates a $(\mathfrak{g},K)_q$-module to any unital $R_q(\mathfrak{p},M)$-module.

As we will see later, when $\mathbb{K} = \mathbb{C}$, the induction functor (\ref{concreteinduction}) does not preserve unitarity. In order to fix this, we introduce a modified induction functor which twists the $R_q(\mathfrak{p},M)$-module in the first place. This can be compared to the $\rho$-shift for classical parabolic induction, see \cite{vandenBan}*{§2}.

By applying the function $q^n \mapsto q^{n/2}$ on $k$, we get an element of $\mathcal{O}_q'(U)$ that we denote by $k^{1/2}$. Let $J$ be the isomorphism of $R_q(\mathfrak{p},M)$ defined by
$$J((-1)_q) = 1, \qquad J(x)  = k^{1/2} \triangleright x \quad (\forall x \in \mathcal{O}_q(K\backslash U)).$$
If $V$ is any $R_q(\mathfrak{p},M)$-module, we denote by $V_J$ the $R_q(\mathfrak{p},M)$-module with underlying space $V$ but endowed with the following twisted action of $R_q(\mathfrak{p},M)$:
$$x\cdot v = J(x)v \qquad (x \in R_q(\mathfrak{p},M), v \in V).$$

\begin{definition}
    Let $V$ be a unital $R_q(\mathfrak{p},M)$-module. The $(\mathfrak{g},K)_q$-module
    parabolically induced from $V$, denoted by $\mathrm{Ind}_q(V)$, is the non-degenerate $R_q(\mathfrak{g},K)$-module
    $$\mathrm{Ind}_{R_q(\mathfrak{p},M)}^{R_q(\mathfrak{g},K)}(V_J).$$
\end{definition}

\subsection*{The compact picture and unitarity of induced modules} Let us now describe the parabolically induced $(\mathfrak{g},K)_q$-modules in a more concrete way. Among other applications, this will allow us to show that $\mathrm{Ind}_q$ preserves unitarity.

Let $U_q(\mathfrak{k})^\perp$ denote the annihilator of $U_q(\mathfrak{k})$ in $\mathcal{O}_q(U)$. Let us denote by $\mathcal{O}_q(K)$ the quotient of $\mathcal{O}_q(U)$ by $U_q(\mathfrak{k})^\perp$. From the fact that $U_q(\mathfrak{k})$ is a left coideal subalgebra of $U_q(\mathfrak{u})$, it follows that $U_q(\mathfrak{k})^\perp$ is a left ideal subcoalgebra of $\mathcal{O}_q(U)$. Hence $\mathcal{O}_q(K)$ inherits by quotient a structure of left $\mathcal{O}_q(U)$-module coalgebra. Moreover, recall that $\mathcal{O}_q(U)$ is dually paired with $\mathcal{O}'_q(U)$ in a non-degenerate way and that $\mathcal{O}_q'(K)$ is precisely the annihilator of $U_q(\mathfrak{k})^\perp$ in $\mathcal{O}'_q(U)$. The induced pairing between $\mathcal{O}_q'(K)$ and $\mathcal{O}_q(K)$ is thus non-degenerate.

Besides, considered as linear forms on $\mathcal{O}_q'(K)$, the elements of $\mathcal{O}_q(K)$ are continuous. Since $R_q(K)$ is dense in $\mathcal{O}_q'(K)$, the above pairing restricts to a non-degenerate pairing between $\mathcal{O}_q(K)$ and $R_q(K)$. Concretely, given the content of Lemma \ref{diagtheta}, this pairing identifies $\mathcal{O}_q(K)$ with the subspace of the dual of $R_q(K)$ spanned by the dual basis $(\zeta_n)_{n \in \mathbb{Z}}$ of $(e_n)_{n\in \mathbb{Z}}$. For each $\varepsilon \in \{-1,1\}$, let us write
$$\mathcal{O}_q^\varepsilon(K) = \{ \phi \in \mathcal{O}_q(K) :  \phi \triangleleft (-1)_q= \varepsilon \phi \}.$$
We clearly have $\mathcal{O}_q(K) = \mathcal{O}_q^{-1}(K) \oplus \mathcal{O}^1_q(K)$ and for each $\varepsilon \in \{-1,1\}$,
$$\mathcal{O}_q^\varepsilon(K) = \mathrm{span}(\zeta_n : n \in \mathbb{Z}^\varepsilon),$$
where $\mathbb{Z}^\varepsilon$ denotes the set of even integers if $\varepsilon = 1$, the set of odd integers otherwise.

Let $V$ be any $R_q(\mathfrak{p},M)$-module. We consider $\mathcal{O}_q(K) \otimes V$ as embedded in $\mathrm{Hom}_\mathbb{K}(R_q(K), V)$ as follows: if $\phi \in \mathcal{O}_q(K)$ and $ v \in V$ then $\phi\otimes v $ is the linear map $R_q(K) \to V$ defined by $(\phi\otimes v)(x) = \langle \phi, x \rangle v$ for all $x \in R_q(K)$. The action of $R_q(K)$ on itself by right-multiplication induces on $\mathrm{Hom}_\mathbb{K}(R_q(K), V)$ a structure of left $R_q(K)$-module. The expression of this action on $\mathcal{O}_q(K) \otimes V$ is quite simple:
$$x(\phi \otimes v) = (x \triangleright \phi )\otimes v, \qquad [\forall x \in R_q(K), \forall \phi \in \mathcal{O}_q(K), \forall v \in V].$$
Moreover, $\mathcal{O}_q(K) \otimes V$ is precisely the non-degenerate part of the $R_q(K)$-module $\mathrm{Hom}_\mathbb{K}(R_q(K), V)$. Indeed, for every $n \in \mathbb{Z}$, we have $e_n \mathrm{Hom}_\mathbb{K}(R_q(K), V) = \zeta_n \otimes V$. The non-degenerate part of its submodule $\mathrm{Hom}_{R_q(M)}(R_q(K), V)$ then identifies with
$$\mathcal{O}_q(K)\otimes_{R_q(M)}V = \mathcal{O}^1_q(K)\otimes V^1 \oplus \mathcal{O}^{-1}_q(K)\otimes V^{-1},$$
where $V^\varepsilon = \{ v \in V : (-1)_qv = \varepsilon v \}$ for each $\varepsilon \in \{-1,1\}$.

Let $\mathcal{O}_q(K\backslash U)$ act on $\mathcal{O}_q(K) \otimes V$ on the left as follows:
$$y(\phi \otimes v) = (y_{(2)}\phi )\otimes (k^{1/2} \triangleright y_{(1)} )v, \qquad (y \in \mathcal{O}_q(K\backslash U), \phi \in \mathcal{O}_q(K), v \in V).$$
Because they satisfy the exchange relations (\ref{exchange}), the combined actions of $\mathcal{O}_q(K\backslash U)$ and $R_q(K)$ induce on $\mathcal{O}_q(K) \otimes V$ a structure of $(\mathfrak{g},K)_q$-module. The following proposition give an alternative description of $\mathrm{Ind}_q(V)$.

\begin{proposition}
    Let $\mathrm{res}_{K_q} : \mathrm{Hom}_{R_q(\mathfrak{p},M)}(R_q(\mathfrak{g},K), V) \to \mathrm{Hom}_{R_q(M)}(R_q(K), V)$ be the restriction to $R_q(K)$. This map is bijective and it restricts to an isomorphism of $(\mathfrak{g},K)_q$-modules
    \begin{equation}\label{compact picture}\mathrm{Ind}_q(V) \longrightarrow \mathcal{O}_q(K)\otimes_{R_q(M)}V,\end{equation}
    the latter being considered as a $(\mathfrak{g},K)_q$-submodule of $\mathcal{O}_q(K) \otimes V$.
\end{proposition}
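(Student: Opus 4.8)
The plan is to recognise $\mathrm{res}_{K_q}$ as the restriction‑of‑$\mathrm{Hom}$ map attached to a factorisation of $R_q(\mathfrak g,K)$ over $R_q(M)$, to read off its bijectivity and its $R_q(K)$‑equivariance from Hom–tensor adjunction, and then to check by a direct computation with the exchange relations \eqref{exchange} that it also intertwines the $\mathcal O_q(K\backslash U)$‑actions.

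The first step is to show that multiplication realises $R_q(\mathfrak g,K)$ as the relative tensor product $R_q(\mathfrak p,M)\otimes_{R_q(M)}R_q(K)$ in the category of $(R_q(\mathfrak p,M),R_q(K))$‑bimodules. This follows by combining the multiplication isomorphism $\mathcal O_q(K\backslash U)\otimes R_q(K)\xrightarrow{\ \sim\ }R_q(\mathfrak g,K)$ of \eqref{ProduitGen} with the isomorphism $R_q(M)\otimes\mathcal O_q(K\backslash U)\xrightarrow{\ \sim\ }R_q(\mathfrak p,M)$ of the preceding structure proposition, using that $R_q(M)$ is central in $R_q(\mathfrak p,M)$ and that $R_q(M)R_q(K)\subseteq R_q(K)$. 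The standard adjunction then gives, for every left $R_q(\mathfrak p,M)$‑module $W$, a natural bijection $\mathrm{Hom}_{R_q(\mathfrak p,M)}(R_q(\mathfrak g,K),W)\xrightarrow{\ \sim\ }\mathrm{Hom}_{R_q(M)}(R_q(K),W)$, namely restriction along $R_q(K)\hookrightarrow R_q(\mathfrak g,K)$, with inverse $g\mapsto\bigl(yx\mapsto J(y)g(x)\bigr)$ for $y\in\mathcal O_q(K\backslash U)$, $x\in R_q(K)$; this inverse is well defined because $J$ restricts to the identity on $R_q(M)$ and $R_q(M)$ is central, so the twist is compatible with the $\mathcal O_q(K\backslash U)$‑ and $R_q(K)$‑decompositions of $R_q(\mathfrak g,K)$. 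Applying this with $W=V_J$, and noting that $V_J=V$ as an $R_q(M)$‑module, yields the bijectivity of $\mathrm{res}_{K_q}$. Restriction is tautologically equivariant for the right multiplication of $R_q(K)$, and since a module over $R_q(\mathfrak g,K)$ is non‑degenerate precisely when it is non‑degenerate over the subalgebra $R_q(K)$ (an approximate unit of $R_q(K)$ being one of $R_q(\mathfrak g,K)$, see Section~2), $\mathrm{res}_{K_q}$ carries $\mathrm{Ind}_q(V)$ bijectively onto the non‑degenerate part of $\mathrm{Hom}_{R_q(M)}(R_q(K),V)$, which the paragraph preceding the statement identifies with $\mathcal O_q(K)\otimes_{R_q(M)}V$. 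This already makes \eqref{compact picture} an isomorphism of $R_q(K)$‑modules.

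It then remains to upgrade it to an isomorphism of $(\mathfrak g,K)_q$‑modules by checking $\mathcal O_q(K\backslash U)$‑equivariance; since $R_q(K)$ and $\mathcal O_q(K\backslash U)$ generate $R_q(\mathfrak g,K)$, this completes the proof. Fix $f\in\mathrm{Ind}_q(V)$ and $y\in\mathcal O_q(K\backslash U)$ and write $\mathrm{res}_{K_q}(f)=\sum_n\zeta_n\otimes f(e_n)$ under the identification of $\mathcal O_q(K)$ with $\mathrm{span}(\zeta_n)$. For $x\in R_q(K)$ one computes $(y\cdot f)(x)=f(xy)$ using the exchange relation \eqref{exchange}: $xy=y_{(1)}(x\triangleleft y_{(2)})$, where $x\triangleleft y_{(2)}$ still lies in $R_q(K)$ (stability of $R_q(K)$ under $\triangleleft\,\mathcal O_q(U)$, which is part of the construction of the Hecke algebra, cf.\ \cite{DCquantisation}). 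Since $y_{(1)}\in\mathcal O_q(K\backslash U)\subseteq R_q(\mathfrak p,M)$ and $f$ is $R_q(\mathfrak p,M)$‑linear into $V_J$, the definition of the $J$‑twist gives $f(xy)=(k^{1/2}\triangleright y_{(1)})\cdot f(x\triangleleft y_{(2)})$. Expanding $x\triangleleft y_{(2)}=\sum_m\langle\zeta_m,x\triangleleft y_{(2)}\rangle e_m$, applying $f$, and using that the left $\mathcal O_q(U)$‑module‑coalgebra action on $\mathcal O_q(K)$ is dual to $\triangleleft\,\mathcal O_q(U)$ on $R_q(K)$ for the pairing $\mathcal O_q(K)\times R_q(K)$ — i.e.\ $\langle y_{(2)}\zeta_m,x\rangle=\langle\zeta_m,x\triangleleft y_{(2)}\rangle$ — one obtains $f(xy)=\sum_m\langle y_{(2)}\zeta_m,x\rangle(k^{1/2}\triangleright y_{(1)})f(e_m)$, which is exactly the value at $x$ of $\sum_m(y_{(2)}\zeta_m)\otimes(k^{1/2}\triangleright y_{(1)})f(e_m)=y\cdot\mathrm{res}_{K_q}(f)$, the right‑hand side being the defining formula for the $\mathcal O_q(K\backslash U)$‑action on $\mathcal O_q(K)\otimes V$.

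The main obstacle is this last equivariance computation: one must place the $k^{1/2}$‑twist correctly — it is precisely the automorphism $J$ built into $\mathrm{Ind}_q$, so it surfaces exactly when the $\mathcal O_q(K\backslash U)$‑part of $R_q(\mathfrak p,M)$ acts on $V_J$ — and one should handle the mild topological points with care: $\Delta(y)$ is a \emph{finite} sum, because $y$ lies in the genuine Hopf algebra $\mathcal O_q(K\backslash U)$, but one does need to know that each intermediate term, notably $x\triangleleft y_{(2)}$, remains inside $R_q(K)$ (equivalently inside $R_q(\mathfrak g,K)$, where $f$ is defined) and not merely in $\mathcal O'_q(K)$. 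Beyond that, everything is routine bookkeeping with the non‑degenerate pairing between $\mathcal O_q(K)$ and $R_q(K)$.
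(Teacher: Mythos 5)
Your proof is correct and follows essentially the same approach as the paper: bijectivity of $\mathrm{res}_{K_q}$ from the factorization of $R_q(\mathfrak{g},K)$ over $R_q(M)$ (the paper simply invokes the multiplication isomorphism $\mathcal{O}_q(K\backslash U)\otimes R_q(K)\xrightarrow{\sim}R_q(\mathfrak{g},K)$ and centrality of $R_q(M)$), restriction to non‑degenerate parts using a shared approximate unit, and a direct exchange‑relation computation for $\mathcal{O}_q(K\backslash U)$‑equivariance. You are somewhat more explicit than the paper — you write out the bimodule factorization $R_q(\mathfrak{g},K)\cong R_q(\mathfrak{p},M)\otimes_{R_q(M)}R_q(K)$ together with its Hom–tensor adjunction and an explicit inverse, and you flag that $x\triangleleft y_{(2)}$ stays in $R_q(K)$, a point the paper's computation leaves implicit.
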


\begin{proof}
    The fact that $\mathrm{res}_{K_q}$ is injective is due to the fact that $R_q(\mathfrak{g},K)$ identifies with $\mathcal{O}_q(K\backslash U) \otimes R_q(K)$ via multiplication, with $\mathcal{O}_q(K\backslash U) \subset R_q(\mathfrak{p},M)$. For the surjectivity, one has to use in addition that $R_q(M)$ is central in $R_q(\mathfrak{p},M)$.
    
    By definition, $\mathrm{Ind}_q(V)$ is the non-degenerate part of $\mathrm{Hom}_{R_q(\mathfrak{p},M)}(R_q(\mathfrak{g},K), V)$. This non-degenerate part does not depend on wether we consider the latter as a $R_q(\mathfrak{g},K)$-module or as a $R_q(K)$-module because both algebras have an approximate unit in common. Clearly, the map $\mathrm{res}_{K_q}$ intertwines the actions of $R_q(K)$. Hence $\mathrm{res}_{K_q}(\mathrm{Ind}_q(V))$ is the non-degenerate part of $\mathrm{Hom}_{R_q(M)}(R_q(K), V)$, that is $\mathcal{O}_q(K) \otimes_{R_q(M)}V$. Thus, $\mathrm{res}_{K_q}$ induces an isomorphism of $R_q(K)$-modules from $\mathrm{Ind}_q(V)$ to $\mathcal{O}_q(K) \otimes_{R_q(M)}V$.

    To conclude the proof, let us check that this isomorphism also intertwines the actions of $\mathcal{O}_q(K\backslash U)$. Let $\xi \in \mathrm{Ind}_q(V)$. Let us write
    $\mathrm{res}_{K_q}(\xi) = \sum_j \phi_j \otimes v_j,$
    where $\phi_j \in \mathcal{O}_q(K)$, $v_j \in V$ for all $j$. If $y \in \mathcal{O}_q(K\backslash U)$ and $x \in R_q(K)$ then
    \begin{align*}
        [\mathrm{res}_{K_q}(y\xi)](x) &= \xi(xy) = \xi [y_{(1)}(x \triangleleft y_{(2)})] = J(y_{(1)})\xi(x \triangleleft y_{(2)})\\
        & = \textstyle\sum_j \langle y_{(2)} \phi_j, x \rangle \otimes (k^{1/2} \triangleright y_{(1)})v_j = [y\, \mathrm{res}_{K_q}(\xi)](x).
    \end{align*}
\end{proof}

Now, let us explain how a $(\mathfrak{g},K)_q$-module induced from a unitary $R_q(\mathfrak{p},M)$-module can itself be naturally unitarized, when $\mathbb{K} = \mathbb{C}$. For that purpose, here is our key result.

\begin{proposition} \label{innerprod}
    Let $(\cdot |\cdot)$ be the sesquilinear form on $\mathcal{O}_q(U)$ defined by
    $$(\phi | \psi) = \langle (\phi^* \triangleleft k) \psi , e_0 \rangle \qquad (\phi,\psi \in \mathcal{O}_q(U)).$$
    This sesquilinear form is hermitian, non-negative and its kernel is the annihilator of $U_q(\mathfrak{k})$ in $\mathcal{O}_q(U)$. The subsequent inner product on $\mathcal{O}_q(K)$, which we write identically, satisfies the following properties:
    \begin{enumerate}[label = (\roman*)]
        \item\label{i} the basis $(\zeta_n)_{n\in \mathbb{Z}}$ is orthogonal and we have $(\zeta_n |\zeta_n) = 2\{n\}_q^{-1}$ for all $n \in \mathbb{Z}$,
        \item\label{ii} $\forall(\phi,\psi) \in \mathcal{O}_q(K)^2, \forall x \in \mathcal{O}_q'(K), \quad(x \triangleright \phi | \psi) = (\phi | x^*\triangleright \psi )$,
        \item \label{iii} $\forall (\phi,\psi) \in \mathcal{O}_q(K)^2, \forall y \in \mathcal{O}_q(U), \quad (y \phi | \psi) = (\phi | (y^* \triangleleft k) \psi )$.
    \end{enumerate}
\end{proposition}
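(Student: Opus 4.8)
The plan is to translate the statement into the Peter--Weyl picture of $\mathcal{O}_q(U)$, in which $e_0$ becomes a concrete orthogonal projection. Write $\mathcal{O}_q(U)=\bigoplus_{n\geq 0}C_n$, with $C_n$ the coalgebra of matrix coefficients of the $(n+1)$-dimensional type~$1$ irreducible $U_q(\mathfrak u)$-module $V_n$, equipped with its $U_q(\mathfrak u)$-invariant inner product $(\cdot,\cdot)_n$. Since $\theta^*=\theta$, the operator $\theta$ is self-adjoint on each $V_n$, and Lemma~\ref{diagtheta} shows its spectrum is simple; hence $e_m$ acts on $V_n$ as the orthogonal projection onto the (at most one-dimensional) $[m]_q$-eigenspace, and in particular $e_0$ acts as the projection onto the space of $K_q$-fixed vectors, which is nonzero exactly when $n$ is even. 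Equivalently $\langle a,e_0\rangle=\epsilon(e_0\triangleright a)$, $\epsilon$ the counit of $\mathcal{O}_q(U)$, so $\langle a,e_0\rangle$ depends only on the ``left $K_q$-average'' of $a$. Throughout I will use the standard Hopf-$\ast$-algebra identities $\langle\phi^*,x\rangle=\overline{\langle\phi,S(x)^*\rangle}$, $\langle ab,x\rangle=\langle a\otimes b,\Delta x\rangle$, $\langle x\triangleright a,y\rangle=\langle a,yx\rangle$, $(b\triangleleft x)^*=b^*\triangleleft S(x)^*$, $S(k)^*=k^{-1}$, and the fact that $k$ is grouplike, so $\triangleleft k$ is an algebra automorphism of $\mathcal{O}_q(U)$.

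Property~(iii) is immediate from the definition: since $\triangleleft k$ is multiplicative and $(y\phi)^*=\phi^*y^*$,
$$\bigl(\phi\,\big|\,(y^*\triangleleft k)\psi\bigr)=\bigl\langle(\phi^*\triangleleft k)(y^*\triangleleft k)\psi,\,e_0\bigr\rangle=\bigl\langle\bigl((y\phi)^*\triangleleft k\bigr)\psi,\,e_0\bigr\rangle=(y\phi\,|\,\psi).$$
For hermiticity and non-negativity I would put the form in GNS form. Using $\langle ab,e_0\rangle=\langle a\otimes b,\Delta e_0\rangle$ together with $\Delta(\theta)=\theta\otimes 1+k\otimes\theta$ --- which is again self-adjoint on any tensor product of unitarizable $U_q(\mathfrak u)$-modules --- the element $\Delta(e_0)$ acts on each such tensor product as the \emph{orthogonal} projection onto its $K_q$-invariant subspace. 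Expanding this projection over an orthonormal basis $(\Phi_i)$ of $K_q$-invariants and inserting the matrix-coefficient descriptions of $\phi^*\triangleleft k$ and $\psi$, one obtains a finite expansion $(\phi\,|\,\psi)=\sum_i\overline{\ell_i(\phi)}\,\ell_i(\psi)$ for suitable linear functionals $\ell_i$; the twist $\triangleleft k$ is precisely what makes the first slot the complex conjugate. Hermiticity and $(\phi\,|\,\phi)\geq 0$ are then manifest.

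Next, the kernel and the properties of the induced inner product. If $\psi\in U_q(\mathfrak k)^\perp$ then $(\phi^*\triangleleft k)\psi\in U_q(\mathfrak k)^\perp$, since the latter is a left ideal of $\mathcal{O}_q(U)$, and $e_0\in\mathcal{O}'_q(K)=(U_q(\mathfrak k)^\perp)^\perp$, so $(\phi\,|\,\psi)=0$; with hermiticity this gives $U_q(\mathfrak k)^\perp\subseteq\ker(\cdot\,|\,\cdot)$, and the form descends to $\mathcal{O}_q(K)$. For (ii): from $\langle x\triangleright a,e_0\rangle=\langle a,e_0x\rangle$ and the fact that $\mathcal{O}'_q(K)$ is commutative with $e_0$ the minimal idempotent of the trivial $U_q(\mathfrak k)$-type --- whence $xe_0=\epsilon(x)e_0$ --- the functional $\langle\cdot,e_0\rangle$ is $\mathcal{O}'_q(K)$-invariant; combining this with $(x\triangleright\phi)^*=S^{-1}(x^*)\triangleright\phi^*$ and the commutation of $\triangleright$ with $\triangleleft$ yields (ii). For (i), taking as lift of $\zeta_n$ the diagonal matrix coefficient $u_n$ of $V_{|n|}$ at the weight-$[n]_q$ unit vector, a direct evaluation of $\langle(u_n^*\triangleleft k)u_n,e_0\rangle$ from the explicit action of $k$ and $\theta$ gives $(\zeta_n\,|\,\zeta_n)=2\{n\}_q^{-1}$; and $(\zeta_n\,|\,\zeta_m)=0$ for $n\neq m$ follows from (ii) applied with $x=e_m$, using the direct check $e_m\triangleright\zeta_n=\delta_{m,n}\zeta_n$ on $\mathcal{O}_q(K)$. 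Since the $\zeta_n$ span $\mathcal{O}_q(K)$, the descended form is then positive-definite, which forces $\ker(\cdot\,|\,\cdot)$ to be exactly $U_q(\mathfrak k)^\perp$. (At $q=1$ everything degenerates to the Fourier orthogonality $\|e^{in\theta}\|^2_{L^2(\mathrm{SO}(2))}=1$, a useful consistency check.)

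The main obstacle is the GNS-style rewriting of $\langle(\phi^*\triangleleft k)\psi,e_0\rangle$: making hermiticity and positivity manifest requires careful book-keeping of the interplay between $\ast$, the antipode $S$ (with $S^2\neq\mathrm{id}$, so $\mathcal{O}_q(U)$ is genuinely non-cocommutative), and the modular twist $\triangleleft k$. The point is that $\langle\cdot,e_0\rangle$, a ``$K_q$-restricted integral'', is unital, but the positivity of the \emph{form} truly uses both that $e_0$ is a \emph{self-adjoint} idempotent --- so that $\Delta(e_0)$ is an honest orthogonal projection on tensor products, by self-adjointness of $\Delta\theta$ --- and the precise shape of the twist; structurally this is the same computation that shows the $L^2$-inner product on a compact quantum homogeneous space is positive.
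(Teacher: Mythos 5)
Your overall blueprint is sound and several pieces are exactly right: the proof of~(iii) is the same one-line computation as the paper's, the description of $e_0$ as a self-adjoint idempotent whose coproduct acts as an orthogonal projection onto $K_q$-invariants on tensor products is correct (and your check that $\Delta(\theta)=\theta\otimes 1+k\otimes\theta$ is self-adjoint is a good observation), and the orthogonality in~(i) and the reduction of~(ii) to a ``left-invariance'' statement for $\langle\cdot,e_0\rangle$ both go in the right direction.

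The genuine gap is in the hermiticity/positivity step, which is the crux of the proposition. You assert that after expanding $\Delta(e_0)$ over an orthonormal basis of $K_q$-invariants and inserting matrix-coefficient descriptions, ``one obtains a finite expansion $(\phi\,|\,\psi)=\sum_i\overline{\ell_i(\phi)}\,\ell_i(\psi)$,'' and that ``the twist $\triangleleft k$ is precisely what makes the first slot the complex conjugate.'' This is exactly the part that needs to be proved and is not proved: the interaction of the $*$-structure, the antipode, and the $k$-twist under $\Delta(e_0)$ is nontrivial, and showing that the resulting sum is conjugate-symmetric amounts to establishing a modular-type identity for $e_0$. The paper does this by appealing to the lemma $S(e_0)=e_0k$ (imported from the authors' earlier work on invariant integrals on coideals), after which hermiticity is a short chain of Hopf-$*$-algebra manipulations; without this identity, your GNS rewriting is circular --- you would effectively be assuming the very compatibility you want to prove. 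A similar issue affects~(i): the explicit constant $2\{n\}_q^{-1}$ in the paper comes from the explicit formula $\Delta(e_0)=\sum_n 2\{n\}_q^{-1}S(e_n)k^{-1}\otimes e_n$ (again imported from the earlier paper), whereas your ``direct evaluation of $\langle(u_n^*\triangleleft k)u_n,e_0\rangle$'' is left entirely to the reader and is, in effect, a re-derivation of that formula.

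For~(ii), your sketch based on $xe_0=\epsilon(x)e_0$ is morally correct, but the actual derivation needs the identity $(S(x)\otimes 1)\Delta(e_0)=(1\otimes x)\Delta(e_0)$ for $x\in\mathcal{O}'_q(K)$, whose proof uses crucially that $U_q(\mathfrak{k})$ is a \emph{left coideal} (so that $x_{(2)}\in U_q(\mathfrak{k})$ when $x\in U_q(\mathfrak{k})$) followed by a continuity argument; the ``commutation of $\triangleright$ with $\triangleleft$'' you invoke is not sufficient on its own, because the action has already been absorbed into a product inside the $\langle\cdot,e_0\rangle$ pairing. In short: you have the right picture, but the three facts about $e_0$ that carry the whole proof --- $S(e_0)=e_0k$, the explicit $\Delta(e_0)$, and the coideal-invariance identity --- are either missing or asserted without argument.
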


The proof of the proposition is based on the following lemma.

\begin{lemma}\label{Deltae0}
    We have $S(e_0) = e_0 k$ and
    $$\Delta(e_0) = \sum_{n \in \mathbb{Z}}2 \{n\}_q^{-1} S(e_n) k^{-1} \otimes e_n.$$
    Moreover, for any $x \in \mathcal{O}'_q(K)$, we have $(S(x) \otimes 1)\Delta(e_0) = (1 \otimes x) \Delta(e_0).$
\end{lemma}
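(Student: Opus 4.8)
\emph{Plan.} I would first reduce all three assertions to the middle one (the formula for $\Delta(e_0)$), and then prove that one by testing against tensor products of finite–dimensional representations.

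For the reductions, note that applying $\mathrm{id}\otimes\epsilon$ to the claimed formula for $\Delta(e_0)$ and using $\epsilon(e_n)=\delta_{n,0}$ together with $\{0\}_q=2$ gives $e_0=(\mathrm{id}\otimes\epsilon)\Delta(e_0)=S(e_0)k^{-1}$, i.e. $S(e_0)=e_0k$; so the first assertion follows. For the third, write an arbitrary $x\in\mathcal{O}'_q(K)$ as a convergent sum $x=\sum_m\lambda_m e_m$ (possible since $R_q(K)=\mathrm{span}(e_m)$ is dense in its completion $\mathcal{O}'_q(K)$ and the $e_m$ are orthogonal idempotents). Since $S$ and the multiplication are continuous, $S$ is anti-multiplicative, and $e_nx=\lambda_ne_n=xe_n$, both $(S(x)\otimes 1)\Delta(e_0)$ and $(1\otimes x)\Delta(e_0)$ reduce to $\sum_n 2\{n\}_q^{-1}\lambda_n\,S(e_n)k^{-1}\otimes e_n$. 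Hence it remains to establish the formula for $\Delta(e_0)$.

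Since $\mathcal{O}'_q(U^2)$ is the algebraic dual of $\mathcal{O}_q(U)^{\otimes 2}$ and $\mathcal{O}_q(U)$ is spanned by matrix coefficients of finite–dimensional type-$1$ $U_q(\mathfrak{u})$-modules, it suffices to check that the two sides act identically on $V\otimes W$ for all such $V,W$. A direct computation from the coproducts of $E,F,k$ gives $\Delta(\theta)=\theta\otimes 1+k\otimes\theta$. As $e_0$ acts on any finite–dimensional module as the spectral projection of the action of $\theta$ onto its kernel, it acts on $V\otimes W$ as the spectral projection of $\theta\otimes 1+k\otimes\theta$ onto its kernel. Decomposing $W=\bigoplus_n W_n$ into $\theta$-eigenspaces (so $\theta$ acts by $[n]_q$ on $W_n$), and using that $\mathcal{O}'_q(K)$ is commutative, hence $\Delta(e_0)$ commutes with each $1\otimes e_n$, one obtains $\Delta(e_0)=\sum_n f_n\otimes e_n$, where $f_n\in\mathcal{O}'_q(U)$ acts on every finite–dimensional module as the spectral projection of $\theta+[n]_q k$ onto its $0$-eigenspace (this operator being diagonalizable because it is the action of $\theta$ on $V\otimes W_n$ for any $W$ with $n$ among its weights, to which Lemma \ref{diagtheta} applies). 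So the formula is equivalent to the identity $f_n=2\{n\}_q^{-1}S(e_n)k^{-1}$.

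To prove this it is enough to evaluate on an irreducible $V=V(m)$, since $e_n$, $S(e_n)$, $k$ and $\theta+[n]_q k$ all respect direct sums. Using that the action of $S(x)$ on $V$ is the transpose of the action of $x$ on the contragredient $V^*$, together with $S(\theta)=-k^{-1}\theta$, one checks that $S(e_n)$ acts on $V(m)$ as the spectral projection of $S(\theta)$ onto its $[n]_q$-eigenspace $E$, which is at most one-dimensional. If $E=0$ both sides vanish; otherwise, since $\theta+[n]_q k=k([n]_q-S(\theta))$, the $0$-eigenspace of $\theta+[n]_q k$ is exactly $E$, and one checks that its complement (the sum of the nonzero eigenspaces of $\theta+[n]_qk$) equals $k$ applied to $\bigoplus_{l\neq n}$ of the $[l]_q$-eigenspaces of $S(\theta)$, which is precisely $\ker\!\big(S(e_n)k^{-1}\big)$. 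Thus $S(e_n)k^{-1}$ and $f_n$ have the same image and the same kernel, so $S(e_n)k^{-1}=c\,f_n$ for a scalar $c$; evaluating at any $0\neq v\in E$ and using the invariant Hermitian form on $V(m)$ (available since $q>0$), with respect to which $\theta$ is self-adjoint and $k$ positive, one finds $c=\langle v,v\rangle/\langle kv,v\rangle=\tfrac12\{n\}_q$.

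The formal manipulations and the comparison of spectral projections are routine; the one genuinely delicate point is this last step — pinning down the normalization constant $\tfrac12\{n\}_q$ (equivalently, that $S(e_n)k^{-1}$ is a scalar multiple of an idempotent), which ultimately relies on the explicit eigenvector structure of $\theta$ on $V(m)$ from Lemma \ref{diagtheta}.
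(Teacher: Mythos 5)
You take a genuinely different route from the paper. The paper proves only the third identity $(S(x)\otimes 1)\Delta(e_0)=(1\otimes x)\Delta(e_0)$, by a direct Hopf-algebra computation starting from $xe_0=\epsilon(x)e_0$ for $x\in U_q(\mathfrak{k})$ and extending to $\mathcal{O}'_q(K)$ by density; for the first two assertions it simply quotes Theorems 1.12 and 3.1 of \cite{DCDTinvariant}. You instead reduce all three claims to the middle formula and verify that one by testing against $V\otimes W$ for finite-dimensional type-1 modules. That reduction is correct ($\epsilon(e_n)=\delta_{n,0}$ gives the first part, and orthogonality of the $e_m$ plus anti-multiplicativity of $S$ gives the third), the identification of the coefficient $f_n$ in $\Delta(e_0)=\sum_n f_n\otimes e_n$ as the spectral projection of $\theta+[n]_qk$ onto its kernel is sound (restrict $\Delta(\theta)=\theta\otimes 1+k\otimes\theta$ to the $\Delta(\theta)$-invariant subspace $V\otimes W_n$), and the image/kernel comparison of $S(e_n)k^{-1}$ with $f_n$ on an irreducible $V(m)$ works because $\theta$ has multiplicity-free spectrum there. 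So your approach has the genuine merit of making the lemma self-contained rather than referred out.

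That said, two steps are glossed over. The passage to $c=\langle v,v\rangle/\langle kv,v\rangle$ is not as routine as you suggest: the useful observation is that $kv$ lies in the $[n]_q$-eigenspace of $S^{-1}(\theta)=-\theta k^{-1}$ (since $v$ lies in the $[n]_q$-eigenspace of $S(\theta)$), so that $S^{-1}(e_n)kv=kv$; pairing $S(e_n)k^{-1}v=cv$ against $kv$ and using $(S(e_n))^{*}=S^{-1}(e_n)$ and $k^{*}=k$ then gives $c\langle v,kv\rangle=\langle k^{-1}v,S^{-1}(e_n)kv\rangle=\langle k^{-1}v,kv\rangle=\langle v,v\rangle$. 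More importantly, the evaluation $\langle v,v\rangle/\langle kv,v\rangle=\tfrac12\{n\}_q$ is precisely the nontrivial computation that the cited theorems of \cite{DCDTinvariant} carry out; you assert it, and to finish your argument one must actually return to the explicit eigenvectors from Lemma~\ref{diagtheta}. Finally, your use of the positive-definite invariant Hermitian form implicitly assumes $\mathbb{K}=\mathbb{C}$ and $q\in\mathbb{R}_+^*$, while the lemma is stated over a general involutive extension $\mathbb{K}$; this is repairable (the identity tested on each $V\otimes W$ is rational in $q$, so validity on an infinite set of positive reals propagates), but the step should be stated.
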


\begin{proof}[Proof of the lemma]
By construction, we have $x e_0 = e_0 x = \epsilon(x)e_0$ for all $x \in U_q(\mathfrak{k})$, where $\epsilon$ denotes the counit of $U_q(\mathfrak{u})$. Using this, we compute for every $x \in U_q(\mathfrak{k})$:
\begin{align*}
        (S(x) \otimes 1)\Delta(e_0) & = (S(x_{(1)})\otimes 1) \Delta(\epsilon(x_{(2)})e_0) = (S(x_{(1)})\otimes 1) \Delta(x_{(2)}e_0) \\
        & = (S(x_{(1)})\otimes 1) \Delta(x_{(2)}) \Delta(e_0) =(S(x_{(1)})x_{(2)}\otimes x_{(3)})\Delta(e_0) \\
        & = (\epsilon(x_{(1)})\otimes x_{(2)})\Delta(e_0)  = (1 \otimes x)\Delta(e_0).
    \end{align*}
    For the second equality, we used that $\mathscr{U}_q^*(\mathfrak{k})$ is a left coideal of $\mathscr{U}_q^*(\mathfrak{u})$. The map
    $$x \in \mathcal{O}_q'(K) \longmapsto (S(x) \otimes 1 - 1 \otimes x)\Delta(e_0)\in \mathcal{O}'_q(U^2)$$
    is continuous and vanishes on $U_q(\mathfrak{k})$. The latter is dense in $\mathcal{O}'_q(K)$ so we have indeed $(S(x) \otimes 1)\Delta(e_0) = (1 \otimes x)\Delta(e_0)$ for all $x \in \mathcal{O}'_q(K)$.

    The rest of the lemma is somehow hidden throughout \cite{DCDTinvariant}. By Theorem 1.12 of that article, the equality $S(e_0) = e_0 k$ is equivalent to the existence of a non-zero $k^{-1}$-invariant functional on $R_q(K)$, which has been proven a little further (Theorem 3.1). The fact that there exist constants $c_n > 0$ such that
$$\Delta(e_0) = \sum_{n \in \mathbb{Z}}c_n S(e_n) k^{-1} \otimes e_n$$
can be found in the proof of Theorem 1.12, see in particular formula (1.31). Formula (1.33) of the same proof shows how to compute these constants. In our case, we have $c_n e_n K e_n = e_n$. The explicit value of $c_n^{-1}$ for each $n\in\mathbb{Z}$ is then computed in the proof of Theorem 3.1 (it is equal to $\mu_{[a+n]}$ in the case $a=0$).
    
\end{proof}

\begin{proof}[Proof of the proposition]
    For now, we consider $(\cdot |\cdot)$ as a sesquilinear form on $\mathcal{O}_q(U)$. Let us prove first that $(\cdot | \cdot )$ is hermitian. By the above lemma, we have $S(e_0)^* = k e_0$, so for any $\phi,\psi \in \mathcal{O}_q(K\backslash U)$,
\begin{align*}
    \overline{(\psi | \phi)} &= \overline{\langle (\psi^* \triangleleft k)\phi, e_0 \rangle} = \langle \phi^* (\psi^* \triangleleft k)^*, S(e_0)^* \rangle \\
    & = \langle \phi^* (\psi \triangleleft k^{-1}), ke_0 \rangle = \langle [\phi^* (\psi \triangleleft k^{-1})] \triangleleft k, e_0 \rangle\\
    & = \langle (\phi^* \triangleleft k)(\psi \triangleleft k^{-1} k), e_0 \rangle = (\phi | \psi).
\end{align*}
Since $\Delta(e_0) \in \mathcal{O}_q'(U \times K)$, we have $(\phi |\psi)=0$ whenever $\psi$ is in the annihilator of $U_q^*(\mathfrak{k})$ in $\mathcal{O}_q(U)$. The latter is thus included in the kernel of $(\cdot | \cdot)$. This means that $(\cdot |\cdot)$ is well defined as a hermitian form on $O_q(K)$.

Now, we prove the formulas \textit{\ref{ii}} and \textit{\ref{iii}}. Let $\phi, \psi \in \mathcal{O}_q(U)$ and $y \in \mathcal{O}_q(U), x \in \mathcal{O}_q'(K)$. We have
\begin{align*}
    (y \phi | \psi) &= \langle [(\phi^*y^*)\triangleleft k]\psi, e_0 \rangle = \langle (\phi^*\triangleleft k)(y^* \triangleleft k)\psi, e_0 \rangle = (\phi | (y^* \triangleleft k)\psi ),
\end{align*}
as well as
\begin{align*}
    (x\triangleright \phi | \psi ) & = \langle [(x \triangleright \phi)^*\triangleleft k]\psi, e_0 \rangle  = \langle (S(x)^* \triangleright \phi^* \triangleleft k)\psi, e_0 \rangle \\
    & = \langle (\phi^* \triangleleft k) \otimes \psi , \Delta(e_0)(S(x)^* \otimes 1) \rangle.
\end{align*}
Using the last formula of lemma \ref{Deltae0}, we have
$$\Delta(e_0)(S(x)^* \otimes 1) = [(S(x) \otimes 1)\Delta(e_0)]^* = [(1\otimes x) \Delta(e_0)]^* = \Delta(e_0) (1 \otimes x^*),$$
and hence
\begin{align*}
    (x\triangleright \phi | \psi ) & = \langle (\phi^* \triangleleft k) \otimes \psi , \Delta(e_0)(1\otimes x^*) \rangle \\
    & = \langle (\phi^* \triangleleft k) \otimes (x^*\triangleright \psi) , \Delta(e_0) \rangle = (\phi | x^*\triangleright \psi).
\end{align*}
These identities still hold after passing to the quotient.

Finally, in order to complete the proof of the proposition, we just check \textit{\ref{i}}. Using that $S^2$ is the conjugation by $k^{-1}$ and the main formula of the lemma, we obtain
$$(k\otimes 1)\Delta(e_0) = \sum_{n \in \mathbb{Z}}c_n S^{-1}(e_n) \otimes e_n,$$
where $c_n = 2/\{n\}_q$ for all $n \in \mathbb{Z}$. Hence, for every $\phi,\psi \in \mathcal{O}_q(K)$, we have
$$(\psi| \phi) = \langle \psi^*\otimes \phi, (1\otimes k) \Delta(e_0)\rangle = \sum_{n \in \mathbb{Z}}c_n \overline{\langle \psi, e_n \rangle} \langle \phi, e_n \rangle.$$
This concludes the proof.
\end{proof}

Recall that $R_q(\mathfrak{p},M)$ is a $\ast$-algebra. Let us call unitary any $R_q(\mathfrak{p},M)$-module equipped with an invariant inner product for which it is a Hilbert space. By \textit{invariant}, we mean that $x^*$ acts as the adjoint of $x$ with respect to the inner product, for all $x \in R_q(\mathfrak{p},M)$.

\begin{corollary} \label{unitaritypreserved}
    Let $V$ be a unitary $R_q(\mathfrak{p},M)$-module. We equip $\mathcal{O}_q(K)$ with the inner product constructed in Proposition \ref{innerprod}. Let us endow $\mathcal{O}_q(K) \otimes V$ with the tensor product inner product. Equipped with the latter, $\mathcal{O}_q(K) \otimes_{R_q(M)} V$ becomes a unitary $(\mathfrak{g},K)_q$-module.
\end{corollary}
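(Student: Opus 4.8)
The plan is to work throughout in the compact picture~(\ref{compact picture}) and to verify the two defining properties of a unitary $(\mathfrak g,K)_q$-module directly. First I would observe that $\mathcal O_q(K)\otimes_{R_q(M)}V=\mathcal O_q^1(K)\otimes V^1\oplus\mathcal O_q^{-1}(K)\otimes V^{-1}$ is an orthogonal direct summand of $\mathcal O_q(K)\otimes V$ for the tensor product inner product: the subspaces $\mathcal O_q^{\pm1}(K)=\mathrm{span}(\zeta_n:n\in\mathbb Z^{\pm1})$ are mutually orthogonal by part~\ref{i} of Proposition~\ref{innerprod}, and $V^1\perp V^{-1}$ because $(-1)_q$ is self-adjoint ($(-1)_q^*=(-1)_q$) and acts on the unitary module $V$. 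Hence the tensor inner product restricts to an inner product on $\mathcal O_q(K)\otimes_{R_q(M)}V$, and it is positive definite: on $\mathcal O_q(K)$ it is positive definite by Proposition~\ref{innerprod} (its radical is the annihilator of $U_q(\mathfrak k)$), and $V$ is a Hilbert space.

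Next I would check that every $K_q$-isotypical component is complete. Since $R_q(K)$ acts by $x(\phi\otimes v)=(x\triangleright\phi)\otimes v$ and $\zeta_n$ spans the $K_q$-type $n$ part of $\mathcal O_q(K)$, one has $e_n\bigl(\mathcal O_q(K)\otimes_{R_q(M)}V\bigr)=\zeta_n\otimes V^{\varepsilon_n}$ with $\varepsilon_n$ the parity of $n$. As $(\zeta_n|\zeta_n)>0$ and $V^{\varepsilon_n}$ is a closed subspace of the Hilbert space $V$ --- being the kernel of the bounded operator $1-\varepsilon_n(-1)_q$ --- this component is, up to a positive rescaling, isometric to $V^{\varepsilon_n}$, hence a Hilbert space.

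It remains to prove invariance, i.e.\ that $x^*$ acts as the adjoint of $x$ for every $x\in R_q(\mathfrak g,K)$. The set of such $x$ is a subalgebra of $R_q(\mathfrak g,K)$, so since $R_q(\mathfrak g,K)=R_q(K)\,\mathcal O_q(K\backslash U)$ it suffices to treat $x\in R_q(K)$ and $y\in\mathcal O_q(K\backslash U)$. For $x\in R_q(K)$ it is immediate from part~\ref{ii} of Proposition~\ref{innerprod}. For $y\in\mathcal O_q(K\backslash U)$ I would expand $(y(\phi\otimes v)\mid\psi\otimes w)$ using $y(\phi\otimes v)=\sum(y_{(2)}\phi)\otimes(k^{1/2}\triangleright y_{(1)})v$, then move $y_{(2)}$ across the $\mathcal O_q(K)$-factor by part~\ref{iii} of Proposition~\ref{innerprod} (which introduces a twist $\triangleleft k$) and move $k^{1/2}\triangleright y_{(1)}$ across the $V$-factor using the unitarity of $V$ and the identity $(k^{1/2}\triangleright a)^*=k^{-1/2}\triangleright a^*$, valid because $k^{1/2}$ is a self-adjoint grouplike element of $\mathcal O_q'(U)$. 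Expanding $(\phi\otimes v\mid y^*(\psi\otimes w))$ in the same way, and using that $\Delta$ is a $\ast$-homomorphism, the required equality reduces to the identity
$$\sum(c_{(2)}\triangleleft k)\otimes(k^{-1/2}\triangleright c_{(1)})=\sum c_{(2)}\otimes(k^{1/2}\triangleright c_{(1)})$$
in $\mathcal O_q(U)\otimes\mathcal O_q(K\backslash U)$, where $c=y^*$; both sides equal $\sum(c_{(2)}\triangleleft k^{1/2})\otimes c_{(1)}$, by coassociativity together with the elementary identity $\langle g_1,a_{(1)}\rangle\langle g_2,a_{(2)}\rangle=\langle g_1g_2,a\rangle$ for grouplike $g_1,g_2$.

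The hard part is precisely this last step: keeping track of which Sweedler leg carries the action on $\mathcal O_q(K)$ and which carries the action on $V$, and checking that the $\triangleleft k$-twist in Proposition~\ref{innerprod}\,\ref{iii} exactly cancels the $k^{1/2}$-twist used to define $\mathrm{Ind}_q$. The remaining ingredients --- positivity, completeness of the $K_q$-isotypical components, and the $R_q(K)$-invariance --- follow at once from Proposition~\ref{innerprod} and the hypothesis that $V$ is a Hilbert $R_q(\mathfrak p,M)$-module.
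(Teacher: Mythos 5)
The paper explicitly omits this proof, saying only that it ``relies on straightforward computations using the invariance properties of the inner product of Proposition~\ref{innerprod}''; your proposal is a correct and complete write-out of exactly those computations. All three things you check (positive-definiteness via the orthogonal splitting $\mathcal{O}_q^1(K)\otimes V^1 \oplus \mathcal{O}_q^{-1}(K)\otimes V^{-1}$, Hilbert-space completeness of the one-dimensional-$\zeta_n$-tensored isotypical components, and $R_q(\mathfrak{g},K)$-invariance reduced to parts~\ref{ii} and~\ref{iii} of Proposition~\ref{innerprod} plus the cancellation identity $\sum(c_{(2)}\triangleleft k)\otimes(k^{-1/2}\triangleright c_{(1)})=\sum c_{(2)}\otimes(k^{1/2}\triangleright c_{(1)})$) are correct, and the final Sweedler computation does cancel the $\triangleleft k$-twist against the $k^{1/2}$-twist in the definition of $\mathrm{Ind}_q$ as you claim.
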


Thus, regarding the isomorphism (\ref{compact picture}), if $V$ is a unitary $R_q(\mathfrak{p},M)$-module then $\mathrm{Ind}_q(V)$ is a unitary $(\mathfrak{g},K)_q$-module. This property is functorial: an intertwining isometry $V \to W$ between two unitary $R_q(\mathfrak{p},M)$-modules induces an intertwining isometry $\mathrm{Ind}_q(V) \to \mathrm{Ind}_q(W)$.

Since it only relies on straightforward computations using the invariance properties of the inner product of Proposition \ref{innerprod}, we omit the proof of this corollary. However, let us comment on it in view of the induction procedure proposed in \cite{DCinduction}. The triple $(\mathcal{O}_q(U), \mathcal{O}_q(K\backslash U), \mathcal{O}_q(K))$ is a unitary Doi-Kopinen datum of coideal type. Using the notation of \cite{DCinduction}, let $L^2_{0}(K_q)$ be the object of $_{\mathcal{O}_q(U)}\mathrm{Rep}^{\mathcal{O}_q(K)}$ whose underlying unitary $\mathcal{O}_q(K)$-comodule is $\mathcal{O}_q(K)$ (equipped with the above inner product) but with the following twisted $\mathcal{O}_q(U)$-action:
$$y \cdot\varphi = (y \triangleleft k^{1/2})\varphi, \qquad (\varphi \in \mathcal{O}_q(K), y \in \mathcal{O}_q(U)).$$
If $V$ is a unitary $R_q(\mathfrak{p},M)$-module, then it is in particular a $\ast$-representation of $\mathcal{O}_q(K\backslash U)$ and its $L^2_0(K_q)$-induction in the sense of \cite{DCinduction}*{Definition 2.3} exactly corresponds to the $R_q(\mathfrak{g},K)$-module $\mathcal{O}_q(K) \otimes V$ via \cite{DCquantisation}*{Theorem 3.3}.

\subsection*{$(\mathfrak{g},K)_q$-modules induced from characters}
In classical representation theory, the interesting parabolically induced representations of $G$ come from representations of $P$ that are trivial on the nilpotent radical $N$. As discussed in Section 3, the analogue of the nilpotent radical at the level of the quantized enveloping algebra is the ideal of $\mathcal{O}_q(K\backslash U)$ generated by $Z$. We are thus inclined to study the $(\mathfrak{g},K)_q$-modules induced from $R_q(\mathfrak{p},M)$-modules on which $Z$ acts trivially. In this regard, from Proposition \ref{ab}, we deduce the following.

\begin{proposition} The kernel of the morphism of algebras $$\mathrm{id} \otimes \mathrm{ab} : R_q(\mathfrak{p},M) \cong R_q(M) \otimes \mathcal{O}_q(K\backslash U) \to R_q(M) \otimes \mathbb{K}[t,t^{-1}]$$
is the ideal of $R_q(\mathfrak{p},M)$ generated by $Z$ (see Proposition \ref{ab} for the definition of $\mathrm{ab}$). This morphism identifies $R_q(M) \otimes \mathbb{K}[t,t^{-1}]$ with the abelianization of $R_q(\mathfrak{p},M)$.

The unital simple modules over $R_q(\mathfrak{p},M)$ on which $Z$ acts trivially correspond to its characters. For any $(\varepsilon, \lambda) \in \{-1,1\} \times \mathbb{K}^\times$, let $\chi_{\varepsilon, \lambda}$ be the unique character of $R_q(\mathfrak{p},M)$ such that $\langle \chi_{\varepsilon, \lambda},(-1)_q\rangle = \varepsilon$,  $\langle \chi_{\varepsilon, \lambda}, X\rangle = \lambda$ and $\langle \chi_{\varepsilon, \lambda}, Y\rangle = \lambda^{-1}$. Every character of $R_q(\mathfrak{p},M)$ is of this form.
\end{proposition}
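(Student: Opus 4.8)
The plan is to reduce every assertion to the structure isomorphism $R_q(M)\otimes\mathcal{O}_q(K\backslash U)\xrightarrow{\sim}R_q(\mathfrak{p},M)$ of the previous proposition together with Proposition~\ref{ab}. For the statement about the kernel, I would use that tensoring over the field $\mathbb{K}$ is exact, so that $\ker(\mathrm{id}\otimes\mathrm{ab})=R_q(M)\otimes\ker(\mathrm{ab})$, while Proposition~\ref{ab} gives $\ker(\mathrm{ab})=Z\cdot\mathcal{O}_q(K\backslash U)$. Transporting this subspace through the multiplication isomorphism and using that $R_q(M)$ is central in $R_q(\mathfrak{p},M)$ (previous proposition), it becomes $R_q(M)\,Z\,\mathcal{O}_q(K\backslash U)$; since Proposition~\ref{ab} also records that the left, right and two-sided ideals of $\mathcal{O}_q(K\backslash U)$ generated by $Z$ all coincide, and $R_q(M)$ is central, this is exactly the two-sided ideal of $R_q(\mathfrak{p},M)$ generated by $Z$.

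For the abelianization claim, the target $R_q(M)\otimes\mathbb{K}[t,t^{-1}]$ is commutative: $\mathbb{K}[t,t^{-1}]$ is, $R_q(M)=\mathrm{span}_\mathbb{K}\{1,(-1)_q\}$ is (as $(-1)_q^2=1$), and the two factors commute inside $R_q(\mathfrak{p},M)$ by the previous proposition. To check the universal property, let $f\colon R_q(\mathfrak{p},M)\to Q$ be any algebra morphism with $Q$ commutative. Its restriction to $\mathcal{O}_q(K\backslash U)$ is then a morphism onto a commutative algebra, so the computation in the proof of Proposition~\ref{ab} (which uses only the relations~(\ref{XYZ})) forces $f(Z)=0$; hence $f$ kills the ideal generated by $Z$ and, by surjectivity of the quotient map $R_q(\mathfrak{p},M)\to R_q(M)\otimes\mathbb{K}[t,t^{-1}]$, factors through it uniquely. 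This is precisely the universal property of the abelianization.

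For the characters and the simple $Z$-trivial modules, I would note that any character of $R_q(\mathfrak{p},M)$, being valued in the commutative ring $\mathbb{K}$, factors uniquely through the abelianization. Since $(-1)_q^2=1$ and $2$ is invertible in $\mathbb{K}$, one has $R_q(M)\cong\mathbb{K}\times\mathbb{K}$ via $(-1)_q\mapsto(1,-1)$, so the abelianization is $\mathbb{K}[t,t^{-1}]\times\mathbb{K}[t,t^{-1}]$; a character of it amounts to a choice of factor — recorded by the value $\varepsilon=\pm1$ it takes on $(-1)_q$ — followed by a character $t\mapsto\lambda$ of $\mathbb{K}[t,t^{-1}]$, necessarily with $\lambda\in\mathbb{K}^\times$. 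Transporting this back through $\mathrm{ab}(X)=t$, $\mathrm{ab}(Y)=t^{-1}$, $\mathrm{ab}(Z)=0$ produces exactly $\chi_{\varepsilon,\lambda}$; conversely, as $X,Y,Z,(-1)_q$ generate $R_q(\mathfrak{p},M)$ and $Z$ must be sent to $0$, the prescribed values determine a unique character, which yields both the uniqueness of $\chi_{\varepsilon,\lambda}$ and the surjectivity of $(\varepsilon,\lambda)\mapsto\chi_{\varepsilon,\lambda}$. Finally, a unital simple $R_q(\mathfrak{p},M)$-module on which $Z$ acts trivially is the same as a simple module over $R_q(\mathfrak{p},M)/(Z)\cong R_q(M)\otimes\mathbb{K}[t,t^{-1}]$, and its one-dimensional instances are precisely the $\chi_{\varepsilon,\lambda}$ (over an algebraically closed $\mathbb{K}$, e.g.\ $\mathbb{K}=\mathbb{C}$, every simple $\mathbb{K}[t,t^{-1}]$-module is one-dimensional, so all of them arise this way). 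The only point requiring genuine attention is the bookkeeping of the first paragraph, where identifying $R_q(M)\otimes\ker(\mathrm{ab})$ with the two-sided ideal generated by $Z$ uses both the centrality of $R_q(M)$ and the coincidence of one- and two-sided ideals in $\mathcal{O}_q(K\backslash U)$; the remaining steps are routine.
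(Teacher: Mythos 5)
Your proof is correct, and it supplies the argument that the paper leaves implicit: the paper states the proposition with only the remark that it is ``deduced from Proposition~\ref{ab}'' and gives no further details. The route you take — exactness of $R_q(M)\otimes_\mathbb{K}(-)$ to compute the kernel, the coincidence of one- and two-sided ideals of $\mathcal{O}_q(K\backslash U)$ generated by $Z$ together with centrality of $R_q(M)$ to identify that kernel with the ideal of $R_q(\mathfrak{p},M)$ generated by $Z$, the universal-property check for the abelianization, and the splitting $R_q(M)\cong\mathbb{K}\times\mathbb{K}$ to enumerate characters — is exactly the routine derivation the paper intends, so there is no meaningful divergence of approach to report.

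One point of yours deserves emphasis: you rightly flag, in parentheses, that the sentence ``the unital simple modules over $R_q(\mathfrak{p},M)$ on which $Z$ acts trivially correspond to its characters'' is a bijection only when every simple $\mathbb{K}[t,t^{-1}]$-module is one-dimensional, which holds for $\mathbb{K}=\mathbb{C}$ but not for a general involutive extension of $\mathbb{C}$ such as the field $\mathbb{F}$ introduced in Section~4 (there $\mathbb{K}[t,t^{-1}]/(p)$ for $p$ irreducible of degree $>1$ gives a simple $Z$-trivial module which is not a character). The proposition as stated does not record this restriction, while all its subsequent uses in the paper concern only characters, so the oversight is harmless — but your caveat is a genuine correction to the letter of the statement rather than a superfluous remark.
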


For every $(\varepsilon , \lambda) \in \{-1,1\} \times \mathbb{K}^\times$, let us denote by $\mathrm{Ind}_q(\varepsilon, \lambda)$ the $(\mathfrak{g},K)_q$ parabolically induced from the character $\chi_{\varepsilon,\lambda}$. In the following, our goal is to study the simplicity of these induced $(\mathfrak{g},K)_q$-modules, as well as the intertwining maps between them.

Let us fix a pair $(\varepsilon, \lambda) \in \{-1,1\} \times \mathbb{K}^\times$. The isomorphism (\ref{compact picture}) identifies $\mathrm{Ind}_q(\varepsilon, \lambda)$ with $\mathcal{O}_q^\varepsilon(K)$. Let us denote by $x_{\varepsilon,\lambda}$ the corresponding action of every $x\in R_q(\mathfrak{g},K)$ on the latter. For any $\phi \in \mathcal{O}_q^\varepsilon(K)$, we have
\begin{gather*}
    x_{\varepsilon,\lambda} \phi = x \triangleright \phi \qquad (x \in R_q(\mathfrak{g},K)),\\ y_{\varepsilon,\lambda} \phi = \langle \chi_{\varepsilon,\lambda}, k^{1/2} \triangleright y_{(1)}\rangle y_{(2)}\phi \qquad (y \in \mathcal{O}_q(K\backslash U)).
\end{gather*}
From the very definition (\ref{defXYZ}) of $X$ and $Y$, one can easily derive
$$(k^{1/2} \triangleright X) = qX, \qquad (k^{1/2} \triangleright Y) = q^{-1}Y.$$
Hence for any $z \in \mathcal{O}_q(K\backslash U)$, we have $\langle \chi_{\varepsilon,\lambda}, k^{1/2} \triangleright z\rangle = \langle \chi_{\varepsilon, q\lambda}, z \rangle$. The above action of $y \in \mathcal{O}_q(K\backslash U)$ on $\phi \in \mathcal{O}_q^\varepsilon(K)$ thus simplifies to
$$y_{\varepsilon,\lambda} \phi =  (y \triangleleft \chi_{\varepsilon, q\lambda})\phi.$$

Recall that $(\zeta_n)_{n \in \mathbb{Z}^\varepsilon}$ is a basis of $\mathcal{O}_q^\varepsilon(K)$. The following proposition describes the action of $\mathcal{O}_q(K\backslash U)$ on this basis.

\begin{proposition}\label{actiontransition} Recall from (\ref{transition}) the definition of the transition operators $T_n^\pm, T_n$. Their action on $\zeta_n \in \mathrm{Ind}_q(\varepsilon, \lambda)$ is given by
$$(T_n^\pm)_{\varepsilon,\lambda} \zeta_n = (\lambda  q^{1\pm n} - \lambda^{-1}q^{-1\mp n})\,\zeta_{n\pm 2}, \qquad (T_n)_{\varepsilon,\lambda} \zeta_n  = (\lambda + \lambda^{-1}) \zeta_n.$$
The Casimir element $\Omega$ acts on $\mathrm{Ind}_q(\varepsilon, \lambda)$ as the scalar $(\lambda + \lambda^{-1})$.
\end{proposition}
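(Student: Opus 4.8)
The plan is to carry out the whole computation inside the compact picture, i.e.\ via the isomorphism~\eqref{compact picture}, which identifies $\mathrm{Ind}_q(\varepsilon,\lambda)$ with $\mathcal{O}_q^\varepsilon(K)=\mathrm{span}_{\mathbb K}(\zeta_n:n\in\mathbb Z^\varepsilon)$. Here $\theta$ acts on $\zeta_n$ through the eigenvalue $[n]_q$ (the $[n]_q$-eigenspace of $\theta$ being exactly $\mathbb{K}\zeta_n$, since $\zeta_n$ is dual to the spectral projection $e_n$), while $\mathcal{O}_q(K\backslash U)$ acts by $y_{\varepsilon,\lambda}\phi=(y\triangleleft\chi_{\varepsilon,q\lambda})\phi$, as recorded just before the statement. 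The argument splits in two: first one reads off the \emph{shape} of the action of the transition operators on the $\zeta_n$ from the intertwining identities~\eqref{transitions} and~\eqref{transitions3}; then one pins down the scalars by a counit computation that collapses to a character evaluation.

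For the shape, I would apply the operator identities~\eqref{transitions} to $\zeta_n$. Because $(\theta-[n]_q)\zeta_n=0$, their right-hand sides vanish, so $(T_n^\pm)_{\varepsilon,\lambda}\zeta_n$ lies in the $[n\pm2]_q$-eigenspace of $\theta$, which is $\mathbb{K}\zeta_{n\pm2}$ (note $n\pm2\in\mathbb Z^\varepsilon$), while $(T_n)_{\varepsilon,\lambda}\zeta_n\in\mathbb{K}\zeta_n$. Write $(T_n^\pm)_{\varepsilon,\lambda}\zeta_n=a_n^\pm\zeta_{n\pm2}$ and $(T_n)_{\varepsilon,\lambda}\zeta_n=c_n\zeta_n$. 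Feeding this into~\eqref{transitions3} gives $\Omega\zeta_n=(T_n)_{\varepsilon,\lambda}\zeta_n=c_n\zeta_n$, so $\Omega$ is already diagonal in the basis $(\zeta_n)$, and it remains only to identify $a_n^\pm$ and $c_n$.

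To compute those scalars I would lift each $\zeta_m$ to a matrix coefficient $\phi_m\in\mathcal{O}_q(U)$ along the quotient map $\mathcal{O}_q(U)\to\mathcal{O}_q(K)$, whose kernel is $U_q(\mathfrak k)^\perp$. The counit $\epsilon$ of $\mathcal{O}_q(U)$ descends to $\mathcal{O}_q(K)$ (as $1\in U_q(\mathfrak k)$), and $\epsilon(\phi_m)=\langle\zeta_m,1\rangle=1$ because $1=\lim_M e_{(M)}$ in $\mathcal{O}'_q(K)$ while $\langle\zeta_m,e_k\rangle=\delta_{mk}$. The equalities above lift to $(T_n^\pm\triangleleft\chi_{\varepsilon,q\lambda})\phi_n-a_n^\pm\phi_{n\pm2}\in U_q(\mathfrak k)^\perp$ and $(T_n\triangleleft\chi_{\varepsilon,q\lambda})\phi_n-c_n\phi_n\in U_q(\mathfrak k)^\perp$; applying $\epsilon$ and using that it is multiplicative, that $\epsilon(T\triangleleft\chi)=\chi(T)$ by the counit axiom, and that $\epsilon(\phi_m)=1$, one obtains $a_n^\pm=\chi_{\varepsilon,q\lambda}(T_n^\pm)$ and $c_n=\chi_{\varepsilon,q\lambda}(T_n)$. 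Now~\eqref{TXYZ} expresses $T_n^\pm$ and $T_n$ in terms of $X,Y,Z$, on which $\chi_{\varepsilon,\mu}$ takes the values $\mu,\mu^{-1},0$; substituting $\mu=q\lambda$ yields $a_n^\pm=\lambda q^{1\pm n}-\lambda^{-1}q^{-1\mp n}$ and $c_n=\lambda+\lambda^{-1}$, as announced. Since then $\Omega\zeta_n=(\lambda+\lambda^{-1})\zeta_n$ for every $n\in\mathbb Z^\varepsilon$ and the $\zeta_n$ span $\mathrm{Ind}_q(\varepsilon,\lambda)$, the element $\Omega$ acts as the scalar $\lambda+\lambda^{-1}$.

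There is no genuinely hard step once the shape is in place; the point that needs care is the bookkeeping in the lifting argument — that the compact-picture action of $\mathcal{O}_q(K\backslash U)$ is compatible with lifting to $\mathcal{O}_q(U)$, that the kernel of $\mathcal{O}_q(U)\to\mathcal{O}_q(K)$ is exactly $U_q(\mathfrak k)^\perp$, and that $\langle\zeta_m,1\rangle=1$. One could instead evaluate $a_n^\pm$ and $c_n$ directly as the pairings $\langle(T_n^\pm\triangleleft\chi_{\varepsilon,q\lambda})\zeta_n,e_{n\pm2}\rangle$ and $\langle(T_n\triangleleft\chi_{\varepsilon,q\lambda})\zeta_n,e_n\rangle$, but that route is more laborious. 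As a built-in consistency check, the quadratic identities~\eqref{transitions2} force $a_n^+a_{n+2}^-=(\lambda+\lambda^{-1})^2-\{n+1\}_q^2$ and $a_{n-2}^+a_n^-=(\lambda+\lambda^{-1})^2-\{n-1\}_q^2$, which one verifies hold for the values obtained.
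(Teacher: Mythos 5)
Your argument is correct, but it takes a genuinely different route from the paper's. The paper computes $\langle (T_n^\mu)_{\varepsilon,\lambda}\zeta_n, x\rangle$ directly by expanding the coproduct of $T_n^\mu$ along the eigenbasis $(w_n^+,w_n^-,w_n)$ of the underlying three\nobreakdash-dimensional representation and observing that the character $\zeta_n$, being determined by $\zeta_n\triangleright\theta=\theta+[n]_q k$, picks out the summand $\langle\chi_{\varepsilon,q\lambda},T_n^\mu\rangle\langle\zeta_{n_\mu},x\rangle$. In other words, the paper obtains shape and scalar simultaneously from the Sweedler decomposition of the matrix coefficient. You instead split the work into two steps: first read off the shape abstractly from the intertwining relations \eqref{transitions} applied to the eigenvector $\zeta_n$ (a reuse of the mechanism already driving the proof of Lemma~\ref{admissibilitycriterion}), and only then pin down the coefficient by a counit evaluation, which collapses to the character value $\langle\chi_{\varepsilon,q\lambda},T_n^\mu\rangle$ by the counit axiom. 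Both proofs bottom out in the same character computation via~\eqref{TXYZ}. What your decoupling buys is a cleaner conceptual picture and less bookkeeping with the matrix-coefficient expansion; what the paper's direct route buys is that it stays entirely inside the compact picture, with no need to lift $\zeta_m$ to $\mathcal{O}_q(U)$ and check that $\epsilon$ descends through the quotient by $U_q(\mathfrak k)^\perp$ (facts you correctly identify as the points requiring care, and which do hold since $1\in U_q(\mathfrak k)$ and $U_q(\mathfrak k)^\perp$ is the kernel of $\mathcal{O}_q(U)\to\mathcal{O}_q(K)$ by definition).
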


\begin{proof}
    Let $y \in \mathcal{O}_q(K\backslash U)$ and $x \in R_q(K)$. For any $n \in \mathbb{Z}^\varepsilon$, we have
    \begin{align*}\langle y_{\varepsilon,\lambda}\zeta_n, x \rangle &= \langle(y \triangleleft \chi_{\varepsilon, q\lambda}) \zeta_n , x \rangle = \langle y \triangleleft \chi_{\varepsilon, q\lambda}, \zeta_n \triangleright x \rangle. \end{align*}
    Let us compute $y \triangleleft \chi_{\varepsilon, q\lambda}$ explicitly for $y \in \{ T_n^\pm, T_n\}$. Recall from (\ref{w_n}) the definition of the vectors $w_n^+, w_n^-, w_n$. Let us denote by $(w^{*+}_n, w_n^{*-}, w^*_n$) the corresponding dual basis. For any index $\mu = +, -, \emptyset $, we have
    \begin{gather*}
        \langle (T_n^\mu)_{\varepsilon,\lambda} \zeta_n, x \rangle = \sum_{\nu =+, -, \emptyset }\langle \chi_{\varepsilon,q \lambda}, T_n^\nu \rangle \langle w^{*\nu}_n, (\zeta_n \triangleright x)w_n^\mu \rangle.
    \end{gather*}
    Let us consider $\zeta_n$ as a linear form on $\mathcal{O}'_q(K)$. Since it is a continuous ring homomorphism, $\zeta_n$ is uniquely determined by its evaluation at $\theta$. We have $\Delta(\theta) = k \otimes \theta + \theta \otimes 1$, so $\zeta_n \triangleright \theta = \theta + [n]_q k$. Recall that for any index $\mu$, we have $(\theta + [n]_q K)w_n^\mu = [n_\mu]_q w_n^\mu$, where $n_\mu = n\pm 2$ if $\mu = \pm$ and $n_\mu = n$ if $\mu = \emptyset$.
    Hence we have
    \begin{align*}\langle (T_n^\mu)_{\varepsilon,\lambda}\zeta_n, x \rangle &= \sum_{\nu=+, -, \emptyset }\langle \chi_{\varepsilon,q \lambda}, T_n^\nu \rangle\langle w_n^{*\nu}, \langle \zeta_{n_\mu}, x \rangle w_n^\mu \rangle = \langle \chi_{\varepsilon,q \lambda}, T_n^\mu \rangle\langle \zeta_{n_\mu}, x \rangle.\end{align*}
    Then, using (\ref{TXYZ}), we get
    \begin{gather*}\langle \chi_{\varepsilon, q\lambda}, T_n^\pm \rangle = \lambda  q^{1\pm n} - \lambda^{-1}q^{-1\mp n},\qquad
    \langle \chi_{\varepsilon, q\lambda}, T_n\rangle = \lambda + \lambda^{-1}.
    \end{gather*}
    This concludes the proof of the formulas involving the transition operators. Formula (\ref{transitions3}) implies that $ \Omega_{\varepsilon,\lambda} \zeta_n = (T_n)_{\varepsilon,\lambda} \zeta_n = (\lambda + \lambda^{-1})\zeta_n$ for all $n \in \mathbb{Z}^\varepsilon$, which proves the last statement.
\end{proof}

Now we can state our main results concerning the $(\mathfrak{g},K)_q$-modules induced from characters of $R_q(\mathfrak{p},M)$. The following two theorems describe the simple subquotients of these modules as well as the intertwining maps between them. Note the striking similarity of these results compared to their classical counterparts, the ones involving the non-unitary principal series of~$\mathrm{SL}(2,\mathbb{R})$.

\begin{theorem}\label{submodules} Let $\varepsilon \in\{-1,1\}$. For every $n\in \mathbb{Z}^{-\varepsilon}_{\geq 0}$ and $\sigma \in \{-1,1\}$, the $(\mathfrak{g},K)_q$-module $\mathrm{Ind}_q(\varepsilon, \sigma q^n)$ has exactly two simple submodules, namely:
    $$D_{\sigma, n}^+ = \mathrm{span}(\zeta_m : m \in \mathbb{Z}^\varepsilon_{>n}), \qquad D_{\sigma, n}^- = \mathrm{span}(\zeta_m : m \in \mathbb{Z}^\varepsilon_{<-n}).$$
    For every $n\in \mathbb{Z}^{-\varepsilon}_{>0}$ and $\sigma \in \{-1,1\}$, the $(\mathfrak{g},K)_q$-module $\mathrm{Ind}_q(\varepsilon, \sigma q^{-n})$ has a unique simple submodule: $$Q_{\sigma,n} = \mathrm{span}(\zeta_m : m \in \mathbb{Z}^\varepsilon, \,|m|<n).$$
    If $\lambda \in \mathbb{K}^\times$ is not of the form $\pm q^n$ for some $n \in \mathbb{Z}^{-\varepsilon}$, then $\mathrm{Ind}_q(\varepsilon, \lambda)$ is simple.
\end{theorem}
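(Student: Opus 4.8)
The plan is to work entirely within the ``compact picture'', identifying $\mathrm{Ind}_q(\varepsilon,\lambda)$ with $\mathcal{O}_q^\varepsilon(K) = \mathrm{span}(\zeta_n : n \in \mathbb{Z}^\varepsilon)$ and exploiting the explicit action of the transition operators supplied by Proposition~\ref{actiontransition}. The first observation is that any $(\mathfrak{g},K)_q$-submodule $W$ is automatically a ``coordinate subspace'': since $W$ is stable under $R_q(K) \subset R_q(\mathfrak{g},K)$ and each $e_n$ acts on $\mathcal{O}_q^\varepsilon(K)$ as the spectral projection onto $\mathbb{K}\zeta_n$ (and as $0$ when $n \notin \mathbb{Z}^\varepsilon$), one gets $W = \bigoplus_{n \in S}\mathbb{K}\zeta_n$ with $S = \{ n \in \mathbb{Z}^\varepsilon : \zeta_n \in W\}$.

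Next I would translate the submodule condition into a combinatorial one on $S$. Because $\mathrm{span}(T_n^+, T_n^-, T_n) = \mathrm{span}(X,Y,Z)$ for each fixed $n$, cf.~(\ref{TXYZ}), and $X,Y,Z$ generate $\mathcal{O}_q(K\backslash U)$, which together with $R_q(K)$ generates $R_q(\mathfrak{g},K)$, the coordinate subspace $\bigoplus_{n\in S}\mathbb{K}\zeta_n$ is a submodule precisely when, for every $n \in S$, both vectors $(T_n^\pm)_{\varepsilon,\lambda}\zeta_n$ lie in it again. By Proposition~\ref{actiontransition} this amounts to the requirement: whenever $\lambda q^{1+n} - \lambda^{-1}q^{-1-n} \neq 0$ one has $n+2 \in S$, and whenever $\lambda q^{1-n} - \lambda^{-1}q^{n-1} \neq 0$ one has $n-2 \in S$.

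The heart of the argument is then the elementary computation that $\lambda q^{1+n} - \lambda^{-1}q^{-1-n} = 0$ if and only if $\lambda = \pm q^{-(n+1)}$, and $\lambda q^{1-n} - \lambda^{-1}q^{n-1} = 0$ if and only if $\lambda = \pm q^{n-1}$. For $n \in \mathbb{Z}^\varepsilon$ both exponents $-(n+1)$ and $n-1$ lie in $\mathbb{Z}^{-\varepsilon}$, so under the standing hypothesis that $\lambda \neq \pm q^m$ for every $m \in \mathbb{Z}^{-\varepsilon}$ none of these transition coefficients can vanish. Consequently, if $W \neq 0$ then $S$ is non-empty, and starting from any $n \in S$ the two conditions above force $n\pm 2 \in S$; iterating gives $S = \mathbb{Z}^\varepsilon$, i.e.\ $W = \mathrm{Ind}_q(\varepsilon,\lambda)$. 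Hence the module is simple.

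I do not expect a serious obstacle here: the only delicate point is the parity bookkeeping in the key computation, namely checking that the obstructions to propagating along the chain $(\zeta_n)_{n \in \mathbb{Z}^\varepsilon}$ occur exactly at the excluded values $\lambda = \pm q^m$ with $m \in \mathbb{Z}^{-\varepsilon}$. This is also the very mechanism behind the reducible cases treated in the rest of the theorem: when $\lambda = \pm q^m$ exactly one of the two transition coefficients vanishes at each relevant spot, severing the chain and producing the submodules $D^\pm_{\sigma,n}$ and $Q_{\sigma,n}$.
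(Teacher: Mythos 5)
Your proposal follows essentially the same route as the paper's proof: identify submodules with coordinate subspaces via the $K_q$-isotypic decomposition, translate the stability condition into vanishing of the transition coefficients $(T_n^\pm)_{\varepsilon,\lambda}\zeta_n$ from Proposition~\ref{actiontransition}, and solve $\lambda^2 = q^{-2(n+1)}$ resp.\ $\lambda^2 = q^{2(n-1)}$ to locate exactly the excluded parameters $\lambda = \pm q^m$, $m \in \mathbb{Z}^{-\varepsilon}$. The simplicity case is worked out completely; the treatment of the reducible cases is left as an informal comment rather than a full bookkeeping, but that is no more terse than the paper's ``the combination of these equivalences \dots implies the various statements'', so there is no genuine gap.
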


\begin{theorem}\label{intertwiners}
    Given $\lambda_1,\lambda_2 \in \mathbb{K}^\times$ and $\varepsilon_1,\varepsilon_2 \in \{-1,1\}$, if there exists a non-zero $(\mathfrak{g},K)_q$-intertwiner $\mathrm{Ind}_q(\varepsilon_1,\lambda_1) \to \mathrm{Ind}_q(\varepsilon_2,\lambda_2)$, then  $(\varepsilon_1,\lambda_1) = (\varepsilon_2, \lambda_2)$ or $(\varepsilon_1, \lambda_1) = (\varepsilon_2, \lambda_2^{-1})$.
    
    If $\varepsilon \in \{-1,1\}$ and $\lambda \in\mathbb{K}^\times$ is not of the form $\pm q^n$ for $n \in\mathbb{Z}^{-\varepsilon}$, then there exists up to a scalar a unique non-zero $(\mathfrak{g},K)_q$-intertwining map $\mathrm{Ind}_q(\varepsilon,\lambda^{-1}) \to \mathrm{Ind}_q(\varepsilon,\lambda)$ and it is an isomorphism. Moreover, for each $n\in \mathbb{Z}^{-\varepsilon}_{> 0}$ and $\sigma \in\{-1,1\}$, we have exact sequences of $(\mathfrak{g},K)_q$-modules:
    \begin{gather*}0  \to Q_{\sigma, n}\to \mathrm{Ind}(\varepsilon,\sigma q^{-n})\to D_{\sigma, n}^+ \oplus D_{\sigma, n}^{-} \to 0,\\
    0 \to D_{\sigma, n}^+ \oplus D_{\sigma, n}^{-} \to \mathrm{Ind}(\varepsilon,\sigma q^n) \to Q_{\sigma, n} \to 0.
    \end{gather*}
    In the case $n =0$, one has $\mathrm{Ind}(\varepsilon,\sigma) = D_{\sigma, 0}^{+} \oplus D_{\sigma, 0}^{-}$.
\end{theorem}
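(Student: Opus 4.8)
The plan is to reduce every claim to the explicit action of the transition operators $T_n^{\pm},T_n$ on the basis $(\zeta_n)$ given by Proposition~\ref{actiontransition}, combined with the submodule structure already established in Theorem~\ref{submodules}. Recall that $T_n^{\pm},T_n$ span $\mathrm{span}(X,Y,Z)$ by (\ref{TXYZ}), and $X,Y,Z$ generate $\mathcal{O}_q(K\backslash U)$, so in all arguments it suffices to control these operators.

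\emph{Necessary conditions on intertwiners.} Any $(\mathfrak{g},K)_q$-intertwiner $f\colon\mathrm{Ind}_q(\varepsilon_1,\lambda_1)\to\mathrm{Ind}_q(\varepsilon_2,\lambda_2)$ commutes with the projections $e_n$, hence sends the $[n]_q$-eigenspace of the source into that of the target. Since $q$ is not a root of unity one has $[n]_q=[m]_q$ only when $n=m$; so if $\varepsilon_1\neq\varepsilon_2$ the $\theta$-spectra $\{[n]_q:n\in\mathbb{Z}^{\varepsilon_1}\}$ and $\{[n]_q:n\in\mathbb{Z}^{\varepsilon_2}\}$ are disjoint and $f=0$, whence $\varepsilon_1=\varepsilon_2=:\varepsilon$. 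Moreover $f$ intertwines the scalar actions of $\Omega$, which are $\lambda_1+\lambda_1^{-1}$ and $\lambda_2+\lambda_2^{-1}$ by Proposition~\ref{actiontransition}; a non-zero $f$ forces $\lambda_1+\lambda_1^{-1}=\lambda_2+\lambda_2^{-1}$, i.e. $\lambda_2\in\{\lambda_1,\lambda_1^{-1}\}$. This proves the first assertion.

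\emph{The generic case.} Fix $\varepsilon$ and $\lambda\in\mathbb{K}^{\times}$ not of the form $\pm q^{n}$ with $n\in\mathbb{Z}^{-\varepsilon}$; then $\mathrm{Ind}_q(\varepsilon,\lambda)$ and $\mathrm{Ind}_q(\varepsilon,\lambda^{-1})$ are simple by Theorem~\ref{submodules}. By the previous step any intertwiner $f\colon\mathrm{Ind}_q(\varepsilon,\lambda^{-1})\to\mathrm{Ind}_q(\varepsilon,\lambda)$ has the form $f(\zeta_n)=c_n\zeta_n$, since each $[n]_q$-eigenspace is $\mathbb{K}\zeta_n$ in the compact picture. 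Commutation with $T_n$ is automatic because $T_n\zeta_n=\Omega\zeta_n$ on both sides by (\ref{transitions3}); commutation with $T_n^{-}$ follows from commutation with $T_n^{+}$ together with (\ref{transitions2}), provided the relevant $T_n^{+}$-coefficients are nonzero; and commutation with $T_n^{+}$ amounts to the single recursion
$$(\lambda^{-1}q^{1+n}-\lambda q^{-1-n})\,c_{n+2}=(\lambda q^{1+n}-\lambda^{-1}q^{-1-n})\,c_n .$$
The coefficient on the left vanishes iff $\lambda^{2}=q^{2(1+n)}$ and the one on the right iff $\lambda^{2}=q^{-2(1+n)}$, i.e. iff $\lambda=\pm q^{\,j}$ with $j=\pm(1+n)\in\mathbb{Z}^{-\varepsilon}$; the hypothesis on $\lambda$ excludes this for every $n\in\mathbb{Z}^{\varepsilon}$. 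Hence the $c_n$ are determined up to a global scalar and are all nonzero, which simultaneously yields existence, uniqueness up to scalar, and (being a nonzero map between simple modules) the isomorphism property.

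\emph{The reducible case.} Theorem~\ref{submodules} gives the inclusions $Q_{\sigma,n}\hookrightarrow\mathrm{Ind}_q(\varepsilon,\sigma q^{-n})$ and $D_{\sigma,n}^{+}\oplus D_{\sigma,n}^{-}\hookrightarrow\mathrm{Ind}_q(\varepsilon,\sigma q^{n})$ (the sum being direct for $n>0$, as the index sets $m>n$ and $m<-n$ are disjoint), so only the identification of the quotients remains. Substituting $\lambda=\sigma q^{\mp n}$ into Proposition~\ref{actiontransition} one checks directly that in $\mathrm{Ind}_q(\varepsilon,\sigma q^{-n})/Q_{\sigma,n}$, with basis $\bar\zeta_m$ for $|m|>n$, the coefficient of $T_{n+1}^{-}$ on $\bar\zeta_{n+1}$ and that of $T_{-(n+1)}^{+}$ on $\bar\zeta_{-(n+1)}$ vanish, so this quotient splits as an ``upper'' plus a ``lower'' summand; on each summand all remaining raising and lowering coefficients are nonzero, and a recursion in scalars $c_m$ as above — now without poles or zeros, since one stays strictly above $n$, resp. below $-n$ — produces an isomorphism onto $D_{\sigma,n}^{+}$, resp. $D_{\sigma,n}^{-}$ (the $T^{-}$-compatibility again being free from (\ref{transitions2})). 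The second exact sequence is treated identically: $\mathrm{Ind}_q(\varepsilon,\sigma q^{n})/(D_{\sigma,n}^{+}\oplus D_{\sigma,n}^{-})$ has the finite basis $\bar\zeta_m$, $|m|<n$, the $T^{+}$-coefficient on the top vector $\bar\zeta_{n-1}$ vanishes both here and on $Q_{\sigma,n}$, and a finite recursion identifies this quotient with $Q_{\sigma,n}$. For $n=0$ (so $\varepsilon=-1$) the coefficients of $T_1^{-}$ on $\zeta_1$ and of $T_{-1}^{+}$ on $\zeta_{-1}$ vanish at $\lambda=\sigma$, so $D_{\sigma,0}^{\pm}$ are submodules exhausting the basis; they are simple by connectivity of the remaining coefficients and inequivalent because their $\theta$-spectra differ, giving $\mathrm{Ind}_q(-1,\sigma)=D_{\sigma,0}^{+}\oplus D_{\sigma,0}^{-}$.

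\emph{Main obstacle.} The only non-formal part is the last one: one must check that the subquotient pieces of one principal series are genuinely isomorphic — not merely abstractly similar — to the specific modules $D_{\sigma,n}^{\pm}$ and $Q_{\sigma,n}$ realized inside another principal series, and keep careful track of the boundary coefficients (and of the small-$n$ cases) so that the defining recursions have no spurious zeros or poles. This is bookkeeping rather than a real difficulty, because the action on such a chain is rigid once the value of $\Omega$ and the location of the ``breaks'' where $T^{\pm}$ vanishes are fixed; but it is where all the care is needed, and it is the reason one works with the subquotient pieces instead of attempting to write down a map between the full reducible principal series directly.
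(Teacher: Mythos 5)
Your argument is correct and, for the first assertion and for the generic case, it follows essentially the same route as the paper: restrict an intertwiner $\mathrm{Ind}_q(\varepsilon_1,\lambda_1)\to\mathrm{Ind}_q(\varepsilon_2,\lambda_2)$ to $K_q$-types and $\mathcal{Z}_q(\mathfrak{g})$-action to force $\varepsilon_1=\varepsilon_2$ and $\lambda_1\in\{\lambda_2,\lambda_2^{-1}\}$, then write $f(\zeta_n)=c_n\zeta_n$ and convert the $T_n^{\pm}$-intertwining condition into a two-term scalar recursion with explicitly known zeros. For the reducible case, however, you take a genuinely different path: the paper's proof continues to work with the same intertwiner $\mathrm{Ind}_q(\varepsilon,\lambda^{-1})\to\mathrm{Ind}_q(\varepsilon,\lambda)$ for $\lambda=\sigma q^{\pm n}$, shows that the recursion (\ref{recursionf}) forces $f_m=0$ on a prescribed range, and then reads both exact sequences off directly from the kernel and the image of this single map. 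You instead form the quotients $\mathrm{Ind}_q(\varepsilon,\sigma q^{\mp n})/(\cdot)$ abstractly, observe the splitting of the large quotient at the broken indices, and run a fresh (pole-free and zero-free) recursion to identify each piece with the corresponding $D_{\sigma,n}^{\pm}$ or $Q_{\sigma,n}$ realized in the other principal series. Both are correct; the paper's is a bit more economical since a single intertwiner does all the work, whereas yours makes the composition factors and their sources more transparent.

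Two small remarks. First, in the generic case your appeal to (\ref{transitions2}) to deduce the $T_n^{-}$-commutation from the $T_n^{+}$-commutation is unnecessary (and, as stated, only conditional): re-indexing $n\mapsto n+2$ in the $T^{-}$-recursion recovers the $T^{+}$-recursion up to an overall sign, so they are literally the same equation. Second, in the reducible case your phrasing that ``the coefficient of $T_{n+1}^{-}$ on $\bar\zeta_{n+1}$ \dots\ vanish[es]'' is imprecise: the scalar coefficient $(\sigma q^{-n}q^{1-(n+1)}-\sigma q^{n}q^{-1+(n+1)})=\sigma(q^{-2n}-q^{2n})$ is nonzero for $n>0$; what vanishes is the image of $\zeta_{n+1}$ in the quotient, because $\zeta_{n-1}$ lies in the subspace being killed. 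This does not affect the validity of the splitting argument, but it should be stated correctly.
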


\begin{proof}[Proof of Theorem \ref{submodules}]
    Let $(\varepsilon, \lambda)\in \{-1,1\}\times \mathbb{K}^\times$. Being stable by $R_q(\mathfrak{g},K)$, a $(\mathfrak{g},K)_q$-submodule of $\mathrm{Ind}_q(\varepsilon, \lambda)$ is necessarily of the form $\mathrm{span}(\zeta_n : n \in \Lambda)$ for $\Lambda$ a subset of $\mathbb{Z}^\varepsilon$. Such a subspace is stable by $\mathcal{O}_q(K\backslash U)$ if and only if for every $n \in \Lambda$, we have $(T_n^+)_{\varepsilon,\lambda}\zeta_n = 0$ whenever $n+2 \notin \Lambda$ and $(T_n^-)_{\varepsilon,\lambda}\zeta_n = 0$ whenever $n-2 \notin \Lambda$. This follows from Proposition \ref{actiontransition} and the fact that $\mathrm{span}(X,Y,Z) = \mathrm{span}(T_n, T_n^+, T_n^-)$ for all integer $n$. Exploiting the formulas of Proposition \ref{actiontransition}, we get for every $n \in \mathbb{Z}^{-\varepsilon}$:
    $$(T_{-n-1}^+)_{\varepsilon,\lambda}\zeta_{-n-1} = 0 \Longleftrightarrow (T_{n+1}^-)_{\varepsilon,\lambda}\zeta_{n+1} = 0 \Longleftrightarrow \lambda \in \{ -q^n, q^{n} \}.$$
    The combination of these equivalences and the above characterization of the submodules of $\mathrm{Ind}_q(\varepsilon, \lambda)$ implies the various statements of the theorem.
\end{proof}

\begin{proof}[Proof of Theorem \ref{intertwiners}]
    Assume that there exists a non-zero $(\mathfrak{g},K)_q$-intertwining map $\mathrm{Ind}_q(\varepsilon_1,\lambda_1) \to \mathrm{Ind}_q(\varepsilon_2,\lambda_2)$. Then $\mathrm{Ind}_q(\varepsilon_1,\lambda_1)$ and $\mathrm{Ind}_q(\varepsilon_2,\lambda_2)$ have at least one $K_q$-type in common, so $\varepsilon_1 = \varepsilon_2$. Moreover, the action of $\Omega$ on both $(\mathfrak{g},K)_q$-modules must be the same, hence $\lambda_1 = \lambda_2$ or $\lambda_1 = \lambda_2^{-1}$.

    Let $(\varepsilon, \lambda)\in \{-1,1\}\times \mathbb{K}^\times$ and let $f : \mathrm{Ind}_q(\varepsilon,\lambda^{-1}) \to \mathrm{Ind}_q(\varepsilon,\lambda)$ be a $\mathbb{K}$-linear map. Imposing that $f$ intertwines the action of $R_q(K)$ amounts to assuming that there exists $f_n \in \mathbb{K}$ such that $f(\zeta_n) = f_n \zeta_n$ for all $n \in \mathbb{Z}^\varepsilon$. In that case, $f$ intertwines the action of $\mathcal{O}_q(K\backslash U)$ if and only if for each $\mu = +,-,\emptyset$ and $n \in \mathbb{Z}^\varepsilon$ we have $f((T_n^\mu)_{\varepsilon,\lambda^{-1}}\zeta_n) = (T_n^\mu)_{\varepsilon,\lambda} f(\zeta_n)$. The last condition is equivalent to
    \begin{equation}\label{recursionf} \forall n \in \mathbb{Z}^{-\varepsilon}, \qquad f_{n+1} (\lambda^{-1} q^{n} - \lambda q^{-n}) =  (\lambda q^{n} - \lambda^{-1} q^{-n}) f_{n-1},\end{equation}
    see Proposition \ref{actiontransition}.
    Assume first that $\lambda$ is not of the form $\pm q^{n}$ for $n \in \mathbb{Z}^{-\varepsilon}$. Because of the above recurrence relation, the map which associates to a $(\mathfrak{g},K)_q$-morphism $f : \mathrm{Ind}_q(\varepsilon,\lambda^{-1}) \to \mathrm{Ind}_q(\varepsilon,\lambda)$ the value of $f_m$ for a fixed $m \in \mathbb{Z}^\varepsilon$ is a linear isomorphism. Moreover if $f_m \neq 0$ then $f_n \neq 0$ for all $n \in \mathbb{Z}^\varepsilon$ and $f$ is invertible.

    Now assume that $\lambda = \pm q^n$ for a certain $n \in \mathbb{Z}^{-\varepsilon}\setminus\{0\}$ and let $f$ be a $(\mathfrak{g},K)_q$-intertwining map $\mathrm{Ind}_q(\varepsilon,\lambda^{-1}) \to \mathrm{Ind}_q(\varepsilon,\lambda)$. Evaluating (\ref{recursionf}) at $n$ and $-n$ yields $f_{n-1} = f_{1-n} = 0$. From the same recurrence relations, it follows that $f_{m} = 0$ when $|m| \leq n-1$ if $n>0$ and when $|m| \geq 1 + |n|$ if $n <0$. In the former case, if $f_{n+1}$ and $f_{n-1}$ are chosen non-zero then $f_m \neq 0$ for all $|m|\geq n+1$. In the second case, if $f_{|n|-1}$ is chosen non-zero then $f_m \neq 0 $ for all $|m| \leq |n|-1$. These two cases correspond to the two exact sequences of the theorem.
\end{proof}

\section{Classification of the irreducible representations}

In this section, we restrict to the case $\mathbb{K} = \mathbb{C}$. The assumption that $q$ is not a root of unity and admits self-adjoint square roots amounts to $q \in \mathbb{R}_+^*\setminus \{1\}$. Our goal is to classify the simple $(\mathfrak{g},K)_q$-modules in terms of parabolic induction.

Recall from Theorem \ref{HCiso} the definition of the isomorphism $\tilde \gamma$ from the center $\mathcal{Z}_q(\mathfrak{g})$ of $U_q(\mathfrak{g})$ to the algebra of Laurent polynomials that are invariant by $t \mapsto t^{-1}$. For every $\lambda \in \mathbb{C}^\times$, let us denote by $\mathrm{ev}_\lambda$ the character of $\mathcal{Z}_q(\mathfrak{g})$ corresponding to the composition of $\tilde \gamma$ by the evaluation at $t = \lambda$. It is clear that two such characters $\mathrm{ev}_\lambda$ and $\mathrm{ev}_{\lambda'}$ are equal if and only if $\lambda$ and $\lambda'$ define the same element of $\mathbb{C}^\times /\mathbb{Z}_2$, where the action of $\mathbb{Z}_2$ is given by the inversion map. Moreover, any character of $\mathcal{Z}_q(\mathfrak{g})$ is of this form.
By analogy to the classical terminology, a $(\mathfrak{g},K)_q$ is said to have infinitesimal character $\lambda \in \mathbb{C}^\times /\mathbb{Z}_2$ if $\mathcal{Z}_q(\mathfrak{g})$ acts on it via the character $\mathrm{ev}_\lambda$.

Let us now explain the construction of the universal basic modules, already introduced in \cite{DCDTsl2R}. Let $\lambda \in \mathbb{C}^\times /\mathbb{Z}_2$ and $n \in \mathbb{Z}$. Let us denote by $I_{\lambda,n}$ the left ideal of $U_q(\mathfrak{g})$ generated by the kernel of $\mathrm{ev}_\lambda$ and the kernel of $\zeta_n$, the latter considered as a character of $U_q(\mathfrak{k})$. More concretely, $I_{\lambda,n}$ is the left ideal of $U_q(\mathfrak{k})$ generated by $\Omega - (\lambda + \lambda^{-1})$ and $\theta - [n]_q$. Let us denote by $M_{\lambda,n}$ the quotient of $U_q(\mathfrak{g})$ by $I_{\lambda, n}$. Let $m_{\lambda,n}$ be the image of $1$ in $M_{\lambda,n}$. We have $\Omega m_{\lambda,n} = \mathrm{ev}_\lambda(\Omega)m_{\lambda,n}$ and $\theta m_{\lambda,n} = [n]_q m_{\lambda,n}$. From Remark \ref{noneed}, it follows that $M_{\lambda,n}$ is a basic $(\mathfrak{g},K)_q$-module. Among all basic modules, $M_{\lambda,n}$ is universal in the following sense: if $V$ is any $(\mathfrak{g},K)_q$-module, $v \in V$ is such that $\Omega v = (\lambda + \lambda^{-1})v$ and $\theta v = [n]_q v$, then there exists a unique $(\mathfrak{g},K)_q$-intertwiner $M_{\lambda,n} \to V$ mapping $m_{\lambda,n}$ to $v$.

Let us refer to the modules of the form $M_{\lambda,n}$ for $\lambda \in \mathbb{C}^\times /\mathbb{Z}_2$ and $n \in \mathbb{Z}$ as the universal basic $(\mathfrak{g},K)_q$-modules. Recall that every simple $(\mathfrak{g},K)_q$-module is basic. Because of the above universal property, any simple $(\mathfrak{g},K)_q$-module is a quotient of some universal basic module.

\begin{proposition}
    The universal basic $(\mathfrak{g},K)_q$-modules are non-zero. Moreover, each of them has a greatest proper submodule and hence a unique simple quotient.
\end{proposition}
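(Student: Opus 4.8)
The plan is to deduce both assertions from the universal property of $M_{\lambda,n}$ together with the admissibility of basic modules established in Lemma~\ref{admissibilitycriterion}.

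First I would settle non-vanishing by exhibiting an explicit non-zero $(\mathfrak{g},K)_q$-module receiving $m_{\lambda,n}$. Fix $\lambda \in \mathbb{C}^\times/\mathbb{Z}_2$ and $n \in \mathbb{Z}$, choose a representative $\mu \in \mathbb{C}^\times$ of $\lambda$, and let $\varepsilon \in \{-1,1\}$ be determined by $n \in \mathbb{Z}^\varepsilon$. Consider the parabolically induced module $\mathrm{Ind}_q(\varepsilon,\mu)$ of Section~6, with its basis $(\zeta_m)_{m \in \mathbb{Z}^\varepsilon}$. By Proposition~\ref{actiontransition}, $\Omega$ acts on $\mathrm{Ind}_q(\varepsilon,\mu)$ as the scalar $\mu+\mu^{-1}$, which equals $\mathrm{ev}_\lambda(\Omega)$ since $\tilde\gamma(\Omega)=t+t^{-1}$; moreover $\zeta_n$ spans the $n$-th $K_q$-isotypical component, so $\theta\zeta_n=[n]_q\zeta_n$. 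The universal property of $M_{\lambda,n}$ then provides a $(\mathfrak{g},K)_q$-intertwiner $M_{\lambda,n}\to \mathrm{Ind}_q(\varepsilon,\mu)$ sending $m_{\lambda,n}$ to $\zeta_n$. Since $\zeta_n\neq 0$, this forces $m_{\lambda,n}\neq 0$, and hence $M_{\lambda,n}\neq 0$.

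For the existence of a greatest proper submodule, recall from Remark~\ref{noneed} that $M_{\lambda,n}=U_q(\mathfrak{g})\,m_{\lambda,n}$ is a basic $(\mathfrak{g},K)_q$-module, so by Lemma~\ref{admissibilitycriterion} its $K_q$-isotypical components are at most one-dimensional; as $m_{\lambda,n}$ is a non-zero vector of $e_nM_{\lambda,n}$ (because $\theta m_{\lambda,n}=[n]_q m_{\lambda,n}$), we get $e_nM_{\lambda,n}=\mathbb{C}\,m_{\lambda,n}$. Now if $N\subsetneq M_{\lambda,n}$ is any proper submodule then $e_nN=0$: otherwise $e_nN=\mathbb{C}\,m_{\lambda,n}$, so $m_{\lambda,n}\in N$ and $N=U_q(\mathfrak{g})\,m_{\lambda,n}=M_{\lambda,n}$, a contradiction. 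Consequently the sum $N_0$ of all proper submodules of $M_{\lambda,n}$ still satisfies $e_nN_0=0$ (applying the linear operator $e_n$ to a sum of subspaces yields the sum of the images), so $N_0\subsetneq M_{\lambda,n}$; thus $N_0$ is the greatest proper submodule, $M_{\lambda,n}/N_0$ is simple, and any other simple quotient $M_{\lambda,n}/N'$ has $N'$ maximal proper, whence $N'\subseteq N_0\subsetneq M_{\lambda,n}$ forces $N'=N_0$.

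I expect the only genuinely non-formal point to be the non-vanishing step; everything else is a standard consequence of cyclicity and admissibility. The argument above deliberately avoids any direct computation in the PBW basis of Corollary~\ref{filtrationUq} by letting $\mathrm{Ind}_q(\varepsilon,\mu)$ serve as a concrete witness, so the main thing to verify carefully is that the infinitesimal character of $\mathrm{Ind}_q(\varepsilon,\mu)$ is indeed $\mathrm{ev}_\lambda$ and that $\varepsilon$ is forced to be the parity of $n$.
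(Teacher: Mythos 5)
Your argument is correct and follows essentially the same route as the paper: non-vanishing is obtained by mapping $M_{[\lambda],n}$ into $\mathrm{Ind}_q(\varepsilon,\lambda)$ via the universal property (using Proposition \ref{actiontransition} to verify that $\Omega$ and $\theta$ act as required on $\zeta_n$), and the existence of a greatest proper submodule is deduced from the fact that basic modules have at most one-dimensional $K_q$-isotypical components, so that every proper submodule avoids the generating component $e_n M_{[\lambda],n}$ and hence so does their sum. The only stylistic difference is that the paper phrases the second step in terms of submodules being direct sums of isotypical components indexed by a subset not containing $n$, whereas you argue directly that $e_n$ annihilates every proper submodule and hence their sum; both amount to the same observation.
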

    The image of any $\lambda \in \mathbb{C}$ in $\mathbb{C}^\times /\mathbb{Z}_2$ is denoted by $[\lambda]$.
\begin{proof}
    Let $\lambda \in \mathbb{C}^\times$ and $n \in \mathbb{Z}$. Let $\varepsilon \in \{-1,1\}$ such that $n \in \mathbb{Z}^\varepsilon$. From Proposition \ref{actiontransition}, we know that $\mathrm{Ind}_q(\varepsilon,\lambda)$ has infinitesimal character $[\lambda]$. Let $f$ be the unique $(\mathfrak{g},K)_q$-intertwiner $M_{[\lambda], n} \to \mathrm{Ind}_q(\varepsilon, \lambda)$ such that $f(m_{[\lambda],n}) = \zeta_n$. Since $\zeta_n \neq 0$, we have $M_{[\lambda],n} \neq 0$.

    Let $A \subset \mathbb{Z}$ be the set of $K_q$-types of $M_{[\lambda], n}$. For each $a \in A$ let us denote by $M_{[\lambda], n}(a)$ the corresponding $K_q$-isotypical component. Since $M_{[\lambda], n}$ is basic, the latter are one-dimensional. Any proper submodule of $M_{[\lambda], n}$ is thus of the form $\oplus_{b \in B}M_{[\lambda], n}(b)$ for some subset $B$ of $A$ not containing $n$. Then, the sum of all proper submodules of $M_{[\lambda], n}$ is proper, which concludes the proof.
\end{proof}

\begin{corollary}
    Every simple $(\mathfrak{g},K)_q$-module is isomorphic to a subquotient of a parabolically induced module $\mathrm{Ind}_q(\varepsilon,\lambda)$ for some $(\varepsilon,\lambda) \in \{-1,1\}\times \mathbb{C}^\times$.
\end{corollary}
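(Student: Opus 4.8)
The plan is to deduce this directly from the preceding proposition together with the universal property of the modules $M_{[\lambda],n}$, so that essentially no new computation is needed. Let $V$ be a simple $(\mathfrak{g},K)_q$-module; recall we are in the case $\mathbb{K}=\mathbb{C}$. By Lemma~\ref{admissibilitycriterion} and Schur's lemma, $V$ is basic: $\Omega$ acts on it as a scalar, which — since $t\mapsto t+t^{-1}$ maps $\mathbb{C}^\times$ onto $\mathbb{C}$ — may be written $\lambda+\lambda^{-1}$ for some $\lambda\in\mathbb{C}^\times$, and $V$ is generated by a $\theta$-eigenvector $v$ with eigenvalue $[n]_q$ for some $n\in\mathbb{Z}$ (as $V$ is integral). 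As already noted before the preceding proposition, the universal property of $M_{[\lambda],n}$ then provides a surjective $(\mathfrak{g},K)_q$-intertwiner $M_{[\lambda],n}\twoheadrightarrow V$, so that $V$ is the unique simple quotient of $M_{[\lambda],n}$ furnished by that proposition.

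Next I would choose $\varepsilon\in\{-1,1\}$ with $n\in\mathbb{Z}^\varepsilon$ and invoke the intertwiner $f\colon M_{[\lambda],n}\to\mathrm{Ind}_q(\varepsilon,\lambda)$ constructed in the proof of the preceding proposition, namely the one with $f(m_{[\lambda],n})=\zeta_n$. Since $\zeta_n\neq 0$, the submodule $\ker f\subseteq M_{[\lambda],n}$ is proper, hence contained in the greatest proper submodule $N$ of $M_{[\lambda],n}$. Therefore the image $W=f(M_{[\lambda],n})\cong M_{[\lambda],n}/\ker f$ is a nonzero submodule of $\mathrm{Ind}_q(\varepsilon,\lambda)$, and the canonical projection $M_{[\lambda],n}/\ker f\to M_{[\lambda],n}/N=V$ exhibits $V$ as a quotient of $W$. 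Thus $V$ is a quotient of a submodule of $\mathrm{Ind}_q(\varepsilon,\lambda)$, i.e.\ a subquotient of it, which is exactly the assertion.

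I do not anticipate a genuine obstacle: all the substance is already contained in the preceding proposition (existence of a greatest proper submodule, hence of a unique simple quotient, for each $M_{[\lambda],n}$) and in the universality of the $M_{[\lambda],n}$. The only point deserving a line of care is that $f$ need not be surjective, so one must observe that its kernel nevertheless lies inside the greatest proper submodule of $M_{[\lambda],n}$; this guarantees that the simple quotient $V$ of $M_{[\lambda],n}$ is recovered as a quotient of the submodule $W\subseteq\mathrm{Ind}_q(\varepsilon,\lambda)$.
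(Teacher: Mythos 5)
Your proof is correct and follows essentially the same route as the paper: realize the simple module as the unique simple quotient of $M_{[\lambda],n}$, push forward along the canonical intertwiner $f\colon M_{[\lambda],n}\to\mathrm{Ind}_q(\varepsilon,\lambda)$ sending $m_{[\lambda],n}\mapsto\zeta_n$, and observe that the simple quotient is a quotient of $\mathrm{im}(f)$. You even make explicit a step the paper leaves implicit, namely that $\ker f$ is proper and hence contained in the greatest proper submodule of $M_{[\lambda],n}$, so the simple quotient factors through $\mathrm{im}(f)$.
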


\begin{proof}
    As evoked earlier, every simple $(\mathfrak{g},K)_q$-module can be realized as the simple quotient of some universal basic module. Let $\lambda \in \mathbb{C}^\times$ and $n \in \mathbb{Z}$. Let $\varepsilon \in \{-1,1\}$ such that $n \in \mathbb{Z}^\varepsilon$ and $f : M_{[\lambda], n} \to \mathrm{Ind}_q(\varepsilon, \lambda)$ the unique intertwiner such that $f(m_{[\lambda],n}) = \zeta_n$. The simple quotient of $M_{[\lambda], n}$ is a quotient of the image of $f$, hence a subquotient of $\mathrm{Ind}_q(\varepsilon, \lambda)$.
\end{proof}

We can now state the classification theorem.

\begin{theorem}
    Every simple $(\mathfrak{g},K)_q$-module is equivalent to one and only one of the following simple modules:
    \begin{itemize}
        \item $\mathrm{Ind}_q(\varepsilon,\lambda)$ for $\varepsilon \in \{-1,1\}$ and $\lambda \in \mathbb{C}^\times \setminus \{ \pm q^n : n \in \mathbb{Z^\varepsilon}\}$ such that $|\lambda| <1$,
        \item $\mathrm{Ind}_q(1,\lambda)$ for  $\lambda \in \mathbb{C}^\times$ such that $|\lambda | = 1$ and $\Im\lambda \geq 0$,
        \item $\mathrm{Ind}_q(-1,\lambda)$ for  $\lambda \in \mathbb{C}^\times$ such that $|\lambda | = 1$ and $\Im\lambda > 0$,
        \item $D^+_{\sigma,n}$ and $D^-_{\sigma,n}$ for $n \in \mathbb{Z}_{\geq 0}$, $\sigma \in \{-1,1\}$,
        \item $Q_{\sigma, n}$ for $n \in \mathbb{Z}_{> 0}$.
    \end{itemize}
\end{theorem}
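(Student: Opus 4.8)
The plan is to combine three inputs already available: the preceding corollary (every simple $(\mathfrak{g},K)_q$-module is a subquotient of some $\mathrm{Ind}_q(\varepsilon,\lambda)$), the description of submodules in Theorem~\ref{submodules}, and the intertwiners and exact sequences in Theorem~\ref{intertwiners}. The proof then has three parts: each listed module is a nonzero simple $(\mathfrak{g},K)_q$-module; every simple module appears in the list; and no two entries of the list are isomorphic.

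For the first part I would simply invoke Theorem~\ref{submodules}: the $\mathrm{Ind}_q(\varepsilon,\lambda)$ with $\lambda\notin\{\pm q^n: n\in\mathbb{Z}^{-\varepsilon}\}$ are simple there, and the stated restrictions on $\lambda$ ($|\lambda|<1$, or $|\lambda|=1$ with $\mathrm{Im}\,\lambda\geq 0$ when $\varepsilon=1$, resp. $>0$ when $\varepsilon=-1$) only cut down the parameter; the $D^\pm_{\sigma,n}$ and $Q_{\sigma,n}$ are exhibited as simple submodules in Theorem~\ref{submodules} (with the convention $\varepsilon=(-1)^{n+1}$, so that $n\in\mathbb{Z}^{-\varepsilon}_{\geq 0}$ is automatic), and nonvanishing is immediate from the explicit spans of the $\zeta_m$. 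For the second part, start from the corollary: any simple $(\mathfrak{g},K)_q$-module is a subquotient of some $\mathrm{Ind}_q(\varepsilon,\lambda)$. If $\lambda$ is not of the exceptional form, $\mathrm{Ind}_q(\varepsilon,\lambda)$ is simple and, using $\mathrm{Ind}_q(\varepsilon,\lambda)\cong\mathrm{Ind}_q(\varepsilon,\lambda^{-1})$ from Theorem~\ref{intertwiners}, I replace $\lambda$ by whichever of $\lambda,\lambda^{-1}$ lies in the stated fundamental domain (modulus $<1$, or on the unit circle with $\mathrm{Im}\geq 0$); the only circle points fixed by inversion are $\lambda=\pm1$, and these are non-exceptional precisely for $\varepsilon=1$, which is exactly why the $\varepsilon=-1$ case uses the strict inequality. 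If instead $\lambda=\sigma q^{\pm n}$ with $n\in\mathbb{Z}^{-\varepsilon}$ and $n>0$, the two exact sequences of Theorem~\ref{intertwiners} (whose end terms are simple) show $\mathrm{Ind}_q(\varepsilon,\lambda)$ has finite length with composition factors exactly $D^+_{\sigma,n},D^-_{\sigma,n},Q_{\sigma,n}$, all in the list; and for $n=0$ (hence $\varepsilon=-1$) one has $\mathrm{Ind}_q(-1,\sigma)=D^+_{\sigma,0}\oplus D^-_{\sigma,0}$. Thus every simple $(\mathfrak{g},K)_q$-module is isomorphic to an entry of the list.

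For the last part, inequivalence, I would separate the list by three isomorphism invariants of a $(\mathfrak{g},K)_q$-module $V$: its dimension; its set of $K_q$-types $\{n\in\mathbb{Z}:e_nV\neq0\}$; and, since $\Omega$ acts by a scalar on any simple module (Schur's lemma, using Theorem~\ref{center}), its infinitesimal character in $\mathbb{C}^\times/\mathbb{Z}_2$. Only the $Q_{\sigma,n}$ are finite-dimensional, with $\dim Q_{\sigma,n}=n$, so distinct second indices are separated by dimension, while a common $n$ is separated by the infinitesimal characters $[\sigma q^{-n}]$, which differ for $\sigma=\pm1$ because $q$ is a positive real. Among the infinite-dimensional entries, $\mathrm{Ind}_q(\varepsilon,\lambda)$ has all of $\mathbb{Z}^\varepsilon$ as $K_q$-types, $D^+_{\sigma,n}$ has $K_q$-types $\{n+1,n+3,\dots\}$ and $D^-_{\sigma,n}$ has $\{-n-1,-n-3,\dots\}$; these subsets of $\mathbb{Z}$ are pairwise distinct except that one must still compare two induced modules, for which Theorem~\ref{intertwiners} forces $\varepsilon_1=\varepsilon_2$ and $\lambda_1\in\{\lambda_2,\lambda_2^{-1}\}$, hence $\lambda_1=\lambda_2$ by the choice of fundamental domain, and compare $D^\pm_{\sigma,n}$ with $D^\pm_{\sigma',n'}$, for which the $K_q$-type sets force $n=n'$ and then $[\sigma q^n]\neq[-\sigma q^n]$ forces $\sigma=\sigma'$.

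There is no conceptually hard step here; the real work, and the only place errors can creep in, is the bookkeeping in the last two paragraphs: choosing the fundamental domain for $\lambda\mapsto\lambda^{-1}$ compatibly with the parity constraint and with the reducibility locus $\{\pm q^n:n\in\mathbb{Z}^{-\varepsilon}\}$, so that each isomorphism class of principal-series type is hit exactly once and the $\pm1$ boundary cases are correctly shunted to the $\varepsilon=1$ branch (and to the $D^\pm_{\sigma,0}$ entries when $\varepsilon=-1$). I would be especially careful to check that the asymmetric conditions "$\mathrm{Im}\geq0$" and "$\mathrm{Im}>0$" line up with the two parities, and that no $D^\pm_{\sigma,n}$ or $Q_{\sigma,n}$ accidentally coincides with a simple $\mathrm{Ind}_q(\varepsilon',\lambda')$ of the opposite $K_q$-type parity.
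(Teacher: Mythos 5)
Your proposal is correct and follows essentially the same route as the paper: reduce via the corollary to subquotients of the $\mathrm{Ind}_q(\varepsilon,\lambda)$, use Theorems~\ref{submodules} and~\ref{intertwiners} to enumerate those subquotients, and separate isomorphism classes by $K_q$-type sets together with the infinitesimal character (the paper does not explicitly invoke dimension for the $Q_{\sigma,n}$, but it is an immediate consequence of the $K_q$-type count and plays the same role). Your bookkeeping of the fundamental domain for $\lambda\mapsto\lambda^{-1}$ and of the $\lambda=\pm1$ boundary cases matches the paper's statement; no gaps.
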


\begin{proof}
    Because of the previous result, the proof of the theorem reduces to identifying the equivalence classes of simple subquotients of the $(\mathfrak{g},K)_q$-modules induced from characters (the analogues of the non-unitary principal series). For this, we use Theorems \ref{submodules} and \ref{intertwiners}. If $\varepsilon \in \{-1,1\}$, $n \in \mathbb{Z}^{-\varepsilon}$ and $\sigma \in \{-1,1\}$ then $\mathrm{Ind}_q(\varepsilon, \sigma q^n)$ and $\mathrm{Ind}_q(\varepsilon, \sigma q^{-n})$ both admit Jordan-Holder composition series with the same successive inequivalent quotients: $D_{\sigma,n}^+$, $D_{\sigma,n}^-$ and $Q_{\sigma, n}$. In all the other cases, $\mathrm{Ind}_q(\varepsilon,\lambda)$ is simple and equivalent to $\mathrm{Ind}_q(\varepsilon,\lambda^{-1})$. Finally, if $\mathrm{Ind}_q(\varepsilon_1,\lambda_1)$ and $\mathrm{Ind}_q(\varepsilon_2,\lambda_2)$ have a simple subquotient in common, then they must have at least one $K_q$-type in common, so $\varepsilon_1 = \varepsilon_2$, and the center must act with the same character on both modules, hence $[\lambda_1] = [\lambda_2]$.
\end{proof}

As a consequence of this classification, a simple $(\mathfrak{g},K)_q$-module is uniquely characterized (up to equivalence) by its infinitesimal character and one of its $K_q$-types, as in the classical case. Moreover, any simple $(\mathfrak{g},K)_q$-module can be embedded into a parabolically induced module $\mathrm{Ind}_q(\varepsilon,\lambda)$ for some $(\varepsilon, \lambda) \in \{-1,1\}\times \mathbb{C}^\times$. This is the analogue of the subrepresentation theorem \cite{Casselman}*{Theorem 8.21} applied to $\mathrm{SL}(2,\mathbb{R})$.

Let us also comment on the unitary aspect. By \cite{DCDTsl2R}*{Theorem 3.17}, we see that the simple unitarizable $(\mathfrak{g},K)_q$-modules are, up to equivalence,
\begin{itemize}
    \item $\mathrm{Ind}_q(1,\lambda)$ for  $\lambda \in \mathbb{C}^\times$ such that $|\lambda | = 1$ and $\Im\lambda \geq 0$,
    \item $\mathrm{Ind}_q(-1,\lambda)$ for  $\lambda \in \mathbb{C}^\times$ such that $|\lambda | = 1$ and $\Im\lambda > 0$,
    \item $D^+_{\sigma,n}$ and $D^-_{\sigma,n}$ for $n \in \mathbb{Z}_{\geq 0}$, $\sigma \in \{-1,1\}$,
    \item $Q_{1,1}$ and $Q_{-1,1}$,
    \item $\mathrm{Ind}_q(1,\lambda)$ for $\lambda \in (-1,1)$ such that $|\lambda| > \min (q,q^{-1})$.
\end{itemize}
In the order of the list, these modules are analogues of the even and odd unitary principal series, the discrete series and their limits, the trivial representation, the complementary series. Note that the three last elements of the above list are doubled with respect to their classical counterparts. By Proposition \ref{unitarydual} and Proposition \ref{ilcuequivalence}, the above list also describes the spectrum of $C^*_q(G)$. In \cite{DCDTsl2R}, the deformation parameter $q$ is taken in $(0,1)$ but the proofs extend to the case $q \in \mathbb{R}_+^*\setminus \{1\}$. Note also that the unitarizability of the analogues of the unitary principal series automatically follows from Corollary \ref{unitaritypreserved}.

\begin{remark} Assume that $q \in (0,1)$. The authors of \cite{DCDTinvariant} introduced an analogue of the regular representation for $C^*_q(G)$. Let us say that a Hilbert representation of $C^*_q(G)$ is tempered if it is weakly contained in this regular representation. Then \cite{DCinduction}*{Theorem 5.13} essentially states that the irreducible tempered representations are up to equivalence the Hilbert completions of
\begin{itemize}
    \item $\mathrm{Ind}_q(1,\lambda)$ for  $\lambda \in \mathbb{C}^\times$ such that $|\lambda | = 1$ and $\Im\lambda \geq 0$,
    \item $\mathrm{Ind}_q(-1,\lambda)$ for  $\lambda \in \mathbb{C}^\times$ such that $|\lambda | = 1$ and $\Im\lambda > 0$,
    \item $D^+_{\sigma,n}$ and $D^-_{\sigma,n}$ for $n \in \mathbb{Z}_{\geq 0}$, $\sigma \in \{-1,1\}$.
\end{itemize}
Now observe that the rest of the simple $(\mathfrak{g},K)_q$-modules are given by the collection of the unique simple quotients of $\mathrm{Ind}_q(\varepsilon, \lambda)$ for $(\varepsilon, \lambda) \in \{-1,1\}\times \mathbb{C}^\times$ such that $|\lambda| <1$, which are mutually inequivalent. One can compare this to the Langlands classification for $\mathrm{SL}(2,\mathbb{R})$, see for example \citelist{\cite{Wallach}*{Section 5.6} \cite{vandenBan}*{§5}}. Here, the condition $|\lambda|<1$ is an exponentiation (in base $q$) of the condition $\Re\lambda>0$.
\end{remark}

\section{Convergence to the classical non-unitary dual}

As one may already have noticed, the classification of simple $(\mathfrak{g},K)_q$-modules of the previous section is very similar to the one of the non-unitary (or admissible) dual of $\mathrm{SL}(2,\mathbb{R})$. Our goal now is to make this observation more formal. To be more precise, in this last section, we explicitly show how the $(\mathfrak{g},K)_q$-modules parabolically induced from characters of $R_q(\mathfrak{p},M)$ deform into the classical parabolically induced $(\mathfrak{g},K)$-modules as $q \to 1$, and how their composition series are preserved in this limit. For that purpose, we use all the material introduced in Section 4.

\subsection*{The non-unitary dual of $\mathrm{SL}(2,\mathbb{R})$}
Since we will need it, we briefly recall the classification of simple $(\mathfrak{g},K)$-modules. On this topic, one may consult \cite{Wallach}*{§5.6} or \cite{Vogan}*{Chapter 1, §2-3}.

Let $P = MAN$ be the standard minimal parabolic subgroup of $G$. We have $M = \{-1,1\}$ and if $E,F,H$ are the canonical generators of $\mathfrak{g}$ then $\mathfrak{p} = \mathrm{span}(H,E)$. Let $R(\mathfrak{g},K)$ and $R(\mathfrak{p},M)$ be the Hecke algebras associated to the pairs $(\mathfrak{g},K)$ and $(\mathfrak{p},M)$. We refer to Example \ref{exampleR} and the references therein for more information concerning these algebras.

For each $(\varepsilon,\lambda)\in \{-1,1\} \times \mathbb{C}$, there exists a unique character $\chi_{\varepsilon,\lambda}$ of $R(\mathfrak{p},M)$ such that
$$\chi_{\varepsilon,\lambda}(H) = \lambda+1, \quad \chi_{\varepsilon,\lambda}(E) = 0, \quad \chi_{\varepsilon,\lambda}(-1) = \varepsilon,$$ where $-1$ is considered as an element of $M$. Conversely, every character of $R(\mathfrak{p},M)$ is of this form. For every $(\varepsilon,\lambda)\in \{-1,1\} \times \mathbb{C}$, let us denote by $\mathrm{I}(\varepsilon,\lambda)$ the following induced $(\mathfrak{g},K)$-module:
$$\mathrm{Ind}_{R(\mathfrak{p},M)}^{R(\mathfrak{g},K)}(\chi_{\varepsilon,\lambda}).$$
Let us express these induced modules in the compact picture. We denote by $\theta$ the element $i(E-F)$ of $\mathfrak{k}_\mathbb{C}$. The space $\mathcal{O}(K)$ of regular functions on $K$ has a basis $(\zeta_n)_{n \in \mathbb{Z}}$ defined by $\zeta_n(e^{it\theta}) = e^{itn} \,\, (n \in \mathbb{Z}, t \in \mathbb{R})$. The Hecke algebra $R(K)$ of the pair $(\mathfrak{k},K)$ then identifies with the space of distributions on $K$ spanned by the family $(e_n)_{n \in \mathbb{Z}}$ characterized by  $\langle e_n, \zeta_m\rangle = \delta_{n,m}$. For each $\varepsilon \in \{-1,1\}$, let us write $\mathcal{O}^\varepsilon(K) = \mathrm{span}(\zeta_n : n \in \mathbb{Z}^\varepsilon)$. For any $\lambda \in \mathbb{C}$, the restriction to $R(K)$ yields a linear isomorphism $\mathrm{res}_K : \mathrm{I}(\varepsilon,\lambda) \to \mathcal{O}^\varepsilon(K)$. The action of $\mathfrak{g}_\mathbb{C}$ on $(\zeta_n)_{n \in \mathbb{Z}^\varepsilon}$, considered as a basis of $\mathrm{I}(\varepsilon,\lambda)$ via $\mathrm{res}_K$, is given by
\begin{equation} \label{actionclassical}
    \theta \zeta_n = n \zeta_n, \qquad \kappa_\pm \zeta_n = (\lambda +1 \pm n)\zeta_{n\pm 2},
\end{equation}
where $\kappa_\pm = H \mp i(E+F)$.

Now, let us recall how to extract the non-unitary dual from the parabolically induced $(\mathfrak{g},K)$-modules. If $\varepsilon \in \{-1,1\}$ and $n \in \mathbb{Z}^{-\varepsilon}_{\geq 0}$, then $\mathrm{I}(\varepsilon,n)$ admits two distinct simple modules, that are $D^\pm_n = \mathrm{span}(\zeta_{\pm m} : m \in \mathbb{Z}^\varepsilon_{>n})$. In addition, when $n>0$, the induced $(\mathfrak{g},K)$-module $\mathrm{I}(\varepsilon,-n)$ admits a unique simple submodule, namely $Q_n = \mathrm{span}(\zeta_m : m \in \mathbb{Z}^\varepsilon, |m| <n)$, and we have exact sequences of $(\mathfrak{g},K)$-modules:
\begin{gather*}
    0 \to D^+_n \oplus D_n^- \to \mathrm{I}(\varepsilon,n)\to Q_n\to 0, \\
    0 \to Q_n \to \mathrm{I}(\varepsilon,-n)\to D^+_n \oplus D_n^- \to 0.
\end{gather*}
Finally, $\mathrm{I}(\varepsilon,\lambda)$ is simple and isomorphic to $\mathrm{I}(\varepsilon,-\lambda)$ for each pair $(\varepsilon, \lambda) \in \{-1,1\}\times \mathbb{C}$ such that $\lambda \notin \mathbb{Z}^{-\varepsilon}$. Up to equivalence, the distinct simple $(\mathfrak{g},K)$-modules are the following:
\begin{itemize}
    \item $\mathrm{I}(\varepsilon,\lambda)$ for $\varepsilon \in \{-1,1\}$ and $\lambda \in \mathbb{C}\setminus \mathbb{Z}^{-\varepsilon}$ such that $\Re\lambda \geq 0$,
    \item $D_n^+$ and $D_n^-$ for $n \in \mathbb{Z}_{\geq 0}$,
    \item $Q_n$ for $n \in \mathbb{Z}_{> 0}$.
\end{itemize}

\subsection*{The classical limit} Let $\mathbb{A}$ be the ring of analytic functions on $\mathbb{R}^*_+$ and $\mathbb{F}$ its field of fractions. In Section 4, we constructed an $\mathbb{A}$-form $U_\mathbb{A}(\mathfrak{g})$ of $U_\mathbb{F}(\mathfrak{g})$. The specialization of $U_\mathbb{A}(\mathfrak{g})$ at any $q\neq 1$ is $U_q(\mathfrak{g})$ while the specialization at $1$ is the enveloping $\ast$-algebra of $\mathfrak{g}$.

\begin{notation}
Let us denote by $\boldsymbol{q}$ the identity function on $\mathbb{R}^*_+$. The objects associated to the deformed convolution algebras of $\mathrm{SL}(2,\mathbb{R})$ relative to the field $\mathbb{F}$ and parameter $\boldsymbol{q}$ will be labeled by $\mathbb{F}$ instead of $\boldsymbol{q}$. For example, we write $\mathrm{Ind}_\mathbb{F}$, $\mathcal{O}_\mathbb{F}(K)$ for $\mathrm{Ind}_{\boldsymbol{q}}$, $\mathcal{O}_{\boldsymbol{q}}(K)$. Moreover, the elements of the deformed convolution algebras and their modules defined over $\mathbb{F}$ with deformation parameter $\boldsymbol{q}$ will be written in bold characters.

The usual notations will be reserved for the objects defined relative to the field $\mathbb{C}$ and deformation parameter $q$ in $\mathbb{R}^*_+\setminus \{1\}$. 
\end{notation}

Let $\varepsilon\in\{-1,1\}$ and $\boldsymbol{\lambda}\in \mathbb{A}^\times$. We consider the induced $(\mathfrak{g}, K)_\mathbb{F}$-module $\mathrm{Ind}_\mathbb{F}(\varepsilon,\boldsymbol{\lambda})$. It has a preferred basis $(\boldsymbol{\zeta}_n)_{n \in \mathbb{Z}^\varepsilon}$, which comes from the isomorphism
$$\mathrm{res}_{K_\mathbb{F}} : \mathrm{Ind}_\mathbb{F}(\varepsilon,\boldsymbol{\lambda}) \to \mathcal{O}_\mathbb{F}^\varepsilon(K).$$
Let us denote by $\mathrm{Ind}_\mathbb{A}(\varepsilon,\boldsymbol{\lambda})$ the $\mathbb{A}$-submodule of $\mathrm{Ind}_\mathbb{F}(\varepsilon,\boldsymbol{\lambda})$ spanned by $(\boldsymbol{\zeta}_n)_{n \in \mathbb{Z}^\varepsilon}$.

\begin{lemma}
    If $\mathrm{ev}_1(\boldsymbol{\lambda})=1$ then $U_\mathbb{A}(\mathfrak{g})$ stabilizes $\mathrm{Ind}_\mathbb{A}(\varepsilon,\boldsymbol{\lambda})$.
\end{lemma}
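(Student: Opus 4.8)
The plan is to verify that each of the four algebra generators $\theta,x,y,z$ of $U_\mathbb{A}(\mathfrak{g})$ maps every basis vector $\boldsymbol{\zeta}_n$ ($n\in\mathbb{Z}^\varepsilon$) into $\mathrm{Ind}_\mathbb{A}(\varepsilon,\boldsymbol{\lambda})$. This is enough: the set of $u\in U_\mathbb{F}(\mathfrak{g})$ with $u\cdot\mathrm{Ind}_\mathbb{A}(\varepsilon,\boldsymbol{\lambda})\subseteq\mathrm{Ind}_\mathbb{A}(\varepsilon,\boldsymbol{\lambda})$ is a unital $\mathbb{A}$-subalgebra of $U_\mathbb{F}(\mathfrak{g})$, so if it contains the generators it contains all of $U_\mathbb{A}(\mathfrak{g})$. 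The generator $\theta$ is immediate: $\boldsymbol{\zeta}_n$ spans the $K_{\boldsymbol{q}}$-isotypical component of $K_{\boldsymbol{q}}$-type $n$, hence $\theta\boldsymbol{\zeta}_n=[n]_{\boldsymbol{q}}\boldsymbol{\zeta}_n$, and $[n]_{\boldsymbol{q}}$ is a Laurent polynomial in $\boldsymbol{q}$, so it lies in $\mathbb{A}$.

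For $x,y,z$ the idea is to repackage the transition operators of (\ref{transition}) so that the denominator $\boldsymbol{q}-\boldsymbol{q}^{-1}$ occurring in $x,y,z$ (via (\ref{changeVariables})) becomes visible. Substituting $X=1+(\boldsymbol{q}-\boldsymbol{q}^{-1})x$, $Y=1+(\boldsymbol{q}-\boldsymbol{q}^{-1})y$, $Z=(\boldsymbol{q}-\boldsymbol{q}^{-1})z$ into (\ref{TXYZ}) and using that $\boldsymbol{q}^{n}-\boldsymbol{q}^{-n}$ is divisible by $\boldsymbol{q}-\boldsymbol{q}^{-1}$ in $\mathbb{A}$, one sees that $T_n^{\pm}$ and $T_n-\{1\}_{\boldsymbol{q}}$ are divisible by $\boldsymbol{q}-\boldsymbol{q}^{-1}$ inside $U_\mathbb{A}(\mathfrak{g})$; I set $\tau_n^{\pm}=T_n^{\pm}/(\boldsymbol{q}-\boldsymbol{q}^{-1})$ and $\tau_n=(T_n-\{1\}_{\boldsymbol{q}})/(\boldsymbol{q}-\boldsymbol{q}^{-1})$, each of which is then an explicit $\mathbb{A}$-linear combination of $1,x,y,z$. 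Conversely, for a fixed $n$ the linear system expressing $\tau_n^{+},\tau_n^{-},\tau_n$ in terms of $x,y,z$ (and $1$) has the same coefficient matrix as (\ref{TXYZ}), and a short computation gives its determinant as $-(\{3n\}_{\boldsymbol{q}}+\{n+2\}_{\boldsymbol{q}}+\{n-2\}_{\boldsymbol{q}}+\{n\}_{\boldsymbol{q}})$, which at every $q\in\mathbb{R}^*_+$ is a sum of four terms each $\geq 2$, hence a nowhere-vanishing analytic function and thus a unit of $\mathbb{A}$. Therefore $x,y,z\in\mathbb{A}\cdot 1+\mathbb{A}\tau_n^{+}+\mathbb{A}\tau_n^{-}+\mathbb{A}\tau_n$ for each $n$.

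It remains to check that $\tau_n^{\pm}$ and $\tau_n$ send $\boldsymbol{\zeta}_n$ into $\mathrm{Ind}_\mathbb{A}(\varepsilon,\boldsymbol{\lambda})$, and this is precisely where the hypothesis $\mathrm{ev}_1(\boldsymbol{\lambda})=1$ is used. From Proposition~\ref{actiontransition},
$$\tau_n^{\pm}\boldsymbol{\zeta}_n=\frac{\boldsymbol{\lambda}\,\boldsymbol{q}^{\,1\pm n}-\boldsymbol{\lambda}^{-1}\boldsymbol{q}^{\,-1\mp n}}{\boldsymbol{q}-\boldsymbol{q}^{-1}}\,\boldsymbol{\zeta}_{n\pm 2},\qquad \tau_n\boldsymbol{\zeta}_n=\frac{\boldsymbol{\lambda}+\boldsymbol{\lambda}^{-1}-\{1\}_{\boldsymbol{q}}}{\boldsymbol{q}-\boldsymbol{q}^{-1}}\,\boldsymbol{\zeta}_n.$$
Since $\boldsymbol{\lambda}\in\mathbb{A}^\times$ and $\mathrm{ev}_1(\boldsymbol{\lambda})=1$, each numerator is an element of $\mathbb{A}$ vanishing at $q=1$; as $\boldsymbol{q}-\boldsymbol{q}^{-1}$ has a single, simple zero on $\mathbb{R}^*_+$ (at $q=1$), the three quotients again lie in $\mathbb{A}$. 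Hence $\tau_n^{\pm}\boldsymbol{\zeta}_n\in\mathbb{A}\boldsymbol{\zeta}_{n\pm 2}$ and $\tau_n\boldsymbol{\zeta}_n\in\mathbb{A}\boldsymbol{\zeta}_n$, so by the previous paragraph $x\boldsymbol{\zeta}_n,y\boldsymbol{\zeta}_n,z\boldsymbol{\zeta}_n\in\mathbb{A}\boldsymbol{\zeta}_{n-2}+\mathbb{A}\boldsymbol{\zeta}_n+\mathbb{A}\boldsymbol{\zeta}_{n+2}\subseteq\mathrm{Ind}_\mathbb{A}(\varepsilon,\boldsymbol{\lambda})$, and together with the case of $\theta$ all generators stabilize $\mathrm{Ind}_\mathbb{A}(\varepsilon,\boldsymbol{\lambda})$. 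The main obstacle, and the only real content, is the bookkeeping of the factor $\boldsymbol{q}-\boldsymbol{q}^{-1}$: one must recognize that the naturally $\mathbb{A}$-integral objects are the rescaled operators $\tau_n^{\pm},\tau_n$ rather than $T_n^{\pm},T_n$, and note that $\mathrm{ev}_1(\boldsymbol{\lambda})=1$ is exactly the condition making their coefficients on $\boldsymbol{\zeta}_n$ divisible by $\boldsymbol{q}-\boldsymbol{q}^{-1}$ in $\mathbb{A}$ (geometrically, this is the normalization under which $\mathrm{Ind}_q(\varepsilon,\boldsymbol{q}^\lambda)$ degenerates to the classical $\mathrm{I}(\varepsilon,\lambda)$ at $q=1$).
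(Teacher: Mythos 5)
Your proof is correct and follows essentially the same strategy as the paper's: renormalize the transition operators $T_n^\pm$, $T_n$ by $\boldsymbol{q}-\boldsymbol{q}^{-1}$ to obtain $\mathbb{A}$-integral combinations of $1,\boldsymbol{x},\boldsymbol{y},\boldsymbol{z}$; invert to recover $\boldsymbol{x},\boldsymbol{y},\boldsymbol{z}$ over $\mathbb{A}$ (the paper writes out the explicit $3\times 3$ inverse with $\alpha_n=\{n-1\}_{\boldsymbol{q}}\{n\}_{\boldsymbol{q}}\{n+1\}_{\boldsymbol{q}}$, while you compute the determinant $-\alpha_n=-(\{3n\}_{\boldsymbol{q}}+\{n+2\}_{\boldsymbol{q}}+\{n-2\}_{\boldsymbol{q}}+\{n\}_{\boldsymbol{q}})$ and observe it is a unit — these agree, since $\{n-1\}_{\boldsymbol{q}}\{n\}_{\boldsymbol{q}}\{n+1\}_{\boldsymbol{q}}$ expands to exactly that sum); and finally use $\mathrm{ev}_1(\boldsymbol{\lambda})=1$ to show the coefficients from Proposition~\ref{actiontransition} vanish to first order at $q=1$, hence remain in $\mathbb{A}$ after dividing by $\boldsymbol{q}-\boldsymbol{q}^{-1}$. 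Your $\{1\}_{\boldsymbol{q}}$ and the paper's $[2]_{\boldsymbol{q}}$ denote the same quantity $\boldsymbol{q}+\boldsymbol{q}^{-1}$, so your $\tau_n^\pm,\tau_n$ are the paper's $\boldsymbol{t}_n^\pm,\boldsymbol{t}_n$.
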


\begin{proof}
    Let us fix $n \in \mathbb{Z}^\varepsilon$. We define
    $$\boldsymbol{t}_n^\pm = (\boldsymbol{q}-\boldsymbol{q}^{-1})^{-1}\boldsymbol{T}_n^\pm, \qquad \boldsymbol{t}_n = (\boldsymbol{q}-\boldsymbol{q}^{-1})^{-1}(\boldsymbol{T}_n -[2]_{\boldsymbol{q}}).$$
    Using Proposition \ref{actiontransition}, we get
    \begin{gather}\label{boldt}(\boldsymbol{t}_n^\pm)_{\varepsilon,\boldsymbol{\lambda}}\boldsymbol{\zeta}_n = \frac{\boldsymbol{\lambda  q}^{1\pm n} - \boldsymbol{\lambda}^{-1}\boldsymbol{q}^{-1\mp n}}{\boldsymbol{q}-\boldsymbol{q}^{-1}}\,\boldsymbol{\zeta}_{n\pm 2}, \qquad (\boldsymbol{t}_n)_{\varepsilon,\boldsymbol{\lambda}} \boldsymbol{\zeta}_n  = \frac{\boldsymbol{\lambda} + \boldsymbol{\lambda}^{-1} -[2]_{\boldsymbol{q}}}{(\boldsymbol{q}-\boldsymbol{q}^{-1})} \boldsymbol{\zeta}_n.\end{gather}
    Then, the hypothesis $\mathrm{ev}_1(\boldsymbol{\lambda}) = 1$ ensures that $\boldsymbol{t}_n^\pm \boldsymbol{\zeta}_n \in \mathbb{A}\boldsymbol{\zeta}_{n\pm 2} $ and $\boldsymbol{t}_n \boldsymbol{\zeta}_n \in \mathbb{A}\boldsymbol{\zeta}_{n}$. Inverting the formulas (\ref{TXYZ}), one gets
    \begin{equation*} \alpha_n\begin{pmatrix}
        \boldsymbol{x} \\
        \boldsymbol{y} \\
        \boldsymbol{z}
    \end{pmatrix}
    = \begin{pmatrix}
        \boldsymbol{q}^{n+1} & \boldsymbol{q}^{1-n} & [2]_{\boldsymbol{q}}\\
        -\boldsymbol{q}^{-n-1} & -\boldsymbol{q}^{n-1} & [2]_{\boldsymbol{q}} \\
        -1 & 1 & \boldsymbol{q}^n-\boldsymbol{q}^{-n}
        \end{pmatrix}
        \begin{pmatrix}
            \{n-1\}_{\boldsymbol{q}}\boldsymbol{t}_n^+\\
            \{n+1\}_{\boldsymbol{q}} \boldsymbol{t}_n^-\\
            \{n\}_{\boldsymbol{q}}\boldsymbol{t}_n
        \end{pmatrix}+\begin{pmatrix}
            \beta_n\\
            \beta_n \\
           \gamma_n
        \end{pmatrix},\end{equation*}
    where $\boldsymbol{x}, \boldsymbol{y}, \boldsymbol{z}$ are the generators of $U_\mathbb{A}(\mathfrak{g})$  from Definition \ref{defUA} and
    $$\alpha_n = \{n-1\}_{\boldsymbol{q}}\{n\}_{\boldsymbol{q}}\{n+1\}_{\boldsymbol{q}}, \qquad \beta_n = \frac{[2]_{\boldsymbol{q}}^2\{n\}_{\boldsymbol{q}} - \alpha_n}{\boldsymbol{q}-\boldsymbol{q}^{-1}}, \qquad \gamma_n = [2]_{\boldsymbol{q}}[n]_{\boldsymbol{q}}\{n\}_{\boldsymbol{q}}.$$
    We have $\alpha_n \in \mathbb{A}^\times$ and $\beta_n,\gamma_n \in \mathbb{A}$, so $\boldsymbol{x}, \boldsymbol{y}, \boldsymbol{z} \in \mathrm{span}_\mathbb{A}(\boldsymbol{t}_n^+, \boldsymbol{t}_n^-, \boldsymbol{t}_n, 1)$. It follows that $\boldsymbol{x}_{\varepsilon,\boldsymbol{\lambda}}\boldsymbol{\zeta}_{n},\, \boldsymbol{y}_{\varepsilon,\boldsymbol{\lambda}}\boldsymbol{\zeta}_{n}$ and $ \boldsymbol{z}_{\varepsilon,\boldsymbol{\lambda}}\boldsymbol{\zeta}_{n}$ belong to $\mathrm{span}_\mathbb{A}(\boldsymbol{\zeta}_{n-2}, \boldsymbol{\zeta}_{n}, \boldsymbol{\zeta}_{n+2})$. Since we also have $\theta \boldsymbol{\zeta}_{n} = [n]_{\boldsymbol{q}} \boldsymbol{\zeta}_{n} \in \mathbb{A}\boldsymbol{\zeta}_{n}$, we conclude that $U_\mathbb{A}(\mathfrak{g})$ preserves  $\mathrm{Ind}_\mathbb{A}(\varepsilon,\boldsymbol{\lambda})$.
\end{proof}

Under the condition $\mathrm{ev}_1(\boldsymbol{\lambda})=1$, one can thus consider for each $q \in \mathbb{R}^*_+$ the specialization of $\mathrm{Ind}_\mathbb{A}(\varepsilon,\boldsymbol{\lambda})$ at $q$, that is $\mathbb{C} \otimes_{\mathrm{ev}_q} \mathrm{Ind}_\mathbb{A}(\varepsilon,\boldsymbol{\lambda})$, as a $\mathbb{C}\otimes_{\mathrm{ev}_q}U_\mathbb{A}(\mathfrak{g})$-module. We recall that the specialization of $U_\mathbb{A}(\mathfrak{g})$ at $1$ identifies with $U(\mathfrak{g})$ via (\ref{specialization1}) while its specialization at $q \neq 1$ identifies with $U_q(\mathfrak{g})$ via (\ref{changeVariables}). We have the following result.

\begin{proposition} Assume that $\mathrm{ev}_1(\boldsymbol{\lambda})=1$ and let $\lambda'$ be the first derivative of $\boldsymbol{\lambda}$ at $1$. For each $q \neq 1$, we consider the linear isomorphism
$$\begin{array}{cccccll}
    \varphi_q & : & \mathbb{C} \otimes_{\mathrm{ev}_q} \mathrm{Ind}_\mathbb{A}(\varepsilon,\boldsymbol{\lambda}) & \longrightarrow & \mathrm{Ind}_q(\varepsilon, \mathrm{ev}_q(\boldsymbol{\lambda}))\\
    & & (1\otimes\boldsymbol{\zeta}_n) & \longmapsto & \zeta_n.
\end{array}$$
We also define
$$\begin{array}{cccccll}
    \varphi_1 & : & \mathbb{C} \otimes_{\mathrm{ev}_1} \mathrm{Ind}_\mathbb{A}(\varepsilon,\boldsymbol{\lambda}) & \longrightarrow & \mathrm{I}(\varepsilon,\lambda')\\
    & & (1\otimes\boldsymbol{\zeta}_n) & \longmapsto & \zeta_n.
\end{array}$$
For each $q \neq 1$, the map $\varphi_q$ is an isomorphism of $U_q(\mathfrak{g})$-modules while $\varphi_1$ is an isomorphism of $U(\mathfrak{g})$-modules.
\end{proposition}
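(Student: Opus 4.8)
The plan is to verify, separately for $q\neq1$ and for $q=1$, that the linear bijection in question intertwines the action of a small generating set of the acting algebra on every basis vector $1\otimes\boldsymbol{\zeta}_n$; by linearity of the module structures this forces it to be a module isomorphism. In both regimes the computation goes through the transition operators $\boldsymbol{T}_n^\pm,\boldsymbol{T}_n$ and formula (\ref{TXYZ}). Over the field $\mathbb{F}$, Proposition~\ref{actiontransition} gives $(\boldsymbol{T}_n^\pm)_{\varepsilon,\boldsymbol{\lambda}}\boldsymbol{\zeta}_n=(\boldsymbol{\lambda}\boldsymbol{q}^{1\pm n}-\boldsymbol{\lambda}^{-1}\boldsymbol{q}^{-1\mp n})\boldsymbol{\zeta}_{n\pm2}$, $(\boldsymbol{T}_n)_{\varepsilon,\boldsymbol{\lambda}}\boldsymbol{\zeta}_n=(\boldsymbol{\lambda}+\boldsymbol{\lambda}^{-1})\boldsymbol{\zeta}_n$ and $\theta\boldsymbol{\zeta}_n=[n]_{\boldsymbol{q}}\boldsymbol{\zeta}_n$, all with coefficients in $\mathbb{A}$; since $U_\mathbb{A}(\mathfrak{g})$ preserves $\mathrm{Ind}_\mathbb{A}(\varepsilon,\boldsymbol{\lambda})$ under the standing hypothesis $\mathrm{ev}_1(\boldsymbol{\lambda})=1$ (preceding lemma), these coefficients may be evaluated at any $q\in\mathbb{R}_+^*$, and doing so computes the action of the corresponding element on $1\otimes\boldsymbol{\zeta}_n$ in the specialization.

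\emph{The case $q\neq1$.} By Proposition~\ref{specialcases} and the change of variables (\ref{changeVariables}), the specialization $\mathbb{C}\otimes_{\mathrm{ev}_q}U_\mathbb{A}(\mathfrak{g})$ is identified with $U_q(\mathfrak{g})$, and $\mathrm{ev}_q$ sends $\boldsymbol{T}_n^\pm,\boldsymbol{T}_n,\theta$ to $T_n^\pm,T_n,\theta$. Evaluating the scalars above at $q$ produces exactly the scalars by which $T_n^\pm,T_n,\theta$ act on $\zeta_n\in\mathrm{Ind}_q(\varepsilon,\mathrm{ev}_q(\boldsymbol{\lambda}))$, again by Proposition~\ref{actiontransition}. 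Hence $\varphi_q$ intertwines $T_n^\pm,T_n$ and $\theta$ on each $\zeta_n$; as $\mathrm{span}(X,Y,Z)=\mathrm{span}(T_n^+,T_n^-,T_n)$ for every $n$ (and $X,Y,Z,\theta$ generate $U_q(\mathfrak{g})$), $\varphi_q$ intertwines the whole algebra, so it is an isomorphism of $U_q(\mathfrak{g})$-modules.

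\emph{The case $q=1$.} Here $\mathbb{C}\otimes_{\mathrm{ev}_1}U_\mathbb{A}(\mathfrak{g})$ is identified with $U(\mathfrak{g})=U(\mathfrak{sl}_2(\mathbb{C}))$ via (\ref{specialization1}), and we take $\theta=i(E-F)$ and $\kappa_\pm=H\mp i(E+F)$ as generators of $U(\mathfrak{g})$. Substituting (\ref{changeVariables}) into (\ref{TXYZ}) one obtains $\boldsymbol{t}_n^\pm:=(\boldsymbol{q}-\boldsymbol{q}^{-1})^{-1}\boldsymbol{T}_n^\pm=\frac{\boldsymbol{q}^{\pm n}-\boldsymbol{q}^{\mp n}}{\boldsymbol{q}-\boldsymbol{q}^{-1}}+\boldsymbol{q}^{\pm n}\boldsymbol{x}-\boldsymbol{q}^{\mp n}\boldsymbol{y}\mp[2]_{\boldsymbol{q}}\boldsymbol{z}$; evaluating at $1$ (using $\mathrm{ev}_1(\boldsymbol{y})=-\mathrm{ev}_1(\boldsymbol{x})$, $[2]_1=2$, and (\ref{specialization1})) gives $\mathrm{ev}_1(\boldsymbol{t}_n^\pm)=\kappa_\pm\pm(n-\theta)$ in $U(\mathfrak{g})$, so $\mathrm{ev}_1(\boldsymbol{t}_n^\pm)$ acts on $\zeta_n$ as $\kappa_\pm$ does, because $(\theta-n)\zeta_n=0$. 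On the module side, $(\boldsymbol{t}_n^\pm)_{\varepsilon,\boldsymbol{\lambda}}\boldsymbol{\zeta}_n$ equals $\frac{\boldsymbol{\lambda}\boldsymbol{q}^{1\pm n}-\boldsymbol{\lambda}^{-1}\boldsymbol{q}^{-1\mp n}}{\boldsymbol{q}-\boldsymbol{q}^{-1}}\boldsymbol{\zeta}_{n\pm2}$, whose coefficient — regular at $1$ thanks to $\mathrm{ev}_1(\boldsymbol{\lambda})=1$ — has first-order Taylor value (l'Hôpital) $\lambda'+1\pm n$, precisely the coefficient of $\kappa_\pm\zeta_n$ in $\mathrm{I}(\varepsilon,\lambda')$ by (\ref{actionclassical}); and $\mathrm{ev}_1([n]_{\boldsymbol{q}})=n$ matches $\theta\zeta_n=n\zeta_n$. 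Thus $\varphi_1$ intertwines $\theta$ on every basis vector, and intertwines $\mathrm{ev}_1(\boldsymbol{t}_n^\pm)$ on $1\otimes\boldsymbol{\zeta}_n$; combining these (and using that $\varphi_1$ intertwines the scalar $n$ trivially) it intertwines $\kappa_\pm=\mathrm{ev}_1(\boldsymbol{t}_n^\pm)\mp(n-\theta)$ on each $1\otimes\boldsymbol{\zeta}_n$, hence everywhere. Since $\theta,\kappa_+,\kappa_-$ generate $U(\mathfrak{g})$, $\varphi_1$ is an isomorphism of $U(\mathfrak{g})$-modules.

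\emph{Main obstacle.} All the combinatorial identities are prepared by Propositions~\ref{actiontransition} and~\ref{specialcases} and by the change-of-basis matrix in the proof of the preceding lemma, so the only genuinely delicate step is the limit $q\to1$. There one must check, first, that the \emph{a priori} $\mathbb{F}$-valued structure constants are in fact regular at $1$ — which rests on $\mathrm{ev}_1(\boldsymbol{\lambda})=1$ together with the simultaneous vanishing at $1$ of the relevant numerators and of $\boldsymbol{q}-\boldsymbol{q}^{-1}$, as already used in the preceding lemma — and, second, the conceptual fact that the $n$-dependent quantum operators $\boldsymbol{t}_n^\pm$ all degenerate at $q=1$ to the \emph{single} classical raising/lowering pair $\kappa_\pm$, the discrepancy $\pm(n-\theta)$ being invisible on the weight vector $\zeta_n$, while the classical spectral parameter of $\mathrm{I}(\varepsilon,\lambda')$ emerges as the first-order coefficient of this degeneration rather than as the value $\mathrm{ev}_1(\boldsymbol{\lambda})=1$.
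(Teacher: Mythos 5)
Your proof is correct and follows essentially the same route as the paper: both check the intertwining property on the basis $(\boldsymbol{\zeta}_n)$ via the renormalized transition operators $\boldsymbol{t}_n^\pm$ (and, for $q\neq 1$, equivalently $\boldsymbol{T}_n^\pm$), using Proposition~\ref{actiontransition} for the explicit scalars, and in the limit $q\to 1$ both rely on the same observation that $\mathrm{ev}_1(\boldsymbol{t}_n^\pm)=\kappa_\pm\pm(n-\theta)$ together with the l'Hôpital evaluation $\mathrm{ev}_1\bigl((\boldsymbol{\lambda q}^{1\pm n}-\boldsymbol{\lambda}^{-1}\boldsymbol{q}^{-1\mp n})/(\boldsymbol{q}-\boldsymbol{q}^{-1})\bigr)=\lambda'+1\pm n$. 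The only difference is cosmetic (you work with $\boldsymbol{T}_n^\pm$ rather than $\boldsymbol{t}_n^\pm$ in the $q\neq 1$ case and spell out the derivative computation), so the argument is in substance identical to the paper's.
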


\begin{proof}
    Let us prove that $\varphi_q$ intertwines the action of $U_q(\mathfrak{g})$ if $q\neq 1$. For each $n \in \mathbb{Z}^\varepsilon$, we have
    \begin{equation}\label{txyz}\boldsymbol{t}_n^\pm = \boldsymbol{q}^{\pm n}{\boldsymbol{x}} - \boldsymbol{q}^{\mp n}{\boldsymbol{y}} \mp [2]_{\boldsymbol{q}}{\boldsymbol{z}} \pm [n]_{\boldsymbol{q}}, \quad \boldsymbol{t}_n = \boldsymbol{q}^{-1}{\boldsymbol{x}} + \boldsymbol{q}{\boldsymbol{y}} + (\boldsymbol{q}^n - \boldsymbol{q}^{-n}){\boldsymbol{z}},\end{equation}
    see (\ref{TXYZ}) and Definition \ref{defUA}.
    Hence we have $\boldsymbol{t}_n^\pm, \boldsymbol{t}_n \in U_\mathbb{A}(\mathfrak{g})$. 
    Let us denote by $t_n^\pm$ and $t_n$ the elements of $U_q(\mathfrak{g})$ corresponding to the specializations of $\boldsymbol{t}_n^\pm, \boldsymbol{t}_n$ at $q$. To shorten notations, let us write $\lambda_q$ for $\mathrm{ev}_q(\boldsymbol{\lambda})$. Using (\ref{boldt}), we get
    $$\varphi_q(t_n^\pm(1 \otimes \boldsymbol{\zeta}_n)) = \frac{q^{1 \pm n}\lambda_q - q^{-1\mp n}\lambda_q^{-1}}{q - q^{-1}}\zeta_{n\pm 2}, \quad \varphi_q(t_n(1 \otimes \boldsymbol{\zeta}_n)) = \frac{\lambda_q +\lambda_q^{-1} - [2]_q}{q - q^{-1}}\zeta_n.$$
    Comparing with Proposition \ref{actiontransition}, we obtain
    $$\varphi_q(T_n^\pm(1 \otimes \boldsymbol{\zeta}_n)) = (T_n^\pm)_{\varepsilon,\lambda_q} \zeta_n, \qquad \varphi_q(T_n(1 \otimes \boldsymbol{\zeta}_n)) = (T_n)_{\varepsilon,\lambda_q} \zeta_n.$$
    Since $\mathrm{span}(T_n^+,T_n^-,T_n) = \mathrm{span}(X,Y,Z)$ for all $n \in \mathbb{Z}^\varepsilon$, we deduce that $\varphi_q$ is $\mathcal{O}_q(K\backslash U)$-linear. Moreover, $\varphi_q$ maps $(1 \otimes \boldsymbol{\zeta}_n)$ to $\zeta_n$ for all $n \in \mathbb{Z}^\varepsilon$, so it commutes with the action of $\theta$. We conclude that $\varphi_q$ is $U_q(\mathfrak{g})$-linear.

    Now let us focus on the case $q = 1$. Let $t_n^\pm$ be the elements of $U(\mathfrak{g})$ corresponding to the specializations of $\boldsymbol{t}_n^\pm$ at $1$. Given the identification (\ref{specialization1}), the evaluation of (\ref{txyz}) at $q=1$ yields
    $$\kappa_\pm = t_n^\pm \pm (\theta-n),$$
    where $\kappa_\pm$ are defined by (\ref{actionclassical}). Hence, to prove that $\varphi_1$ is $U(\mathfrak{g})$-linear, it is enough to check that
    $$\varphi_1(t_n(1 \otimes \boldsymbol{\zeta}_n)) = \kappa_\pm \zeta_n \qquad (n \in \mathbb{Z}^\varepsilon).$$
    But this follows from the fact that
    $$\mathrm{ev}_1\left( \frac{\boldsymbol{\lambda  q}^{1\pm n} - \boldsymbol{\lambda}^{-1}\boldsymbol{q}^{-1\mp n}}{\boldsymbol{q}-\boldsymbol{q}^{-1}}\right) = \lambda'+1 \pm n.$$
\end{proof}

We now come to the main result announced in the introduction. It is a straightforward application of the previous proposition, Theorem \ref{submodules} and the structure of the classical parabolically induced $(\mathfrak{g},K)$-modules.

\begin{theorem}
    Let $\varepsilon \in \{-1,1\}$ and $\lambda \in \mathbb{C}$. The specialization of $\mathrm{Ind}_\mathbb{A}(\varepsilon, \boldsymbol{q}^\lambda)$ at each $q \in \mathbb{R}_+^*\setminus \{1\}$ identifies with $\mathrm{Ind}_q(\varepsilon,q^\lambda)$ through $\varphi_q$while its specialization at $1$ identifies with $\mathrm{I}(\varepsilon,\lambda)$ through $\varphi_1$.
    
    For each $(\mathfrak{g},K)_\mathbb{F}$-submodule $V$ of $\mathrm{Ind}_\mathbb{F}(\varepsilon, \boldsymbol{q}^\lambda)$ and every $q\in \mathbb{R}_+^*$, let us denote by $V_q$ the specialization of $V \cap \mathrm{Ind}_\mathbb{A}(\varepsilon, \boldsymbol{q}^\lambda)$ at $q$. Provided that $|\log q||\Im \lambda | <\pi$, the map $V \mapsto V_q$ establishes a one-one correspondence between $(\mathfrak{g},K)_\mathbb{F}$-submodules of $\mathrm{Ind}_\mathbb{F}(\varepsilon, \boldsymbol{q}^\lambda)$ and
    \begin{itemize}
        \item $(\mathfrak{g},K)_q$-submodules of $\mathrm{Ind}_q(\varepsilon,q^\lambda)$ if $q\neq 1$,
        \item $(\mathfrak{g},K)$-submodules of $\mathrm{I}(\varepsilon,\lambda)$ if $q =1$.
    \end{itemize}
\end{theorem}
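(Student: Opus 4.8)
The plan is to reduce the statement to bookkeeping with the distinguished basis $(\boldsymbol{\zeta}_n)_{n\in\mathbb{Z}^\varepsilon}$ together with one elementary coefficient computation. The first assertion requires nothing new: it is the proposition preceding this theorem applied with $\boldsymbol{\lambda}=\boldsymbol{q}^\lambda\in\mathbb{A}^\times$, for which $\mathrm{ev}_1(\boldsymbol{q}^\lambda)=1$, the first derivative of $\boldsymbol{q}^\lambda$ at $1$ equals $\lambda$, and $\mathrm{ev}_q(\boldsymbol{q}^\lambda)=q^\lambda$ for $q\neq1$, so $\varphi_q$ and $\varphi_1$ supply the claimed identifications.

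For the submodule correspondence, the first step I would carry out is to record the description of submodules in the compact picture. Exactly as in the proof of Theorem \ref{submodules} --- an argument that uses only the $K_q$-isotypical decomposition afforded by the idempotents $e_n$, hence is valid over an arbitrary field --- every $(\mathfrak{g},K)_\mathbb{F}$-submodule of $\mathrm{Ind}_\mathbb{F}(\varepsilon,\boldsymbol{q}^\lambda)$ is of the form $\bigoplus_{n\in\Lambda}\mathbb{F}\boldsymbol{\zeta}_n$ for a unique $\Lambda\subseteq\mathbb{Z}^\varepsilon$, and such a subspace is a submodule precisely when $\Lambda$ is \emph{stable}, in the sense that for every $n\in\Lambda$ one has $n\pm2\in\Lambda$ unless $(\boldsymbol{T}_n^\pm)_{\varepsilon,\boldsymbol{q}^\lambda}$ annihilates $\boldsymbol{\zeta}_n$. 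The same description holds for the submodules of the specialization at a fixed $q$: for $q\neq1$ that specialization is $\mathrm{Ind}_q(\varepsilon,q^\lambda)$ via $\varphi_q$, to which Theorem \ref{submodules} applies, and for $q=1$ it is $\mathrm{I}(\varepsilon,\lambda)$, where \eqref{actionclassical} gives the analogous statement with coefficients $\lambda+1\pm n$.

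The crux is the second step: proving that, under $|\log q|\,|\Im\lambda|<\pi$, stability over $\mathbb{F}$ and stability for the specialization at $q$ are the same condition on $\Lambda$. Using the rescaled operators $\boldsymbol{t}_n^\pm=(\boldsymbol{q}-\boldsymbol{q}^{-1})^{-1}\boldsymbol{T}_n^\pm\in U_\mathbb{A}(\mathfrak{g})$ and \eqref{boldt} with $\boldsymbol{\lambda}=\boldsymbol{q}^\lambda$, the relevant coefficient is $d_n^\pm=(\boldsymbol{q}-\boldsymbol{q}^{-1})^{-1}(\boldsymbol{q}^{\lambda+1\pm n}-\boldsymbol{q}^{-\lambda-1\mp n})\in\mathbb{A}$; passing from $\boldsymbol{T}_n^\pm$ to $\boldsymbol{t}_n^\pm$ only multiplies by $\boldsymbol{q}-\boldsymbol{q}^{-1}\in\mathbb{A}^\times$, and $d_n^\pm$ governs the corresponding action on each specialization via $\mathrm{ev}_q$, with $\mathrm{ev}_1(d_n^\pm)=\lambda+1\pm n$ as in the preceding proposition. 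Now $d_n^\pm=0$ in $\mathbb{F}$ iff $\boldsymbol{q}^{2(\lambda+1\pm n)}\equiv1$ on $\mathbb{R}_+^*$, i.e. iff $\lambda+1\pm n=0$; and $\mathrm{ev}_q(d_n^\pm)=0$ iff $q^{2(\lambda+1\pm n)}=1$, which, separating modulus and argument of $e^{2(\lambda+1\pm n)\log q}$, forces $\Re\lambda+1\pm n=0$ and $\Im\lambda\,\log q\in\pi\mathbb{Z}$; the bound then gives $\Im\lambda\,\log q=0$, hence $\lambda+1\pm n=0$ again (for $q=1$ one reads this off directly from $\mathrm{ev}_1(d_n^\pm)$). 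So $d_n^\pm$ vanishes in $\mathbb{F}$ if and only if its image at $q$ vanishes, and the two stability conditions coincide. I expect this to be the only genuinely delicate point: the hypothesis $|\log q|\,|\Im\lambda|<\pi$ is exactly what rules out a sporadic extra relation $q^{2(\lambda+1\pm n)}=1$ with $\Re\lambda+1\pm n=0$ but $\lambda\notin\mathbb{R}$, which would create strictly more submodules at $q$ than over $\mathbb{F}$ and break surjectivity.

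Finally I would assemble the bijection. For $V=\bigoplus_{n\in\Lambda}\mathbb{F}\boldsymbol{\zeta}_n$ one has $V\cap\mathrm{Ind}_\mathbb{A}(\varepsilon,\boldsymbol{q}^\lambda)=\bigoplus_{n\in\Lambda}\mathbb{A}\boldsymbol{\zeta}_n$, a direct $\mathbb{A}$-module summand of $\mathrm{Ind}_\mathbb{A}(\varepsilon,\boldsymbol{q}^\lambda)$; hence $\mathbb{C}\otimes_{\mathrm{ev}_q}(-)$ keeps it injected into the specialization of $\mathrm{Ind}_\mathbb{A}(\varepsilon,\boldsymbol{q}^\lambda)$, and $\varphi_q$ identifies $V_q$ with $\bigoplus_{n\in\Lambda}\mathbb{C}\zeta_n$, which is a submodule of the specialization by the second step. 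Conversely every submodule of the specialization equals $\bigoplus_{n\in\Lambda}\mathbb{C}\zeta_n$ for a stable $\Lambda$, which is then stable over $\mathbb{F}$, so it is $V_q$ with $V=\bigoplus_{n\in\Lambda}\mathbb{F}\boldsymbol{\zeta}_n$; injectivity is immediate since distinct $\Lambda$ give distinct $V$. Thus $V\mapsto V_q$ is the asserted one-to-one correspondence, its inverse carrying $\bigoplus_{n\in\Lambda}\mathbb{C}\zeta_n$ to $\bigoplus_{n\in\Lambda}\mathbb{F}\boldsymbol{\zeta}_n$.
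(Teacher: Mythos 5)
Your proposal is correct and carries out exactly the argument the paper leaves implicit when it says the theorem ``is a straightforward application of the previous proposition, Theorem \ref{submodules} and the structure of the classical parabolically induced $(\mathfrak{g},K)$-modules.'' In particular, your analysis of when $\mathrm{ev}_q(d_n^\pm)$ vanishes --- separating modulus and argument of $q^{2(\lambda+1\pm n)}$ and using $|\log q|\,|\Im\lambda|<\pi$ to rule out the spurious vanishing with $\Im\lambda\neq 0$ --- is precisely the content that makes the hypothesis on $q$ and $\lambda$ do its work, and it is the one nontrivial verification the paper omits.
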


\end{document}